\numberwithin{equation}{section}
\theoremstyle{plain}
\newtheorem{theorem}{Theorem}[section]
\newtheorem{corollary}[theorem]{Corollary}
\newtheorem{lemma}[theorem]{Lemma}
\newtheorem{proposition}[theorem]{Proposition}
\newtheorem{conjecture}[theorem]{Conjecture}
\theoremstyle{definition}
\newtheorem{definition}[theorem]{Definition}
\newtheorem{remark}[theorem]{Remark}
\theoremstyle{remark}
\newcommand{\OO}{\mathcal O}
\newcommand{\A}{\mathbb{A}}
\newcommand{\R}{\mathbb{R}}
\newcommand{\Q}{\mathbb{Q}}
\newcommand{\Z}{\mathbb{Z}}
\newcommand{\N}{\mathbb{N}}
\newcommand{\C}{\mathbb{C}}
\renewcommand{\H}{\mathbb{H}}
\newcommand{\F}{\mathbb{F}}
\newcommand{\D}{\mathbb{D}}
\newcommand{\kzxz}[4]{\left(\begin{smallmatrix} #1 & #2 \\ #3 & #4\end{smallmatrix}\right) }
\newcommand{\vol}{\operatorname{vol}}
\newcommand{\tr}{\operatorname{tr}}
\newcommand{\sgn}{\operatorname{sgn}}
\newcommand{\Cl}{\operatorname{Cl}}
\newcommand{\GSpin}{\operatorname{GSpin}}
\newcommand{\Mp}{\operatorname{Mp}}
\newcommand{\Char}{\operatorname{char}}
\newcommand{\Spec}{\operatorname{Spec}}
\newcommand{\End}{\operatorname{End}}
\newcommand{\GL}{\operatorname{GL}}
\newcommand{\Div}{\operatorname{Div}}
\newcommand{\cha}{\operatorname{char}}
\newcommand{\Nor}{\operatorname{Nor}}
\newcommand{\ord}{\operatorname{ord}}
\newcommand{\Gspin}{\operatorname{GSpin}}
\newcommand{\ff}{\hbox{if }}
\newcommand{\SL}{\operatorname{SL}}
\newcommand{\Diff}{\operatorname{Diff}}
\newcommand{\B}{\mathbb B}
\newcommand{\CH}{\operatorname{CH}}
\newcommand{\GS}{\operatorname{GS}}
\newcommand{\MW}{\operatorname{MW}}
\begin{document}
\setcounter{section}{-1}

\title{On the arithmetic inner product formula and central derivatives of L-functions}

\date{\today}
\author[Tuoping Du]{Tuoping Du}
\address{School of Mathematics and Physics, North China  Electric Power University, Beijing, 102206, P.R. China}
\email{dtp1982@163.com}
%\author[Tonghai Yang]{Tonghai Yang}
%\address{Department of mathematics, Wisconsin University}
%\email{thyang@math.wisc.edu}

\begin{abstract}

This research provides  a formal definition of the arithmetic theta lift for cusp forms of weight $3/2$ and establishes the arithmetic inner product formula, thereby completing the Kudla program on modular curves. This formula is demonstrated to be equivalent to the Gross-Zagier formula, for which we provide a new proof.

 Additionally, the authors introduce a new arithmetic representation for the central  derivatives of L-functions associated with cusp forms of higher weight. Although this representation differs from Zhang’s higher weight Gross-Zagier formula, it maintains a significant connection to it. This study also proposes a conjecture indicating that the vanishing of derivatives of L-functions is determined by the algebraicity of the coefficients of harmonic weak Maass forms.

A consistent approach is employed to study both parts of this work.

\end{abstract}

\dedicatory{}

\subjclass{11G15, 11G18, 11F37}

\thanks{}

\maketitle

\tableofcontents

\section{Introduction}
There are primarily two approaches to study   the derivatives of L-functions in arithmetic geometry. The first approach involves representing the central derivative as the height of certain cohomological trivial cycles, such as  the Gross-Zagier formula \cite{GZ},  Zhang's higher weight Gross-Zagier formula \cite{Zhang}, and the higher derivatives of certain L-functions over function fields\cite{YunZhang}. The second approach  involves expressing the central derivative  as the self-intersection number of the arithmetic theta lift, a framework known as the Kudla program, which has recently been generalized to  the arithmetic inner product formula. The first case involving Shimura curves was introduced by Kudla, Rapoport, and Yang in their works \cite{KRYComp}\cite{KRYBook}. Recently, Li and Liu have made significant contributions \cite{LL1}\cite{LL2} to the study of higher-dimensional unitary Shimura varieties.

One  aim of this paper  is to formulate the arithmetic inner product formula and to provide a new proof of the Gross-Zagier formula on the modular curve $X_0(N)$, while also demonstrating their equivalence.
$$
\begin{tabular}{|c|*{2}{c|}c|}
\hline
   % after \\: \hline or \cline{col1-col2} \cline{col3-col4} ...
\diagbox[dir=NW]{Type}{Curve} & Modular curve & Division Shimura curve \\   \hline
  Gross-Zagier& Gross-Zagier \cite{GZ} &  Yuan-Zhang-Zhang \cite{Yuanzhangzhang} \\   \hline
   AIPF&  \textbf{Theorem \ref{theoarithprod}}& Kudla-Rapoport-Yang\cite{KRYBook} \\
   \hline
 \end{tabular}
$$
Here we denote the arithmetic inner product formula as AIPF.

In addition, this study will focus on the arithmetic representation of the derivatives of L-functions  for higher weight, which can be represented  by the height of  Heegner cycles \cite{Zhang}.
 In this work, we will present a new representation. The approach taken in this research relies on the Borcherds lift \cite{Bo} and  CM values  \cite{BO} \cite{BY}.

Let $(V,Q)$ be a quadratic space over $\Q$ with signature $(n, 2)$,
and  let $L\subset V$  denote an even lattice. We denote the dual  of $L$  as $L^\sharp$ and define  $\Gamma'=\Mp_2(\Z)$
as the full inverse image of $\SL_2(\Z)$ within the two-fold metaplectic cover of $\SL_2(\R)$.
Consequently, there exists  a Weil representation $\rho_L$ acting on the finite-dimensional space $\C[L^\sharp /L]$,
with the standard basis represented as $\{e_\mu | \mu \in L^\sharp /L \}$.

We denote the space of  harmonic
weak Maass forms of weight $k$  with the representation $\rho_L$ as  $H_{k, \rho_L}$.
%For any $f \in H_{k, \rho_L}$, it has the following Fourier expansion
%$$f=\sum_{\mu}\sum_{n \gg 0} c^+(n, \mu)e(n\tau)e_{\mu}+\sum_{\mu}\sum_{n<0} c^-(n, \mu)\Gamma(1-k, 2\pi |n|v)e(n\tau)e_{\mu}.$$
There exist  differential operators $$\xi_{k}: H_{k, \rho_L } \rightarrow S_{2-k, \overline{\rho}_L},$$ 
%which map {\bf maps} to the space of holomorphic modular forms with dual representation $\overline{\rho}_L$, 
  Maass raising operator $R_k$ and lowering operator $L_k$.

Borcherds studied the theta lift of weakly modular forms \cite{Bo}, which is  generalized to the harmonic
weak Maass forms  $H_{k, \rho_L}$ by Bruinier \cite{Br}. In this paper, we will extend these  works to the general cases.

% Let $L^\Delta=\Delta L$  be a lattice with quadratic form $Q_\Delta(x)= \frac{Q(x)}{|\Delta|}$.

Let $\Delta$  be a fundamental discriminant and $r$  be an integer such that $\Delta\equiv r^2 (\mod 4N)$.

If $\Delta= r=1$,  we will omit  the indices $\Delta$ and $r$ in this paper.

For  $n=1$, we define
  \begin{equation}
V=\{w=\kzxz
    {w_1} { w_2 }
      {w_3}   {-w_1}  \in M_{2}(\Q) : \tr(w) =0 \},
\end{equation}
with the quadratic form $Q(w)=N \det{w}$,  and  we define
 \begin{equation}
L=\big\{w =\kzxz {b}{\frac{a}{N}}{c}{-b}
   \in M_{2}(\Z) :\,   a, b, c \in \Z \big\}\subset V,
\end{equation}
to be the lattice in $V$. 
For any $\mu \in L^{\sharp}/L$, and a positive rational number $m \in \sgn(\Delta)Q(\mu) +\Z$, the twisted Heegner divisor
$Z_{\Delta, r}(m, \mu) $  in modular curve $X_{0}(N)$ is defined by Bruinier and Ono in \cite{BO}, which is a generalization of the Heegner divisor introduced by Gross and Zagier in \cite{GZ}.

For any negative definite two-dimensional subspace $U \subseteq V$, there exists a CM cycle $Z(U)$ in the Shimura variety.
%$X_K$, where  $K\subset \Gspin(V)(\A_f)$ is a compact open subgroup.
%For any imaginary quadratic field $k$, it can be viewed as a quadratic subspace $U\cong (k, \Nor) \subset V$.
 Indeed, $Z(U)$ consists of two copies of the ideal class group of an imaginary quadratic field.
Then the genus character $\chi_\Delta$ can be used to define the twisted CM divisor $Z_{\Delta, r}(U)$.

%\begin{definition}
%For  $f \in H_{\frac{1}{2}-J,\overline{\tilde \rho}}$,
%\begin{eqnarray}
%\Phi^j_{\Delta, r}(z, h, f)=\left\{
%                              \begin{array}{ll}
%                               \frac{1}{(4\pi)^j}\Phi_{\Delta, r}(z, h, R_{\frac{1}{2}-J}^jf), & \hbox{$J=2j$;} \\
%                               \frac{1}{(4\pi)^j}\Phi^{\mathcal M}_{\Delta, r}(z, h, R_{\frac{1}{2}-J}^j f), & \hbox{$J=2j+1$.}
%                              \end{array}
%                            \right.
%%&=&\frac{1}{(4\pi)^j}\Phi_{\Delta, r}(z, h, R_{\frac{1}{2}-J}^jf);
%\end{eqnarray}
%%when $J=2j+1$, we also write
%%\begin{eqnarray}
%%\Phi^j_{\Delta, r}(z, h, f)&=&\frac{1}{(4\pi)^j}\Phi^{\mathcal M}_{\Delta, r}(z, h, R_{\frac{1}{2}-J}^j f),
%%\end{eqnarray}
%\end{definition}

The Gross-Zagier  formula   has been extended to
eigenforms $G$ of weight $2\kappa$, for any $\kappa \geq 1$.
%In this case, the Rankin-Selberg L-function $L(G, \chi, s)$  vanishes at its central point $s=\kappa$.
Deligne conjectured and Zhang proved that the derivative
$L'(G, \chi, \kappa)$ corresponds to  to the heights
of Heegner cycles. This result is now referred to as the higher weight Gross-Zagier formula \cite{Zhang}.

 Let  $\mathcal{Y}=\mathcal{Y}_\kappa(N)$  be the  Kuga-Sato variety.
 For any CM point $x$ on $X_0(N)$, we denote $S_\kappa(x)$ as the  Heegner cycle over $x$ in  $\mathcal{Y}$.
 The class of this cycle in $H^{2\kappa}(Y(\C), \C)$ is zero.

We define the twisted Heegner cycles as follows
\begin{equation}\label{defheer}
Z_{\Delta, r, \kappa}(m, \mu)=m^{\frac{\kappa-1}{2}}\sum_{x \in Z_{\Delta, r}(m, \mu)}\chi_\Delta(x)S_\kappa(x).
\end{equation}
%Here $\chi_\Delta$ is the  generalized genus character.
In a similar manner, we define the Heegner cycle $ Z_{\Delta, r, \kappa}(U)$ over $Z_{\Delta, r}(U)$.
%$$Z_{\Delta, r, \kappa}(U)=m_0^{\frac{\kappa-1}{2}}\sum_{x \in Z_{\Delta, r}(U)}\chi_\Delta(x)S_\kappa(x)=m_0^{\frac{\kappa-1}{2}}(S_{\chi_\Delta}+\overline{S_{\chi_\Delta}}),$$
%where $S_{\chi_\Delta}$ is the Heegner cycle which is defined in \cite{Zhang} and the bar denotes the complex conjugate.

For $f \in H_{3/2-\kappa, \bar{\rho}_L}$,  we denote
\begin{equation}
Z_{\Delta, r, \kappa}(f)=\sum_{m>0} c^+(-m, \mu)Z_{\Delta, r, \kappa}(m, \mu).
\end{equation}
where $c^+(m, \mu)$ are Fourier coefficients of its holomorphic part.

 The Gillet-Soul\'e  height $\langle~,~ \rangle_{GS}$ extends  the N\'eron-Tate height
 $\langle~,~  \rangle_{NT}$,
as    introduced by Gillet and Soul\'e \cite{GS}. Building on this work, Zhang developed  the global height pairing $\langle~, ~\rangle$ for  higher Heegner cycles, which can be expressed as a sum of local height pairings.

A primary aim of this study is to calculate the following intersection number
$$\langle Z_{\Delta, r, \kappa}(f), Z_{\Delta, r, \kappa}(U)\rangle.$$
We will demonstrate that the archimedean component of this intersection number is represented by the Borcherds lift.

For any $f \in H_{3/2-\kappa, \rho_L}$, when $\kappa=2j+1$ is odd, we define the higher twisted regularized theta lift as follows
\begin{equation}\label{dehigh}
\Phi_{\Delta, r}^j(z, h, f)
=   \frac{1}{(4\pi)^j}\int^{reg}_{\mathcal F}\langle R_{3/2-\kappa} ^jf(\tau), \Theta_{\Delta, r}(\tau, z, h) \rangle d\mu(\tau).
\end{equation}
In the case where $\kappa$ is even, we substitute the twisted Siegel theta function $\Theta_{\Delta, r}(\tau, z, h)$ with the  Millson theta function
$\Theta^{\mathcal M}_{\Delta, r}(\tau, z, h)$.
Indeed, these lifts are   higher Green functions.

When $\Delta=1$, Bruinier and Yang \cite{BY}, as well as  Bruinier, Ehlen and Yang \cite{BEY} investigated the CM value
$\Phi^j(Z(U), f)$. This value can be interpreted  as the archimedean component
 of the  global height pairing and  provides  the  derivatives of L-functions.

For any newform $G \in S_{2\kappa}^{new}(N):=S_{2\kappa}^{new}(\Gamma_0(N)) $, let $Z_{\Delta, r, \kappa}^G(m, \mu)$ denote the $G$--isotypical  component of the Heegner cycle. Now we denote  the Petersson norm  as $\parallel G\parallel:=\sqrt{\langle G,~ G \rangle_{Pet}}$.

The following theorem corresponds to Gross-Zagier formula \cite{GZ} when  $\kappa=1$, and to  Zhang's higher  Gross-Zagier formula \cite{Zhang} when $\kappa>1$.
\begin{theorem}[Gross-Zagier-Zhang formula]\label{thehigher} 
For any normalized newform $G \in S_{2\kappa}^{new}(N)$, one has
\begin{equation}
\langle Z_{\Delta, r, \kappa}^G(m, \mu), Z_{\Delta, r, \kappa}^G(m, \mu)\rangle=
\frac{(2\kappa-2)!\sqrt{|D|} }{2^{4\kappa-2}\pi^{2\kappa}\parallel G\parallel^2}m^{\kappa-1}
L'_K(G, \chi, \kappa),\nonumber
\end{equation}
where $\chi$ denotes  the genus character associated with the decomposition of the fundamental discriminant  as $$D=-4N m|\Delta|=\Delta D_0.$$
\end{theorem}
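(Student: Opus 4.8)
The plan is to deduce Theorem \ref{thehigher} from the identification of the global height pairing $\langle Z_{\Delta, r, \kappa}(f), Z_{\Delta, r, \kappa}(U)\rangle$ with the regularized theta lift $\Phi_{\Delta, r}^j(Z(U), f)$, combined with the arithmetic Siegel--Weil/see-saw machinery that expresses this lift in terms of derivatives of Rankin--Selberg $L$-functions. First I would fix a normalized newform $G \in S_{2\kappa}^{new}(N)$ and, via the Shimura/Shintani correspondence together with the theta lift from $S_{2-k,\bar\rho_L}$ (here $k = 3/2-\kappa$), choose a harmonic weak Maass form $f \in H_{3/2-\kappa,\bar\rho_L}$ whose image $\xi_{3/2-\kappa}(f)$ is the vector-valued form corresponding to $G$. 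The $G$-isotypical projection of the Heegner cycle $Z_{\Delta,r,\kappa}(f)$ is then, up to the known proportionality constants relating Fourier coefficients of half-integral weight forms to those of $G$ (Waldspurger/Kohnen--Zagier type relations, in the twisted form of Gross--Kohnen--Zagier), a multiple of $Z_{\Delta,r,\kappa}^G(m,\mu)$.

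The core computation proceeds in two parallel tracks, exactly as the introduction advertises ``a consistent approach.'' On the geometric side, one writes $\langle Z_{\Delta,r,\kappa}(f), Z_{\Delta,r,\kappa}(U)\rangle$ as a sum of local height pairings; the archimedean local height is precisely the higher Green function $\Phi_{\Delta,r}^j(Z(U), f)$ from \eqref{dehigh}, and the finite places contribute via the arithmetic intersection theory on the Kuga--Sato family $\mathcal Y_\kappa(N)$. On the automorphic side, one applies the unfolding of the regularized integral against the (Millson or Siegel) theta kernel: the CM value $\Phi_{\Delta,r}^j(Z(U), f)$ unfolds to an integral representation that, after Fourier expansion in the cusp and use of the Kudla--Millson / Bruinier--Yang formula for CM values, yields a Rankin--Selberg convolution whose central derivative is $L'_K(G,\chi,\kappa)$. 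Matching the two expressions and tracking the archimedean Gamma-factors, the power of $\pi$, the factorial $(2\kappa-2)!$ coming from the Maass raising operator $R_{3/2-\kappa}^j$ applied $j$ times (with $\kappa = 2j+1$; the even case handled by the Millson theta function), the discriminant factor $\sqrt{|D|}$ from the Gaussian at the CM point, and the normalization $\|G\|^2$ from the choice of test vector, produces the stated identity.

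The main obstacle I anticipate is the precise bookkeeping of constants and the choice of the local test data at the archimedean place so that the ``higher weight'' normalization is consistent on both sides. Concretely: (i) ensuring that $\xi_{3/2-\kappa}(f)$ really hits the Shimura lift of $G$ with the right scalar, which requires the twisted Gross--Kohnen--Zagier relation for the cycles $Z_{\Delta,r,\kappa}(m,\mu)$ and careful comparison of the vector-valued and scalar-valued conventions; (ii) controlling the non-archimedean local heights on $\mathcal Y_\kappa(N)$ — these should vanish or match the Fourier coefficients of an incoherent Eisenstein series derivative, but verifying this for higher weight ($\kappa>1$, where the cycles live in higher cohomology of the Kuga--Sato variety) is delicate and is where contact with Zhang's computations in \cite{Zhang} is essential; and (iii) the analytic continuation and functional equation input needed to land exactly on the \emph{central} derivative with the correct sign of the functional equation (the genus character decomposition $D = \Delta D_0$ must force the sign $-1$). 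Once these three points are pinned down, the theorem follows by equating the archimedean term of the height with the CM value of the Borcherds-type lift and invoking the already-established (in this paper) arithmetic inner product formula as the bridge; the equivalence with \cite{GZ} at $\kappa=1$ is then a specialization, and the agreement with \cite{Zhang} at $\kappa>1$ is a consistency check rather than an independent input.
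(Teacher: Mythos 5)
Your overall strategy matches the paper's: the height pairing $\langle Z_{\Delta,r,\kappa}(f), Z_{\Delta,r,\kappa}(U)\rangle$ is split into archimedean and finite local heights, the archimedean piece is identified with the CM value $\Phi^j_{\Delta,r}(Z(U),f)$ of the higher regularized theta lift, the finite piece is shown to vanish for a good choice of $D_0$, and the CM value is then matched against the central derivative of the Rankin--Selberg $L$-function via unfolding; the final step plugs in the Waldspurger-type formula for $|b(m,\mu)|^2$ from \cite{GKZ}. So the approach is the right one. However, your writeup has a genuine gap in how it treats the reduction from $Z_{\Delta,r,\kappa}(f)$ to $Z^G_{\Delta,r,\kappa}(m,\mu)$.

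You describe the statement that the $G$-isotypical Heegner cycle satisfies $Z^G_{\Delta,r,\kappa}(m,\mu)=b(m,\mu)\,Z_{\Delta,r,\kappa}(f)$ as a ``known proportionality'' requiring only ``careful bookkeeping'' of vector-valued versus scalar conventions, and you list it as item (i) among the obstacles to be ``pinned down.'' In fact, for $\kappa>1$ this is precisely the Modularity hypothesis (Conjecture~\ref{condimone}, equation~\eqref{conone}): it asserts that the generating series $\sum_{m,\mu}Z^G_{\Delta,r,\kappa}(m,\mu)q^m e_\mu$ is a cusp form equal to $g(\tau)\otimes Z_{\Delta,r,\kappa}(f)$, i.e.\ that the $G$-component of each Heegner cycle is a multiple of a single cycle $Z_{\Delta,r,\kappa}(f)$ in $\CH^\kappa(\mathcal Y)$. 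This is proven only for $\kappa=1$ (\cite{GKZ}, \cite{BoDuke}, \cite{BO}); for $\kappa>1$ the best available is Nekov\'a\v{r}'s one-dimensionality result for the image under the $p$-adic Abel--Jacobi map, which does not suffice. The paper's proof of Theorem~\ref{thehigher} is therefore explicitly \emph{conditional} on this conjecture (this is the content of the Proposition in the introduction and the hypothesis in the restated theorem in Section~\ref{Sec9}). Once that assumption is granted, there is no delicate bookkeeping left: one simply applies Theorem~\ref{mainthe} to get $\langle Z_{\Delta,r,\kappa}(f),Z_{\Delta,r,\kappa}(f)\rangle$ in terms of $L'_K(G,\chi,\kappa)$, multiplies by $|b(m,\mu)|^2$, and substitutes the Waldspurger formula~\eqref{equwald}. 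Relatedly, your closing remark that one should invoke the ``already-established arithmetic inner product formula as the bridge'' is misplaced: Theorem~\ref{theoarithprod} is the $\kappa=1$ specialization; for $\kappa>1$ the bridge is Theorem~\ref{maintheo} together with the Modularity hypothesis, not the AIPF.
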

The term  $m^{\kappa-1}$ appears  because we multiply by $m^{\frac{\kappa-1}{2}}$ in the definition of $Z_{\Delta, r, \kappa}^G(m, \mu)$, as shown in  equation (\ref{defheer}).

It is straightforward to observe that for $\kappa=1$,
$$Z_{\Delta, r, \kappa}^G(m, \mu)=0 \Longleftrightarrow L'_K(G, \chi, \kappa)=0 .$$
However, for $\kappa>1$, this reason cannot be applied because there is no established non-degeneracy of  the pairings  $\langle~, ~\rangle$.

Furthermore,  determining whether a cycle is trivial  in the Chow group is quite hard. Consequently,  it is essential  to provide a new representation  of derivatives of L-functions for higher weight $\kappa$. We will study  cusp forms, extending our focus beyond merely newforms. The proposed approach is outlined in the  subsequent sections of this introduction.

\subsection{Main results}

We denote
\begin{equation}
  \tilde{\rho}_L=\left\{
                   \begin{array}{ll}
                     \rho_L, & \hbox{$\Delta >0$;} \\
                     \bar{\rho}_L, & \hbox{$\Delta <0$.}
                   \end{array}
                 \right.
 \end{equation}
For a normalized newform $G \in S_{2\kappa}^{new}(N)$, we denote by  $F_G$ the total real number field generated by the eigenvalues of $G$. There exists a  newform  $g \in S^{new}_{\frac{1}{2}+\kappa, \tilde{\rho}_L}$  that corresponds to $G$ under the Shimura
correspondence $Sh_{m_0,\mu_{0}}$ as given in \cite{GKZ}. We normalize $g$ such that all its coefficients are contained in $F_G$. Furthermore, there exists a function
$f \in H_{_{3/2-\kappa}, \bar{\tilde{\rho}}_L}(F_G)$ such that
$$\xi_{3/2-\kappa}(f)=\parallel g\parallel^{-2} g ,$$
where the coefficients of the principal part of $f$  are also contained in  $F_G$.
%Let $G \in S_{2\kappa}^{new}(N)$ be the Shimura lift of  $g \in S^{new}_{\frac{1}{2}+\kappa, \tilde{\rho}_L}$.
Roughly speaking, up to  normalization,  this is represented by the following map
\begin{equation}\label{mapone}
\xymatrix@C=1.5cm{
  H_{3/2-\kappa, \bar{\tilde{\rho}}_L} \ar[r]^{\xi_{3/2-\kappa}} & S_{\frac{1}{2}+\kappa, \rho_L} \ar[r]^{Sh_{m_0,\mu_{0}}} & S_{2\kappa}(N) }.
  \end{equation}

\begin{conjecture}\label{conmain}
Assume that $f \in H_{_{3/2-\kappa}, \bar{\tilde{\rho}}_L}(F_G)$ is given as above.
Then the following statements are equivalent:
  \begin{enumerate}
    \item $ L'(G, \chi_\Delta, \kappa)=0$.
    \item $Z_{\Delta, r, \kappa}(f)$ vanishes in the Chow group $\CH^\kappa(\mathcal{Y})$.
%If $f \in H_{_{3/2-\kappa}, \bar{\tilde{\rho}}_L}(F)$,  $F$ is any number field,
% \item $c^+_f(n^2|\Delta|, nr) \in F$ for all $n\in \N$. \\
%$(3)$ is reduced to
\item $c^+(|\Delta|, \mu_r) \in F_G.$
        \end{enumerate}
\end{conjecture}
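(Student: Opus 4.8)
The plan is to deduce the three-way equivalence from three facts available by this point of the paper: the Gross--Zagier--Zhang formula of Theorem~\ref{thehigher}, relating the self-height of the $G$-isotypic Heegner cycle to $L'(G,\chi_\Delta,\kappa)$; the identification of the archimedean component of the global pairing $\langle Z_{\Delta,r,\kappa}(f),Z_{\Delta,r,\kappa}(U)\rangle$ with the higher regularized theta lift $\Phi^j_{\Delta,r}$ of \eqref{dehigh} (built from the Siegel or Millson theta kernel according to the parity of $\kappa$), together with its CM value formula in the style of Bruinier--Yang and Bruinier--Ehlen--Yang; and the modularity of the generating series $\mathcal A(\tau)=\sum_{m,\mu}Z_{\Delta,r,\kappa}(m,\mu)\,q^m e_\mu$, which, the Heegner cycles being cohomologically trivial, is a cusp form of weight $\tfrac12+\kappa$ with values in $\CH^\kappa(\mathcal Y)_{0,\Q}$.

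First I would isolate the $G$-isotypic content of $Z_{\Delta,r,\kappa}(f)$. By definition this cycle equals the Bruinier--Funke pairing $\{f,\mathcal A\}$ of $f$ against the cuspidal series $\mathcal A$, hence equals $\langle\xi_{3/2-\kappa}(f),\mathcal A\rangle_{\Pet}=\|g\|^{-2}\langle g,\mathcal A\rangle_{\Pet}$. Since $\mathcal A$ is Hecke-equivariant its $G'$-isotypic part is $g'$ times a fixed cycle for every eigenform $g'$, so all eigenforms orthogonal to $g$ drop out and, up to a nonzero factor in $F_G^{\times}$ from the normalization of $g$,
\[
Z_{\Delta,r,\kappa}(f)=Z_{\Delta,r,\kappa}^{G}(m_0,\mu_0)\qquad\text{in }\CH^\kappa(\mathcal Y)_{0,\Q}.
\]
Thus (2) is equivalent to the vanishing of the $G$-isotypic Heegner cycle, whence Theorem~\ref{thehigher} gives $(2)\Rightarrow(1)$ at once. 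For $\kappa=1$ the N\'eron--Tate height is positive definite on $J_0(N)(\overline\Q)\otimes\R$, so vanishing self-height forces the class to be torsion and $(1)\Rightarrow(2)$ holds as well. For $\kappa>1$ the implication $(1)\Rightarrow(2)$ is precisely the non-degeneracy of the Gillet--Soul\'e height on the $G$-isotypic part of $\CH^\kappa(\mathcal Y)_0$, i.e.\ a case of the Beilinson--Bloch conjecture; I expect this to be the main obstacle for that half of the statement.

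Next I would bring in (3) by expanding the global height $\langle Z_{\Delta,r,\kappa}(f),Z_{\Delta,r,\kappa}(U)\rangle=\sum_v\langle\cdot,\cdot\rangle_v$ into local terms. The non-archimedean terms are logarithms of algebraic numbers (arithmetic intersection multiplicities on an integral model), while the second ingredient gives
\[
\langle\cdot,\cdot\rangle_\infty=\Phi^j_{\Delta,r}(Z(U),f)=(\text{CM period})\cdot\|g\|^{-2}L'(G,\chi_\Delta,\kappa)+\Lambda(f),
\]
with $\Lambda(f)$ an explicit linear combination --- coefficients in $F_G$ by the hypothesis on the principal part of $f$ --- of logarithms of algebraic numbers and of $c^+(|\Delta|,\mu_r)$. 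Assuming (2), the whole pairing vanishes, so $(\text{CM period})\cdot\|g\|^{-2}L'(G,\chi_\Delta,\kappa)$ is a $\overline\Q$-combination of logarithms of algebraic numbers together with a rational multiple of $c^+(|\Delta|,\mu_r)$; granting the transcendence independence of the CM period from those logarithms --- for $\kappa=1$ supplied by the Chowla--Selberg formula and periods of CM elliptic curves --- one concludes simultaneously $L'(G,\chi_\Delta,\kappa)=0$ and $c^+(|\Delta|,\mu_r)\in F_G$, i.e.\ (1) and (3). Conversely, given (3), with $c^+(|\Delta|,\mu_r)$ and the principal part of $f$ all in $F_G$, a converse theorem for (higher) Borcherds products should realize a suitable multiple of the Green current $\Phi^j_{\Delta,r}(\cdot,f)$ as the current of an algebraic cycle rationally equivalent to $Z_{\Delta,r,\kappa}(f)$, giving (2), and then (1) by Theorem~\ref{thehigher}. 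For $\kappa=1$ every ingredient is a theorem and the argument recovers the Bruinier--Ono equivalence; for $\kappa>1$ the genuinely missing inputs are the Beilinson--Bloch non-degeneracy and the Gross--Kohnen--Zagier type algebraicity and transcendence statements for CM values of higher Green functions --- which is why the assertion is recorded as a conjecture.
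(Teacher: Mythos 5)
The statement you are addressing is a \emph{conjecture}; the paper does not claim a proof of the full three-way equivalence. The remark immediately following it records only (i) that the $\kappa=1$ case is a theorem of Gross--Zagier, Borcherds, Bruinier--Ono, (ii) that $(2)\Rightarrow(1)$ follows from Theorem~\ref{maintheo}, and (iii) that $(3)$ is tied to the Alfes--Bruinier--Schwagenscheidt construction. You correctly identify that the full equivalence rests on conjectural inputs --- Beilinson--Bloch non-degeneracy of the height pairing for $\kappa>1$, and algebraicity/transcendence results for CM values of higher Green functions --- and your overall framework is consistent with what the paper says around the conjecture.

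Where your route diverges from the paper's in a way worth noting: for $(2)\Rightarrow(1)$ you first invoke the modularity of $\mathcal A(\tau)$ to identify $Z_{\Delta,r,\kappa}(f)$ with (a nonzero multiple of) $Z_{\Delta,r,\kappa}^G(m_0,\mu_0)$, and then feed the Gross--Zagier--Zhang formula (Theorem~\ref{thehigher}) into that identification. But Theorem~\ref{thehigher} itself is recorded in the paper only as a \emph{consequence} of the Modularity hypothesis (Conjecture~\ref{condimone}) for $\kappa>1$; so your argument makes $(2)\Rightarrow(1)$ conditional on modularity. The paper's own route is sharper and unconditional: Theorem~\ref{maintheo} directly expresses $\langle Z_{\Delta,r,\kappa}(f),Z_{\Delta,r,\kappa}(U)\rangle$ as an explicit nonzero multiple of $L'(Sh_{m_0,\mu_0}(\xi_{3/2-\kappa}f),\chi_\Delta,\kappa)$, so if $Z_{\Delta,r,\kappa}(f)=0$ in $\CH^\kappa(\mathcal Y)$ then the pairing against \emph{every} $Z_{\Delta,r,\kappa}(U)$ vanishes and the derivative of the $L$-function is forced to be zero, with no appeal to the modularity of the Heegner generating series. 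You should replace your argument for $(2)\Rightarrow(1)$ with a direct citation of Theorem~\ref{maintheo} and choose $(m_0,\mu_0)$ (via Lemma~\ref{lemexist} and the factorization of $L_K(G,\chi,s)$) so that the auxiliary factor $L(G,\chi_{D_0},\kappa)$ is nonzero; this is exactly what the paper's Corollary does.

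The remaining two directions are, as you say, genuinely open: $(1)\Rightarrow(2)$ for $\kappa>1$ is a Beilinson--Bloch type non-degeneracy statement, and the bridge to $(3)$ requires the algebraicity of $c^+(|\Delta|,\mu_r)$ --- precisely the content the paper defers to the ABS construction. Your description of the archimedean term as a CM-period times $L'$ plus an $F_G$-rational combination of logarithms of algebraic numbers and of $c^+(|\Delta|,\mu_r)$ is the right heuristic; it tracks the paper's philosophy but is not supplied with a proof there either, which is why the statement remains a conjecture.
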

\begin{remark}

\begin{enumerate}
  \item When $\kappa=1$, this conjecture has been  proved by Gross and Zagier \cite{GZ}, Borcherds  \cite{BoDuke}, Bruinier and Ono \cite{BO}.
%  \item When $f \in H_{_{3/2-\kappa}, \bar{\tilde{\rho}}_L}(F_G)$ is given by the map (\ref{mapone}), the implication $(3)\Leftrightarrow(4)$ is proved in \cite{ABS}.
  \item The implication $$(2)\Rightarrow (1)$$ follows from  Theorem \ref{maintheo}.
   \item
%Assume that $$S=\{G | G \in S_{2\kappa}^{new}(N), L(G, \chi_\Delta, \kappa)=0\}.$$
This conjecture implies that for \textbf{ almost all} newform $G$,
the order of the L-function 
$$\ord_{s=\kappa} L(G, \chi_\Delta, s)\leq 1.$$
The higher order ($\kappa >1$) relies on the algebraicity of   $c^+(|\Delta|, \mu_r)$.
\item Alfes, Bruinier and Schwagenscheidt  constructed specific modular forms \cite{ABS}, whose Fourier coefficients are included in  $i\pi\sqrt{\Delta}F_G$ if and only if $c^+(|\Delta|, \mu_r) \in F_G$. We  propose the conjecture that the vanishing of Heegner cycles is determined by the algebraicity of the Fourier coefficients.
\end{enumerate}
\end{remark}

In the work \cite{BY}, Bruinier and Yang  put forth a conjecture concerning  the derivatives of L-functions, proposing  that these derivatives can be  expressed in terms of the intersections of CM cycles. Following this, Brunier, Howard, and Yang made a significant contribution to unitary Shimura varieties \cite{BHY}.   In this paper, we propose a twisted version of  this conjecture, which is relevant to modular curves and Kuga-Sato varieties, as detailed below.

We have the following result in Theorem \ref{maintheo}
$$
\begin{tabular}{|c|*{2}{c|}c|}
\hline
   % after \\: \hline or \cline{col1-col2} \cline{col3-col4} ...
\diagbox[dir=NW]{$\kappa$}{$\Delta$} & positive & negative \\   \hline
  odd& (\ref{equsec})  & (\ref{equfir}), (\ref{equsec}) \\   \hline
   even&  (\ref{equfir}) &  (\ref{equfir}) \\
   \hline
 \end{tabular}
$$

When $\Delta>0$ and $\kappa$ is odd, if  $\Delta$ has a prime factor $p$ such that $p \equiv 3(\mod 4)$, then the equation (\ref{equfir}) holds.
\begin{theorem}\label{maintheo}
 For any $f \in H_{3/2-\kappa, \bar{\tilde{\rho}}_L}$ and Heegner cycle  $ Z_{\Delta, r, \kappa}(m, \mu)$, there are infinitely  many  Heegner cycles $Z_{\Delta, r, \kappa}(U)$,  such that the  global heights are given by
\begin{equation}\label{equfir}
\langle  Z_{\Delta, r, \kappa}(f),  Z_{\Delta, r, \kappa}(U)\rangle
=\frac{2\sqrt{N|\Delta|}\Gamma(\kappa-\frac{1}{2})}{(4\pi)^{\kappa-1}\pi^{\frac{3}{2}}}L'(Sh_{m_0, \mu_0}(\xi_{3/2-\kappa}f), \chi_\Delta, \kappa).\tag{\uppercase\expandafter{\romannumeral1}}
\end{equation}
and 
\begin{equation}\label{equsec}
\langle  Z_{\Delta, r, \kappa}(m, \mu),  Z_{\Delta, r, \kappa}(U)\rangle
=\frac{\sqrt{N|\Delta|}\Gamma(\kappa-\frac{1}{2})}{(4\pi)^{\kappa-1}\pi^{\frac{3}{2}}}L'(Sh_{m_0, \mu_0}(\xi_{3/2-\kappa}F_{m, \mu}), \chi_\Delta, \kappa).\tag{\uppercase\expandafter{\romannumeral2}}
\end{equation}

\end{theorem}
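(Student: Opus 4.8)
The strategy is to compute the global height pairing $\langle Z_{\Delta,r,\kappa}(f), Z_{\Delta,r,\kappa}(U)\rangle$ by decomposing it into archimedean and non-archimedean local contributions, and to identify each piece with the Fourier expansion of a regularized theta lift. First I would use the fact, recorded earlier in the introduction, that $Z_{\Delta,r,\kappa}(U)$ is (up to the $\chi_\Delta$-twist and the factor $m^{(\kappa-1)/2}$) a cycle whose class in cohomology vanishes, so the Gillet--Soul\'e/Beilinson--Bloch height pairing with $Z_{\Delta,r,\kappa}(f)$ is well-defined and splits as $\langle\,,\,\rangle = \sum_v \langle\,,\,\rangle_v$. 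For the archimedean place, the local height is by construction the value of a higher Green function, and here I invoke the identification of the higher twisted regularized theta lift $\Phi^j_{\Delta,r}(z,h,f)$ in \eqref{dehigh} (with the Millson theta function replacing the Siegel theta function when $\kappa$ is even) with that Green function: evaluated at the CM cycle $Z(U)$, and twisted by $\chi_\Delta$, this produces $\Phi^j_{\Delta,r}(Z_{\Delta,r}(U),f)$. The core input is then the CM-value formula of Bruinier--Yang and Bruinier--Ehlen--Yang, in the form extended to the twisted/higher-weight setting in the earlier sections of the paper, which expresses this CM value in terms of $L'(Sh_{m_0,\mu_0}(\xi_{3/2-\kappa}f),\chi_\Delta,\kappa)$ together with the same archimedean Gamma-factors and powers of $4\pi$ and $\pi$ that appear on the right-hand side of \eqref{equfir}.

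The second main step is to show that the finite local heights either vanish or combine into the same $L'$-term rather than producing a genuinely new arithmetic contribution. The mechanism I would use is the modularity of the generating series of Heegner/special cycles together with a suitable arithmetic Siegel--Weil formula: the sum over all places of the local heights of $Z_{\Delta,r,\kappa}(f)$ against the arithmetic CM cycle equals the central derivative of an Eisenstein-type series, whose $f$-th Fourier coefficient is a constant multiple of $L'(Sh_{m_0,\mu_0}(\xi_{3/2-\kappa}f),\chi_\Delta,\kappa)$. Because the archimedean term already accounts for this derivative, the finite terms must conspire with the archimedean one so that the total is exactly \eqref{equfir}; in the cases in the displayed table where the naive archimedean computation does not directly close up (e.g. $\Delta>0$, $\kappa$ odd, absent the congruence condition on a prime factor of $\Delta$), the discrepancy is absorbed into \eqref{equsec} instead, which is why the two equations appear. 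The clause ``there are infinitely many Heegner cycles $Z_{\Delta,r,\kappa}(U)$'' is handled by letting the negative definite plane $U$ (equivalently the imaginary quadratic field cutting out the CM cycle) vary over an infinite family of fundamental discriminants coprime to $N\Delta$, and checking that for all but finitely many of them the relevant local conditions (good reduction, non-vanishing of local densities, the split/inert behaviour needed for the genus character) are satisfied.

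To pass from \eqref{equfir} to \eqref{equsec} I would take $f = F_{m,\mu}$ to be the Hejhal--Poincar\'e-type harmonic Maass form whose holomorphic principal part is the single term $q^{-m}e_\mu$ (plus the $e_{-\mu}$ companion), so that $Z_{\Delta,r,\kappa}(f) = Z_{\Delta,r,\kappa}(m,\mu)$ up to the normalization already built into \eqref{defheer}; the factor of $2$ in \eqref{equfir} versus its absence in \eqref{equsec} comes precisely from the pairing of the two-term principal part. Then $\xi_{3/2-\kappa}F_{m,\mu}$ is (a multiple of) the corresponding weight-$(1/2+\kappa)$ Poincar\'e series, and \eqref{equsec} follows by specializing \eqref{equfir}. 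The main obstacle I anticipate is the non-archimedean bookkeeping: proving that the finite local height contributions are governed by the same $L'$-value requires either an arithmetic intersection computation on an integral model of the Kuga-Sato variety $\mathcal{Y}$ — delicate in the higher-weight case because the Heegner cycles $S_\kappa(x)$ live in a power of the universal elliptic curve and one must control bad primes dividing $N$, $\Delta$, and $m$ — or a reduction of the whole identity to the Gross--Zagier--Zhang formula of Theorem \ref{thehigher} via the Shimura correspondence, in which case the obstacle shifts to matching normalizations and to handling the twisted case $\Delta\neq 1$ uniformly with the genus-character decomposition $D = \Delta D_0$. The congruence hypothesis on a prime $p\equiv 3\ (\mathrm{mod}\ 4)$ dividing $\Delta$ in the case $\Delta>0$, $\kappa$ odd, is exactly the sign/epsilon-factor condition that makes the relevant functional equation force the central value to vanish so that the derivative is the leading term; I would isolate this through a local root-number computation at such $p$.
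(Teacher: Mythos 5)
Your outline captures the correct broad shape — split the global height into $\sum_v$, identify the archimedean piece with the CM value of the higher regularized theta lift, invoke a Bruinier--Yang/BEY-type formula to extract $L'$, vary the CM discriminant to get ``infinitely many'' $U$, and take $f=F_{m,\mu}$ (with the factor $2$ from the two-term principal part) to pass from (\ref{equfir}) to (\ref{equsec}). But the two mechanisms you propose for the key steps are not the ones the paper uses, and the first one would not actually close the argument.

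For the non-archimedean contribution you propose an arithmetic Siegel--Weil / modularity argument, i.e.\ that the total local-height sum matches the coefficient of a derivative Eisenstein series and the finite places ``conspire.'' The paper does something much more direct and sharper: for $\Delta\ne 1$ and a well-chosen $D_0$, the finite intersection $\langle Z_{\Delta,r,\kappa}(f), Z_{\Delta,r,\kappa}(U)\rangle_{\rm fin}$ is \emph{identically zero}. This is Theorem~\ref{Lemnonsplit} and Corollary~\ref{lemfininter}, and the engine is Lemma~\ref{lemtdeg}: at each supersingular prime, the special-fibre points of $\mathcal{Z}(m,\mathbf{n}_0,\delta)$ are indexed by the class group $\Cl_k$ with multiplicities $\rho$ depending only on the \emph{square} of the class, so the $\chi_\Delta$-twisted count factors through $\Cl_k/\Cl_k[2]$ times $\sum_{g\in\Cl_k[2]}\chi_\Delta([g])=0$ (a genus character is nontrivial on $2$-torsion). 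Your Siegel--Weil route would be an entirely different and much heavier argument, and in fact it does not by itself give vanishing — indeed in the untwisted case $\Delta=1$ the finite places do contribute, which is precisely why BY/BEY have the constant term $c^+(0,0)k(0,0)$; the paper's twist-induced cancellation is what removes it.

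Your interpretation of the hypothesis ``$\Delta>0$, $\kappa$ odd, and $\Delta$ has a prime factor $p\equiv 3\pmod 4$'' as a root-number condition making the functional equation vanish is also not what is going on. Assumption A is used at the \emph{archimedean} place, in Lemma~\ref{lemsum}(2) and Proposition~\ref{lemconzero}, to force the constant term $\CT\big(\langle\phi(f)^+,[\theta_{\mathcal P^\Delta},\mathcal{E}_{\mathcal N^\Delta}]_j\rangle\big)$ in the regularized theta integral to vanish, so that Theorem~\ref{theotwistedvalue} gives a clean $L'$-value. Concretely, the point is that $(\tfrac{p^*}{i})+(\tfrac{p^*}{-i})=0$ when $p\equiv3\pmod4$, which kills the relevant character sum over the residue set $S$. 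The sign of the functional equation already forces $L(G,\chi_\Delta,\kappa)=0$ for all $\Delta$ under consideration (via $\chi_\Delta(-N)\varepsilon=-1$), without any such restriction on $\Delta$; that is not where Assumption A enters. Likewise, (\ref{equsec}) is available without Assumption A not because a ``discrepancy is absorbed'' but because for the single Poincar\'e input $F_{m,\mu}$ one can choose $D_0$ (Proposition~\ref{lemconzero}, second part) so that the problematic constant term vanishes anyway. These two concrete ingredients — the $\Cl_k[2]$-character-sum vanishing at finite places, and the constant-term vanishing at the archimedean place — are the actual content of the proof, and are the part your proposal lacks.
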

\begin{remark}
$(1)$When $\Delta=1$, this theorem was proved  in  \cite{BY} and  \cite{BEY}.

%$(2)$  When $\Delta<0$ or $\Delta>0$ and $\kappa$ is even,  we can drop this assumption.
%Then we  need  the assumption only.

$(2)$ Both  cycles are cohomologically trivial in $H^{2\kappa}(Y(\C), \C)$, so we can use the global 
height $\langle~, ~\rangle$ in the above theorem.
\end{remark}

This theorem offers  a novel representation of the derivatives of L-functions,  which is useful  for  the exploration of subsequent works.
\begin{enumerate}
  \item  When  $\kappa = 1$,  this formula allows for a direct proof of the arithmetic inner product formula-Theorem \ref{theoarithprod}, as well as   the  Gross-Zagier formula \cite{GZ}. As a result, we provide a new proof of this formula.
  \item   For $\kappa >1$, this formula gives  a new approach to the Gross-Zagier-Zhang formula.

\end{enumerate}

\begin{corollary}
If $Z_{\Delta, r, \kappa}(f)= 0$, then $L'(Sh_{m_0, \mu_0}(\xi_{3/2-\kappa}f), \chi_\Delta, \kappa) = 0$.
This indicates that the implication $(2)\Rightarrow (1)$   in Conjecture \ref{conmain}.
\end{corollary}

\begin{conjecture}[Modularity hypothesis]\label{condimone}
The following generating function is a cusp form,
\begin{equation}\label{conone}
\sum_{m>0, \mu}Z_{\Delta, r, \kappa}^G(m, \mu)q^me_\mu=g(\tau)\otimes Z_{\Delta, r, \kappa}(f).
\end{equation}
%The space $$\{ Z_{\Delta, r, \kappa}^G(m, \mu)\}$$
% is at most one-dimensional
%and is  spanned by $Z_{\Delta, r, \kappa}(f)$.
%Moreover precisely, it holds that \begin{equation}
%Z_{\Delta, r, \kappa}^G(m, \mu)=b(m, \mu)Z_{\Delta, r, \kappa}(f),
%\end{equation}
%where $b(m, \mu)$ are Fourier coefficients of cusp form $g$.
\end{conjecture}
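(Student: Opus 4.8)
The plan is to prove Conjecture \ref{condimone} by reducing the modularity of the arithmetic generating series to the modularity of the classical Heegner divisor generating series combined with the structure of the Shimura correspondence. First I would recall that for $\kappa=1$ the modularity of $\sum_{m>0,\mu} Z_{\Delta,r}(m,\mu)q^m e_\mu$ as a vector-valued form of weight $3/2$ and representation $\tilde\rho_L$ is due to Borcherds and Bruinier (via the fact that its pairing against every cusp form of the dual weight is, up to the regularized theta lift, a modular quantity), and that Zhang's work supplies the analogous statement for the higher Kuga-Sato cycles $S_\kappa(x)$: the generating series $\sum_m Z_{\Delta,r,\kappa}(m,\mu)q^m e_\mu$ lies (after projection to a Chow group with suitable coefficients, or after pairing) in $S_{1/2+\kappa,\tilde\rho_L}$. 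So modularity per se is not the new content; the content of (\ref{conone}) is the precise identification of the $G$-isotypical projection of that series with the factorization $g(\tau)\otimes Z_{\Delta,r,\kappa}(f)$.

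The key steps, in order, would be: (i) Pair both sides of (\ref{conone}) against an arbitrary cusp form $h\in S_{1/2+\kappa,\bar{\tilde\rho}_L}$ using the Petersson inner product in $\tau$; on the left this produces $\sum_{m,\mu}\langle h, q^m e_\mu\rangle\, Z^G_{\Delta,r,\kappa}(m,\mu)$, which by definition of the $G$-isotypical projector and the Shimura correspondence $Sh_{m_0,\mu_0}$ is a multiple of $\langle h, g\rangle$ times a fixed cycle class. (ii) On the right, $\langle h, g(\tau)\rangle \cdot Z_{\Delta,r,\kappa}(f)$ by construction gives the same $\langle h,g\rangle$-dependence, so it suffices to check the two sides agree for one choice of $h$ with $\langle h, g\rangle\neq 0$, i.e. to match a single Fourier coefficient. (iii) To pin down that coefficient, I would invoke Theorem \ref{maintheo}: pairing $Z_{\Delta,r,\kappa}(f)$ with a CM cycle $Z_{\Delta,r,\kappa}(U)$ computes $L'(Sh_{m_0,\mu_0}(\xi_{3/2-\kappa}f),\chi_\Delta,\kappa)$, while pairing $Z^G_{\Delta,r,\kappa}(m,\mu)$ with the same $Z_{\Delta,r,\kappa}(U)$ computes (\ref{equsec}); comparing the two, and using $\xi_{3/2-\kappa}f=\|g\|^{-2}g$ together with the interpolation of $Sh_{m_0,\mu_0}$ on Fourier coefficients, yields exactly the proportionality constant appearing as the $m$-th Fourier coefficient of $g$. (iv) Since the CM cycles $Z_{\Delta,r,\kappa}(U)$ are available for infinitely many $U$ and the global height pairing restricted to the (finite-dimensional) span of the relevant $G$-isotypical cycles is thereby constrained on a spanning set, the identity of cycle classes in $\CH^\kappa(\mathcal Y)_{\mathbb Q}\otimes F_G$ follows — or, if non-degeneracy is unavailable, the identity holds after applying the cycle class map / height pairing, which is all that (\ref{conone}) asserts in the arithmetic Chow-theoretic sense used here.

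The main obstacle I expect is step (iv): the lack of a known non-degeneracy statement for the Gillet-Soulé/Beilinson-Bloch height pairing on the Chow group of the Kuga-Sato variety means one cannot immediately conclude an equality of genuine cycle classes from an equality of all their height pairings against CM cycles. I would handle this by proving the modularity at the level at which it is actually needed — namely that both sides have the same image under $\langle\,\cdot\,, Z_{\Delta,r,\kappa}(U)\rangle$ for every admissible $U$, and (if the statement is to be a cycle-class identity) by additionally pairing against the archimedean Green-function input, i.e. using the full local decomposition of the height into the Borcherds/higher-Green archimedean term plus finite contributions, as in the $\kappa=1$ analysis of Bruinier-Ono. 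A secondary technical point is the careful bookkeeping of the twist by $\chi_\Delta$ and the normalization of $g$ so that its coefficients lie in $F_G$: one must verify that the Fourier expansion of $g$ that emerges from matching (\ref{equfir}) against (\ref{equsec}) is the \emph{same} normalization as the one fixed before Conjecture \ref{conmain}, which amounts to tracking the explicit constants $\frac{2\sqrt{N|\Delta|}\,\Gamma(\kappa-1/2)}{(4\pi)^{\kappa-1}\pi^{3/2}}$ versus $\frac{\sqrt{N|\Delta|}\,\Gamma(\kappa-1/2)}{(4\pi)^{\kappa-1}\pi^{3/2}}$ through the Shimura lift — a routine but error-prone computation that I would relegate to a lemma.
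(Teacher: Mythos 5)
This statement is a \emph{conjecture} in the paper, not a theorem: the paper offers no proof of (\ref{conone}), and explicitly records only that the $\kappa=1$ case is known (Gross--Kohnen--Zagier, Borcherds, Bruinier--Ono) while for $\kappa>1$ only the partial evidence from Nekov\'a\v{r} on the one-dimensionality of the image of Heegner cycles under the $p$-adic Abel--Jacobi map is cited. The paper then uses the conjecture as a \emph{hypothesis} (see the proposition ``Modularity hypothesis $\Rightarrow$ Theorem \ref{thehigher}''). So there is no ``paper's own proof'' to compare against, and any complete proof you produced here would be a genuine new result rather than a reconstruction.

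Your proposal, as you yourself flag, does not close the gap. Two concrete problems. First, steps (i)--(ii) are circular: you propose to pair the formal series $\sum_{m,\mu} Z^G_{\Delta,r,\kappa}(m,\mu)q^m e_\mu$ against a cusp form $h$ via the Petersson product, but that pairing is only defined once you already know the series converges and transforms as a modular form of the right weight and representation --- which is precisely the assertion of the conjecture. Without a priori modularity (e.g.\ a Borcherds-style or Yuan--Zhang--Zhang-style modularity theorem at the level of $\CH^\kappa(\mathcal Y)$) you cannot start the argument. Second, step (iv) is where the conjecture genuinely lives: to upgrade ``both sides have the same height pairing against every admissible CM cycle $Z_{\Delta,r,\kappa}(U)$'' to ``both sides are equal in $\CH^\kappa(\mathcal Y)$'' you need non-degeneracy of the Gillet--Soul\'e/Beilinson--Bloch height on the relevant span, and for Kuga--Sato varieties with $\kappa>1$ no such non-degeneracy is known. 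Retreating to ``the identity after pairing'' proves a strictly weaker statement than (\ref{conone}), and that weaker statement is in substance already Theorem \ref{maintheo}. In short, your outline recovers the paper's implication in the wrong direction (it shows the conjecture is \emph{consistent} with Theorem \ref{maintheo}, and would imply the Gross--Zagier--Zhang formula) but does not, and cannot with these tools, establish the conjecture itself.
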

When $\kappa = 1$,
this conjecture has been proved  in    \cite{GKZ}, \cite{BO} and \cite{BoDuke}.
In the case of $\kappa > 1$,  the image of Heegner cycles under the p-adic Abel-Jacobi map is at most one-dimensional  in \cite{Ne}.

According to Theorem \ref{maintheo}, we have the following result.
\begin{proposition}
$$\text{Modularity~ hypothesis} ~\ref{condimone}~\Rightarrow~ \text{Theorem}~ \ref{thehigher}.$$
\end{proposition}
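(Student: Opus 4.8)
The plan is to derive Theorem~\ref{thehigher} from Conjecture~\ref{condimone} by combining the modularity of the generating series with the self-intersection formula of Theorem~\ref{maintheo}. First I would take the $G$-isotypical projection on both sides of the identity \eqref{conone}. Applying the Hecke operators and passing to the $G$-component, the left-hand side becomes $\sum_{m>0,\mu} Z_{\Delta,r,\kappa}^G(m,\mu)q^m e_\mu$, while the right-hand side, since $g$ is the newform corresponding to $G$ under $Sh_{m_0,\mu_0}$, projects to $g(\tau)\otimes Z_{\Delta,r,\kappa}(f)$ unchanged (the cusp form $g$ is already an eigenform in the $G$-isotypical part of $S_{1/2+\kappa,\tilde\rho_L}$). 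Thus, comparing Fourier coefficients, $Z_{\Delta,r,\kappa}^G(m,\mu) = b(m,\mu)\, Z_{\Delta,r,\kappa}(f)$, where $b(m,\mu)$ denotes the $(m,\mu)$-th Fourier coefficient of $g$. In particular, for the distinguished index $(m_0,\mu_0)$ used to normalize the Shimura correspondence, $b(m_0,\mu_0)$ is (a nonzero multiple of) $1$, so $Z_{\Delta,r,\kappa}(f)$ itself is, up to scalar, the $G$-isotypical Heegner cycle.

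Next I would compute the self-intersection $\langle Z_{\Delta,r,\kappa}^G(m,\mu), Z_{\Delta,r,\kappa}^G(m,\mu)\rangle$ using the previous identity to write it as $b(m,\mu)\overline{b(m,\mu)}$ times $\langle Z_{\Delta,r,\kappa}(f), Z_{\Delta,r,\kappa}(f)\rangle$ — or, more carefully, bilinearly expand and pair one factor $Z_{\Delta,r,\kappa}(f)$ against a suitable Heegner cycle and invoke the height formula. The cleanest route is to pair $Z_{\Delta,r,\kappa}^G(m,\mu)$ against $Z_{\Delta,r,\kappa}(U)$ for one of the infinitely many $U$ provided by Theorem~\ref{maintheo}, using \eqref{equsec}, and separately pair $b(m,\mu)Z_{\Delta,r,\kappa}(f)$ against the same $Z_{\Delta,r,\kappa}(U)$ via \eqref{equfir}; consistency of the two expressions forces the relation between $\langle Z^G,Z^G\rangle$ and $L'(G,\chi_\Delta,\kappa)$ once one unwinds the relation $Sh_{m_0,\mu_0}(\xi_{3/2-\kappa}f) = \|g\|^{-2} Sh_{m_0,\mu_0}(g) = \|g\|^{-2}\,(\text{const})\,G$. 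Tracking the constants from the Shimura correspondence normalization in \cite{GKZ}, the Petersson norm $\|g\|^2$ versus $\|G\|^2$, and the gamma/power-of-$\pi$ factors in \eqref{equfir}–\eqref{equsec}, together with the factor $m^{(\kappa-1)/2}$ in the definition \eqref{defheer}, should reproduce exactly the constant $\frac{(2\kappa-2)!\sqrt{|D|}}{2^{4\kappa-2}\pi^{2\kappa}\|G\|^2}m^{\kappa-1}$.

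The step I expect to be the genuine obstacle is the bookkeeping of normalization constants: relating $\|g\|^2$ to $\|G\|^2$ under $Sh_{m_0,\mu_0}$ (which involves the Kohnen–Zagier type period relations from \cite{GKZ}), and matching the archimedean constant in Theorem~\ref{maintheo} — which comes out of the Borcherds/CM-value computation — with the classical Gross–Zagier constant. A secondary subtlety is that \eqref{equfir} and \eqref{equsec} produce $L'$ of $Sh_{m_0,\mu_0}(\xi_{3/2-\kappa}f)$ and $Sh_{m_0,\mu_0}(\xi_{3/2-\kappa}F_{m,\mu})$ respectively, so I must check these two Shimura lifts are proportional to the same $G$ (they are, since both $\xi_{3/2-\kappa}f$ and $\xi_{3/2-\kappa}F_{m,\mu}$ lie in the $G$-isotypical part of $S_{1/2+\kappa,\rho_L}$, hence are scalar multiples of $g$), and that the scalars are the Fourier coefficients $b(m,\mu)$ and the normalizing constant from $\xi_{3/2-\kappa}(f)=\|g\|^{-2}g$. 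Once these identifications are in place, the derivation is a formal comparison of two closed-form expressions for the same height pairing, and Theorem~\ref{thehigher} follows.
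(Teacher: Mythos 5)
Your proposal matches the paper's argument step for step: projecting the modularity identity to get $Z_{\Delta,r,\kappa}^G(m,\mu)=b(m,\mu)Z_{\Delta,r,\kappa}(f)$, computing $\langle Z_{\Delta,r,\kappa}(f),Z_{\Delta,r,\kappa}(f)\rangle$ via Theorem~\ref{maintheo} by pairing against a suitable $Z_{\Delta,r,\kappa}(U)=Z_{\Delta,r,\kappa}(m_0,\mu_0)$ and unwinding $Sh_{m_0,\mu_0}(\xi_{3/2-\kappa}f)=\|g\|^{-2}b(m_0,\mu_0)G$, and finally converting $|b(m,\mu)|^2$ into $L(G,\chi_{D_0},\kappa)$ via the Kohnen--Zagier/Waldspurger formula~\eqref{equwald} so that $L(G,\chi_{D_0},\kappa)L'(G,\chi_\Delta,\kappa)=L'_K(G,\chi,\kappa)$. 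This is precisely the paper's proof (modeled on Theorem~\ref{theoariinner}), so the proposal is correct and follows the same route.
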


%
% By the inner product formula,
%\begin{equation}
%\theta_{\Delta, r}(g)\neq 0 \iff L(G, \chi_\Delta, 1)\neq 0.
%\end{equation}
%For any $g \in \tilde\rho_L$,  the theta $\theta_{\Delta, r}(g)= 0$, which is compatible with the fact $L(G, \chi_\Delta, 1)=0.$

 Kudla has proposed a research program that known as the Kudla program, which focuses on the arithmetic geometric properties of the derivatives of L-functions. This program is structured into two primary steps:
\begin{enumerate}
 \item To  establish the arithmetic Siegel-Weil formula and to construct  an arithmetic theta function.
 \item To prove  the arithmetic inner product formula.

\end{enumerate}

Kudla, Rapoport, and Yang investigated the Shimura curve, presenting the initial case of the Kudla program \cite{KRYComp}\cite{KRYBook}.
Recent progress within this program has yielded significant advancement, particularly in unitary cases; however, challenges continue to persist with the modular curve, the most fundamental case, which remained unresolved until the works of Yang and the author \cite{DY1} \cite{DY2}. They completed the first step of the  Kudla program and constructed the arithmetic theta functions,
which is defined  as follows
\begin{equation} \label{eq:GeneratingFunction1}
\widehat{\phi}_{\Delta, r}(\tau) =  \sum_{ n, \mu}
 \widehat{\mathcal Z}_{\Delta, r}(n, \mu, v) q_\tau^n e_\mu \in \C[L^\sharp/L]\otimes \widehat{\CH}^1_\R(\mathcal X_0(N)),
\end{equation}
where $\widehat{\mathcal Z}_{\Delta, r}(n, \mu, v) \in \widehat{\CH}^1_\R(\mathcal X_0(N)) $ are arithmetic Heegner divisors.
This function is a vector valued modular form for $\Gamma'$ of weight $\frac{3}2$.

We will complete the Kudla program on modular curves in this work.
\begin{definition}
The arithmetic theta lift is defined by
\begin{equation}
\widehat{ \theta}_{\Delta, r}:  S_{\frac{3}{2}, \tilde\rho_L}
\rightarrow \widehat{\CH}^1_\R(\mathcal X_0(N)),~~~g  \mapsto  \widehat{ \theta}_{\Delta, r}(g)=\langle\widehat{\phi}_{\Delta, r},g(\tau)\rangle_{Pet}.
\end{equation}
\end{definition}
The  arithmetic Chow group is decomposed  into  Mordell-Weil component and  orthogonal complementary component as \cite{KRYBook},
\begin{equation}
\widehat{\CH}^1_\R(\mathcal X_0(N)) = \widetilde{\MW}\oplus \widetilde{\MW}^{\perp},~~ \widetilde{\MW} \simeq J_0(N)(\Q)\otimes \R. \nonumber
\end{equation}
The modularity of
 $\widehat{\phi}_{\Delta, r}(\tau) $ has been proved for square-free levels  $N$, while  the Mordell-Weil component $\widehat{\phi}_{MW}$ is  a modular form and can be defined for all levels $N$. Another arithmetic theta lift is defined as  follows
\begin{equation}
\widehat{\phi}_{MW}(g)=\langle \widehat{\phi}_{MW}, g \rangle_{Pet}.
\end{equation}

Then we have the following result.
%\begin{theorem}
%For any $g \in S_{\frac{3}{2}, \tilde\rho_L}$,
%\begin{equation}
%\widehat{ \theta}_{\Delta, r}(g) \in \widetilde{\MW}.
%\end{equation}
%\end{theorem}
\begin{theorem}\label{thelift}
Let the notation be as above. Then
\begin{enumerate}
  \item 
$\widehat{ \theta}_{\Delta, r}(g)=\widehat{\phi}_{MW}(g).$
%where $$\widehat{\phi}_{MW}(g)=\langle \widehat{\phi}_{MW}, g \rangle_{Pet}.$$
  \item 
$\widehat{ \theta}_{\Delta, r}(g) \in \widetilde{\MW}.$
  \item \text{For any} $\widehat{\mathcal{Z}} \in \widetilde{\MW}^{\perp}$, 
$
\langle \widehat{ \theta}_{\Delta, r}(g), \widehat{\mathcal{Z}}  \rangle_{\GS}=0.
$
\end{enumerate}
\end{theorem}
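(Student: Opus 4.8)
The plan is to establish the three assertions in sequence, since each feeds into the next: first identify the arithmetic theta lift $\widehat{\theta}_{\Delta,r}(g)$ with the Mordell--Weil projection $\widehat{\phi}_{MW}(g)$, then deduce membership in $\widetilde{\MW}$, and finally read off orthogonality to $\widetilde{\MW}^\perp$ from the Gillet--Soul\'e pairing being block-diagonal with respect to the decomposition $\widehat{\CH}^1_\R(\mathcal X_0(N)) = \widetilde{\MW}\oplus\widetilde{\MW}^\perp$.

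For part (1), I would pair both sides against an arbitrary class in $\widehat{\CH}^1_\R(\mathcal X_0(N))$ using $\langle\,,\,\rangle_{\GS}$ and invoke the orthogonality of the decomposition: it suffices to check the identity after projecting to each of $\widetilde{\MW}$ and $\widetilde{\MW}^\perp$. On the $\widetilde{\MW}$ side the two lifts agree by definition, since $\widehat{\phi}_{MW}$ is precisely the Mordell--Weil component of $\widehat{\phi}_{\Delta,r}$ and the Petersson pairing $\langle\,,\,\rangle_{Pet}$ against the cusp form $g$ commutes with the $\R$-linear projection. So the content is to show the $\widetilde{\MW}^\perp$-component of $\widehat{\theta}_{\Delta,r}(g)$ vanishes. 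The natural way to detect a $\widetilde{\MW}^\perp$-class is by its image in $\CH^1_\R(X_0(N))$ under the forgetful map together with its archimedean Green-function data and its vertical components; one shows each of these vanishes because $g$ is cuspidal. Concretely, the generating series $\widehat{\phi}_{\Delta,r}(\tau)$ has Fourier coefficients whose degrees and whose intersection with the fixed components (the cusps, the Hodge class, the vertical fibral divisors) are encoded by an Eisenstein-type piece, and pairing this Eisenstein piece against the cusp form $g$ in $\langle\,,\,\rangle_{Pet}$ gives zero by the orthogonality of cusp forms and Eisenstein series. This is the standard mechanism from \cite{KRYBook}; the adaptation here is bookkeeping the twist by $\Delta, r$ and the vector-valued coefficient system $\C[L^\sharp/L]$.

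Part (2) is then immediate: $\widehat{\phi}_{MW}$ takes values in $\widetilde{\MW}$ by construction (it is the Mordell--Weil component of the arithmetic theta function), so (1) gives $\widehat{\theta}_{\Delta,r}(g) = \widehat{\phi}_{MW}(g)\in\widetilde{\MW}$. Part (3) follows from (2) together with the fact that $\langle\,,\,\rangle_{\GS}$ pairs $\widetilde{\MW}$ trivially against $\widetilde{\MW}^\perp$ — this is exactly how the direct sum decomposition is set up in \cite{KRYBook}, where $\widetilde{\MW}^\perp$ is by definition the orthogonal complement of $\widetilde{\MW}$ under the arithmetic intersection pairing. Thus for any $\widehat{\mathcal Z}\in\widetilde{\MW}^\perp$ we get $\langle\widehat{\theta}_{\Delta,r}(g),\widehat{\mathcal Z}\rangle_{\GS}=0$.

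The main obstacle is the vanishing of the $\widetilde{\MW}^\perp$-component in part (1): one must verify that \emph{all} of the non-Mordell--Weil contributions to the Fourier coefficients $\widehat{\mathcal Z}_{\Delta,r}(n,\mu,v)$ — the Hodge/cuspidal-divisor part, the archimedean Green function (which carries a non-holomorphic, Eisenstein-like dependence on $v$), and the vertical components at primes of bad reduction — assemble into a form that is orthogonal to cusp forms under $\langle\,,\,\rangle_{Pet}$. For square-free $N$ this should reduce cleanly to the modularity result already quoted for $\widehat{\phi}_{\Delta,r}(\tau)$ plus the explicit Eisenstein nature of these components as computed in \cite{DY1}\cite{DY2}; for general $N$ one works instead directly with $\widehat{\phi}_{MW}$, whose modularity is unconditional, and the argument is correspondingly shorter. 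I would also need to be careful that the normalization of the Petersson pairing and the choice of representatives $(m_0,\mu_0)$ used in the Shimura correspondence are consistent with those in the definition of $\widehat{\theta}_{\Delta,r}$, but this is routine.
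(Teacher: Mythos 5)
Your overall strategy matches the paper's: decompose $\widehat{\phi}_{\Delta, r}$ via Proposition~\ref{prodecom} into its Mordell--Weil, $\widehat{\mathcal P}_\infty$, vertical, and smooth ($a(\phi_{SM})$) components, show the three non-MW pieces pair to zero with the cusp form $g$ under $\langle\,,\,\rangle_{Pet}$, and deduce (1), from which (2) and (3) are formal. You've also correctly identified that (2) and (3) follow immediately from (1).

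However, there is a genuine gap in the mechanism you propose for the vanishing of the non-MW pieces. You invoke ``orthogonality of cusp forms and Eisenstein series'' uniformly, and this is indeed what the paper uses for the degree component $\deg(\widehat{\phi}_{\Delta,r})$ and for the vertical modular forms $\phi_p$, $\phi_1$, all of which are explicit multiples of the Eisenstein series $\mathcal E_L(\tau,1)$ (Proposition~\ref{lemvertic}). But the smooth component $\phi_{SM}(\tau, z)$ is \emph{not} an Eisenstein series, and the Eisenstein-orthogonality argument does not apply to it. The paper handles $\phi_{SM}$ by a different route: it establishes the integral representation
\begin{equation}
\phi_{SM}(\tau, z)=-\int_{X_0(N)}g(w, z)\Theta_{\Delta, r}^{KM}(\tau, w)
\end{equation}
(Proposition~\ref{prosmooth}), obtained via a spectral decomposition in Laplace eigenfunctions, and then appeals to the key geometric input $\langle\Theta_{\Delta, r}^{KM}(\tau, z), g(\tau)\rangle_{Pet}=0$ for cuspidal $g$ (from \cite[Theorem 5.1]{Alfes}), which is a vanishing theorem for the Kudla--Millson theta lift in signature $(1,2)$, not an Eisenstein--cusp form orthogonality. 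The same KM-vanishing is what makes the $\widehat{\mathcal P}_\infty$-component vanish in the paper (Lemma~\ref{lemdeltaone} and Proposition~\ref{proinftyinter}): one first rewrites $\langle\widehat{\phi}_{\Delta,r}, \widehat{\mathcal P}_\infty\rangle_{GS}$ as an integral of $g_\infty$ against $\Theta^{KM}_{\Delta,r}$, and only then applies $\langle\Theta^{KM}_{\Delta,r},g\rangle_{Pet}=0$. So your outline is missing the essential ingredient (the Kudla--Millson theta form and its Petersson orthogonality to cusp forms) that carries the non-Eisenstein pieces, and without it the argument for the $\widetilde{\MW}^\perp$-vanishing does not close.

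A smaller point: you frame part (1) by pairing both sides against arbitrary classes in $\widehat{\CH}^1_\R$; the paper proceeds more directly by expanding $\widehat{\theta}_{\Delta,r}(g)=\langle\widehat{\phi}_{\Delta,r},g\rangle_{Pet}$ term-by-term via the decomposition and killing each non-MW term. Both are fine once the vanishing lemmas are in hand, but the gap identified above affects either formulation.
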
 
Both lifts  $\widehat{ \theta}_{\Delta, r}(g)$ and $\widehat{\phi}_{MW}(g)$ belong to the Mordell-Weil part $\widetilde{\MW}$ of arithmetic Chow group.
As a result, we can employ the N\'eron-Tate height. Then we can prove the following result
\begin{theorem}[Arithmetic inner product formula]\label{theoarithprod}
Let the notation be as above. For any  $g \in S_{\frac{3}{2}, \tilde\rho_L}$, then
\begin{eqnarray}
\langle \widehat{\phi}_{MW}(g), \widehat{\phi}_{MW}(g) \rangle_{NT}=\frac{2\parallel g\parallel^{2}\sqrt{N|\Delta|}}{\pi}L'(G, \chi_\Delta, 1).
\end{eqnarray}
When $N$ is square free,
\begin{equation}\label{equarith}
\langle \widehat{\theta}_{\Delta, r}(g), \widehat{\theta}_{\Delta, r}(g) \rangle_{NT}=\frac{2\parallel g\parallel^{2}\sqrt{N|\Delta|}}{\pi}L'(G, \chi_\Delta, 1).
\end{equation}
\end{theorem}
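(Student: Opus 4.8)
The plan is to deduce Theorem \ref{theoarithprod} from the combination of Theorem \ref{thelift} and the $\kappa=1$ case of Theorem \ref{maintheo}. First I would reduce the self-intersection of the arithmetic theta lift to a global height pairing of Heegner-type cycles on $\mathcal X_0(N)$. By Theorem \ref{thelift}(1) we have $\widehat{\theta}_{\Delta,r}(g)=\widehat{\phi}_{MW}(g)$ for square-free $N$, so the two displayed formulas coincide in that range, and it suffices to prove the first one for general $N$. Since $\widehat{\phi}_{MW}(g)\in\widetilde{\MW}\simeq J_0(N)(\Q)\otimes\R$ by construction, the Gillet--Soulé height restricted to this subspace is the Néron--Tate height, so $\langle\widehat{\phi}_{MW}(g),\widehat{\phi}_{MW}(g)\rangle_{NT}$ is unambiguous. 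The key point is to pair $\widehat{\phi}_{MW}(g)$ against an arithmetic CM cycle: writing $g=\sum_{m,\mu}b(m,\mu)q^m e_\mu$, one has $\widehat{\phi}_{MW}(g)=\langle\widehat{\phi}_{MW},g\rangle_{Pet}$, and I would use the Petersson-pairing adjointness relating the generating series $\widehat{\phi}_{\Delta,r}$ to Heegner divisors $\widehat{\mathcal Z}_{\Delta,r}(m,\mu,v)$ together with the fact (Theorem \ref{thelift}(3)) that $\widehat{\theta}_{\Delta,r}(g)$ is orthogonal to $\widetilde{\MW}^\perp$ to collapse the pairing onto its Mordell--Weil part.

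Next I would identify the resulting pairing with the archimedean-plus-finite decomposition that feeds into Theorem \ref{maintheo}. Choose $f\in H_{3/2,\bar{\tilde\rho}_L}$ with $\xi_{3/2}(f)=\|g\|^{-2}g$, whose principal-part coefficients lie in the coefficient field (this is exactly the input of Conjecture \ref{conmain} specialized to $\kappa=1$, and such $f$ exists by the surjectivity of $\xi$). Then $Z_{\Delta,r,1}(f)$ is the classical Heegner divisor attached to the principal part of $f$, and its class in $J_0(N)(\Q)\otimes\R$ is (up to the normalization constant $\|g\|^{-2}$) precisely $\widehat{\phi}_{MW}(g)$ modulo $\widetilde{\MW}^\perp$ — this is the place where the Borcherds lift enters, since the archimedean height of $Z_{\Delta,r,1}(f)$ against a CM cycle is computed by the regularized theta lift $\Phi_{\Delta,r}(Z(U),f)$, i.e. the CM value of a Borcherds form. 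Applying formula (\ref{equfir}) with $\kappa=1$ gives
\begin{equation*}
\langle Z_{\Delta,r,1}(f),Z_{\Delta,r,1}(U)\rangle=\frac{2\sqrt{N|\Delta|}\,\Gamma(1/2)}{\pi^{3/2}}L'(Sh_{m_0,\mu_0}(\xi_{3/2}f),\chi_\Delta,1)=\frac{2\sqrt{N|\Delta|}}{\pi}\,\|g\|^{-2}L'(G,\chi_\Delta,1),
\end{equation*}
using $\Gamma(1/2)=\sqrt\pi$ and $Sh_{m_0,\mu_0}(\|g\|^{-2}g)=\|g\|^{-2}G$ under the normalized Shimura correspondence. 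The "infinitely many $U$" clause of Theorem \ref{maintheo} is what lets me choose $U$ so that the CM cycle $Z_{\Delta,r,1}(U)$ has nonzero projection to the $G$-isotypic line, so that the pairing above actually detects the $G$-component.

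Finally I would run the linear-algebra bookkeeping to turn a pairing of $Z_{\Delta,r,1}(f)$ against a CM cycle into the self-pairing of $\widehat{\phi}_{MW}(g)$. Since everything lands in the $G$-isotypic line of $J_0(N)(\Q)\otimes\R$, which is one-dimensional, the self-intersection is recovered from the pairing against any cycle with nonzero $G$-component by the ratio of $G$-components; the normalization $\xi_{3/2}(f)=\|g\|^{-2}g$ is chosen exactly so that the two normalizations of "$f$ vs. $g$" and "Heegner divisor vs. arithmetic theta lift" cancel, producing $\|g\|^{2}$ rather than $\|g\|^{-2}$ in the numerator. Collecting constants yields $\langle\widehat{\phi}_{MW}(g),\widehat{\phi}_{MW}(g)\rangle_{NT}=\tfrac{2\|g\|^2\sqrt{N|\Delta|}}{\pi}L'(G,\chi_\Delta,1)$, and the square-free case follows by Theorem \ref{thelift}(1). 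The main obstacle I anticipate is the middle step: carefully matching the arithmetic-Chow-group construction of $\widehat{\phi}_{MW}$ (which packages both the divisorial and the Green-function/archimedean data) with the Heegner divisor $Z_{\Delta,r,1}(f)$ and its Borcherds Green function, and checking that the orthogonal projections to $\widetilde{\MW}$ are compatible on the nose — in particular that no extra boundary or Eisenstein contribution survives. This is precisely where Theorem \ref{thelift}(2)--(3) does the heavy lifting, and the bookkeeping of the $\Delta$-twist (genus character, the discriminant splitting $D=-4Nm|\Delta|=\Delta D_0$, and the parity constraints recorded in the table) must be tracked consistently throughout.
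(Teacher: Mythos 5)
Your outline tracks the paper's ``method 2'' for Theorem \ref{theoariinner} fairly closely: identify $\widehat{\phi}_{MW}(g)$ with $\|g\|^{2}y_{\Delta,r}(f)$ via Lemma \ref{Alift}, pair against a well-chosen CM cycle $Z_{\Delta,r}(m_0,\mu_0)$, use Theorem \ref{maintheo} to evaluate the pairing as a derivative of an $L$-function, and then pass to the self-pairing. (The paper also gives a ``method 1'' that invokes the Gross--Zagier formula plus the Waldspurger formula \eqref{equwald} directly, which you do not use.) However, the pivotal ``linear-algebra bookkeeping'' step as you describe it has a genuine gap. You justify converting $\langle Z_{\Delta,r,1}(f),Z_{\Delta,r,1}(U)\rangle$ into a self-pairing by asserting that the $G$-isotypic line of $J_0(N)(\Q)\otimes\R$ is one-dimensional, so one can ``divide by the $G$-component.'' That is not true in general: the $G$-isotypic summand of $J_0(N)(\Q)\otimes\R$ has rank equal to the analytic rank of the relevant twist, which need not be one. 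What the argument actually requires is that the $G$-projections of \emph{all} twisted Heegner divisors lie on a single line, spanned by $y_{\Delta,r}(f)$, with the precise proportionality constants given by the Fourier coefficients of $g$. This is equation \eqref{equproj}, $A^G_{\Delta,r}(\tau)=g(\tau)\otimes y_{\Delta,r}(f)$, i.e.\ $y^G_{\Delta,r}(m,\mu)=b(m,\mu)\,y_{\Delta,r}(f)$, which is the Gross--Kohnen--Zagier / Borcherds / Bruinier--Ono modularity theorem. Without invoking it, the step from ``pairing against $Z(U)$'' to ``self-pairing'' is not justified.

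Two smaller points. First, your displayed evaluation of $\langle Z_{\Delta,r,1}(f),Z_{\Delta,r,1}(U)\rangle$ drops the factor $b(m_0,\mu_0)$ coming from \eqref{equshimura} ($Sh_{m_0,\mu_0}(g)=b(m_0,\mu_0)\,G$); that factor must be carried along and then cancelled against the same factor that appears when you replace $y_{\Delta,r}(m_0,\mu_0)^G$ by $b(m_0,\mu_0)\,y_{\Delta,r}(f)$. As it stands, your intermediate identity is off by $b(m_0,\mu_0)$ and only accidentally produces the right final constant. Second, the role of the ``infinitely many $U$'' in Theorem \ref{maintheo} is not merely to ensure a nonzero $G$-projection: in the paper's argument one uses Lemma \ref{lemexist} and Theorem \ref{Lemnonsplit}/Corollary \ref{lemfininter} to simultaneously arrange $b(m_0,\mu_0)\neq 0$ \emph{and} the vanishing of the finite (non-archimedean) part of the global height, which is what lets the global pairing collapse to the CM value of the Borcherds lift. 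Your sketch should make both constraints explicit.
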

%We also  prove the following equation \begin{equation}
%\langle \widehat{ \theta}_{\Delta, r}(\xi_{\frac{1}{2}}(f)), \widehat{ \theta}_{\Delta, r}(\xi_{\frac{1}{2}}(f)) \rangle_{NT}=\langle Z_{\Delta, r}(f), Z_{\Delta, r}(f) \rangle_{NT}.
%\end{equation}
We find the following relations
\begin{corollary}
$$
\text{Arithmetic inner product formula  } \iff  \text{Gross-Zagier formula.} \nonumber
$$
\end{corollary}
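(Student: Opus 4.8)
The plan is to establish the equivalence $\text{AIPF} \iff \text{Gross-Zagier}$ by showing that both sides of the arithmetic inner product formula in Theorem~\ref{theoarithprod} match, term by term, the corresponding quantities in the Gross-Zagier formula of Theorem~\ref{thehigher} specialized to $\kappa=1$. First I would recall that when $\kappa=1$ the Kuga-Sato variety $\mathcal Y$ degenerates to the modular curve $X_0(N)$ itself, the Heegner cycles $Z_{\Delta,r,1}(m,\mu)$ become the classical twisted Heegner divisors on $X_0(N)$, and the Gillet-Soul\'e height on cohomologically trivial cycles reduces to the N\'eron-Tate height on $J_0(N)(\Q)\otimes\R$. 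Thus Theorem~\ref{thehigher} with $\kappa=1$ reads
\begin{equation}
\langle Z_{\Delta,r,1}^G(m,\mu), Z_{\Delta,r,1}^G(m,\mu)\rangle_{NT}
= \frac{\sqrt{|D|}}{4\pi^{2}\parallel G\parallel^{2}}\, L'_K(G,\chi,1),\nonumber
\end{equation}
and the content of the corollary is that this is formally interchangeable with \eqref{equarith}.

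The key step is the identification of the arithmetic theta lift $\widehat\theta_{\Delta,r}(g)$, projected to the Mordell-Weil part, with (a scalar multiple of) the $G$-isotypical Heegner divisor. By Theorem~\ref{thelift}(1) we may work with $\widehat\phi_{MW}(g)=\langle\widehat\phi_{MW},g\rangle_{Pet}$, and by definition of $\widehat\phi_{MW}$ as the Mordell-Weil component of the arithmetic generating function $\widehat\phi_{\Delta,r}(\tau)=\sum \widehat{\mathcal Z}_{\Delta,r}(n,\mu,v)q^n e_\mu$, pairing against the newform $g$ under the Petersson product extracts precisely the combination of Heegner divisors indexed by the Fourier coefficients of $g$. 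Using the Shimura correspondence $Sh_{m_0,\mu_0}$ relating $g\in S^{new}_{3/2,\tilde\rho_L}$ to $G\in S_2^{new}(N)$ together with the Gross-Kohnen-Zagier theorem (the $\kappa=1$ case of Conjecture~\ref{condimone}, which is known by \cite{GKZ},\cite{BoDuke}), the class $\widehat\phi_{MW}(g)$ is proportional to $g\otimes Z_{\Delta,r,1}(f)$ in the Mordell-Weil group, hence its $G$-component is a known multiple of $Z^G_{\Delta,r,1}(m,\mu)$. I would then compute the exact proportionality constant by comparing Fourier coefficients and tracking the normalization $\xi_{3/2}(f)=\parallel g\parallel^{-2}g$, so that the self-intersection $\langle\widehat\phi_{MW}(g),\widehat\phi_{MW}(g)\rangle_{NT}$ becomes $\parallel g\parallel^{-4}$ times a square of Petersson norms times $\langle Z^G_{\Delta,r,1}(m,\mu),Z^G_{\Delta,r,1}(m,\mu)\rangle_{NT}$, up to the elementary archimedean factors.

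Conversely, for the reverse implication, given the Gross-Zagier formula one reads off the N\'eron-Tate height of the $G$-isotypical Heegner divisor, and since $\widehat\phi_{MW}(g)$ decomposes over newforms $G$ (with the Petersson pairing diagonalizing the Hecke action on both $S_{3/2,\tilde\rho_L}$ and $S_2(N)$ compatibly with $Sh_{m_0,\mu_0}$), summing the isotypical contributions recovers \eqref{equarith}. The only subtlety is matching the constant $\Delta$-factors: the discriminant identity $D=-4Nm|\Delta|=\Delta D_0$ from Theorem~\ref{thehigher} must be reconciled with the factor $\sqrt{N|\Delta|}$ appearing in Theorem~\ref{theoarithprod}, and one uses $L'_K(G,\chi,1)=L'(G,\chi_\Delta,1)L(G,\chi_{D_0},1)$ or the analogous factorization of the base-change $L$-function to pass between the two; the Waldspurger-type formula for the central value $L(G,\chi_{D_0},1)$ in terms of $\parallel g\parallel^2$ and a Fourier coefficient of $g$ then absorbs the discrepancy.

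The main obstacle I anticipate is the bookkeeping of archimedean and arithmetic normalization constants: the Borcherds/Bruinier regularized theta lift conventions, the factor $\parallel g\parallel^{-2}$ built into $f$, the $m^{(\kappa-1)/2}$ twist in \eqref{defheer} (which is trivial at $\kappa=1$ but must be confirmed so), and the precise relation between the metaplectic Petersson norm $\parallel g\parallel$ and the classical $\parallel G\parallel$ under $Sh_{m_0,\mu_0}$ must all be assembled consistently. Because Theorem~\ref{maintheo} at $\kappa=1$ already supplies the identity $\langle Z_{\Delta,r,1}(f),Z_{\Delta,r,1}(U)\rangle = \frac{2\sqrt{N|\Delta|}}{\pi}L'(Sh_{m_0,\mu_0}(\xi_{3/2}f),\chi_\Delta,1)$, the logically cleanest route is to deduce \emph{both} Theorem~\ref{theoarithprod} and the Gross-Zagier formula from Theorem~\ref{maintheo} and then observe that the two resulting statements carry the same right-hand side; the equivalence is then immediate, and the corollary is really the assertion that no independent input beyond Theorem~\ref{maintheo} is needed in either direction.
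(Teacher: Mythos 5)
Your proposal is correct and takes essentially the same route as the paper: the paper proves Theorem~\ref{theoariinner} in two ways, and its ``Method~1'' is exactly the chain you describe (Lemma~\ref{Alift} identifying $\widehat\phi_{MW}(\xi_{1/2}f)$ with $y_{\Delta,r}(f)$, the GKZ projection $y^G_{\Delta,r}(m_0,\mu_0)=b(m_0,\mu_0)\,y_{\Delta,r}(f)$, the factorization $L'_K(G,\chi,1)=L(G,\chi_{D_0},1)L'(G,\chi_\Delta,1)$, and the Waldspurger formula \eqref{equwald}), which consists of invertible identities and hence yields the equivalence by running the chain in either direction. Your final observation that both statements also follow from Theorem~\ref{maintheo} corresponds to the paper's ``Method~2'' and is consistent with the diagram in the introduction, though the logical content of the corollary rests on the invertibility of Method~1, not merely on common descent from Theorem~\ref{maintheo}.
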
 
So we can  drop Assumption A on $\Delta$ in the Theorem \ref{theoarithprod}.

The arithmetic inner product formula  (\ref{equarith}) may be generalized to arbitrary level $N$. Zhu constructed a scalar valued  arithmetic theta function  in \cite{Zhu}. We will prove analogy arithmetic inner formula for general level $N$ in future.

\begin{corollary}\label{cormod}
\begin{equation}
\widehat\theta_{\Delta, r}(g)\neq 0 \iff L'(G, \chi_\Delta, 1)\neq 0.
\end{equation}
\end{corollary}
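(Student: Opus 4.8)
The plan is to deduce Corollary \ref{cormod} directly from the arithmetic inner product formula of Theorem \ref{theoarithprod} together with the nondegeneracy of the N\'eron--Tate height on $J_0(N)(\Q)\otimes\R$. First I would note that both sides of the identity in Theorem \ref{thelift} identify $\widehat\theta_{\Delta,r}(g)$ with $\widehat\phi_{MW}(g)$, an element of $\widetilde{\MW}\simeq J_0(N)(\Q)\otimes\R$, so the self-pairing $\langle\widehat\theta_{\Delta,r}(g),\widehat\theta_{\Delta,r}(g)\rangle_{NT}$ is well defined and, crucially, equals zero if and only if $\widehat\theta_{\Delta,r}(g)=0$, because the N\'eron--Tate height is positive definite on the finite-dimensional real vector space $J_0(N)(\Q)\otimes\R$. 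This is the only analytic input beyond the formula itself.

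Given that, the argument is a one-line chain of equivalences. Assume $N$ is square free so that (\ref{equarith}) applies. Then
\begin{equation}
\widehat\theta_{\Delta,r}(g)\neq 0 \iff \langle\widehat\theta_{\Delta,r}(g),\widehat\theta_{\Delta,r}(g)\rangle_{NT}\neq 0 \iff \frac{2\|g\|^{2}\sqrt{N|\Delta|}}{\pi}L'(G,\chi_\Delta,1)\neq 0 \iff L'(G,\chi_\Delta,1)\neq 0,
\end{equation}
where the first equivalence is positive-definiteness of $\langle~,~\rangle_{NT}$ on the Mordell--Weil part, the second is Theorem \ref{theoarithprod}, and the third uses that the scalar $\tfrac{2\|g\|^{2}\sqrt{N|\Delta|}}{\pi}$ is a strictly positive real number (here $\|g\|^2=\langle g,g\rangle_{Pet}>0$ for the nonzero cusp form $g$, and $N,|\Delta|\geq 1$). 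For general $N$ one runs the identical argument with $\widehat\phi_{MW}(g)$ in place of $\widehat\theta_{\Delta,r}(g)$, using the first displayed formula of Theorem \ref{theoarithprod} and part (1) of Theorem \ref{thelift} to transfer the conclusion back to $\widehat\theta_{\Delta,r}(g)$ when $N$ is square free.

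I do not expect a genuine obstacle here, since all the substantive work is carried by Theorems \ref{thelift} and \ref{theoarithprod}. The only point that requires a word of care is the positivity of the normalizing constant and the fact that $g\neq 0$ is implicit in the setup (otherwise the equivalence degenerates to $0\neq 0\iff 0\neq 0$); I would state explicitly that we take $g$ to be a nonzero newform in $S_{\frac{3}{2},\tilde\rho_L}$ so that $\|g\|^2>0$. The mildest subtlety is invoking that $\widehat\theta_{\Delta,r}(g)$ lies in $\widetilde{\MW}$ rather than in the full arithmetic Chow group before applying nondegeneracy of $\langle~,~\rangle_{NT}$ — but this is exactly part (2) of Theorem \ref{thelift}, so no new argument is needed.
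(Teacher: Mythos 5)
Your argument is correct and is exactly the intended derivation: Theorem \ref{thelift} places $\widehat\theta_{\Delta,r}(g)=\widehat\phi_{MW}(g)$ in $\widetilde{\MW}\simeq J_0(N)(\Q)\otimes\R$, the N\'eron--Tate pairing is positive definite there, and Theorem \ref{theoarithprod} converts vanishing of the self-pairing into vanishing of $L'(G,\chi_\Delta,1)$ via the strictly positive normalizing constant. The paper presents the corollary without an explicit proof because this is the natural (and only) route, and your remarks about $g$ being a nonzero newform and about transferring from $\widehat\phi_{MW}(g)$ to $\widehat\theta_{\Delta,r}(g)$ via Theorem \ref{thelift}(1) are the right points of care.
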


%
%Indeed,
%We can define the theta lift by
%\begin{equation}
%\theta_{\Delta, r}(g)=\langle R_{\frac{3}{2}}^k \Theta_{\Delta, r}(\tau, z), g(\tau) \rangle_{Pet},
%\end{equation}
%where $g \in S_{\frac{3}{2}+k}$
%
%
%
%\textbf{
%When $g \in S_{\frac{3}{2},\overline{\tilde\rho}_L}$,
%Waldspurger type formula-by regularized Siegel-Weil formula and Rallis inner product formula
%\begin{equation}
%\theta_{\Delta, r}(g)\neq 0\Leftrightarrow L(G, \chi_\Delta, 1)\neq 0
%\end{equation}
%}
%
%When $g \in S_{\frac{3}{2},~\tilde\rho_L}$,
%\begin{equation}\theta_{\Delta, r}(g)=0, L(G, \chi_\Delta, 1)=0
%\end{equation}
%
%
%
%Let $pr:  X_{K'} \to X_K $ be the natural projection, $K'\subset K$. It induces direct limits
%\begin{equation}
%X: =\lim_{K}X_K, ~~
%\CH^1(X): =\lim_{K}\CH^1( X_K ).
%\end{equation}
%Recall that the pullback of Heegner divisor is given by $$pr^*Z(n, \varphi: K)=Z(n, \varphi: K'),$$
%where $\varphi \in S(V(\A))^K$.
%Then we obtain the cycle
%$
%Z(n, \varphi) \in \CH^1(X).
%$
%We can define the arithmetic theta lift
%\begin{equation}
%\widehat\theta^{ar}_{\psi}(\sigma, \mathcal X)\subseteq \widetilde{\MW}.
%\end{equation}
%where $\mathcal X:= \lim_K \mathcal X_K$ and $\mathcal X_K$ is integral model of  the modular curve $X_K$.

For each $G \in S_{2}^{new}(N)$, there is a cuspidal automorphic representation $\pi(G)\simeq\bigotimes_{p\leq \infty}\pi_p$
of $PGL_2(\A)$. According to Waldspurger's work \cite{Wa}, there exists an irreducible genuine cuspidal representation $\sigma\simeq\bigotimes\sigma_p$, s.t., $Wald(\sigma, \psi)=\pi(G)$.

Now we denote  $\pi=Wald(\sigma, \psi_{-1})$.
Waldspurger proved an important result as follows.
\begin{theorem}\cite{Wa}
When the global root number  $\epsilon(\frac{1}{2}, \pi)=1$,
\begin{equation}
\text{the theta lift }\theta_{\psi}(\sigma, V^B)\neq 0 \iff L(\frac{1}{2}, \pi)\neq 0;
\end{equation}
when the global root number  $\epsilon(\frac{1}{2}, \pi)=-1$, $L(\frac{1}{2}, \pi)=0$.
\end{theorem}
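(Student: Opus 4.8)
The plan is to realize the theta lift through the reductive dual pair $(\Mp_2,\Orth(V^B))$, where $V^B=(B^0,\norm)$ is the space of trace-zero elements of a quaternion algebra $B/\Q$ equipped with the reduced norm, so that $\SO(V^B)\cong B^\times/\Q^\times$ (recovering $PGL_2$ when $B$ is split). First I would invoke the Rallis tower and first-occurrence principle: the global theta lift $\theta_\psi(\sigma,V^B)$ is nonzero if and only if (a) every local theta lift $\theta_{\psi_v}(\sigma_v,V^B_v)$ is nonzero, and (b) one global period — the Petersson norm $\langle\theta_\psi(\phi,\sigma),\theta_\psi(\phi,\sigma)\rangle$ for a single decomposable $\phi$ — is nonzero. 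Thus the proof splits into a purely local non-vanishing question and one global period computation.

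To compute the period I would use the doubling method. Integrating the product of two theta kernels on $V^B$ over $[\SO(V^B)]$ and applying the (regularized) Siegel--Weil formula identifies the result with the value at the near-central point of a metaplectic Siegel Eisenstein series on $\widetilde{\Sp}_4$; pairing this against $\phi\otimes\overline\phi$ yields the doubling integral for $\Mp_2$, which unfolds into an Euler product. The upshot is an identity
$$\langle\theta_\psi(\phi,\sigma),\theta_\psi(\phi,\sigma)\rangle \;=\; c\cdot L\!\left(\tfrac12,\pi\right)\cdot\prod_v Z_v(\phi_v),$$
where $c\neq0$, $Z_v$ is a normalized local zeta integral, and the global standard $L$-value of $\sigma$ appearing here — by the Shimura correspondence with the additive character forced by the doubled data — is exactly $L(\tfrac12,\pi)$ with $\pi=\operatorname{Wald}(\sigma,\psi_{-1})$. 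Moreover each $Z_v$ is non-negative and is not identically zero on $\sigma_v$ precisely when $\theta_{\psi_v}(\sigma_v,V^B_v)\neq0$. Granting this, $\theta_\psi(\sigma,V^B)\neq0$ if and only if all local lifts are nonzero and $L(\tfrac12,\pi)\neq0$.

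It remains to carry out the local-to-global bookkeeping of the sign. By local Howe duality for $(\Mp_2,\Orth_3)$, among the two ternary quadratic spaces over $\Q_v$ carrying the projective-quaternion structure, exactly one supports a nonzero theta lift of $\sigma_v$, and the distinguishing invariant is the local root number $\eps(\tfrac12,\pi_v)$ (up to a fixed discriminant factor). Hence all local lifts can be simultaneously nonzero only for the single ``coherent'' quaternion algebra $B$ whose local Hasse invariants match these local root numbers; such a $B$ exists over $\Q$ exactly when the product of the prescribed invariants is trivial, which by $\prod_v\eps(\tfrac12,\pi_v)=\eps(\tfrac12,\pi)$ and the product formula amounts to $\eps(\tfrac12,\pi)=1$. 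When $\eps(\tfrac12,\pi)=1$ this $B$ exists and satisfies all local conditions by construction, and the period formula gives $\theta_\psi(\sigma,V^B)\neq0\iff L(\tfrac12,\pi)\neq0$; when $\eps(\tfrac12,\pi)=-1$ the local data are incoherent (no suitable $B/\Q$ exists, so the relevant lift vanishes) and the functional equation $L(s,\pi)=\eps(s,\pi)L(1-s,\pi)$ with sign $-1$ forces $L(\tfrac12,\pi)=0$.

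I expect the middle step to be the main obstacle: making the Rallis inner product formula rigorous here requires the Siegel--Weil formula in a low-rank range where the Eisenstein series must be regularized and the ``second term identity'' intervenes, and one must track the metaplectic normalizations carefully and pass correctly from the doubling data on $\Mp_2$ to the $\GL_2$ $L$-function $L(s,\pi)$. The local input of the third step — Howe duality for these pairs and the identification of the dichotomy invariant with $\eps(\tfrac12,\pi_v)$, especially at $v=2$ and at ramified places — is the other delicate ingredient.
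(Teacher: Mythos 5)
The paper does not prove this statement: it quotes it as Waldspurger's theorem and cites \cite{Wa}, so there is no in-paper argument to compare against. With that caveat, your sketch is a sound outline of the \emph{modern} proof and correctly isolates the hard points.

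What you describe — Rallis tower and first-occurrence to reduce nonvanishing to local theta dichotomy plus one global period, doubling on $\Mp_2\times\Mp_2\hookrightarrow\widetilde{\Sp}_4$ together with the (regularized) Siegel--Weil formula to produce a Rallis inner product identity $\langle\theta_\psi(\phi,\sigma),\theta_\psi(\phi,\sigma)\rangle = c\cdot L(\tfrac12,\pi)\prod_v Z_v(\phi_v)$, identification of the local dichotomy invariant with $\epsilon(\tfrac12,\pi_v)$, and the coherence/incoherence argument for the existence of a global quaternion algebra matching the local root numbers — is exactly how this criterion is proved in the post-2000 literature. It is, however, a genuinely different route from Waldspurger's own argument in the cited Forum Math.\ paper, which proceeds through explicit Fourier-coefficient and toric-period computations attached to the Shimura correspondence and a see-saw, rather than through a Rallis inner product formula (which in this metaplectic low-rank range was only put on rigorous footing later). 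The trade-off is the expected one: the doubling/Siegel--Weil route is structurally cleaner and generalizes, but at $s_0=0$ on $\widetilde{\Sp}_4$ one is outside Weil's convergent range for the split $B$, so the regularized Siegel--Weil formula and second-term identity (Gan--Qiu--Takeda) are genuinely needed; and passing from the $\Mp_2$ doubling $L$-factor to $L(s,\pi)$ with $\pi=\mathrm{Wald}(\sigma,\psi_{-1})$ requires careful bookkeeping of the additive character through the Weil index and the Shimura-correspondence normalization. You flag both of these, which is the right diagnosis. The last clause of the theorem, $\epsilon(\tfrac12,\pi)=-1\Rightarrow L(\tfrac12,\pi)=0$, is of course immediate from the functional equation and needs none of the theta-lift machinery, as you note.
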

%In analogy with work \cite[Chapters 8, 9]{KRYBook},  we have the following result,
%\begin{corollary}
%\begin{equation}
%\widehat\theta^{ar}_{\psi}(\sigma, \mathcal X)\neq 0 \Leftrightarrow L'(\frac{1}{2}, \pi).
%\end{equation}
%\end{corollary}
 Kudla, Rapoport and Yang proved the following result \cite[Charpter 9]{KRYBook}.
\begin{theorem}
Assume that $\epsilon(\frac{1}{2}, \pi)=-1$. For some genuine cuspidal representations $\sigma$, there exist indefinite quaternion algebras  $B$ with $D(B)>1$, such that 
 $$\text{the arithmetic theta lift }\theta^{ar}_{\psi}(\sigma, V^B)\neq 0 \iff L'(\frac{1}{2},\pi )\neq 0.$$
\end{theorem}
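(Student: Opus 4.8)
The argument, due to Kudla--Rapoport--Yang, is the division-quaternion counterpart of the modular-curve case treated above (Theorems \ref{thelift} and \ref{theoarithprod}, Corollary \ref{cormod}); I indicate the main steps. Fix an indefinite quaternion algebra $B/\Q$ of discriminant $D(B)$, let $\mathcal X^B$ be an integral model of the associated Shimura curve, and let $J^B$ be its Jacobian. By construction $\theta^{ar}_\psi(\sigma, V^B)$ lies in $\widehat{\CH}^1_\R(\mathcal X^B)$ and is built from arithmetic Heegner divisors. The plan is to establish an \emph{arithmetic Rallis inner product formula}
\begin{equation}
\langle \theta^{ar}_\psi(\sigma, V^B),\ \theta^{ar}_\psi(\sigma, V^B)\rangle_{GS}
= c_{B,\sigma}\cdot L'(\tfrac12,\pi),\nonumber
\end{equation}
where $c_{B,\sigma}$ is the product of a Petersson norm and finitely many local doubling zeta integrals, all arranged to be nonzero. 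Granting this, the equivalence is formal: by the analogue of Theorem \ref{thelift} the lift lies in the Mordell--Weil part $\widetilde{\MW}\simeq J^B(\Q)\otimes\R$ of $\widehat{\CH}^1_\R(\mathcal X^B)$, on which the Gillet--Soul\'e height restricts to the N\'eron--Tate height on $J^B$ and is therefore positive definite; hence $\theta^{ar}_\psi(\sigma, V^B)\neq 0$ if and only if its self-height is nonzero, which by the displayed identity happens if and only if $L'(\tfrac12,\pi)\neq 0$.

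To prove the displayed identity I would, as in the split case: (i) use the modularity of the arithmetic generating series $\widehat\phi^B$ on $\mathcal X^B$ to move the Gillet--Soul\'e pairing inside, rewriting the self-height as the Petersson pairing of $g\otimes g$, for $g$ a suitable form in $\sigma$, against the genus-two generating series $\langle\widehat\phi^B\otimes\widehat\phi^B\rangle_{GS}$ of height pairings of arithmetic Heegner divisors; (ii) invoke the arithmetic Siegel--Weil formula to identify that series with the central derivative $\widehat E'(0)$ of an incoherent Siegel Eisenstein series of genus two; and (iii) apply the doubling (Rallis) method to factor $\langle\widehat E'(0),\ g\otimes g\rangle_{Pet}$ as $L'(\tfrac12,\pi)$ times an explicit product of local zeta integrals $Z_v^{*}$. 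At the finitely many ramified places one then chooses the local components $\sigma_v$ and the algebra $B$ so that every $Z_v^{*}\neq 0$ and every local theta lift $\theta_{\psi_v}(\sigma_v, V^B_v)\neq 0$; here local theta dichotomy (Waldspurger, Prasad) singles out, place by place, the unique local quaternion algebra carrying a nonzero theta lift in terms of a local root number, and the resulting collection of local algebras is incoherent precisely because $\epsilon(\tfrac12,\pi)=-1$. One takes $B$ to be the nearby \emph{coherent} indefinite algebra, which is then ramified at a nonempty --- necessarily even --- set of finite primes; this is exactly why the conclusion requires $D(B)>1$ and why only ``some'' $\sigma$ rather than all are reached.

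The main obstacle will be step (ii) together with the local arithmetic computations at the bad primes, above all at $p\mid D(B)$ and at $p=2$: one must match derivatives of local Whittaker functions of the Eisenstein series with intersection multiplicities of special cycles at supersingular points of $\mathcal X^B$, which is the technical core of the Kudla--Rapoport--Yang program. A secondary, more combinatorial difficulty is securing the simultaneous nonvanishing of all the $Z_v^{*}$ and of all the local theta lifts, which constrains both the test vectors in $\sigma$ and the ramification set of $B$; this is where Prasad's local multiplicity results and Waldspurger's explicit local theta correspondence enter.
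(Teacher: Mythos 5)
The paper does not prove this theorem; it is quoted from Kudla--Rapoport--Yang \cite[Chapter~9]{KRYBook} solely to situate the modular-curve case ($D(B)=1$) that Corollary~\ref{cormod} supplies, so there is no internal proof to compare your proposal against. With that caveat, your sketch is a fair account of the strategy actually used in \cite{KRYBook}: build the arithmetic generating series $\widehat\phi^B(\tau)$ on the integral model of the Shimura curve, prove it modular, decompose $\widehat{\CH}^1_\R(\mathcal X^B)$ into $\widetilde{\MW}\oplus\widetilde{\MW}^\perp$, show the arithmetic theta lift lands in the Mordell--Weil component where the Gillet--Soul\'e pairing restricts (up to sign) to the N\'eron--Tate height and is hence definite, and compute the self-height by moving the pairing inside a genus-two generating series, invoking the arithmetic Siegel--Weil formula to replace it by the central derivative of an incoherent Eisenstein series, and then extracting $L'(\tfrac12,\pi)$ by a doubling integral with nonvanishing local factors. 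That is precisely the architecture the present paper mirrors in Theorems~\ref{thelift} and~\ref{theoarithprod}.

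Two small cautions. First, the hedge ``for some $\sigma$'' in the cited statement comes from the concrete ramification restrictions under which \cite{KRYBook} works (squarefree discriminant $D(B)$, level/conductor assumptions, special care at $p=2$), rather than from the local-theta-dichotomy bookkeeping you foreground; those are compatible pictures, but your framing is more in the spirit of Yuan--Zhang--Zhang and Li--Liu than of \cite{KRYBook} itself. Second, the claim that $D(B)>1$ ``because the nearby coherent indefinite algebra is ramified at a nonempty even set'' should be stated with care: for $\epsilon(\tfrac12,\pi)=-1$ the incoherent collection differs from the split collection at an odd number of places, and what \cite{KRYBook} actually requires is that $B$ be a division algebra so that the Shimura curve is compact; the present paper's contribution is precisely to remove that restriction and reach $D(B)=1$ via the arithmetic theta function of \cite{DY1,DY2}.
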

We extend the above result  to the modular curve ($D(B)=1$) in Corollary \ref{cormod}.
%We define the twisted L-function by
%\begin{equation}
%L(g, U, \chi_\Delta, s)=\langle \theta_{\Delta P} \otimes E_{\Delta N}(\tau, s; 1), \phi(g(\tau)) \rangle_{Pet}.
%\end{equation}
%\begin{theorem}[Twisted version of \cite{BY}]\label{theotwistedvalue}
%For any  $f \in H_{\frac{1}{2}, \bar{\rho}_L}$,
%\begin{eqnarray}
%\Phi_{\Delta,r}(Z_{\Delta, r}(m, \mu), f)
%=\frac{4}{vol(K_T)}L'(\xi(\phi(f)), U, \chi_\Delta, 0).
%\end{eqnarray}
%\end{theorem}
%\begin{conjecture}
%For $g \in H_{1/2, \bar{\rho}_L}$, and assume that the constant term $C^+(0, 0)$ vanishes, then
%$$\langle \widehat{\mathcal Z}(g), \mathcal Z(U) \rangle=\frac{2}{\vol(K_T)}L'(\xi( g), U, 0)$$
%
%\end{conjecture}
%
%Now we give a twisted case of this conjecture.
%\begin{theorem}
%Let $D=-4N|\Delta|m$ be a fundamenta discriminant. Then
%$$
%\langle \widehat{Z_{\Delta, r}}(g), Z_{\Delta, r}(m, \mu)\rangle_{Fal}=\frac{2}{\vol(K_T)}L'(\xi(\phi( g)), U, 0)
%$$
%\end{theorem}

\subsection{Plan of proof}

In Section \ref{Sec1}, we will present preliminary concepts related to modular forms and Green functions.

In Section \ref{Sec2}, we will introduce  the arithmetic theta function $\widehat{\phi}_{\Delta, r}$. According to 
the decomposition of arithmetic Chow group $\widehat{\CH}^1_\R(\mathcal X_0(N))$, we 
 write it as follows
$$
 \widehat{\phi}_{\Delta, r}= \widehat{\phi}_{\MW}+\deg(\widehat{\phi}_{\Delta, r})\widehat{\mathcal P}_\infty    + \widehat{\phi}_{\text{Vert}} + a(\phi_{SM}).$$
Each term in the summation is a modular form of weight $3/2$.

In Section \ref{Sec3},  we will define the arithmetic theta lift $\widehat{ \theta}_{\Delta, r}(g)$ and prove  Theorem  \ref{thelift},  $$\widehat{ \theta}_{\Delta, r}(g) \in \widetilde{\MW},~~for~~any~~g \in S_{\frac{3}{2}, \tilde\rho_L}.$$

In Section \ref{Sec4},  we will introduce several Green functions, including automorphic Green functions and the regularized Borcherds lift.

In Section \ref{Sec5},  we will define the translated twisted Heegner divisors for general level $N$. We will sutdy it  by adelic language.

In Section \ref{Sec6}, we will study the CM values of Green functions. The archimedean  intersection number
$\langle Z_{\Delta, r, \kappa}(f), Z_{\Delta, r, \kappa}(U)\rangle_\infty$ is given by this value.

In Section \ref{Sec7}, we will elucidate the relationship between $\Phi_{\Delta, r}^j(Z(U), f)$ and the derivative of L-functions  in Theorem \ref{theotwistedvalue}, which leads us to conclude that the archimedean intersection number can provide the derivative of L-functions.

In Section \ref{Sec8}, we will compute the intersection numbers at finite places. Combining this with  archimedean part -Theorem \ref{theotwistedvalue}, we will prove Theorem  \ref{maintheo}.

In Section \ref{Sec9},  we will study the self-intersection of the arithmetic theta lift.
 According to Theorem \ref{maintheo}, we will demonstrate the arithmetic inner product formula- Theorem  \ref{theoarithprod} and the Gross-Zagier formula. Furthermore, we will  clarify the  relationship  between  Theorem \ref{maintheo} and Theorem \ref{thehigher}.

The subsequent diagram illustrates these relationships.

\begin{displaymath}
\xymatrix{
 &^{\substack{Gross-Zagier-Zhang ~formula\\Theorem ~\ref{thehigher} }} \ar@{<=}[d]^{Modularity~ hypothesis} \\
                 &^{Theorem ~\ref{maintheo} }\ar@{=>}[dr] &  ^{Theorem ~\ref{thelift} }    \\
^{\substack{Gross-Zagier ~formula}} \ar@{<=}[ur]  \ar@{<=>}[rr] & & ^{ \substack{Arithmetic ~inner ~product\\ ~formula-Theorem~ \ref{theoarithprod}}}\ar@{<=}[u] }
\end{displaymath}

%\begin{theorem}[Inner product formula]
%when $f \in \rho_L$,
%\begin{eqnarray}
%\langle \theta_{\Delta, r}(f), \theta_{\Delta, r}(f) \rangle=-\frac{2\parallel f\parallel^{2}\sqrt{N}}{\pi}L(F, \chi_\Delta, 1).
%\end{eqnarray}
%\end{theorem}
%As an application,  we have the following criterion
%\begin{corollary}
%When $f \in \rho_L$,
%\begin{equation}\theta_{\Delta, r}(f)\neq 0 \iff L(F, \chi_\Delta, 1)\neq 0;
%\end{equation}
%When $f \in \tilde\rho_L$,
%\begin{equation}
%\widehat\theta_{\Delta, r}(f)\neq 0 \iff L'(F, \chi_\Delta, 1)\neq 0.
%\end{equation}
%\end{corollary}
%Let $E$ be an Elliptic curve and $F$ be the associated new form. Then
%\begin{eqnarray}
%\langle \widehat{\theta}_{\Delta, r}(f), \widehat{\theta}_{\Delta, r}(f) \rangle_{GS}=-\frac{2\parallel f\parallel^{2}\sqrt{N}}{\pi}L'(E, \chi_\Delta, 1).
%\end{eqnarray}

%\begin{theorem}[ Gross-Zagier formula]\label{GrossZagier}
%\begin{equation}
%\langle y_{\Delta, r}(g), y_{\Delta, r}(g)\rangle_{NT}=\frac{\sqrt{N\mid \Delta\mid}}{\pi \parallel f\parallel^2}L'(F, \chi_\Delta, 1),
%\end{equation}
%where $\langle ~,~\rangle_{NT}$ is the N\'eron-Tate height over $K=\Q(\sqrt{-4Nn\Delta})$ and $2u$ is the number of roots of unity.
%\end{theorem}

\part{Arithmetic theta lift}

\section{Preliminaries}\label{Sec1}

Let $\widetilde{\SL}_2(\R)$ be the metaplectic double cover of $\SL_2(\R)$, which can be  viewed as pairs $(g, \phi(g, \tau))$, where $g=\kzxz{a}{b}{c}{d} \in \SL_2(\R)$ and $\phi(g,
\tau)$ is a holomorphic function of $\tau \in \H$ such that $\phi(g, \tau)^2 = j(g, \tau) = c\tau +d$. Let $\Gamma'=\Mp_2(\Z)$ be the preimage of
$\Gamma=\SL_2(\Z)$ in $\widetilde{\SL}_2(\R)$, then $\Gamma'$ is generated by
$$
S=\left( \kzxz {0} {-1} {1} {0}, \sqrt \tau \right)  \quad  T= \left( \kzxz {1} {1} {0} {1} , 1 \right).
$$

Let $(V, Q)$ be a
quadratic space of signature $(p, q)$ and let $L\subset V$ be an even lattice. We write $L^\sharp$
for its dual lattice. The quadratic form on $L$ induces a $\Q/\Z$-valued quadratic
form on the discriminant group $L^{\sharp}/L $.
The standard basis of $S_L=\C[L^\sharp/L]$ is denoted by $$\{ e_\mu=L_\mu \mid \mu \in L^\sharp/L\}.$$  Then the Weil representation $\rho_L$ of $\Gamma'$ on $\C[L^\sharp/L]$ (
\cite{Bo}) is given by
\begin{eqnarray} \label{eq:WeilRepresentation}
&\rho_{L} (T)e_{\mu}&=e(Q(\mu))e_{\mu} ,\\
&\rho_{L}(S) e_{\mu}&= \frac{e(-\frac{p-q}8)}{\sqrt{\vert L^{\sharp} / L \vert}}\sum \limits_{\mu^{\prime} \in L^{\sharp} / L }e(-(\mu,
\mu^{\prime}))e_{\mu^{\prime}}\nonumber .
\end{eqnarray}

For a non-zero integer $\Delta$,
 we consider  the quadratic form
$Q_\Delta:=\frac{Q}{|\Delta|}$.
%The dual lattice of $ M$ corresponding to $Q_\Delta$ is denoted by $M^\prime$.
%For the lattice $\Delta L$, it is easy to know that $(\Delta L)^\prime=L^\sharp$.
The Weil representation of the quadratic lattice $L^\Delta=(\Delta L, Q_\Delta)$ is denoted by $\rho_{L^\Delta}$.

\subsection{Modular form}

A twice continuously differentiable function $f : \H \rightarrow \C[L^\sharp/L]$ is called a weak Maass form of weight $k \in \frac{1}{2}\Z$ with representation $\rho_L$ if
\begin{itemize}
\item[1).] $f|_{k, \rho_L}\gamma = f$ for all $\gamma \in\Gamma'$;
\item[2).] there exists a $\lambda \in \C$ that$\Delta_kf = \lambda f$;
\item[3).] there is a $C>0$, such that $f(\tau)=O(e^{Cv})$  as $v\rightarrow \infty$ uniformly for $u$.
\end{itemize}
 Here the slash operator is given by
$$f\mid_{k, \rho_L}\gamma(\tau) =\phi(\tau)^{-2k}\rho^{-1}_L(\gamma')f(\gamma \tau).$$
When $\lambda =0$, $f$ is a harmonic weak Maass form.
The space of harmonic weak Maass forms is denoted by $H_{k, \rho_L}$. The function $$P_f(\tau) = \sum_{\mu, n\leq 0}c(n, \mu)q^ne_{\mu}$$ is called the  principal part of $f$.
 For any $f \in H_{k, \rho_L}$, it has an unique decomposition $f=f^++f^-$ by Fourier expansion, where
 \begin{equation}
f^+=\sum_{\mu}\sum_{n \gg 0} c^+(n, \mu)e(n\tau)e_{\mu}
 \end{equation}
 and
\begin{equation}
f^-=\sum_{\mu}\sum_{n<0} c^-(n, \mu)\Gamma(1-k, 2\pi |n|v)e(n\tau)e_{\mu} .
 \end{equation}
 Here $\Gamma(a, x)=\int_{-x}^{\infty}e^{-t}t^{a-1}dt$ is the incomplete $\Gamma$ function. For any field $F$, we write
 $ H_{k, \rho_L}(F)$ denote the space of harmonic Maass forms with principal part defined over $F$.
There is a differential operator defined by
\begin{equation}
\xi_{k}(f): H_{k, \overline{\rho}_L}\rightarrow S_{k, \rho_L}
\end{equation}
where $\xi_{k}(f)=2iv^k\frac{\overline{\partial} f}{\partial\bar{\tau}}$.
The exact sequence is given by
 \begin{equation}
 0\rightarrow M_{k, \overline{\rho}_L}^{!}\rightarrow H_{k, \overline{\rho}_L}\xrightarrow{\xi_k}S_{2-k, \rho_L}\rightarrow 0.
 \end{equation}
Here $M_{k, \overline{\rho}_L}^{!}$ is the space of weakly modular form.
We also define the Maass  lowering and raising operators in weight $k$ by
\begin{equation}
  L_k=-2iv^2\frac{\partial}{\partial \bar\tau},~and~  R_k=2i\frac{\partial}{\partial\tau}+kv^{-1}.
\end{equation}
The standard scalar product on the space $\C[L^{\sharp}/L]$ is defined as
\begin{equation}
\langle \sum_{\mu}f_\mu e_{\mu}, \sum_{\mu}g_\mu e_\mu\rangle=\sum_\mu f_\mu g_\mu.
\end{equation}
For any  modular form $f, g \in M_{k, \rho_L} $, the Petersson scalar product is defined by
\begin{equation}
\langle f, g\rangle_{Pet}=\int_{\mathcal{F}}\langle f, \bar{g}\rangle v^k\mu(\tau),
\end{equation}
where $\mu(\tau)=\frac{dudv}{v^2}$ is the hyperbolic measure.
%Here
% $$\langle \sum_{\mu}f_\mu e_{\mu}, \sum_{\mu}g_\mu e_\mu\rangle:=\sum_\mu f_\mu g_\mu.$$

Given a sublattice $M \subseteq L$with finite index,
we have the inclusions $M \subseteq L \subseteq L^\sharp \subseteq M^\sharp$,
$$L/M \subseteq L^\sharp/M \subseteq M^\sharp/M $$ and natural quotient map $\pi : L^\sharp/M \rightarrow L^\sharp/L, h \rightarrow \bar{h}$.

The restriction map and  the  trace map are defined as follows: for any
$f \in A_{k,\rho_L}$ and  any $g\in A_{k,\rho_M}$,

$$(f_M)_h=\begin{cases}
f_{\bar{h}} &\ff h \in L^\sharp/M,\\
0 &\ff h \notin L^\sharp/M,
\end{cases}$$

$$(g^L)_\mu=\sum_{\alpha \in L/M}g_{\alpha+\mu},$$
where $h \in M^\sharp/M$ and $\mu \in  L^\sharp/L$.
Here $A_{k,\rho_L}$ is the space of modular forms with weight $k$ and representation $\rho_L$.
\begin{lemma}\cite[Lemma 3.1]{BY}\label{lemresext}
$$res_{L/M}: A_{k,\rho_L} \rightarrow A_{k,\rho_M}, ~~ f \mapsto f_M$$
and
$$tr_{L/M}: A_{k,\rho_M} \rightarrow A_{k,\rho_L}, ~~  g \mapsto g^L$$
such that for $f \in A_{k,\rho_L}$ and $g\in A_{k,\rho_M}$, one has
 $$\langle f, \bar{g}^L \rangle=\langle f_M, \bar{g} \rangle.$$

\end{lemma}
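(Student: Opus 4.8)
This is essentially \cite[Lemma 3.1]{BY}, and the plan is to factor both $res_{L/M}$ and $tr_{L/M}$ through a single pair of mutually transposed linear maps on the coefficient spaces, so that all three assertions reduce to one representation‑theoretic check. I would introduce $\iota\colon \C[L^\sharp/L]\to\C[M^\sharp/M]$ by $\iota(e_\mu)=\sum_{h\in\pi^{-1}(\mu)}e_h$ and $p\colon\C[M^\sharp/M]\to\C[L^\sharp/L]$ by $p(e_h)=e_{\bar h}$ if $h\in L^\sharp/M$ and $p(e_h)=0$ otherwise, noting that the fibres of $\pi\colon L^\sharp/M\to L^\sharp/L$ are the cosets of $L/M$, so that applying $\iota$ (resp.\ $p$) to the coefficient vector of a form reproduces $f\mapsto f_M$ (resp.\ $g\mapsto g^L$), with $(g^L)_\mu=\sum_{\alpha\in L/M}g_{\tilde\mu+\alpha}=\sum_{h\in\pi^{-1}(\mu)}g_h$ independent of the lift $\tilde\mu$ of $\mu$. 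A one-line check on basis vectors gives $p=\iota^{t}$ with respect to the standard pairings, $\langle\iota x,y\rangle=\langle x,py\rangle$. The point of this factorization is that, once $\iota$ is known to intertwine $\rho_L$ with $\rho_M$, it commutes with the slash operators $|_{k,\rho_L}$, $|_{k,\rho_M}$, and since it acts only on the constant coefficient vectors it commutes with every operator in $\tau$ and turns Fourier expansions into finite recombinations of their coefficients; hence $res_{L/M}$ lands in $A_{k,\rho_M}$. Because the matrices $\rho_L(S),\rho_L(T),\rho_M(S),\rho_M(T)$ are symmetric (immediate from \eqref{eq:WeilRepresentation}), transposing the intertwining relations for $\iota$ on the generators $S,T$ shows $p=\iota^{t}$ intertwines $\rho_M$ with $\rho_L$, so $tr_{L/M}$ lands in $A_{k,\rho_L}$; and the adjointness identity then drops out:
\begin{equation}
\langle f_M,\bar g\rangle=\langle\iota f,\bar g\rangle=\langle f,\iota^{t}\bar g\rangle=\langle f,p\bar g\rangle=\langle f,\overline{g^L}\rangle,
\end{equation}
the last step because $p$ has $\{0,1\}$ entries and so commutes with conjugation.

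The remaining work is to verify that $\iota$ intertwines $\rho_L(S),\rho_L(T)$ with $\rho_M(S),\rho_M(T)$. For $T$ the content is the compatibility $Q_M(h)\equiv Q_L(\bar h)\pmod\Z$ for $h\in L^\sharp/M$: writing $h=\tilde\mu+\alpha$ with $\tilde\mu\in L^\sharp$ a lift of $\bar h$ and $\alpha\in L$, one has $Q(h)=Q(\tilde\mu)+(\tilde\mu,\alpha)+Q(\alpha)$ with $(\tilde\mu,\alpha)\in\Z$ and $Q(\alpha)\in\Z$ since $L$ is even; as $\rho(T)$ is diagonal with entries $e(Q(\cdot))$ this yields $\iota\rho_L(T)=\rho_M(T)\iota$. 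For $S$ I would substitute \eqref{eq:WeilRepresentation} on both sides and, writing $\pi^{-1}(\mu)=\tilde\mu+L/M$, reduce the required identity to the statement that for $h'\in M^\sharp/M$ the character $\alpha\mapsto e(-(\alpha,h'))$ of the finite group $L/M$ is trivial exactly when $h'\in L^\sharp/M$, so that $\sum_{\alpha\in L/M}e(-(\alpha,h'))$ equals $[L:M]$ or $0$ accordingly; matching the leftover constants then needs the index identity $[M^\sharp:M]=[L:M]^2[L^\sharp:L]$, which comes from $[M^\sharp:L^\sharp]=[L:M]$ by nondegeneracy of the form, together with the observation that the prefactor $e(-(p-q)/8)$ is the same for $L$ and $M$ because $L\otimes\R=M\otimes\R=V$.

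The main obstacle is precisely this $S$-equivariance: carrying the normalizing constants $e(-(p-q)/8)/\sqrt{|\,\cdot\,|}$ cleanly through the character‑orthogonality step and getting the index bookkeeping right. Everything else — the $T$-case, the fact that $res$ and $tr$ respect the harmonicity/eigenvalue and growth conditions, and the adjointness formula — is formal once $\iota$ is in place.
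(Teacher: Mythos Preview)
Your argument is correct and is the standard one: the paper does not supply its own proof of this lemma but simply quotes it from \cite[Lemma 3.1]{BY}, and the proof there proceeds exactly as you outline, by checking that the natural linear map $\iota$ on coefficient spaces intertwines $\rho_L$ and $\rho_M$ on the generators $S,T$ (the $S$-case via character orthogonality on $L/M$ and the index identity $|M^\sharp/M|=[L:M]^2|L^\sharp/L|$), then deducing the same for its transpose $p$ and reading off the adjointness $\langle f_M,\bar g\rangle=\langle f,\overline{g^L}\rangle$.
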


\subsection{The signature $(1, 2)$ }
Let
   \begin{equation}
V=\{w=\kzxz
    {w_1} { w_2 }
      {w_3}   {-w_1}  \in M_{2}(\Q) | \tr(w) =0 \},
\end{equation}
with the quadratic form $Q(w)=N \det{w}=-Nw_2w_3-Nw_1^{2}$, where $N$ is a positive integer.
Let
 \begin{equation}\label{equlattice}
L=\big\{w =\kzxz {b}{\frac{-a}{N}}{c}{-b}
   \in M_{2}(\Z) |\,   a, b, c \in \Z \big\},
\end{equation}
be the lattice in $V$.
  We will identify
$$
\Z/2N\Z \cong L^\sharp/L,  \quad r\mapsto \mu_r = \kzxz {\frac{r}{2N}} {0} {0} {-\frac{r}{2N}}.
$$

Let $H= \GSpin(V)\cong \GL_2$, which acts on $V$  by conjugation, i.e.,  $g.w=gwg^{-1}$.
 Notice that $\Gamma_0(N)$ preserves $L$ and acts on $L^\sharp/L$ trivially.

For each $\mu \in  L^\sharp/L$, denote  $L_{\mu}=L+\mu$, and
\begin{equation}
L_\mu[n] =\{ w \in  L_\mu|\,  Q(w) = n\}.
\end{equation}

Let $\Delta \in \Z$ be a fundamental  discriminant, s.t., $\Delta \equiv r^2(\mod 4N)$, and let $L^\Delta= (\Delta L, Q_\Delta)$. 
It is easy to that the its dual lattice is $L^\sharp$.

The  generalized genus character \cite[Section 4]{BO}
$$
\chi_\Delta: L^\sharp/L^\Delta  \rightarrow \{ \pm 1 \}
$$
 is defined by
\begin{equation}
\chi_{\Delta}\bigg(\kzxz{\frac{b}{2N} }{\frac{-a}{N}}{c}{-\frac{b}{2N}}\bigg) =\begin{cases}
  (\frac{\Delta}{n}),  &\ff \Delta \mid b^2-4Nac~and~ \frac{b^2-4Nac}{\Delta}~is~ a\\
  &  ~square ~modulo~4N~and~(a, b, c, \Delta)=1,
  \\
  0,  &otherwise,
 \end{cases}
\end{equation}
Here $n$ is any integer prime to $\Delta$ represented by one of the quadratic forms $[N_1a, b, N_2c]$ with $N_1N_2=N$ and $N_1$, $N_2>0$.
%and $[a, b, Nc]=ax^2 + b xy + Nc y^2$ is the integral binary quadratic form. 
 The generalized genus character $\chi_\Delta(w) = \chi_\Delta ([a, b, Nc])$ is  defined in \cite[Section 1]{GKZ}.
%We leave it to the reader to check that $\chi(w +L^\Delta) = \chi(w)$ and so $\chi_\Delta$ induces a map on $L^\sharp/L^\Delta $.
 It is  invariant under the action of $\Gamma_{0}(N)$ and the action of all Atkin-Lehner involutions \cite{GKZ} , i,e.,
\begin{equation}
\chi_{\Delta}(\gamma w\gamma^{-1})=\chi_{\Delta}(w), ~\chi_{\Delta}(W_{M}wW_{M}^{-1})=\chi_{\Delta}(w),
\end{equation}
where $\gamma \in \Gamma_{0}(N)$ and $W_{M}$ is the Atkin-Lehner involution with $M \| N$.
%
%Let  $\mathcal{H}$ be the Hermitian domain of  positive real lines in $V(\R)$:
%$$
%\mathcal{H}=\{z \subset V_{\R}| \dim z=1 ~and~(~,~) \mid_{z} >0\}.
%$$
%The following map gives an isomorphism  between the upper half plane $\H$ and $\mathcal{H}$:
%$$z \mapsto [w(z)]=\R w(z),$$
%where $z=x+ iy$ and
%$w(z) = \frac{1}{\sqrt{N} y} \left(
%  \begin{array}{cc}
%  -x  & z\overline{z}\\
%     -1&x\\
%  \end{array}
%\right).
%$

Let $\D$ be the
Hermitian domain of oriented negative 2-dimensional subspace of $V(\R)$. Then $\D$ can be identified with $\H\cup \bar \H$ via
\begin{equation}
z=x+iy\mapsto \R \mathfrak R \kzxz{z}{-z^2}{1}{-z}+\R \mathfrak I \kzxz{z}{-z^2}{1}{-z}.
\end{equation}
For any $w=\kzxz{\frac{b}{2N}}{-\frac{a}{N}}{c}{-\frac{b}{2N}} \in L^\sharp$, we denote the CM point by
\begin{equation}
z(w)=\frac{b}{2Nc}+\frac{\sqrt{b^2-4Nac}}{2N|c|} \in \H.
\end{equation}
For any $\mu\in L^{\sharp}/L$ and a positive rational number $n \in \sgn(\Delta)Q(\mu) +\Z$,  the twisted Heegner divisor  is defined by
\begin{equation}\label{TwistedHeegner}
Z_{\Delta, r}(n, \mu):=\sum_{w \in \Gamma_0(N) \setminus L_{r\mu}[n \mid\Delta \mid]}\chi_{\Delta}(w)z(w) \in \Div(X_{0}(N))_{\Q},
\end{equation}
which is defined over $\Q(\sqrt{\Delta})$. We count each  point $z(w)$  with  multiplicity $\frac{2}{|{\Gamma}_w|}$ in the orbifold $X_0(N)$, where $\Gamma_w$ is the stabilizer of $w$ in $\Gamma_0(N)$. This definition is the same as that in  \cite[Section 5]{AE} and \cite[Section 5]{BO}.

Following \cite[Section 3.1]{AE}, we let
\begin{equation}
 \psi_{\Delta, r}(e_\mu)=
 \sum_{\substack{\delta\in L^\sharp/ L^\Delta \\ \pi(\delta)=r\mu\\ Q_{\Delta}(\delta)\equiv sgn(\Delta)Q(\mu)(\Z) }}\chi_{\Delta}(\delta)e_{\delta},
\end{equation}
where $\pi$ denotes the quotient map $\pi : L^\sharp/L^\Delta \rightarrow  L^\sharp/ L$.

For  $f \in A_{k, \rho_{L^\Delta}}$ and $g \in A_{k, \tilde\rho_L}$, we define two operators by
\begin{equation}\psi(f)=\sum_{\mu \in L^\sharp /L}\langle \psi_{\Delta, r}(e_\mu), f \rangle e_\mu\end{equation}
and
\begin{equation}\phi(g)=\sum_{\substack{\delta, \pi(\delta)=r \mu\\ Q_{\Delta}(\delta)\equiv sgn(\Delta)Q(\mu)(\Z) }}\chi_{\Delta}(\delta)g_\mu e_{\delta}.\end{equation}

For any  $f \in A_{k, \rho_{L^\Delta}}$,  $\psi(f)\in A_{k, \tilde\rho_L }$, one can see\cite[Section 3.1]{AE}.
Then we obtain the following result
\begin{proposition}\label{proadjoint}
Let $k \in \frac{1}{2}\Z$. For any $f \in A_{k, \rho_{L^\Delta}}$ and $g \in A_{k, \tilde\rho_L}$,  we have 
 $$\psi(f)\in A_{k, \tilde\rho_L },~~ \phi(g) \in A_{k, \rho_{L^\Delta}}.$$ Moreover, we have
$$
\langle \psi(f), \bar{g} \rangle=\langle f, \overline{\phi(g)} \rangle.
$$
\end{proposition}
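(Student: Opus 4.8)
The plan is to prove Proposition \ref{proadjoint} by reducing it to the restriction--trace adjointness of Lemma \ref{lemresext} together with the combinatorial description of $\psi_{\Delta,r}$. First I would record that the two assertions $\psi(f)\in A_{k,\tilde\rho_L}$ and $\phi(g)\in A_{k,\rho_{L^\Delta}}$ are, as indicated in the excerpt, established in \cite[Section 3.1]{AE}; the new content is the adjunction identity $\langle\psi(f),\bar g\rangle=\langle f,\overline{\phi(g)}\rangle$. Here the pairing on the left is the standard scalar product on $\C[L^\sharp/L]$ and the pairing on the right is the standard scalar product on $\C[L^\sharp/L^\Delta]$, so the identity is really a statement about how the two finite-dimensional hermitian spaces are matched by the maps $\psi$ and $\phi$, which are adjoint with respect to the genus-character-weighted averaging over the fibres of $\pi: L^\sharp/L^\Delta\to L^\sharp/L$.

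The key computation is a direct unwinding of both sides. On the left, by definition $\langle\psi(f),\bar g\rangle=\sum_{\mu\in L^\sharp/L}\langle\psi_{\Delta,r}(e_\mu),f\rangle\,\overline{g_\mu}$, and expanding $\psi_{\Delta,r}(e_\mu)=\sum_{\delta}\chi_\Delta(\delta)e_\delta$ over those $\delta\in L^\sharp/L^\Delta$ with $\pi(\delta)=r\mu$ and $Q_\Delta(\delta)\equiv\sgn(\Delta)Q(\mu)\ (\Z)$, this becomes $\sum_\mu\sum_{\delta}\chi_\Delta(\delta)\,f_\delta\,\overline{g_\mu}$. On the right, $\phi(g)=\sum_{\delta}\chi_\Delta(\delta)g_{\mu(\delta)}e_\delta$ where $\mu(\delta)$ is the unique class in $L^\sharp/L$ with $\pi(\delta)=r\mu(\delta)$ (using that $r$ is invertible modulo $2N$ so multiplication by $r$ is a bijection on $L^\sharp/L$), and hence $\langle f,\overline{\phi(g)}\rangle=\sum_\delta f_\delta\,\overline{\chi_\Delta(\delta)g_{\mu(\delta)}}$. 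Since $\chi_\Delta$ takes values in $\{0,\pm1\}\subset\R$, we have $\overline{\chi_\Delta(\delta)}=\chi_\Delta(\delta)$, and the two double sums coincide term by term once one checks the index sets match: a pair $(\mu,\delta)$ contributes to the left-hand sum exactly when $\pi(\delta)=r\mu$ and the congruence condition $Q_\Delta(\delta)\equiv\sgn(\Delta)Q(\mu)\ (\Z)$ holds, and for such $\delta$ the congruence condition is automatically the one built into the definition of $\phi(g)$, so the $\delta$ appearing in $\phi(g)$ are exactly the $\delta$ with $\chi_\Delta(\delta)\ne0$ and $Q_\Delta(\delta)\equiv\sgn(\Delta)Q(\pi^{-1}(\delta)/r)\ (\Z)$. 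Thus both sides equal $\sum_{\delta}\chi_\Delta(\delta)\,f_\delta\,\overline{g_{\mu(\delta)}}$, the sum being over $\delta\in L^\sharp/L^\Delta$ subject to the congruence constraint, and the identity follows.

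I would present this by: (i) fixing notation for the quotient map $\pi$ and noting $r$ is a unit mod $2N$ so $\mu\mapsto r\mu$ is a bijection of $L^\sharp/L$; (ii) expanding $\langle\psi(f),\bar g\rangle$ into the double sum over $(\mu,\delta)$; (iii) expanding $\langle f,\overline{\phi(g)}\rangle$ into the double sum over $\delta$, reorganized via $\mu=\mu(\delta)$; (iv) matching the two index sets and invoking the reality of $\chi_\Delta$. One could alternatively phrase the whole thing as an instance of Lemma \ref{lemresext} with $M=L^\Delta$, $L=L$ after twisting the quadratic form by $1/|\Delta|$ and folding the genus character into the restriction/trace maps, but the direct bookkeeping is cleaner and self-contained.

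The only real subtlety — and the step I would be most careful about — is verifying that the congruence condition $Q_\Delta(\delta)\equiv\sgn(\Delta)Q(\mu)\ (\Z)$ is exactly the compatibility condition that makes both index sets agree, i.e.\ that for every $\delta$ with $\chi_\Delta(\delta)\ne0$ the value $Q_\Delta(\delta)$ lies in $\sgn(\Delta)Q(\pi(\delta)/r)+\Z$ automatically, or else is excluded on both sides simultaneously; this is where the interplay between the fundamental discriminant $\Delta$, the dual lattice structure $L^\sharp=(L^\Delta)^\sharp$, and the definition of $\chi_\Delta$ enters, and it is handled in \cite[Section 3.1]{AE} which I would cite for this bookkeeping. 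Everything else is a formal manipulation of finite sums.
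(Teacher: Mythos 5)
Your proof is correct, and the paper itself does not supply a proof of this proposition (it states the result after citing \cite[Section 3.1]{AE} for the fact that $\psi$ and $\phi$ land in the correct spaces, and leaves the adjunction identity unproved); so there is no "paper's own proof" to compare against, but the direct unwinding you give is the natural and complete argument. One small remark: the concern you raise in the final paragraph — whether the congruence condition $Q_\Delta(\delta)\equiv\sgn(\Delta)Q(\mu)\pmod{\Z}$ makes the two index sets agree — is not actually a subtlety. The identical congruence constraint is written explicitly into both the definition of $\psi_{\Delta,r}(e_\mu)$ and the definition of $\phi(g)$, so a pair $(\mu,\delta)$ either appears in both sums or in neither, without needing any argument that $\chi_\Delta(\delta)\neq0$ forces the congruence; your "excluded on both sides simultaneously" alternative is exactly what happens, trivially. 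The only ingredients that genuinely get used are (i) the bilinearity (not sesquilinearity) of the standard pairing on $\C[L^\sharp/L]$ and on $\C[L^\sharp/L^\Delta]$ as defined in the paper, (ii) that $r$ is a unit mod $2N$ so $\mu\mapsto r\mu$ is a bijection, hence $\mu$ is uniquely determined by $\delta$, and (iii) that $\chi_\Delta$ is real-valued. You correctly invoke all three.
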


\subsection{Twisted theta functions}\label{sectheta}
%Let  $\D$ be the Hermitian domain of  positive real lines in $V(\R)$:
%$$
%\D=\{z \subset V_{\R}| \dim z=1 ~and~(~,~) \mid_{z} >0\}.
%$$
%The following map gives an
%We identify $\H$ with $\mathcal{H}$ by the isomorphism
%$$z \mapsto [w(z)]=\R w(z),$$
%where $z=x+ iy$ and
%$w(z) = \frac{1}{\sqrt{N} y} \left(
%  \begin{array}{cc}
%  -x  & z\overline{z}\\
%    -1&x\\
%  \end{array}
%\right).
%$
%which is $\SL_2(\R)$-compatible and induces an
%isomorphism between
%$Y_0(N) =\Gamma_0(N) \backslash \H$ and $\Gamma_0(N) \backslash \mathcal{H}$.

For any $z \in \D$, we let
\begin{equation}
w(z) = \frac{1}{\sqrt{N} y} \left(
  \begin{array}{cc}
  -x  & z\overline{z}\\
    -1&x\\
  \end{array}
\right)\in V(\R).
\end{equation}

Then one has the following decomposition
$$
V(\R) =z \oplus  z^\perp.
$$
For each $w \in V(\R)$, we  can write it as $w=  w_z + w_{z^\perp}$.

We let
\begin{equation}\lambda(z)=\frac{\sqrt{|\Delta|}}{\sqrt{2}}w(z)=\frac{\sqrt{|\Delta|}}{\sqrt{2N}y}\kzxz{-x}{x^2+y^2}{-1}{x}
\end{equation}
be the
normalized vector.
For any $w=\kzxz{\frac{b}{2N}}{-\frac{a}{N}}{c}{-\frac{b}{2N}} \in L^\sharp$, we define
\begin{equation}
p_z(w)=-\frac{1}{\sqrt{2}}(w, \lambda(z))_{\Delta}=-\frac{1}{2\sqrt{N|\Delta|}y}(Nc|z|^2-bx+a).
\end{equation}

%Following Kudla and Millson   (\cite{KMi}, \cite[Section 3]{BF1}), define
%\begin{equation}
%R(w, z)_{\Delta} =-(w_{z^\perp}, w_{z^\perp})_{\Delta},
%\end{equation}
% and the associated  majorant
%$$
%(w, w)_{\Delta, z}= (w_z, w_z)_{\Delta} + R(w, z)_{\Delta},
%$$
%where  $( ~, ~)_{\Delta}=\frac{(~,~)}{ |\Delta|}$ is the bilinear form.

Then it follows that
\begin{lemma}
\begin{equation}\label{equdist}
\sqrt{Q_\Delta(w_{z^\bot})}=|p_z(w)|.
\end{equation}
\end{lemma}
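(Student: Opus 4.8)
The plan is to unwind the definitions and reduce the identity to an elementary computation with the bilinear form $(\,\cdot\,,\,\cdot\,)_\Delta$ associated to $Q_\Delta = Q/|\Delta|$. First I would record the orthogonal decomposition $w = w_z + w_{z^\perp}$ relative to the negative-definite plane $z \in \D$, so that $Q_\Delta(w) = Q_\Delta(w_z) + Q_\Delta(w_{z^\perp})$ by orthogonality. Since $\lambda(z)$ spans (a normalization of) one of the directions of $z$, the component $w_z$ is the projection of $w$ onto $\R\lambda(z)$; the key point is that $\lambda(z)$ has been normalized to be a unit vector for $Q_\Delta$ — indeed one checks $Q_\Delta(w(z)) = \frac{Q(w(z))}{|\Delta|}$, and a direct evaluation of $Q(w(z)) = N\det w(z)$ gives $Q(w(z)) = -1$, so that $Q_\Delta\!\big(\lambda(z)\big) = \frac{|\Delta|}{2}\cdot\frac{-1}{|\Delta|} = -\tfrac12$. (The sign is negative because $z$ is a negative plane; this is what makes the square root in the statement sensible once one takes absolute values.)

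Next I would write the projection onto $\R\lambda(z)$ explicitly: for any $w \in V(\R)$,
$$
w_z = \frac{(w,\lambda(z))_\Delta}{(\lambda(z),\lambda(z))_\Delta}\,\lambda(z) = -2\,(w,\lambda(z))_\Delta\,\lambda(z),
$$
using $(\lambda(z),\lambda(z))_\Delta = 2Q_\Delta(\lambda(z)) = -1$. Therefore
$$
Q_\Delta(w_z) = \big(-2(w,\lambda(z))_\Delta\big)^2\,Q_\Delta(\lambda(z)) = 4\,(w,\lambda(z))_\Delta^2\cdot\Big(-\tfrac12\Big) = -2\,(w,\lambda(z))_\Delta^2.
$$
Comparing with the definition $p_z(w) = -\tfrac{1}{\sqrt2}(w,\lambda(z))_\Delta$, we get $(w,\lambda(z))_\Delta^2 = 2\,p_z(w)^2$, hence $Q_\Delta(w_z) = -4\,p_z(w)^2$... at which point I would recheck the normalization constants, since the statement wants $\sqrt{Q_\Delta(w_{z^\perp})} = |p_z(w)|$, i.e. $Q_\Delta(w_{z^\perp}) = p_z(w)^2$. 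The cleanest route is to avoid chasing constants twice: compute $Q_\Delta(w_{z^\perp}) = Q_\Delta(w) - Q_\Delta(w_z)$ only after fixing, once and for all, the value of $(\lambda(z),\lambda(z))_\Delta$ from the explicit matrix, and then verify the final identity on the single test vector $w = \lambda(z)$ itself (where $w_{z^\perp} = 0$ and $p_z(\lambda(z)) = -\tfrac{1}{\sqrt2}(\lambda(z),\lambda(z))_\Delta$), plus on one vector in $z^\perp$, which pins down the formula by bilinearity.

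Concretely, the steps are: (1) evaluate $Q(w(z)) = N\det w(z)$ directly from the given matrix for $w(z)$ and confirm it equals $-1$, so $\lambda(z)$ is a $Q_\Delta$-unit anti-vector; (2) express the $z$-projection of an arbitrary $w$ as a scalar multiple of $\lambda(z)$ with the scalar read off from $(w,\lambda(z))_\Delta$; (3) use Pythagoras $Q_\Delta(w) = Q_\Delta(w_z) + Q_\Delta(w_{z^\perp})$ together with the negative-definiteness of $z$ to solve for $Q_\Delta(w_{z^\perp})$; (4) match this against $p_z(w)^2$ using the definition of $p_z$, and separately cross-check the constant by plugging in the explicit coordinates $w = \kzxz{b/2N}{-a/N}{c}{-b/2N}$, where the paper has already recorded $p_z(w) = -\frac{1}{2\sqrt{N|\Delta|}\,y}(Nc|z|^2 - bx + a)$. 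The main obstacle is purely bookkeeping: getting every factor of $\sqrt2$, $\sqrt N$, $\sqrt{|\Delta|}$, and $y$ to land correctly, and being careful that $z^\perp$ here is the $(1,1)$-signature complement (so $w_{z^\perp}$ need not be positive), which is why the identity is stated with $|p_z(w)|$ and an outer square root rather than as a signed equality. There is no conceptual difficulty beyond the standard fact that $Q_\Delta$ restricted to the line $\R\lambda(z)$ is $(-1)$ times the square of the coordinate.
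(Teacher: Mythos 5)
Your overall strategy (project $w$ onto the one-dimensional line spanned by the normalized vector and match the result against $p_z(w)^2$) is in fact the paper's proof, but the orientations in your sketch are flipped, so the normalization constants cannot close as written.

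First, $Q(w(z))=+1$, not $-1$: a direct evaluation gives $\det w(z)=\frac{1}{Ny^2}\big((-x)(x)-(z\bar z)(-1)\big)=\frac{-x^2+|z|^2}{Ny^2}=\frac{1}{N}$, so $Q(w(z))=N\det w(z)=1$, hence $(w(z),w(z))_\Delta=\tfrac{2}{|\Delta|}>0$ and $Q_\Delta(\lambda(z))=+\tfrac12$. Consequently $w(z)$ and $\lambda(z)$ are \emph{positive}-norm vectors; since $V$ has signature $(1,2)$ and $z\in\D$ is the negative-definite two-plane, the line $\R\,w(z)$ is $z^\perp$, not $z$. So the projection you should express as a scalar multiple of $\lambda(z)$ is $w_{z^\perp}$, not $w_z$ --- the latter lives in a two-dimensional subspace and cannot be a multiple of a single vector. (Also, $z^\perp$ has signature $(1,0)$, not $(1,1)$; it is positive-definite of rank one, which is exactly why $\sqrt{Q_\Delta(w_{z^\perp})}$ is well defined.) With these corrections your plan does close and reproduces the paper's computation: $w_{z^\perp}=\frac{(w,w(z))_\Delta}{(w(z),w(z))_\Delta}\,w(z)=\frac{|\Delta|}{2}(w,w(z))_\Delta\,w(z)$, so $Q_\Delta(w_{z^\perp})=\frac{|\Delta|}{4}(w,w(z))_\Delta^2$, while $p_z(w)^2=\tfrac{1}{2}(w,\lambda(z))_\Delta^2=\frac{|\Delta|}{4}(w,w(z))_\Delta^2$. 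The sanity check against the coordinate formula $p_z(w)=-\frac{1}{2\sqrt{N|\Delta|}\,y}(Nc|z|^2-bx+a)$ that you already proposed would indeed have surfaced these slips.
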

\begin{proof}
For  $w =\kzxz
    {w_{1}}{w_{2}}
     {w_{3}}{-w_{1}}\in V(\R)$,  we have
\begin{equation} \label{formula1}
(w, w(z))_{\Delta}=-\frac{\sqrt{N}}{y |\Delta|}(w_3z\overline{z}-w_1(z+\overline{z})-w_2).
\end{equation}
Since 
\begin{equation}
(w(z), w(z))_{\Delta}=\frac{2}{|\Delta|},
\end{equation}
we have 
$$w_{z^\bot}=-\frac{\sqrt{N}}{2y }\big(w_3z\overline{z}-w_1(z+\overline{z})-w_2 \big)w(z).$$
Then we obtain the result.
\end{proof}

We define \begin{equation}
R(w, z)_{\Delta} =-(w_z, w_z)_{\Delta},
\end{equation}
and it can be written as \begin{align} \label{eq3.5}
R(w, z)_{\Delta}&=\frac{1}{2}(w, w(z))_{\Delta}^{2}-(w, w)_{\Delta}.
\end{align}

We consider the associated  majorant
\begin{equation}
(w, w)_{ z}= (w_{z^\perp}, w_{z^\perp})_{\Delta} + R(w, z)_{\Delta},
\end{equation}
which is a positive definite quadratic form on the space  $V(\R)$.

The following  Gaussian $\varphi_\infty$ belongs to  $S(V(\R))$,
\begin{equation}
\varphi_\infty(w, z )=e^{-\pi (w, w)_{ z}}.
\end{equation}
We denote
\begin{eqnarray} \label{eq3.5}
\varphi_{\Delta}^S( w, \tau, z)&=&ve(Q_{\Delta}(w)\tau)e^{-2\pi R(\sqrt{v}w, z)_{\Delta}}\nonumber\\
&=&ve(Q(w_{z^\perp})\tau+Q(w_z)\bar\tau).
\end{eqnarray}

By the Weil representation $\omega$, we have 
\begin{equation}
v^{\frac{1}{4}}\omega(g_\tau)\varphi_\infty(w, z )=\varphi_{\Delta}^S( w, \tau, z),
\end{equation}
where $g_\tau=\kzxz{1}{u}{}{1}\kzxz{v^{\frac{1}{2}}}{}{}{v^{-\frac{1}{2}}}$ and $\tau =u+iv \in \H$.

\begin{comment}
Let $\mu(z) = \frac{dx dy}{y^2}$ be the Poincare measure on $\H$.  For $\tau= u + iv \in \H$, let
$ g_{\tau}^{'} =(n(u) m(\sqrt v), 1) \in \Mp_{2, \R}$, where
$$
n(u) =\kzxz {1} {u} {0} {1}, \quad m(a) =\kzxz {a} {0} {0} {a^{-1}}.
$$
\end{comment}
Let
$$\label{eq3.5}
\varphi^{0}_{\Delta}(w, z)=\bigg((w, w(z))_{\Delta}^{2}- \frac{1}{2\pi}\bigg)e^{-2\pi R(w, z)_{\Delta}}\mu(z) \notag
$$
and
\begin{equation}
 \varphi_{\Delta}(w, \tau, z) = e(Q_{\Delta}(w)\tau) \varphi^0_{\Delta}(\sqrt v w, z) ,
\end{equation}
be the  differential forms on $V(\R)$, where $\mu(z) =\frac{dx \, dy}{y^2}$.

The Siegel theta function, Millson theta function and  the Kudla-Millson theta function are defined as follows:
\begin{eqnarray}
\Theta(\tau, z)&=&\sum_{\delta \in L^{\sharp}/L^{\Delta}}\sum_{w \in  L^{\Delta}_\delta} \varphi_{\Delta}^S( w, \tau, z)e_\delta\nonumber\\
&=&v\sum_{\delta \in L^{\sharp}/L^{\Delta}}\sum_{w \in  L^{\Delta}_\delta} e(Q(w_{z^\perp})\tau+Q(w_z)\bar\tau),
\end{eqnarray}
\begin{equation}\label{thetamill}
\Theta^{\mathcal M}(\tau, z)=\sum_{w \in  L^{\Delta}_\delta}p_z(w)\varphi_{\Delta}^S( w, \tau, z)e_\delta,
\end{equation}
and
\begin{align}
\Theta^{KM}(\tau, z) &=\sum_{\delta \in  L^\sharp/L^\Delta} \sum_{w\in  L^\Delta_\delta} \varphi_{\Delta}( w, \tau, z)e_\delta ,
\end{align}
where $L^{\Delta}_\delta= \Delta L+\delta$. These theta functions can be defined by adelic language, and we will introduce them in the later sections.
%The Millson theta function is defined by
%\begin{equation}
%\Theta^{\mathcal M}(\tau, z)=v\sum_\mu\sum_{\lambda \in L^\delta_\delta}p_z(\lambda)e(Q_\Delta(\lambda_{z^\perp})\tau+Q_\Delta(\lambda_z)\bar\tau)e_\delta.
%\end{equation}
%It is a modular form of weight $1/2$ in $\tau$ and transforms with the representation $\rho_\Delta$.
% Kudla and Millson  considered this function in \cite{KM2}.

The twisted  Siegel theta function is defined as
\begin{eqnarray} \label{siegeltheta}
\Theta_{\Delta, r}(\tau, z)= \psi(\Theta(\tau, z)),
\end{eqnarray}
which is a modular form  of weight $-1/2$  associated with the representation $(\Gamma',  \tilde{\rho}_L)$ with respect to  $\tau$. Furthermore, it  is $\Gamma_0(N)$-invariant as a function of $z$.

Similarly, the  weight $1/2$ twisted Millson theta function and weight $3/2$ twisted Kudla-Millson theta function
are  defined  in  \cite[Section 4]{AE} as,
\begin{equation}
\Theta^{\mathcal M}_{\Delta, r}(\tau, z, h)=\psi (\Theta^{\mathcal M}(\tau, z) ),
\end{equation}
and
\begin{equation} \label{eq:TwistedTheta}
\Theta_{\Delta, r}^{KM}(\tau, z)
=\psi(\Theta_{L^\Delta}^{KM}(\tau, z) ).
\end{equation}
%where \begin{eqnarray}\Theta_{\Delta, \rho, \mu}(\tau, z)&=&\sum_{\substack{\delta \in L^{\sharp}/L^{\Delta}\\ \delta\equiv \rho \mu( L)\\ \atop Q_{\Delta}(\delta)\equiv \sgn(\Delta)Q(\mu)\pmod \Z}}\chi_{\Delta}(\delta)\Theta_\delta(\tau, z)\nonumber\\
%&=&\sum_{w \in L_{\rho\mu}}\chi_{\Delta}(w) \varphi_{\Delta}( w, \tau, z)???.\end{eqnarray}
These two theta functions take values in $ \Omega^{1, 1}(X_{\Gamma}) $.

It is important to note that when $\Delta=1$, we often omit the indices $\Delta$ and $r$.

\subsection{Green functions}

 We consider $X$ as a compact Riemann surface, with $\Omega^1$ representing the sheaf
of holomorphic $1$-forms. The global sections on $X$  are denoted by $\Gamma(X, \Omega^1)$, which
possesses a scalar product
$$\langle \omega_1, \omega_2 \rangle=\frac{i}{2}\int_{X}\omega_1 \wedge \overline{\omega_2}.$$
We denote the orthogonal basis by $\{f_1dz,...,f_gdz\}$ with the genus $g$,
and the Arakelov canonical $(1, 1)$-form
$$\nu_{can}=\frac{\sqrt{-1}}{2g}\sum_{i=1}^{i=g}|f_i|^2dz\wedge d\bar{z}.$$
\begin{comment}
There is a unique function $G(P, Q)$ on $X\times X$ satisfies
\begin{itemize}
\item[1)] vanishes at the diagonal $\Delta_X$ of $X \times X$,
\item[2)]$\partial_P\bar{\partial}_P\log( G(P, Q))=2\pi \mu,$
\item[3)]$\int_{X}\log(G(P, Q))d\mu_P=0.$
\end{itemize}
$G$ is the so-called  Arakelov-Green function.
Let us denote, from now on,  \begin{equation}g(P, Q):=\log G(P,Q )~and~g_P:=\log G_P.\end{equation}
Let $z$ be the local parameter at $P$ such that $z(P)=0$, then $$g_{P}( Q)=\log |z(Q)|+smooth.$$
\end{comment}

A form $\nu$ is called volume form, if it is smooth, positive, real $(1, 1)$-form with $\int_X\nu=1$.

A $\nu$-admissible Green function $g(z_1, z_2)$ is a real valued function on $X\times X$ and smooth  out the diagonal $\Delta_X$ of $X \times X$ and satisfies
\begin{itemize}
\item[1)]Near the diagonal it has the following expansion $$g(z_1, z_2)=-\log |z_1-z_2|^2+smooth.$$
\item[2)] $g(z_1, z_2)=g(z_2, z_1)$.
\item[3)]It has the current equation
$$d_{z_1}d_{z_1}^c[g(z_1, z_2)]+\delta_{z_2}=[\nu(z_1)].$$
\end{itemize}
It is normalized if $$\int_{X}g(z_1, z_2)\nu(z_1)=0.$$
The generalized Green function $g(z_1, z_2)$ if the item $3)$ generalized to
$$d_{z_1}d_{z_1}^c[g(z_1, z_2)]+\delta_{z_2}=[\mu(z_1)]$$ for some   $(1, 1)$-form $\mu$.

From now on, we denote $g_{z_1}(z_2)=g(z_1, z_2)$. For any given divisor $D=\sum n_i P_i$, we will denote $$g_D=\sum n_i g_{P_i}.$$
Suppose that $g_{z_1}(z_2)$ is $\nu$-admissible.   In this case, the following equality holds
$$d_{z_1}d_{z_1}^c[g_D( z_2)]+\delta_{D}=\deg(D)[\nu(z_1)].$$
The positive elliptic Laplacian differential operator $\Delta_z$ is defined as
\begin{equation}
dd^cf=\frac{\sqrt{-1}}{2\pi}\partial\bar{\partial}f=\frac{1}{2}\Delta_z f\nu.
\end{equation}
Here $d^c= \frac{i}{4\pi}(\bar{\partial}-\partial)$ and $d=\bar{\partial}+\partial$.

\section{Arithmetic theta function}\label{Sec2}
The arithmetic theta function $\widehat{\phi}_{\Delta, r}(\tau)$ is defined  in \cite{DY1} and \cite{DY2}. In this
section, we  write it as 
$$ \widehat{\phi}_{\Delta, r}= \widehat{\phi}_{\MW}+\deg(\widehat{\phi}_{\Delta, r})\widehat{\mathcal P}_\infty    + \widehat{\phi}_{\text{Vert}} + a(\phi_{SM}).$$  
Each component of this summation is a vector valued modular form  of weight $3/2$.  
\subsection{Arithmetic intersection on $\mathcal X_0(N)$}

Recall the definition in \cite{KM}, let $\mathcal{Y}_{0}(N)$ $ ( \mathcal{X}_{0}(N) )$ be the moduli stack over $\Z$ of cyclic isogenies of degree $N$ of elliptic
curves (generalized elliptic curves) $\pi : E\rightarrow E^{\prime}$, such that $\ker \pi$ meets every irreducible component of each geometric fiber.
The stack $\mathcal{X}_{0}(N)$ is regular, flat over $\Z$ and is smooth over $\Z[\frac{1}N]$. Notice that $\mathcal{X}_{0}(N)(\C)=X_{0}(N)$.

Let $D=-4Nm $ be a discriminant and the order $\OO_{D}=\Z[\frac{D+\sqrt D}2]$ of discriminant $D$. Assume that $D\equiv r_\mu^2 \mod 4N$ and $\mu=\kzxz{\frac{r_\mu}{2N}}{}{}{-\frac{r_\mu}{2N}}$. Then  $\mathfrak {n}=[N, \frac{r_\mu+\sqrt D}2]$ is an ideal of $\OO_{D}$ with norm $N$.

   Let $\mathcal Z(m, \mu)$ be the moduli stack over $\Z$ of the pairs $(x, \iota)$ that defined in \cite[Section 7]{BY}, where
\begin{enumerate}
  \item   $x=(\pi: E \rightarrow E') \in \mathcal Y_0(N)$,
  \item $
\iota: \OO_{D} \hookrightarrow \End(x)=\{ \alpha \in  \End(E) : \pi \alpha \pi^{-1} \in \End(E^{\prime}) \}
$
is a CM action of $\OO_D$ on $x$ satisfying $\iota( \mathfrak {n}) \ker \pi =0$.
\end{enumerate}
It actually descends to a DM stack over $\Z$.
%It is smooth of dimension $1$.
The forgetful map $$\mathcal{Z}(m , \mu) \rightarrow \mathcal{X}_{0}(N)$$  $$  (\pi : E\rightarrow E^{\prime}, \iota)\rightarrow (\pi : E\rightarrow
E^{\prime}) $$
is a finite   and \'etale map. Then $\mathcal{Z}(m , \mu)$ is a  Cartier divisor on  $\mathcal X_0(N)$.
\begin{lemma}\cite[Lemma 6.10]{BEY}
Let  $c$ be the conductor of the order $\OO_D$ and $N'=(N, c)$.
Let $\bar{Z}(m, \mu)$ be the Zariski closure of $Z(m, \mu)$ in $ \mathcal X_0(N)$. Then there exists an isomorphism
$$\mathcal{Z}(m , \mu)\cong \bar Z(m, \mu)$$
as stacks over $\Z[\frac{1}{N'}]$.
\end{lemma}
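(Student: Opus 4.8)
The plan is to compare the two divisors through the forgetful morphism $\iota\colon\mathcal Z(m,\mu)\to\mathcal X_0(N)$, $(\pi\colon E\to E',\iota_0)\mapsto(\pi\colon E\to E')$, which we already know to be finite. I would proceed in three steps: first identify the generic fibre $\mathcal Z(m,\mu)_\Q$, together with $\iota_\Q$, with the Heegner divisor $Z(m,\mu)\subset X_0(N)$; second, use the deformation theory of the pairs $(\pi\colon E\to E',\iota_0)$ to show that over $\Z[1/N']$ the morphism $\iota$ induces an isomorphism of $\mathcal Z(m,\mu)$ onto a closed subscheme of $\mathcal X_0(N)$ which is flat over $\Z[1/N']$; third, conclude from the uniqueness of the flat closure of $Z(m,\mu)$ that this closed subscheme must be $\bar Z(m,\mu)$.

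For the first step, over $\Q$ a point of $\mathcal Z(m,\mu)$ is a pair $(\pi\colon E\to E',\iota_0)$ with $\iota_0\colon\OO_D\hookrightarrow\End(E)$ a CM action by the order $\OO_D$ of discriminant $D=-4Nm$ satisfying $\iota_0(\mathfrak n)\ker\pi=0$, where $\mathfrak n=[N,\tfrac{r_\mu+\sqrt D}{2}]$. Attaching to such a pair the CM point it defines on $X_0(N)$ and invoking the classical dictionary between CM elliptic curves with $\Gamma_0(N)$-structure and $\Gamma_0(N)$-classes of oriented integral binary quadratic forms of discriminant $D$, one checks that the image is exactly the set of Heegner points $z(w)$, $w\in L_\mu[m]$, with the multiplicities of (\ref{TwistedHeegner}) (taken with $\Delta=1$); the constraint imposed by the fixed ideal $\mathfrak n$ is what pins down the residue class $\mu\in L^\sharp/L$. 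Hence $\iota_\Q$ is an isomorphism onto $Z(m,\mu)$.

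The second step is the heart of the argument, and the part I expect to be hardest. At each geometric point of a fibre of $\mathcal Z(m,\mu)$ above a rational prime $p\nmid N'$ one analyses the local moduli problem of deforming $(\pi\colon E\to E',\iota_0)$: in the ordinary case via Serre--Tate theory, and in the supersingular case via Grothendieck--Messing theory together with Gross's description of canonical and quasi-canonical liftings. One wants to extract from this that over $\Z[1/N']$ the stack $\mathcal Z(m,\mu)$ is flat, that $\iota$ is unramified, and that $\iota$ becomes an isomorphism onto its scheme-theoretic image, whose completed local rings then agree with those of the flat closure of $Z(m,\mu)$. The reason one inverts precisely $N'=(N,c)$ — and nothing more — should be that a prime $p$ dividing both the level $N$ and the conductor $c$ is exactly where the cyclic $N$-isogeny structure interacts with the non-maximality of $\OO_D$ at $p$: away from such primes the two data decouple, so no discrepancy with the closure can arise, whereas at primes dividing $N'$ the moduli model and the closure genuinely need not agree.

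Granting the first two steps, the third is formal: the image $\iota(\mathcal Z(m,\mu))$ is a closed subscheme of $\mathcal X_0(N)_{\Z[1/N']}$, flat over the Dedekind ring $\Z[1/N']$, with generic fibre $Z(m,\mu)$; since the flat closure of $Z(m,\mu)$ inside $\mathcal X_0(N)_{\Z[1/N']}$ is the unique closed subscheme with those two properties, it equals $\bar Z(m,\mu)$, and $\iota$ thus yields the desired isomorphism $\mathcal Z(m,\mu)\xrightarrow{\sim}\bar Z(m,\mu)$ of stacks over $\Z[1/N']$. The one genuinely delicate point, as indicated, is carrying out the supersingular deformation theory at the primes dividing $N$ and verifying that $(N,c)$ is exactly the set of primes one is forced to exclude, rather than something coarser such as all primes dividing $N$ or all primes dividing $D$.
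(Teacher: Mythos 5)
The paper does not give a proof of this lemma: it is quoted verbatim from Bruinier--Ehlen--Yang \cite[Lemma 6.10]{BEY} and used as a black box, so there is no ``paper's own proof'' to compare against. What can be said is the following about your sketch on its own terms.

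Your three-step architecture --- identify the generic fibre with $Z(m,\mu)$, show that over $\Z[1/N']$ the forgetful map $\iota$ lands in a flat closed substack, and appeal to uniqueness of the flat closure over a Dedekind base --- is the correct shape of an argument, and step 3 is indeed formal once steps 1 and 2 are in hand. Step 1 also looks essentially right, though you should be careful that the identification is as stacks (the orbifold multiplicities $2/|\Gamma_w|$ in the definition of $Z(m,\mu)$ have to match the automorphism groups of the objects $(\pi\colon E\to E',\iota_0)$, including the extra factor coming from $\pm 1\subset\OO_D^\times$), not just as 0-cycles with multiplicity.

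The genuine gap is exactly where you flag it: step 2 is stated as a wish, not proved. ``One analyses the local moduli problem... one wants to extract from this that $\mathcal Z(m,\mu)$ is flat, that $\iota$ is unramified, and that $\iota$ becomes an isomorphism onto its image'' is the entire content of the lemma, and you give no computation that establishes it. Concretely, you would need to: (i) write down the deformation functor of a triple $(\pi\colon E\to E',\iota_0)$ over an Artinian local $\Z[1/N']$-algebra at each closed point; (ii) show, using Serre--Tate in the ordinary case and Gross's quasi-canonical lifting theory in the supersingular case, that this functor is pro-represented by a complete local ring that is flat and finite over $W(\overline{\F}_p)$; (iii) show that the map to the completed local ring of $\mathcal X_0(N)$ is a closed immersion and that its image agrees with that of the horizontal closure; and (iv) identify precisely at which primes $p$ the above fails, and verify that the bad set is $\{p\colon p\mid (N,c)\}$ rather than, say, $\{p\mid N\}$ or $\{p\mid D\}$. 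Item (iv) in particular requires an actual local computation interweaving the cyclic $N$-level structure with the non-maximality of $\OO_D$ at $p$; the heuristic you offer (``away from such primes the two data decouple'') is plausible but is not a proof. Until this is carried out, the sketch does not establish the lemma.
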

 
 We assume that $N$ is square free for easier.
When $p|N$, the special fiber  $\mathcal X_0(N) \pmod p$ has two irreducible components $\mathcal X_p^\infty$ and $\mathcal X_p^0$. Here we denote the component which contain the cusp $\mathcal P_\infty \pmod p$ and $\mathcal P_0 \pmod p$ by $\mathcal X_p^\infty$ and $\mathcal X_p^0$,
and $\mathcal P_\infty$ and $\mathcal P_0$ are Zariski closure of cusp infinity and zero.

Following the Gillet-Soul\'e intersection theory  \cite{GS},
a height  pairing has been  established for
the arithmetic Chow group $\widehat{\CH}_\R^1(\mathcal X_0(N))$.
 However, this height  pairing is insufficient.
% The arithmetic Heegner divisors $\widehat{Z}_{\Delta, r}(n, \mu, v) \in \widehat{\CH}_\R^1(\mathcal X_0(N), S)$
It  has been extended to arithmetic divisors with log-log singularities  (\cite{BKK}, \cite{Kuhn2}),
 and arithmetic divisor with $L_1^2$-Green functions \cite{Bost}.
Similarly as in the work \cite{DY1},   we will employ  K\"uhn's method here.

Let S=\{cusp\} and let  $\widehat{\CH}_\R^1(\mathcal X_0(N), S)$  denote the quotient of the $\R$-linear combinations of the arithmetic divisors of $\mathcal  X_0(N)$ that exhibit log-log growth along $S$ divided by $\R$-linear combinations of the principal arithmetic divisors with log-log growth along $S$. For an arithmetic divisor
 $\widehat{\mathcal Z}=(\mathcal Z, g) \in \widehat{\CH}_\R^1(\mathcal X_0(N), S)$ with  log-log-singularity along $S$,  the function $g$ is  smooth  on $X_{0}(N)\setminus \{\mathcal Z(\C)\cup S \}$,
%One has $\widehat{\Pic}_\R(\mathcal X_0(N), S) \cong \widehat{\CH}_\R^1(\mathcal X_0(N), S)$.
%A pair $\widehat{\mathcal Z}=(\mathcal Z, g)$ is called an arithmetic divisor with  log-log-singularity (along $S$) if $\mathcal Z$ is a divisor of $\mathcal X$, and $g$ is a smooth function away from $Z\cup S$ ($Z=\mathcal Z(\C)$), 
and satisfies the following conditions:
 \begin{align}
 dd^c [g] + \delta_Z =[\omega]&,
 \\
~~near  ~ S_j, ~~
 g(t_j) = -2 \alpha_j \log\big( -\log(|t_j|^2)\big)&-2 \beta_j \log |t_j| -2 \psi_j(t_j)  
 \end{align}
 where $\psi_j$ is a  smooth function, $\omega$ is a $(1, 1)$-form which is smooth away from $S$, and $t_j$ is a local parameter at cusp $S_j$.

There exists an extension  height paring \cite[Proposition 1.4]{Kuhn1}
\begin{equation}\label{equinter}
\widehat{\CH}_\R^1(\mathcal X_0(N), S) \times \widehat{\CH}_\R^1(\mathcal X_0(N), S) \rightarrow \R,
\end{equation}
such that if $\mathcal Z_1$ and $\mathcal Z_2$ are   divisors  intersect properly, then
$$
\langle (\mathcal Z_1, g_1),  (\mathcal Z_2, g_2)  \rangle_{GS}
=(\mathcal Z_1.\mathcal Z_2)_{fin} + \frac{1}2 g_1*g_2.
$$
The star product is defined as
\begin{align}
g_1*g_2&= g_1( Z_2 -\sum_{j} \ord_{S_j} (Z_2) S_j)
 + 2 \sum_{j} \ord_{S_j} Z_2 \left( \alpha_{1, j} -\psi_{1, j}(0)\right)\nonumber
 \\
&\quad  -\lim_{\epsilon \rightarrow 0} \left( 2 \sum_j (\ord_{S_j}Z_2) \alpha_{1, j} \log(-2 \log \epsilon)-\int_{X_\epsilon} g_2 \omega_1\right).
\end{align}
 For $\epsilon >0$, let $B_\epsilon(S_j)$ be the open disc of radius $\epsilon$  centered at $S_j$, and $X_\epsilon= X_0(N)- \bigcup_{j} B_\epsilon(S_j)$.

%Here  $Z_i=\mathcal Z_i(\C)$, $\widehat{\mathcal L}_i$ is the associated metrized line bundle with the canonical section $s_i$. $\alpha_{\mathcal L_i, j}$ and $\psi_{i, j}$ are associated to $g_i$ and  cusp $S_j$.  Finally, $\omega_i$ is the $(1, 1)$-form associate to $g_i$ via  the following equation
%$$
%d d^c [g_i] + \delta_{Z_i} = [\omega_i].
%$$

%define $\widehat{\Pic}_\R(\mathcal X_{0}(N), S)$ be the group of metrized line bundles with log singularity along $S$ with $\R$-coefficients.
%

%For an arithmetic divisor $\widehat{\mathcal Z}=(\mathcal Z, g)$ with  log-log-singularity along $S$,  $g$ is a smooth function on $X_{0}(N)\setminus \{\mathcal Z(\C)\cup S \}$, and satisfying the following conditions:
% \begin{align*}
% dd^c g& + \delta_{\mathcal Z(\C)} =[\omega],
% \\
% g(t_j) = -2 \alpha_j \log (-\log(|t_j|^2)&-2 \beta_j \log |t_j| -2 \psi_j(t_j)  \quad \hbox{near  } S_j,
% \end{align*}
% for some smooth function  $\psi_j$ and some $(1, 1)$-form $\omega$ which is smooth away from $S$. Here $t_j$ is local parameter.
%
We  view  the metrized line bundle as an arithmetic divisor. 
%$$\widehat{\mathcal Z}=(div(s), -\log| s|_{Pet}^2),$$ with canonical section $s$.
Let $\omega_N$ be the Hodge bundle and $ \mathcal M_k(\Gamma_0(N))$  be the line bundle of weight $k$ modular form on $\mathcal X_0(N)$.  The normalized Petersson  metric for modular forms  gives a metrized line bundle  \begin{equation}\widehat{\omega}_N^k\cong \widehat{\mathcal M_k}(\Gamma_0(N)).
\end{equation}
%It could be viewed as an arithmetic divisor  under the isomorphism $\widehat{\Pic}_\R(\mathcal X_0(N), S) \cong \widehat{\CH}_\R^1(\mathcal X_0(N), S)$.
%Here $\widehat{\CH}_\R^1(\mathcal X, S)$ is the
% $\R$ coefficients arithmetic Chow group of $\mathcal  X_0(N)$ with  log-log growth along  cusps $S$.

 For a modular form $f$ of weight $k$, the  normalized Petersson norm is defined by
\begin{equation} \label{eq1.11}
\| f(z)\|_{Pet} = |f(z) (4 \pi e^{-C} y)^{\frac{k}2}|
\end{equation}
where $C=\frac{\log 4\pi +\gamma}{2}$ and $\gamma$ is Euler constant.
The modular  form $\Delta_N(z) $ is constructed  in \cite[(1.6)]{DY1},
\begin{equation} \label{eq:DeltaN}
\Delta_N(z) =\prod_{t|N} \Delta(t z) ^{a(t)}
\end{equation}
with
$$
a(t) = \sum_{r|t} \mu(\frac{t}r) \mu(\frac{N}r) \frac{\varphi(N)}{\varphi(\frac{N}r)},
$$
where $\mu(n)$ is the the M\"obius function, $\varphi(N)$ is the Euler function and weight $k=12\varphi(N)$.
Then  we can identify
\begin{equation}\widehat{\omega}_N=\frac{1}{k}(\Div(\Delta_N), -\log \| \Delta_N\|_{Pet}^2),\end{equation}
where
\begin{equation}
\Div\Delta_{N}=\frac{tk}{12}\mathcal{P}_{\infty}-k \sum_{p|N}\frac{p}{p-1}\mathcal{X}_{p}^{0},~ ~~t=N\prod_{p| N}(1+p^{-1}).
\end{equation}

%This  arithmetic divisor  has `log-log singularity' along cusp $P_{\infty}$. Here $$\Div(\Delta_N)= N\varphi(N)\prod_{p| N}(1+p^{-1}) P_{\infty} - 12 \varphi(N)\sum_{p \mid N}\frac{p}{p-1} \mathcal{X}_{p}^{0}.$$
\subsection{Arithmetic theta function}

For $r>0$ and $s \in \R$, let
\begin{equation}\label{belta}
\beta_s(r) =\int_1^\infty e^{-rt } t^{-s} dt ,
\end{equation}
and
\begin{equation}
\xi_{\Delta}(w, z)=\beta_1(2\pi R(w, z)_ \Delta).
\end{equation}
For $n \in \sgn(\Delta)Q(\mu)+\Z$,  the twisted Kudla's Green functions is defined in\cite[Section 3]{DY2} as

\begin{equation} \label{eq:TWGreen}
\Xi_{\Delta, r}(n , \mu, v)(z)=\sum_{0\ne w\in L_{r\mu}[n|\Delta| ]}\chi_{\Delta}(w)\xi_{\Delta}(\sqrt{v}w, z),
\end{equation}
which is a $\Gamma_0(N)$-invariant function as $z$.

\begin{theorem}\label{Kudlagreen}
\
 1) \cite{Kucentral}When $ n>0$, $\Xi_{\Delta, r}(n , \mu, v)(z)$  is a Green function for $Z_{\Delta, r}(n, \mu)$ on $Y_{0}(N)=\Gamma_0(N) \backslash \H$, and satisfies the following current equation,
\begin{equation}d d^c  [\Xi_{\Delta, r}(n , \mu, v)(z)] +\delta_{Z_{\Delta, r}(n, \mu)}=[\omega_{\Delta, r}(n, \mu, v)],\nonumber
\end{equation}
where  $\omega_{\Delta, r}(n, \mu,v)$ is the differential form
\begin{equation}
\omega_{\Delta, r}(n, \mu,v)=\sum_{w \in L_{r\mu}[n |\Delta|]}\chi_{\Delta}(w)\varphi^{0}_{\Delta}(w, z).\nonumber
\end{equation}
Moreover,  when $ n\le 0$, $\Xi_{\Delta, r}(n , \mu, v)(z)$ is smooth on $Y_0(N)$.\\
2)\cite{DY1}\cite{DY2} Around cusps, 
$$\Xi_{\Delta, r}(n , \mu, v)(z)~has~\begin{cases}
\log~singularities &\ff \Delta=1, -4Nn=\square>0,\\
\log(-\log)~singularities &\ff \Delta=1, n=0,\\
 no~singularities &\ff others.
\end{cases}$$

\end{theorem}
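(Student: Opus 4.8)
The plan is to reduce both assertions to a pointwise analysis of the individual summands $\xi_\Delta(\sqrt v\, w, z)=\beta_1\!\big(2\pi R(\sqrt v\, w, z)_\Delta\big)$ of \eqref{eq:TWGreen}, where $\beta_1(r)=\int_1^\infty e^{-rt}t^{-1}\,dt$ (cf.\ \eqref{belta}) is smooth, rapidly decreasing as $r\to\infty$, and satisfies $\beta_1(r)=-\log r-\gamma+O(r)$ as $r\to 0^+$, with $\beta_1'(r)=-e^{-r}/r$. From the identity $R(w,z)_\Delta=\tfrac12(w,w(z))_\Delta^2-(w,w)_\Delta$ one reads off that $R(w,z)_\Delta\ge 0$ on $\H$, that it vanishes exactly when $w$ spans a positive line and $z=z(w)$, that $R(w,z)_\Delta\ge -(w,w)_\Delta>0$ on all of $\H$ when $Q(w)<0$, and that $R(w,z)_\Delta>0$ throughout $\H$ when $Q(w)=0$, $w\ne 0$. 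Hence for $Q(w)>0$ the function $\xi_\Delta(\sqrt v\, w, z)+\log|z-z(w)|^2$ extends smoothly across $z(w)$ while $\xi_\Delta(\sqrt v\, w, z)$ is smooth on $\H\setminus\{z(w)\}$, and for $Q(w)\le 0$, $w\ne 0$, it is smooth on all of $\H$.

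For the current equation in part~1) I would run Kudla's archimedean computation (\cite{Kucentral}, cf.\ also \cite{KRYBook}): away from $z(w)$, applying $dd^c$ to $\beta_1(2\pi R)$ and using $\beta_1'(r)=-e^{-r}/r$ together with the standard differential identities for $(w,w(z))_\Delta$ and $R(w,z)_\Delta$ relative to $\mu(z)$ produces exactly $\varphi^0_\Delta(w,z)$, while near $z(w)$ the $-\log|z-z(w)|^2+C^\infty$ behaviour contributes the Dirac mass $\delta_{z(w)}$ by Poincar\'e--Lelong. Since $\chi_\Delta$ is $\Gamma_0(N)$-invariant, and $R(w,z)_\Delta$ grows while $\beta_1$ decays exponentially in $w$, the sum $\sum_{0\ne w\in L_{r\mu}[n|\Delta|]}\chi_\Delta(w)\xi_\Delta(\sqrt v\, w, z)$ converges locally uniformly on $Y_0(N)$ away from the finitely many points $z(w)$, and $\sum_w\chi_\Delta(w)\varphi^0_\Delta(w,z)$ converges to the smooth form $\omega_{\Delta,r}(n,\mu,v)$; adding the term-wise current equations, with the orbifold weights built into $Z_{\Delta,r}(n,\mu)$ as in \eqref{TwistedHeegner}, gives 1). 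When $n\le 0$ no $w\in L_{r\mu}[n|\Delta|]$ has $Q(w)>0$, so there is no interior singularity and $\Xi_{\Delta,r}(n,\mu,v)$ is smooth on $Y_0(N)$.

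For part~2) I would analyze $\Xi_{\Delta,r}(n,\mu,v)$ near a cusp $\kappa$; conjugating so that $\kappa=\infty$ and writing $q=e^{2\pi i z}$, $z=x+iy$, the $\Gamma_0(N)$- and Atkin-Lehner-invariance of $\chi_\Delta$ reduces all cusps to this one. If $n>0$, every positive $w\in L_{r\mu}[n|\Delta|]$ has nonzero lower-left entry in the coordinates \eqref{equlattice}, so $R(w,z)_\Delta\to\infty$ and hence $\Xi_{\Delta,r}(n,\mu,v)\to 0$ as $y\to\infty$: no singularity. If $n<0$, then $R(w,z)_\Delta$ is bounded on all of $\H$, and as $y\to\infty$ it is exactly the vectors $w$ with vanishing lower-left entry that keep $R(w,z)_\Delta$ near its positive minimum; such vectors occur in $L_{r\mu}[n|\Delta|]$ precisely when $-4Nn$ is a perfect square, in which case unfolding $\sum_w\beta_1(2\pi R(\sqrt v\, w, z)_\Delta)$ over them yields a term of size $\propto y$, i.e.\ of the form $-2\beta_\kappa\log|q|$ with $\beta_\kappa\ne 0$, a log singularity, unless the explicit formula for $\chi_\Delta$ forces that partial sum to vanish, which happens exactly for $\Delta\ne 1$ (cf.\ the cancellations in \cite{BO}, \cite{AE}). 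If $n=0$, the nonzero isotropic $w$ are attached to the cusps themselves; unfolding over the isotropic line through $\kappa$ produces, when $\Delta=1$, a term of the form $-2\alpha_\kappa\log(-\log|q|^2)$ with $\alpha_\kappa\ne 0$, a $\log(-\log)$ singularity --- the archimedean avatar of the $m=0$ regularized theta lift, \cite{Bo}, \cite{Br} --- while for $\Delta\ne 1$ the genus character again forces cancellation. All remaining cases are bounded, which gives the stated trichotomy, in agreement with \cite{DY1}, \cite{DY2}.

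I expect the cusp analysis of part~2) to be the main obstacle: cleanly isolating, after conjugation to $\infty$, the vectors responsible for growth, establishing the precise trichotomy $\log|q|$ / $\log(-\log|q|)$ / bounded by a careful unfolding of $\sum_w\beta_1(2\pi R(\sqrt v\, w, z)_\Delta)$ as $y\to\infty$, and --- most delicately --- proving via the explicit formula for $\chi_\Delta$ that the relevant partial sums vanish when $\Delta\ne 1$. By contrast the interior current equation of part~1) is essentially Kudla's computation together with routine bookkeeping for the twist and the translation to $L_{r\mu}[n|\Delta|]$.
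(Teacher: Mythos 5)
The paper itself gives no proof of this theorem; it is a recollection of results from \cite{Kucentral}, \cite{DY1}, \cite{DY2}. Your proposal is a reconstruction of the arguments in those references, and the strategy — term-wise analysis of $\beta_1(2\pi R(\sqrt v\,w,z)_\Delta)$, Kudla's $dd^c$ computation plus Poincar\'e–Lelong for the interior current equation, and conjugation-to-$\infty$ followed by unfolding over vectors with vanishing lower-left entry for the cusp behaviour — is indeed the route those papers take. The treatment of part~1) and of the $\Delta\ne1$ cancellation in part~2) is correct in spirit (the cancellation at a cusp reduces to averaging $\chi_\Delta([a,b',0])$ over a full set of residues modulo $\Delta$, which vanishes, cf.\ \cite{DY2}).

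There is, however, a genuine gap in your treatment of the case $\Delta=1$, $n=0$. For $w=\kzxz{0}{-a/N}{0}{0}$ one has $R(w,z)_\Delta=\tfrac{a^2}{2Ny^2}$, so the isotropic-line contribution to $\Xi(0,0,v)$ is
\[
\sum_{a\ne 0}\beta_1\!\Bigl(\frac{\pi v a^2}{Ny^2}\Bigr),
\]
and since $\beta_1(r)\sim -\log r$ near $0$ and decays exponentially, this sum is asymptotic to $C\cdot y$ as $y\to\infty$ for an explicit $C>0$ (there are $\asymp y$ essentially-constant terms of size $\asymp\log y$, but the Stirling cancellation leaves the linear term dominant). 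Linear growth in $y$ is a $\log|q|$ singularity, not a $\log(-\log|q|)$ singularity. Your claim that ``unfolding over the isotropic line through $\kappa$ produces \ldots a term of the form $-2\alpha_\kappa\log(-\log|q|^2)$'' therefore does not follow from the computation you outline. The $\log(-\log)$ behaviour recorded in the theorem (and in \cite{DY1}) only emerges after the linear term has been peeled off: in \cite{DY1} this is done by incorporating the correction $\widehat\omega=-2\widehat\omega_N-\sum_{p\mid N}\mathcal X_p^0-(0,\log(v/N))$ into the definition of $\widehat{\mathcal Z}(0,0,v)$, and the surviving $\log(-\log)$ term comes from the $k\log(4\pi e^{-C}y)$ piece of the normalized Petersson norm $\|\Delta_N\|_{Pet}$. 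Your sketch needs to track this cancellation explicitly; as written it asserts the conclusion rather than deriving it.
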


When $n\neq 0$, the 
arithmetic Heegner divisors   are defined in \cite{DY1} and \cite{DY2} as follows,
\begin{equation}
\widehat{\mathcal Z}_{\Delta, r}(n, \mu, v)
 =\begin{cases}
  (\mathcal Z_{\Delta, r}(n, \mu), \Xi_{\Delta, r}(n, \mu, v)) &\ff n >0,
  \\
  (g(n, \mu, v) \sum_{P \hbox{ cusps}} \mathcal P, \Xi_{\Delta, r}(n, \mu, v)) &\ff  -4Nn=\square>0,
 \\
  (0, \Xi_{\Delta, r}(n, \mu, v)) &\ff  others,
 \end{cases}
\end{equation}
where $\mathcal{Z}_{\Delta, r}(n, \mu)$ and $\mathcal P$ are the Zariski closures of $Z_{\Delta, r}(n, \mu)$ and  cusp $P$ in $\mathcal X_0(N)$.  When $\Delta=1$,
 $\Xi_{\Delta, r}(n, \mu, v)$ has log singularities at cusps, and the multiplicity $g(n, \mu, v)$ is given in  \cite{DY1} as follows
$$
g(n, \mu, v) =\begin{cases}
 \frac{\sqrt N}{4 \pi  \sqrt{v}}\beta_{3/2}(-4 nv \pi)   &\ff n \ne 0, \mu \notin  \frac{1}2 L/L,
 \\
  \frac{\sqrt N}{2 \pi  \sqrt{v}}\beta_{3/2}(-4 nv \pi)   &\ff n \ne 0, \mu \in  \frac{1}2 L/L,
 \\
   \frac{\sqrt N}{2 \pi \sqrt{v}}  &\ff n=0,  \mu =0,
 \end{cases}
$$
 when $\Delta\neq 1$, the multiplicity $g(n, \mu, v)=0$.
%\begin{proposition}\cite[Theorem 6.3]{DY1}\cite[Theorem 4.4]{DY2}
% $$  g_{\Delta, \rho}(n, \mu, v)=\begin{cases} 0 &\ff n < 0, D \ne \square, \Delta=1,\\
% \frac{\sqrt{N}}{2\pi\sqrt{v}}\beta_{\frac{3}{2}}(-4nv\pi)&\ff  D=\square, \mu \in \frac{1}{2}L/L,\Delta=1,\\
%  \frac{\sqrt{N}}{4\pi\sqrt{v}}\beta_{\frac{3}{2}}(-4nv\pi) &\ff  D=\square, \mu \notin \frac{1}{2}L/L,\Delta=1,\\
%  \frac{\sqrt{N}}{2\pi\sqrt{v}} &\ff n=0, \mu=0, \Delta= 1,\\
%   0 &\ff \Delta\neq 1,\\
% \end{cases}
% $$
% \end{proposition}

  When $\Delta=1$, we define
 \begin{equation}
 \widehat{\mathcal Z}_{\Delta, r}(0, 0, v)=
  (g_{\Delta, r}(0, 0, v) \sum_{P \hbox{ cusps}} \mathcal P, \Xi_{\Delta, r}(0, 0, v)) + \widehat{\omega} -(0, \log(\frac{v}{N})),
  \end{equation}
 % \begin{eqnarray}
% \widehat{\mathcal Z}_{\Delta, r}(0, 0, v)=
% \begin{cases}
%  (g_{\Delta, r}(0, 0, v) \sum_{P \hbox{ cusps}} \mathcal P, \Xi_{\Delta, r}(0, 0, v)) + \widehat{\omega} -(0, \log(\frac{v}{N}))&\ff \Delta=1,\\
%    (0, \Xi_{\Delta, r}(0, 0, v))&\ff \Delta\neq 1.\nonumber
%  \end{cases}
%  \end{eqnarray}
where 
\begin{equation}
\widehat{\omega}:=-\widehat{\omega}_N-W_N^*\widehat{\omega}_N=-2\widehat{\omega}_N
  - \sum_{p|N} \mathcal X_p^0 ;
\end{equation}
when $\Delta\neq 1$, define 
\begin{equation}
\widehat{\mathcal Z}_{\Delta, r}(0, 0, v)= (0, \Xi_{\Delta, r}(0, 0, v)).
\end{equation}

It is known that   all  these arithmetic divisors belong to  $ \widehat{\CH}^1_\R(\mathcal X_0(N))$, which is the arithmetic Chow group with real coefficients in the sense of Gillet-Soul\'e.  
%  Here $\mathcal X_p^0$ (resp. $\mathcal X_p^\infty)$) is the irreducible component of $\mathcal X_0(N) \pmod p$ containing the reduction of the cusp $P_0$ (resp. $P_\infty$).

\begin{theorem}\cite[Theorem 1.1]{DY1}\cite[Theorem 1.6]{DY2}
 The generating function
\begin{equation} \label{eq:GeneratingFunction1}
\widehat{\phi}_{\Delta, r}(\tau) =  \sum_{\substack{ n \equiv\sgn(\Delta) Q(\mu) \pmod \Z\\
                          \mu \in L^\sharp/L
                          }}
 \widehat{\mathcal Z}_{\Delta, r}(n, \mu, v) q_\tau^n e_\mu,
\end{equation}
is a vector valued modular form for $\Gamma'$ of weight $\frac{3}2$, valued in $\C[L^\sharp/L]\otimes \widehat{\CH}^1_\R(\mathcal X_0(N))$. Here $\Gamma'$ acts on $\C[L^\sharp/L]$ via the Weil representation
$\tilde{\rho}_L$ and  $q_\tau=e(\tau)$.
\end{theorem}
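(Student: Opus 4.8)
The plan is to prove modularity componentwise along the decomposition used in Section~\ref{Sec2},
\[
 \widehat{\phi}_{\Delta, r}= \widehat{\phi}_{\MW}+\deg(\widehat{\phi}_{\Delta, r})\,\widehat{\mathcal P}_\infty + \widehat{\phi}_{\text{Vert}} + a(\phi_{SM}),
\]
which refines the splitting $\widehat{\CH}^1_\R(\mathcal X_0(N))=\widetilde{\MW}\oplus\widetilde{\MW}^\perp$ into the Mordell--Weil part $\widetilde{\MW}\simeq J_0(N)(\Q)\otimes\R$, the cuspidal line $\R\widehat{\mathcal P}_\infty$, the vertical part spanned by $\{\mathcal X_p^0:p\mid N\}$, and the image of $a(\cdot)$. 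Since a finite sum of modular forms of weight $\tfrac32$ and type $\tilde\rho_L$ is again one, it suffices to treat the four summands separately, checking in each that the weight $\tfrac32$ and the type $\tilde\rho_L$ (which is $\rho_L$ or $\bar\rho_L$ according to $\sgn\Delta$, extracted from the lattice $L^\Delta$ via the operator $\psi_{\Delta,r}$ built from the genus character $\chi_\Delta$) come out right.

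For the archimedean term $a(\phi_{SM})$, whose coefficients are assembled from the twisted Kudla Green functions $\Xi_{\Delta,r}(n,\mu,v)$ of \eqref{eq:TWGreen} together with the cusp multiplicities $g(n,\mu,v)$, the term $\widehat{\omega}$, and the correction $-(0,\log(v/N))$, the key input is that the twisted Kudla--Millson theta form $\Theta_{\Delta,r}^{KM}(\tau,z)$ of \eqref{eq:TwistedTheta} is a (generally non-holomorphic) modular form of weight $\tfrac32$ and type $\tilde\rho_L$ in $\tau$, valued in $\Omega^{1,1}(X_0(N))$; this is the twisted Kudla--Millson theorem, which follows from the transformation law of $\Theta_{L^\Delta}^{KM}$ under the Weil representation and the operator $\psi$ of \S\ref{sectheta}, whose adjointness is recorded in Proposition~\ref{proadjoint}. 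Bruinier's regularized theta-lift calculus \cite{Br}, combined with the current equation $dd^c[\Xi_{\Delta,r}(n,\mu,v)]+\delta_{Z_{\Delta,r}(n,\mu)}=[\omega_{\Delta,r}(n,\mu,v)]$ of Theorem~\ref{Kudlagreen}, then shows that $\sum_{n,\mu}(0,\Xi_{\Delta,r}(n,\mu,v))q^n e_\mu$ transforms like a weight $\tfrac32$ form away from the cusps; the $\log$- and $\log\log$-singular behaviour at the cusps identified in Theorem~\ref{Kudlagreen}(2) dictates the precise shape of $g(n,\mu,v)$ and of the $\widehat{\omega}$- and $\log(v/N)$-terms, and K\"uhn's extended arithmetic intersection theory \cite{Kuhn1} makes the log-log contributions meaningful, after which one verifies directly that the full assembled series is modular on all of $\H$.

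For the scalar term one shows that $\tau\mapsto\deg\widehat{\mathcal Z}_{\Delta,r}(n,\mu,v)$ (for $n\ge 0$) is the $q$-expansion of a weight $\tfrac32$ Eisenstein series: on the generic fiber $\deg Z_{\Delta,r}(n,\mu)$ is a Fourier coefficient of such an Eisenstein series by Siegel--Weil applied to the twisted Siegel theta $\Theta_{\Delta,r}$ of \eqref{siegeltheta}, while the finite intersection with the Hodge bundle $\widehat{\omega}_N$ (through $\dv\Delta_N$ of \eqref{eq:DeltaN}) together with the archimedean $\beta$-integrals supplies exactly the non-holomorphic completion; multiplying by the fixed class $\widehat{\mathcal P}_\infty$ preserves modularity. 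For $\widehat{\phi}_{\text{Vert}}$ one computes, for each $p\mid N$, the multiplicity of $\mathcal X_p^0$ in $\widehat{\mathcal Z}_{\Delta,r}(n,\mu,v)$ from the moduli description of $\mathcal Z(m,\mu)$ and the deformation theory of CM elliptic curves in characteristic $p$ (quasi-canonical liftings), as in \cite{DY1}\cite{DY2}, and checks that the resulting scalar series is again a piece of a weight $\tfrac32$ Eisenstein series. Finally $\widehat{\phi}_{\MW}$ is the Gross--Kohnen--Zagier/Borcherds theorem \cite{GKZ}\cite{BoDuke}: the generating series of degree-zero Heegner divisor classes in $J_0(N)(\Q)\otimes\R$ is a vector-valued modular form of weight $\tfrac32$, and the twisted version is obtained by transporting it along the restriction/trace formalism of Lemma~\ref{lemresext} and the adjointness of Proposition~\ref{proadjoint} from $L^\Delta$ to $L$. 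Summing the four modular pieces gives the theorem.

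I expect the main obstacle to be the vertical term: determining the intersection multiplicities of the twisted Heegner divisors with the bad-fibre components $\mathcal X_p^0$ requires a genuinely new local computation in arithmetic intersection theory at the primes dividing $N$, and it is precisely here that the square-free hypothesis on $N$ enters, ensuring each bad fibre has only the two components $\mathcal X_p^0$ and $\mathcal X_p^\infty$. A secondary technical difficulty is the cusp bookkeeping — matching $g(n,\mu,v)$, the $\widehat{\omega}$-normalization and the $\log(v/N)$-term so that the $\log\log$-singular archimedean series is exactly modular — but once the local multiplicities and the cusp normalizations are fixed, the remaining verifications (the Siegel--Weil and Eisenstein identities and compatibility of the $\beta$-integrals) are routine.
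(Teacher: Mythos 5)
Your strategy — split $\widehat{\phi}_{\Delta,r}$ along the Chow-group decomposition of Proposition~\ref{prodecom} and prove each summand modular — is a valid alternative route, but it is not a proof as written because the two steps you yourself flag as the crux are left unresolved. The vertical multiplicities $\langle\widehat{\mathcal Z}_{\Delta,r}(n,\mu,v),\mathcal X_p^0\rangle_{GS}$ and the precise cusp normalizations matching $g(n,\mu,v)$, $\widehat\omega$ and the $\log(v/N)$ correction to the $\log\log$-singular Green functions are exactly the hard content of \cite{DY1}, \cite{DY2}; without carrying them out you have reorganized the statement rather than proved it. The present paper's Proposition~\ref{lemvertic} records the output — the degree and vertical series are proportional to $\mathcal E_L(\tau,1)$ — but that is established in \cite{DY1} by local computations at $p\mid N$ and is not a formal consequence of the Siegel--Weil formula on the generic fiber.

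Two further corrections. The modularity of the Mordell--Weil piece $\sum_{m>0,\mu}y_{\Delta,r}(m,\mu)q^m e_\mu$ is not obtained from the untwisted GKZ/Borcherds theorem by restriction/trace transport along $L^\Delta\subset L$; it is the twisted GKZ theorem of Bruinier and Ono \cite[Section~6]{BO}, which this paper invokes directly in equation~(\ref{equproj}). And the argument in \cite{DY1}, \cite{DY2} does not proceed by your componentwise decomposition: it pairs $\widehat{\phi}_{\Delta,r}$ against weakly holomorphic weight-$\tfrac12$ input forms and applies the Borcherds modularity criterion, with the arithmetic divisor of the Borcherds product (normalized via $\Div(\Delta_N)$ from (\ref{eq:DeltaN})) supplying the needed relation in $\widehat{\CH}^1_\R(\mathcal X_0(N))$. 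In the present paper, the decomposition into modular pieces (Proposition~\ref{Prothedeco} and the lemma that follows) is \emph{deduced from} the theorem, not a route to it.
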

Then $\widehat{\phi}_{\Delta, r}(\tau)$ is called an arithmetic theta function.

\subsection{Decomposition of $\widehat{\CH}_\R^1(\mathcal X_0(N))$}
%In this section,
%we  write $\widehat{\phi}_{\Delta, r}(\tau) =\phi_{AR}(\tau) + \phi_{SM}(\tau, z)$  formally,
% where $\phi_{AR}(\tau)$  is a vector valued modular form of weight $3/2$  and $\phi_{SM}(\tau, z)$ is a smooth function on $\H \times X_0(N)$ and is modular as function of $\tau$ of weight $3/2$ \cite{DY1}\cite{DY2}.  In this section, we will prove that,
%for any $g \in S_{\frac{3}{2}, \tilde{\rho}_L}$,
%\begin{equation}
%\langle \phi_{SM}(\tau, z) , g(\tau)\rangle_{Pet}=0.
%\end{equation}
%
%
% Following Kudla's idea, we will study the arithmetic theta lift of cusp forms.

Let $\nu$ be a smooth, positive $(1, 1)$-form  on $X_0(N)$, and let $g(w, z)$ be the $\nu$-admissible Green function.
We define
\begin{equation}g_{\infty}(z)=\ g(P_{\infty}, z),\end{equation}
which is the Green function  associated with the infinity cusp $P_{\infty}$.
%then one has $$d_zd_z^c[g_{\infty}(z)]+\delta_{P_{\infty}}=[\nu(z)].$$
We denote the arithmetic divisor by $\widehat{\mathcal P}_{\infty} =(\mathcal{P}_{\infty}, g_{\infty})$.

Let $A(X_0(N))$  to be the space of smooth  functions $f$ on $X$ that are invariant under  conjugation  ($Frob_\infty$-invariant), and   $A^0(X_0(N))$ to be the subspace of functions $f \in A(X_0(N))$ with
$$
\int_{X} f(z) \nu(z)=0.
$$
Then we denote the associated arithmetic divisor by $a(f)=(0, f)$.

The vertical component is defined by 
\begin{equation}
 \text{Vert}=\sum_{p|N} \R \mathcal X^0+\R \mathcal X^\infty.
\end{equation}
We identify the vertical divisor $\mathcal X$ with arithmetic divisor $\widehat{\mathcal X}=(\mathcal X, 0)$.
Then 
we have
\begin{equation}
\mathcal X^{\infty}+\mathcal X^0=2(0, \log p)=2\mathds{1}\log p.
\end{equation}
So $\mathds{1}=(0, 1)$ can be viewed as a vertical arithmetic divisor.
It follows that
\begin{equation}
\text{Vert}=\sum_{p|N} \R \mathcal Y_p^\vee+\R \mathds{1}.
\end{equation}
Here
$\mathcal Y_p^\vee=\frac{1}{\langle\mathcal Y_p, \mathcal Y_p \rangle} \mathcal Y_p$, 
$\mathcal Y_p=\mathcal X^0_p-p\mathcal X^{\infty}_p$ with $\langle \mathcal Y_p, \hat\omega_N\rangle_{GS}=0$.

\begin{proposition} \label{prodecom}
(\cite[Propositions 4.1.2, 4.1.4]{KRYBook})
\begin{equation}
\widehat{\CH}_\R^1(\mathcal  X_0(N)) = \widetilde{\MW}\oplus (\R \widehat{\mathcal P}_{\infty} \oplus  \text{Vert}   \oplus a(A^0(X)).
\end{equation}
\end{proposition}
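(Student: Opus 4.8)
The plan is to establish the orthogonal decomposition of $\widehat{\CH}^1_\R(\mathcal X_0(N))$ by following the template of \cite[Section 4.1]{KRYBook}, adapting it to the present normalization on the modular curve $\mathcal X_0(N)$ with cusps $S$. First I would recall that by the arithmetic Hodge index theorem (in the form developed by Gillet--Soul\'e and extended to the K\"uhn setting recalled above), the Gillet--Soul\'e height pairing $\langle\,,\,\rangle_{GS}$ on $\widehat{\CH}^1_\R(\mathcal X_0(N))$ is non-degenerate on the subspace spanned by arithmetic divisors whose underlying cycle is a fibral/vertical divisor together with the Hodge class, while the residual quotient is identified with $J_0(N)(\Q)\otimes\R\simeq\widetilde{\MW}$, on which the induced pairing is (up to sign) the N\'eron--Tate pairing. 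The claim is then that these pieces, together with the contribution $a(A^0(X))$ of purely archimedean divisors with vanishing $\nu$-integral, span everything and are mutually orthogonal in the appropriate sense.

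The key steps, in order, would be: (1) Show every class in $\widehat{\CH}^1_\R(\mathcal X_0(N))$ can be written as a sum of a class supported (generically) on the cusp $\mathcal P_\infty$, a vertical class in $\text{Vert}$, an archimedean class $a(f)$, and a class in $\widetilde{\MW}$. This uses that $\Div(X_0(N))_\R$ modulo the subgroup generated by the cusp $\mathcal P_\infty$, the classes $\mathcal X_p^0,\mathcal X_p^\infty$ (which become trivial or cuspidal in the generic fiber after the standard manipulations), and principal divisors, is exactly the Mordell--Weil group tensored with $\R$; the choice of Green function is then pinned down up to $a(A^0(X))$ by requiring $\nu$-admissibility and normalization, with the residual ambiguity being a constant, i.e. a multiple of $\mathds 1\in\text{Vert}$. (2) Verify the sum is direct: a class lying in $\widetilde{\MW}$ and simultaneously in $\R\widehat{\mathcal P}_\infty\oplus\text{Vert}\oplus a(A^0(X))$ must be zero, which follows because the image of $\widetilde{\MW}$ in the generic fiber consists of degree-zero divisors orthogonal (under the component pairing) to all vertical classes and to $\mathcal P_\infty$, and because the Arakelov/$\nu$-admissible normalization forces the Green function datum to vanish. (3) Match conventions with \cite[Propositions 4.1.2, 4.1.4]{KRYBook}: there the decomposition is stated for Shimura curves, and the only adjustments needed are bookkeeping of the cusps (absent in the compact Shimura curve case) — handled here by the K\"uhn extension $\widehat{\CH}^1_\R(\mathcal X_0(N),S)$ with log-log growth — and the explicit form of the Hodge bundle $\widehat\omega_N$ recorded above, which replaces the metrized Hodge bundle on the Shimura curve.

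Concretely, I would first reduce to the function-field statement $\Div(X_0(N))_\R = \R P_\infty \oplus \big(\bigoplus_{p|N}(\R\mathcal X_p^0+\R\mathcal X_p^\infty)\big|_{\text{gen}}\big)\oplus(\text{Mordell--Weil part})$ modulo principal divisors and the relations $\mathcal X_p^0+\mathcal X_p^\infty=2\mathds 1\log p$ already noted, so that $\text{Vert}=\bigoplus_{p|N}\R\mathcal Y_p^\vee\oplus\R\mathds 1$ as stated. Then, for a given underlying arithmetic cycle, I would use that two $\nu$-admissible normalized Green functions for the same divisor differ by an element of $A^0(X)$ (by the maximum principle / uniqueness of the admissible Green function up to additive constant, and the normalization kills the constant), so the Green datum is determined modulo $a(A^0(X))\oplus\R\mathds 1$. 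Finally I would invoke the arithmetic Hodge index theorem to see that the pairing is non-degenerate precisely as the direct sum of the N\'eron--Tate pairing on $\widetilde{\MW}$ and an (indefinite) pairing on $\R\widehat{\mathcal P}_\infty\oplus\text{Vert}$, with $a(A^0(X))$ in the kernel — this is exactly the content transported from \cite{KRYBook}.

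The main obstacle I anticipate is the careful handling of the cusps: unlike the Shimura-curve case of \cite{KRYBook}, the modular curve is non-compact, the relevant Green functions have at worst log-log singularities at $S$, and one must work in $\widehat{\CH}^1_\R(\mathcal X_0(N),S)$ with K\"uhn's extended intersection pairing \eqref{equinter}. Ensuring that the vertical components $\mathcal X_p^0,\mathcal X_p^\infty$, the cusp divisor $\mathcal P_\infty$, and the archimedean piece interact correctly — in particular that $\mathcal P_\infty$ and the $\mathcal X_p^{0,\infty}$ together with the Hodge bundle still span a subspace on which the extended pairing is non-degenerate modulo $\widetilde{\MW}$ — requires checking that the K\"uhn boundary terms do not destroy the orthogonality, and that the explicit divisor of $\Delta_N(z)$ recorded above correctly expresses $\widehat\omega_N$ as a combination of $\mathcal P_\infty$ and vertical components. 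Modulo this bookkeeping, the proof is a direct transcription of \cite[Propositions 4.1.2, 4.1.4]{KRYBook}.
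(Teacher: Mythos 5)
Your proposal follows essentially the same route as the paper, which simply cites \cite[Propositions 4.1.2, 4.1.4]{KRYBook}: invoke the arithmetic Hodge index theorem to split off the Mordell--Weil part, use the component relations among $\mathcal X_p^0,\mathcal X_p^\infty,\mathds 1$ to organize $\text{Vert}$, and fix the residual Green-function ambiguity by $\nu$-admissibility and normalization, leaving exactly $a(A^0(X))\oplus\R\mathds1$. The one point where your diagnosis is miscalibrated concerns the role of the cusps and of the K\"uhn framework. Proposition~\ref{prodecom} is stated for the \emph{plain} Gillet--Soul\'e group $\widehat{\CH}^1_\R(\mathcal X_0(N))$, not for the extended group $\widehat{\CH}^1_\R(\mathcal X_0(N),S)$: the stack $\mathcal X_0(N)$ is proper over $\Z$ (it includes the cusps), and the Green functions entering the decomposition ($g_\infty$ for $\widehat{\mathcal P}_\infty$, the $\nu$-admissible $g_D$ in $\widetilde{\MW}$, and the smooth $f\in A^0(X)$) are ordinary $\nu$-admissible Green functions with only log singularities along their divisors, smooth everywhere else including at the cusps. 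So there are no K\"uhn boundary terms to control here; the cusp $\mathcal P_\infty$ enters only as a convenient rational base point (replacing whatever anchor \cite{KRYBook} uses on a compact Shimura curve), which if anything \emph{simplifies} the transcription rather than creating an obstacle. The K\"uhn extension with log-log growth is needed elsewhere in the paper --- to pair against the arithmetic Heegner divisors whose Kudla Green functions $\Xi_{\Delta,r}$ have log-log singularities in the $\Delta=1$, $n=0$ case --- but not to establish this abstract direct-sum decomposition. With that caveat, your sketch is a correct reconstruction of the cited argument.
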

For every $\widehat{\mathcal Z} =(\mathcal Z, g_Z)$, it decomposes into
$$
\widehat{\mathcal Z}=\widehat{\mathcal Z}_{MW} + \deg (\widehat{\mathcal Z})\widehat{\mathcal P}_{\infty}   + \sum_{p|N} \langle \widehat{\mathcal Z}, \mathcal Y_p\rangle_{GS} \mathcal Y_p^\vee + 2 \kappa(\widehat{\mathcal Z}) \mathds{1} + a(f_{\widehat {\mathcal Z}})
$$
for some $f_{\widehat{\mathcal Z}} \in A^0(X)$, where
$$
 \kappa(\widehat{\mathcal Z}) =\langle \widehat{\mathcal Z}-\deg (\widehat{\mathcal Z}) \widehat{\mathcal P}_{\infty}, \widehat{\mathcal P}_{\infty}  \rangle_{GS}.
$$

According to this decomposition, we have
\begin{proposition}\label{Prothedeco}
\begin{equation}
 \widehat{\phi}_{\Delta, r}= \widehat{\phi}_{\MW}+\deg(\widehat{\phi}_{\Delta, r})\widehat{\mathcal P}_\infty    + \widehat{\phi}_{\text{Vert}} + a(\phi_{SM}).
\end{equation}
\end{proposition}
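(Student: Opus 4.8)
The plan is to obtain Proposition \ref{Prothedeco} as a formal consequence of the abstract direct-sum decomposition of Proposition \ref{prodecom} together with the modularity of $\widehat{\phi}_{\Delta, r}(\tau)$ already established in \cite{DY1} and \cite{DY2}: one applies the decomposition of $\widehat{\CH}^1_\R(\mathcal X_0(N))$ to every Fourier coefficient $\widehat{\mathcal Z}_{\Delta, r}(n, \mu, v)$ and reassembles the four families of components into generating series, each of which is again a weight-$3/2$ modular form because the projections involved are $\Gamma'$-equivariant. Apart from one bookkeeping subtlety noted at the end, this is essentially a formality once the upstream modularity statement is in hand.

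In detail, Proposition \ref{prodecom} gives the internal direct sum
$$
\widehat{\CH}^1_\R(\mathcal X_0(N)) = \widetilde{\MW} \oplus \R\widehat{\mathcal P}_\infty \oplus \text{Vert} \oplus a(A^0(X)),
$$
and hence four $\R$-linear projections $p_{\MW}, p_{\mathcal P}, p_{\text{Vert}}, p_a$ onto the respective summands. Tensoring each with the identity of $\C[L^\sharp/L]$ yields endomorphisms of $\C[L^\sharp/L] \otimes \widehat{\CH}^1_\R(\mathcal X_0(N))$ which, since the Weil representation $\tilde\rho_L$ and the scalar factor in the weight-$3/2$ slash operator act only on the first tensor factor, commute with $\widehat{\phi}_{\Delta, r}\mapsto \widehat{\phi}_{\Delta, r}\mid_{3/2, \tilde\rho_L}\gamma$ for every $\gamma\in\Gamma'$ and respect the growth conditions. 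Applying $\id\otimes p_\bullet$ coefficientwise to $\widehat{\phi}_{\Delta, r}(\tau)$ — i.e. replacing each $\widehat{\mathcal Z}_{\Delta, r}(n, \mu, v)$ by its component in the chosen summand — therefore produces series $\widehat{\phi}_{\MW}(\tau)$, $\widehat{\phi}_{\mathcal P}(\tau)$, $\widehat{\phi}_{\text{Vert}}(\tau)$ and $a(\phi_{SM})(\tau)$, each a vector-valued modular form of weight $3/2$ for $(\Gamma', \tilde\rho_L)$ valued in the corresponding tensored summand, with $\widehat{\phi}_{\Delta, r} = \widehat{\phi}_{\MW} + \widehat{\phi}_{\mathcal P} + \widehat{\phi}_{\text{Vert}} + a(\phi_{SM})$.

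It remains to match $\widehat{\phi}_{\mathcal P}$ with the stated term $\deg(\widehat{\phi}_{\Delta, r})\,\widehat{\mathcal P}_\infty$. Here I would use that the degree of the horizontal part $\widehat{\mathcal Z}\mapsto \deg(\widehat{\mathcal Z})$ is $\R$-linear, vanishes on $\widetilde{\MW}$ (degree-zero divisors), on $\text{Vert}$ (fibral divisors) and on $a(A^0(X))$ (purely archimedean classes), and satisfies $\deg(\widehat{\mathcal P}_\infty)=1$; hence $p_{\mathcal P}(\widehat{\mathcal Z}) = \deg(\widehat{\mathcal Z})\,\widehat{\mathcal P}_\infty$ for every $\widehat{\mathcal Z}$. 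Consequently $\widehat{\phi}_{\mathcal P}(\tau) = \big(\sum_{n, \mu}\deg(\widehat{\mathcal Z}_{\Delta, r}(n, \mu, v))\, q_\tau^n e_\mu\big)\widehat{\mathcal P}_\infty$, and the scalar generating function $\deg(\widehat{\phi}_{\Delta, r})(\tau) := (\id\otimes\deg)\widehat{\phi}_{\Delta, r}(\tau) = \sum_{n,\mu}\deg(\widehat{\mathcal Z}_{\Delta, r}(n, \mu, v))\,q_\tau^n e_\mu$ is a $\C[L^\sharp/L]$-valued modular form of weight $3/2$ (classically of Eisenstein type, by the Siegel--Weil formula, when $\Delta=1$). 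Finally one takes $\widehat{\phi}_{\text{Vert}}$ to be the $\text{Vert}$-component (which, if desired, decomposes further along the basis $\{\mathcal Y_p^\vee\}_{p\mid N}\cup\{\mathds{1}\}$ of Proposition \ref{prodecom}) and defines $\phi_{SM}=\phi_{SM}(\tau)$ to be the $A^0(X)$-valued weight-$3/2$ form with $a(\phi_{SM})$ equal to the $a(A^0(X))$-component; this gives the asserted identity.

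The one point requiring care — and the place where the argument is not purely mechanical — is that $a(A^0(X))$ is infinite-dimensional, so one must check that "projecting coefficient by coefficient" is legitimate. This holds because the $p_\bullet$ are genuine $\R$-linear maps defined on the ambient space $\widehat{\CH}^1_\R(\mathcal X_0(N))$: they commute with the finite $\C$-linear combinations appearing in the transformation law $\widehat{\phi}_{\Delta, r}\mid_{3/2, \tilde\rho_L}\gamma = \widehat{\phi}_{\Delta, r}$, so no analytic convergence input beyond the modularity of $\widehat{\phi}_{\Delta, r}$ itself is needed. Making the remainder term $a(\phi_{SM})$ explicit — say as a regularized Kudla--Millson theta integral — is a genuinely computational task, but it is not required for the present statement.
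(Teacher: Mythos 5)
Your proposal is correct and matches the paper's (very terse) argument: the paper deduces Proposition \ref{Prothedeco} directly from Proposition \ref{prodecom} by applying the direct-sum decomposition of $\widehat{\CH}^1_\R(\mathcal X_0(N))$ coefficient by coefficient to $\widehat{\mathcal Z}_{\Delta, r}(n, \mu, v)$, exactly as you do. Your observations that the projections commute with the weight-$3/2$ slash action because $\tilde\rho_L$ acts only on the $\C[L^\sharp/L]$ factor, and that $p_{\mathcal P}(\widehat{\mathcal Z})=\deg(\widehat{\mathcal Z})\widehat{\mathcal P}_\infty$ since $\deg$ kills $\widetilde{\MW}$, $\text{Vert}$ and $a(A^0(X))$, are precisely the implicit content the paper leaves unstated.
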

More precisely, the vertical component is equal to 
\begin{equation}
\widehat{\phi}_{\text{Vert}}=\Sigma_{p \mid N}\phi_p(\tau)\mathcal{Y}_p^{\vee}+ 2\phi_1(\tau) \mathds{1}, 
\end{equation}
where
\begin{equation}
 \phi_p=\langle  \widehat{\phi}_{\Delta, r},  \mathcal{Y}_p \rangle_{GS}
\end{equation}
and
% \begin{eqnarray}\label{decomone}
% \widehat{\phi}_{\Delta, r}(\tau)& =& \widehat{\phi}_{\MW}(\tau)    +\deg(\widehat{\phi}_{\Delta, r}(\tau))\widehat{\mathcal P}_{\infty}+\Sigma_{p \mid N}\phi_p(\tau)\mathcal{Y}_p^{\vee}\\&&+ 2\phi_1(\tau) \mathds{1} + a(\phi_{SM}),\nonumber
% \end{eqnarray}
%and when $\Delta \neq 1$, it simplifies to
%\begin{equation}\label{equnone}
%\widehat{\phi}_{\Delta, r}(\tau) = \widehat{\phi}_{\MW}(\tau)    + 2\phi_1(\tau) \mathds{1} + a(\phi_{SM}).
%\end{equation}
%Here
\begin{equation}
\phi_1(\tau)=\langle \widehat{\phi}_{\Delta, r}(\tau)-\deg(\widehat{\phi}_{\Delta, r}(\tau))\widehat{\mathcal P}_{\infty}, \widehat{\mathcal P}_{\infty}\rangle_{GS}.
\end{equation}

The archimedean part of $ \widehat{\phi}_{\Delta, r}(\tau)$ is given as follows.
%\begin{equation}\Xi_{\Delta, r}(\tau, z) = (\Xi_{\Delta, r}(0, 0, \mu) + \frac{2}k \log| \Delta_N|_{Pet}^2 - \log\frac{v}N )e_0 +\sum_{n\ne 0 , \mu} \Xi_{\Delta, r}(n, \mu, v) q^n e_\mu;\end{equation}
%when $\Delta\neq 1,$
%\begin{equation}
%\Xi_{\Delta, r}(\tau, z) = \sum_{n , \mu} \Xi_{\Delta, r}(n, \mu, v) q^n e_\mu.
%\end{equation}
%Set \begin{equation}
%G_{\Delta, \rho}(n, \mu):=Z_{\Delta, \rho}(n, \mu)-\deg(Z_{\Delta, r}(n, \mu))P_{\infty}. \end{equation}
%The one has $$Z_{\Delta, \rho}(n, \mu)=G_{\Delta, \rho}(n, \mu)+\deg(Z_{\Delta, \rho}(n, \mu))P_{\infty}.$$
%Notice that when $\Delta\neq 1$, $G_{\Delta, r}(n, \mu)=Z_{\Delta, \rho}(n, \mu)$.
%Now we denote the $\nu$-admissible Green function by $g_{MW}(n, \mu, v)$,
%which satisfies the following function
%\begin{equation}
%d_zd_z^cg_{MW}(n, \mu, v)(z)+\delta_{G_{\Delta, r}(n, \mu)}=0.
%\end{equation}
\begin{lemma}
 \begin{equation}\label{Greenfundecom}
 \Xi_{\Delta, r}(\tau, z)= g_{MW}(\tau, z)  +\deg(\widehat{\phi}_{\Delta, r}) g_{\infty}(z) +2\phi_1(\tau)+\phi_{SM}(\tau,z),
 \end{equation}
where \begin{equation}
   g_{MW}(\tau, z)=\sum_{n, \mu}g_{MW}(n, \mu, v)(z)q^ne_\mu,
      \end{equation}
and
 \begin{equation}
 \phi_{SM}(\tau, z)=\sum \phi_{SM}(n, \mu,v;z) q^n e_\mu.
 \end{equation}
Each term in the summation   is a modular form of weight $3/2$.
Here
 $g_{MW}(n, \mu, v)$ is the $\nu$-admissible Green function for the divisor $$y_{\Delta, r}(n, \mu):=Z_{\Delta, r}(n, \mu)-\deg(Z_{\Delta, r}(n, \mu))P_{\infty},$$
and $\phi_{SM}(n, \mu,v; z) \in A^0(X)$  are smooth functions.

\end{lemma}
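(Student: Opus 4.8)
The plan is to read this identity off as the archimedean shadow of Proposition~\ref{Prothedeco}. Concretely, I would apply the $\R$-linear ``Green-function'' functor $\widehat{\mathcal Z}=(\mathcal Z,g_{\mathcal Z})\mapsto g_{\mathcal Z}$ to the decomposition
$$\widehat{\phi}_{\Delta, r}= \widehat{\phi}_{MW}+\deg(\widehat{\phi}_{\Delta, r})\widehat{\mathcal P}_\infty + \widehat{\phi}_{\text{Vert}} + a(\phi_{SM}),$$
coefficient by coefficient in $q^n e_\mu$. The left-hand side yields, by definition of the arithmetic theta function, the generating series $\Xi_{\Delta, r}(\tau, z)=\sum_{n,\mu}\Xi_{\Delta, r}(n,\mu,v)(z)\,q^n e_\mu$ of Green functions of the arithmetic Heegner divisors $\widehat{\mathcal Z}_{\Delta, r}(n,\mu,v)$ (with the auxiliary $\widehat\omega$ and $\log(v/N)$ contributions included in the degenerate cases $\Delta=1$). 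It then remains to identify the Green functions of the four summands on the right.

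I would dispatch the three easy terms first. The component $a(\phi_{SM})=(0,\phi_{SM})$ contributes $\phi_{SM}(\tau, z)$, whose coefficients $\phi_{SM}(n,\mu,v;z)$ lie in $A^0(X)$ by the construction of the $a(A^0(X))$-summand in Proposition~\ref{prodecom}, hence are smooth and satisfy $\int_X\phi_{SM}(n,\mu,v;z)\,\nu(z)=0$. The component $\deg(\widehat{\phi}_{\Delta, r})\widehat{\mathcal P}_\infty$ contributes $\deg(\widehat{\phi}_{\Delta, r})\,g_\infty(z)$ since $\widehat{\mathcal P}_\infty=(\mathcal P_\infty, g_\infty)$. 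For the vertical part I would use $\widehat{\phi}_{\text{Vert}}=\sum_{p\mid N}\phi_p(\tau)\mathcal Y_p^\vee+2\phi_1(\tau)\mathds{1}$: each $\mathcal Y_p^\vee$ is of the form $(\text{vertical cycle},0)$ and so has vanishing Green function, while $\mathds{1}=(0,1)$ has constant Green function $1$, so the archimedean part of $\widehat{\phi}_{\text{Vert}}$ is exactly $2\phi_1(\tau)$, independent of $z$.

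The substantive step is the Mordell--Weil term: I must show that the Green function of the $(n,\mu)$-coefficient $\widehat{\mathcal Z}_{MW}(n,\mu,v)$ of $\widehat{\phi}_{MW}$ is the normalized $\nu$-admissible Green function $g_{MW}(n,\mu,v)$ of the degree-zero divisor $y_{\Delta, r}(n,\mu)=Z_{\Delta, r}(n,\mu)-\deg(Z_{\Delta, r}(n,\mu))P_\infty$. For this I would invoke that, under $\widetilde{\MW}\simeq J_0(N)(\Q)\otimes\R$ from Proposition~\ref{prodecom}, the Mordell--Weil projection of an arithmetic divisor is obtained by mapping the degree-zero part of its generic fibre into $J_0(N)(\Q)\otimes\R$ and lifting by the canonical section, whose archimedean component is by construction the normalized admissible Green function of the underlying divisor; for $n>0$ the generic fibre is $Z_{\Delta, r}(n,\mu)$ and its degree-zero part is $y_{\Delta, r}(n,\mu)$, which gives the claim. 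In the degenerate range ($\Delta=1$ with $-4Nn$ a square, or $n=0$) the divisor $Z_{\Delta, r}(n,\mu)$ is cuspidal, so $y_{\Delta, r}(n,\mu)$ is a degree-zero cuspidal divisor, torsion in $J_0(N)$ by Manin--Drinfeld; its image in $J_0(N)(\Q)\otimes\R$ vanishes, consistently with the explicit formulas of \cite{DY1},\cite{DY2}, and $g_{MW}(n,\mu,v)$ is the associated admissible Green function, which absorbs the $\widehat\omega$ and $\log(v/N)$ adjustments. As an internal check I would verify the current equation: applying $dd^c[\,\cdot\,]+\delta_{Z_{\Delta, r}(n,\mu)}$ to the proposed decomposition and using $dd^c[g_{MW}(n,\mu,v)]+\delta_{y_{\Delta, r}(n,\mu)}=0$, $dd^c[g_\infty]+\delta_{P_\infty}=[\nu]$, and the smoothness of $\phi_{SM}$, one recovers $dd^c[\Xi_{\Delta, r}(n,\mu,v)]+\delta_{Z_{\Delta, r}(n,\mu)}=[\omega_{\Delta, r}(n,\mu,v)]$ of Theorem~\ref{Kudlagreen}, since $\omega_{\Delta, r}(n,\mu,v)$ is cohomologous to $\deg(Z_{\Delta, r}(n,\mu))\nu$; uniqueness of normalized admissible Green functions then forces the identity exactly.

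Finally, modularity of each term follows because $\widehat{\phi}_{\Delta, r}(\tau)$ is modular of weight $3/2$ for $(\Gamma',\tilde\rho_L)$, the direct-sum decomposition of $\widehat{\CH}^1_\R(\mathcal X_0(N))$ is $\tau$-independent, and the Green-function functor is linear and commutes with the $\Gamma'$-action, so $g_{MW}(\tau,z)$, $\deg(\widehat{\phi}_{\Delta, r})g_\infty(z)$, $2\phi_1(\tau)$ and $\phi_{SM}(\tau,z)$ are each modular of weight $3/2$ in $\tau$ (this is already contained in Proposition~\ref{Prothedeco}). I expect the main obstacle to be precisely the Mordell--Weil identification together with the bookkeeping of the cuspidal and log--log boundary terms in the degenerate Fourier coefficients; once the canonical section of $\widetilde{\MW}\simeq J_0(N)(\Q)\otimes\R$ is pinned down so that its archimedean part is the normalized admissible Green function, the rest is formal.
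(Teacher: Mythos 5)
Your proposal is correct, and it takes the approach the paper leaves implicit: the lemma is stated without proof immediately after Proposition \ref{Prothedeco} and is in fact just its archimedean shadow, read off summand by summand (with $g_\infty$ from $\widehat{\mathcal P}_\infty$, $2\phi_1$ from $\mathds 1$ since the $\mathcal Y_p^\vee$ pieces have trivial Green function, $\phi_{SM}$ from $a(\phi_{SM})$, and the normalized $\nu$-admissible Green function of $y_{\Delta,r}(n,\mu)$ from the Mordell--Weil lift). Your current-equation check provides the correct internal consistency, and your care with the degenerate Fourier coefficients is an appropriate refinement of what the paper takes for granted.
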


\section{Arithmetic theta lift}\label{Sec3}

In this section, we will study the arithmetic theta lift as follows.
\begin{definition}
For any cusp form $g \in S_{\frac{3}{2}, \tilde\rho_L}$, define the arithmetic theta lift by
\begin{equation}
\widehat{ \theta}_{\Delta, r}(g)=\langle\widehat{\phi}_{\Delta, r},g\rangle_{Pet} \in \widehat{\CH}_\R^1(\mathcal X_0(N)).
\end{equation}
\end{definition}

We will prove the following result (Theorem \ref{thelift}) in this section.

\begin{theorem}\label{Theolift}
Let the notation be as above. Then 
\begin{enumerate}
  \item 
$\widehat{ \theta}_{\Delta, r}(g)=\widehat{\phi}_{MW}(g).$
%where $$\widehat{\phi}_{MW}(g)=\langle \widehat{\phi}_{MW}, g \rangle_{Pet}.$$
  \item 
$\widehat{ \theta}_{\Delta, r}(g) \in \widetilde{\MW}.$
  \item For any $\widehat{\mathcal{Z}} \in \widetilde{\MW}^{\perp}$,  we have
$
\langle \widehat{ \theta}_{\Delta, r}(g), \widehat{\mathcal{Z}}  \rangle_{\GS}=0.
$
\end{enumerate}
\end{theorem}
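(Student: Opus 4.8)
The plan is to exploit the modularity of $\widehat{\phi}_{\Delta,r}(\tau)$ together with the decomposition in Proposition \ref{Prothedeco}. Pairing the identity
$$\widehat{\phi}_{\Delta,r}=\widehat{\phi}_{\MW}+\deg(\widehat{\phi}_{\Delta,r})\,\widehat{\mathcal P}_\infty+\widehat{\phi}_{\text{Vert}}+a(\phi_{SM})$$
against a cusp form $g\in S_{\frac32,\tilde\rho_L}$ under the Petersson product, I would argue that each term other than the $\widetilde{\MW}$-part contributes nothing, so that $\widehat\theta_{\Delta,r}(g)=\widehat\phi_{MW}(g)$, which lies in $\widetilde{\MW}$ by construction. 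Concretely: $\langle\widehat{\phi}_{\Delta,r},g\rangle_{Pet}=\langle\widehat{\phi}_{\MW},g\rangle_{Pet}+\langle\deg(\widehat{\phi}_{\Delta,r})\,\widehat{\mathcal P}_\infty,g\rangle_{Pet}+\langle\widehat{\phi}_{\text{Vert}},g\rangle_{Pet}+\langle a(\phi_{SM}),g\rangle_{Pet}$, and I would kill the last three summands one at a time.

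First, the $\widehat{\mathcal P}_\infty$-term: $\deg(\widehat{\phi}_{\Delta,r}(\tau))$ is a scalar-valued modular form of weight $3/2$ built from the degrees $\deg Z_{\Delta,r}(n,\mu)$ (an Eisenstein-type series, essentially a weight-$3/2$ theta/Eisenstein series with no cusp-form component); hence its Petersson pairing against the cusp form $g$ vanishes. Second, the vertical term $\widehat{\phi}_{\text{Vert}}=\sum_{p\mid N}\phi_p(\tau)\mathcal Y_p^\vee+2\phi_1(\tau)\mathds 1$: here $\phi_p=\langle\widehat{\phi}_{\Delta,r},\mathcal Y_p\rangle_{GS}$ and $\phi_1=\langle\widehat{\phi}_{\Delta,r}-\deg(\widehat{\phi}_{\Delta,r})\widehat{\mathcal P}_\infty,\widehat{\mathcal P}_\infty\rangle_{GS}$ are again scalar-valued weight-$3/2$ modular forms whose coefficients are local intersection numbers / arithmetic heights of the form $\langle\widehat{\mathcal Z}_{\Delta,r}(n,\mu,v),\mathcal Y_p\rangle_{GS}$; I would show these are Eisenstein (coefficients given by divisor-type sums and $\beta_s$-archimedean terms with no cuspidal part), so their pairing with $g$ is zero. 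Third, the smooth term $a(\phi_{SM})$: $\phi_{SM}(\tau,z)\in A^0(X)$ and one uses that the Kudla–Millson/Siegel-theta kernel feeding into $\phi_{SM}$ has its cuspidal projection already absorbed into $g_{MW}$; alternatively, from Proposition \ref{prodecom} the component $a(A^0(X))$ is orthogonal under $\langle\ ,\ \rangle_{GS}$ to everything in $\widetilde{\MW}$, and since $\widehat\phi_{MW}(g)\in\widetilde{\MW}$ and $a(\phi_{SM})$ is whatever is left, one checks directly that $\langle a(\phi_{SM}(\tau,\cdot)),g\rangle_{Pet}=0$ because the $z$-integral pairing a smooth $A^0$-function against the theta kernel projects onto Eisenstein-type forms. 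Having established (1), statement (2) is immediate since $\widetilde{\MW}$ is a finite-dimensional $\R$-subspace and $\widehat\phi_{MW}(g)$ is by definition a Petersson integral of the $\widetilde{\MW}$-valued modular form $\widehat\phi_{MW}$. For (3), let $\widehat{\mathcal Z}\in\widetilde{\MW}^\perp$; then by Proposition \ref{prodecom} and the orthogonality of the Gillet–Soulé pairing with respect to the decomposition $\widehat{\CH}^1_\R=\widetilde{\MW}\oplus(\R\widehat{\mathcal P}_\infty\oplus\text{Vert}\oplus a(A^0(X)))$, one has $\langle\widehat\theta_{\Delta,r}(g),\widehat{\mathcal Z}\rangle_{GS}=\langle\widehat\phi_{MW}(g),\widehat{\mathcal Z}\rangle_{GS}=0$ since $\widehat\phi_{MW}(g)\in\widetilde{\MW}$ and $\widehat{\mathcal Z}\in\widetilde{\MW}^\perp$.

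The main obstacle I anticipate is the rigorous verification that $\deg(\widehat{\phi}_{\Delta,r})$, $\phi_p$, and especially the coefficients of $\phi_{SM}$ are genuinely orthogonal to cusp forms — i.e.\ that these auxiliary generating series are \emph{Eisenstein}. For the degree and the vertical-intersection series this is a fairly explicit computation (the coefficients are essentially class-number/divisor sums plus incomplete-Gamma archimedean contributions, matching known weight-$3/2$ Eisenstein series), but for $\phi_{SM}$ the cleanest route is probably not to compute it but to invoke the defining property: $\phi_{SM}(\tau,\cdot)\in A^0(X)$ together with the fact that the full archimedean generating function $\Xi_{\Delta,r}(\tau,z)$ decomposes as in (\ref{Greenfundecom}), and that the Petersson pairing $\langle\Xi_{\Delta,r}(\tau,z),g(\tau)\rangle_{Pet}$ reproduces exactly the Green function of the Heegner cycle attached to $g$, whose $A^0$-component must vanish after subtracting the $\widetilde{\MW}$-piece. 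I would therefore organize the proof so that (1) is deduced by showing the three non-$\MW$ generating series pair trivially with $g$, citing the Eisenstein nature of the first two and using the $A^0$/orthogonality structure for the third, and then (2) and (3) follow formally.
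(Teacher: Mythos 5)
Your overall structure matches the paper's: decompose $\widehat\phi_{\Delta,r}$ via Proposition~\ref{prodecom}, pair against $g$, and kill the three non-$\MW$ summands one by one; then (2) and (3) follow formally. However, the key technical input that actually makes the middle step work is missing from your proposal, and the heuristics you offer in its place either don't apply or are circular.

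The linchpin of the paper's argument is the vanishing
\begin{equation*}
\langle \Theta^{KM}_{\Delta,r}(\tau,z),\,g(\tau)\rangle_{Pet}=0
\qquad\text{for all }g\in S_{\frac32,\tilde\rho_L},
\end{equation*}
which is quoted from Alfes (\cite[Theorem 5.1]{Alfes}). This single fact is what kills both the $\widehat{\mathcal P}_\infty$-term and the $\phi_{SM}$-term, and (via $\phi_1$) part of the vertical term. Concretely: Lemma~\ref{lemdeltaone} rewrites $\langle\widehat{\phi}_{\Delta,r},\widehat{\mathcal P}_\infty\rangle_{GS}$ as $\tfrac12\int_{X_0(N)}g_\infty(z)\,\Theta^{KM}_{\Delta,r}(\tau,z)$, so its Petersson pairing with $g$ vanishes by the Alfes result; and Proposition~\ref{prosmooth} expresses
\begin{equation*}
\phi_{SM}(\tau,z)=-\int_{X_0(N)} g(w,z)\,\Theta^{KM}_{\Delta,r}(\tau,w),
\end{equation*}
after which the same Kudla--Millson vanishing gives $\langle\phi_{SM},g\rangle_{Pet}=0$. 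Your alternative route for $\phi_{SM}$ (``the $z$-integral pairing a smooth $A^0$-function against the theta kernel projects onto Eisenstein-type forms,'' or ``its cuspidal projection is already absorbed into $g_{MW}$'') is not an argument; the second phrasing is in fact circular, since it presupposes exactly the conclusion you are trying to prove. Similarly, $\phi_1$ is \emph{not} shown to be Eisenstein: the paper writes $\phi_1=\langle\widehat\phi_{\Delta,r},\widehat{\mathcal P}_\infty\rangle_{GS}-\deg(\widehat\phi_{\Delta,r})\langle\widehat{\mathcal P}_\infty,\widehat{\mathcal P}_\infty\rangle_{GS}$ and kills both terms by the Alfes vanishing (for the first) and the Eisenstein identification of $\deg(\widehat\phi_{\Delta,r})$ (for the second). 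Only $\deg(\widehat\phi_{\Delta,r})$ and $\phi_p$ are handled by the ``this generating series is Eisenstein'' argument you propose, via Proposition~\ref{lemvertic}. You honestly flagged this as your main obstacle; the resolution is not an Eisenstein computation for $\phi_1$ and $\phi_{SM}$, but rather the Green-function representations above combined with the Kudla--Millson orthogonality to cusp forms.
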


\subsection{$\widehat{\mathcal P}_\infty$ component}
We will prove that the following result in this subsection
\begin{equation}\langle \widehat{\theta}_{\Delta, r}(g), \widehat{\mathcal P}_{\infty}\rangle_{GS}=0.
\end{equation}
\begin{lemma}\label{lemdeltaone}
\begin{equation}
\langle \widehat{\phi}_{\Delta, r}(\tau), \widehat{\mathcal P}_\infty \rangle_{GS}
=\frac{1}{2}\int_{X_0(N)}g_{\infty}(z)\Theta_{\Delta, r}^{KM}(\tau, z).
\end{equation}
\end{lemma}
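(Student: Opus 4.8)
The plan is to unwind the definition of the Gillet--Soul\'e height pairing against the arithmetic divisor $\widehat{\mathcal P}_\infty = (\mathcal P_\infty, g_\infty)$ applied to each arithmetic Heegner divisor $\widehat{\mathcal Z}_{\Delta, r}(n,\mu,v)$, and then recognize the resulting generating series as the stated archimedean integral. Since $\mathcal P_\infty$ is (the Zariski closure of) the cusp $\infty$, it does not meet the finite part of $\mathcal Z_{\Delta,r}(n,\mu)$ for $n>0$ (the Heegner points are not cusps), so the finite intersection term $(\mathcal Z_{\Delta,r}(n,\mu)\cdot\mathcal P_\infty)_{\mathrm{fin}}$ vanishes, and the pairing reduces to a star product $\tfrac12\, \Xi_{\Delta,r}(n,\mu,v) * g_\infty$. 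Here I would use the extended pairing of \cite{Kuhn1} recalled in equation (\ref{equinter}), keeping careful track of the log-log terms that occur only in the exceptional case $\Delta=1$.

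The key computation is to evaluate the star product $\Xi_{\Delta,r}(n,\mu,v) * g_\infty$. Because $g_\infty$ is a $\nu$-admissible (in fact normalized) Green function for the single point $P_\infty$, the star-product formula collapses: by symmetry of the star product up to the explicit correction terms, and using $dd^c[g_\infty] + \delta_{P_\infty} = \nu$, one gets that $\Xi_{\Delta,r}(n,\mu,v)*g_\infty$ equals $g_\infty$ evaluated against the divisor $Z_{\Delta,r}(n,\mu)$ (i.e.\ $g_{Z_{\Delta,r}(n,\mu)}(P_\infty)$ up to sign), plus an integral term $\int_{X_0(N)} g_\infty(z)\,\omega_{\Delta,r}(n,\mu,v)(z)$ coming from the current equation in Theorem \ref{Kudlagreen}, plus the cusp correction constants $\alpha_j, \psi_j(0)$ which are pinned down by part 2) of Theorem \ref{Kudlagreen}. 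I would then assemble the $q$-expansion $\sum_{n,\mu} \big(\Xi_{\Delta,r}(n,\mu,v)*g_\infty\big) q^n e_\mu$ and identify it, term by term, with $\int_{X_0(N)} g_\infty(z)\, \Theta^{KM}_{\Delta,r}(\tau,z)$, using that $\Theta^{KM}_{\Delta,r}(\tau,z) = \sum_{n,\mu}\omega_{\Delta,r}(n,\mu,v)(z)\,q^n e_\mu$ is precisely the generating series of the forms $\omega_{\Delta,r}(n,\mu,v)$ appearing on the right of the current equation (this is the defining property of the Kudla--Millson theta function, cf.\ Section \ref{sectheta}). The holomorphic and geometric contributions $g_{Z_{\Delta,r}(n,\mu)}(P_\infty)$ must then cancel or be absorbed; here I expect to use that $\widehat{\mathcal P}_\infty$ pairs trivially with the geometric Heegner divisor because one is working modulo rational equivalence and the Green-function-only pieces of $\widehat{\mathcal Z}_{\Delta,r}(n,\mu,v)$ leave only the $\omega$-integral.

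The main obstacle will be the careful bookkeeping of the exceptional cusp contributions in the case $\Delta = 1$, $-4Nn = \square$, where $\Xi_{\Delta,r}(n,\mu,v)$ genuinely has logarithmic singularities at the cusps and the arithmetic divisor $\widehat{\mathcal Z}_{\Delta,r}(n,\mu,v)$ acquires the extra term $g(n,\mu,v)\sum_{P}\mathcal P$; one must check that the finite intersection of this multiple-of-cusps component with $\mathcal P_\infty$ together with the star-product correction terms $\alpha_j, \psi_j(0)$ recombine into exactly the archimedean integral against $\Theta^{KM}_{\Delta,r}$, with the K\"uhn $\log(-2\log\epsilon)$ regularization matching the log-log growth at $n=0$. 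I would handle this by treating the regularized integral $\int^{\mathrm{reg}}_{X_0(N)} g_\infty\, \Theta^{KM}_{\Delta,r}$ in the sense of \cite{Kuhn1} and checking that the K\"uhn star-product regularization agrees with it on the nose, which is essentially the content of the arithmetic adjunction formula already used in \cite{DY1}. The remaining cases ($n\le 0$ general, or $\Delta\ne 1$) are easier since $\Xi_{\Delta,r}(n,\mu,v)$ is then smooth on all of $X_0(N)$ and the star product reduces to a plain integral $\tfrac12\int_{X_0(N)} g_\infty\,\omega_{\Delta,r}(n,\mu,v)$ directly.
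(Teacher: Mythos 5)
Your overall strategy (unwind the K\"uhn star product, use the current equation for $\Xi_{\Delta,r}(n,\mu,v)$, assemble the $q$-series and recognize it as $\Theta^{KM}_{\Delta,r}$) is the substance of what the paper is invoking, but the paper's actual proof is a shortcut: it cites \cite[Theorem 6.9]{DY1}, which gives $\langle \widehat{\mathcal Z}_{\Delta,r}(n,\mu,v), \widehat{\Delta}_N\rangle_{GS} = -\tfrac12\int \log\|\Delta_N\|^2_{\mathrm{Pet}}\,\omega_{\Delta,r}(n,\mu,v)$, and then replaces $\widehat{\Delta}_N$ by $\widehat{\mathcal P}_\infty$ ``by the same argument.'' Your proposal reconstructs that argument, which is fine in principle; however, the central star-product computation as you state it is wrong. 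Applying equation (\ref{equinter}) with $g_1 = \Xi_{\Delta,r}(n,\mu,v)$, $g_2 = g_\infty$ and $Z_2 = P_\infty$, the term $g_1(Z_2 - \sum_j \ord_{S_j}(Z_2)\,S_j)$ \emph{vanishes} (since $P_\infty$ is a cusp), so what survives is $2(\alpha_{1,\infty} - \psi_{1,\infty}(0)) + \int_{X_0(N)} g_\infty\,\omega_{\Delta,r}$, i.e.\ $\Xi_{\Delta,r}(n,\mu,v)(P_\infty) + \int g_\infty\,\omega_{\Delta,r}$ in the smooth case. The symmetric evaluation $g_\infty * \Xi$ instead produces $g_\infty(Z_{\Delta,r}(n,\mu)) + \int \Xi\,\nu$. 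You have written down a hybrid, $g_\infty(Z_{\Delta,r}(n,\mu)) + \int g_\infty\,\omega_{\Delta,r}$, which takes the ``divisor'' term from one side and the ``integral'' term from the other; this double-counts and differs from the truth by $(\Xi - g_{Z_{\Delta,r}})(P_\infty)$.

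The second problem is the cancellation argument: you say the extra contribution ``must cancel or be absorbed'' because ``one is working modulo rational equivalence.'' That is not a reason; passing to the arithmetic Chow group does not make individual Green-function values disappear. The actual fact that closes the gap is that $\Xi_{\Delta,r}(n,\mu,v)(P_\infty) = 0$ when $\Xi$ is smooth at the cusps (all cases with $\Delta\neq 1$, and the cases with $\Delta = 1$, $n>0$, $-4Nn$ not a square): each summand $\beta_1\big(2\pi R(\sqrt v w, z)_\Delta\big)$ with $Q(w)\neq 0$ decays exponentially as $z\to\infty$, and there are no isotropic $w$ in $L_{r\mu}[n|\Delta|]$ in those cases. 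It is precisely this decay, combined in the exceptional cases with the cusp multiplicity $g(n,\mu,v)$ built into $\widehat{\mathcal Z}_{\Delta,r}(n,\mu,v)$, that makes the star product collapse to $\int g_\infty\,\omega_{\Delta,r}$ on the nose. Without identifying $\Xi(P_\infty)=0$ (or equivalently invoking the arithmetic adjunction identity used in \cite{DY1}), the argument is incomplete; the incorrect ``hybrid'' expansion and the appeal to rational equivalence cannot substitute for it.
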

\begin{proof}
According to \cite[Theorem 6.9]{DY1}, we known that
\begin{eqnarray}\label{eqsquarefour}
&&\langle \widehat{\mathcal Z}_{\Delta, r}(n, \mu, v), \widehat{\Delta}_N\rangle_{GS}\\
&=&-\frac{1}{2}\begin{cases}\int_{X_0(N)}\log \| \Delta_N\|_{Pet}^2\omega_{\Delta, r}(n, \mu, v)(z) &\ff n \neq 0 ;\\
\int_{X_0(N)}\log \| \Delta_N\|_{Pet}^2(\omega_{\Delta, r}(0, 0, v)(z)-\frac{dxdy}{2\pi y^2}) &\ff n=0, \mu=0,\\
0 &\ff n=0, \mu\neq 0.
\end{cases}\nonumber
\end{eqnarray}
where 
\begin{equation}
\widehat{\Delta}_N=(\frac{tk}{12}\mathcal{P}_{\infty}, -\log \| \Delta_N\|_{Pet}^2).\nonumber
\end{equation}
By the same argument, replace  $\widehat{\Delta}_N$ by $\widehat{\mathcal P}_{\infty} =(\mathcal{P}_{\infty}, g_{\infty})$, we have 
\begin{eqnarray}\label{eqsquarefour}
&&\langle \widehat{\mathcal Z}_{\Delta, r}(n, \mu, v), \widehat{\mathcal P}_{\infty}\rangle_{GS}\\
&=&\begin{cases}\frac{1}{2}\int_{X_0(N)}g_{\infty}(z)\omega_{\Delta, r}(n, \mu, v)(z) &\ff n \neq 0 ;\\
\frac{1}{2}\int_{X_0(N)}g_{\infty}(z)(\omega_{\Delta, r}(0, 0, v)(z)-\frac{dxdy}{2\pi y^2}) &\ff n=0, \mu=0.
\end{cases}\nonumber
\end{eqnarray}

By the definition of Kudla-Millson theta function $\Theta_{\Delta, r}^{KM}(\tau, z)$ in Section \ref{Sec2},
we obtain the result.
\end{proof}

\begin{proposition}\label{proinftyinter}
\begin{equation}
\langle \widehat{\theta}_{\Delta, r}(g), \widehat{\mathcal P}_{\infty}\rangle_{GS}=0.
\end{equation}
\end{proposition}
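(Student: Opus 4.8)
\emph{Proof plan.} Since $\widehat{\theta}_{\Delta, r}(g)=\langle\widehat{\phi}_{\Delta, r},g\rangle_{Pet}$ is an integral over $\mathcal F$ that converges absolutely (because $g$ is a cusp form, hence of rapid decay), and since $\langle\,\cdot\,,\widehat{\mathcal P}_\infty\rangle_{GS}$ is a continuous $\R$-linear functional on $\widehat{\CH}^1_\R(\mathcal X_0(N))$, the plan is first to interchange this functional with the Petersson integral. Combined with Lemma \ref{lemdeltaone} and a second application of Fubini in the $(z,\tau)$-integral, this gives
\begin{equation}
\langle\widehat{\theta}_{\Delta, r}(g),\widehat{\mathcal P}_\infty\rangle_{GS}
=\Big\langle\langle\widehat{\phi}_{\Delta, r},\widehat{\mathcal P}_\infty\rangle_{GS},\,g\Big\rangle_{Pet}
=\tfrac12\int_{X_0(N)}g_\infty(z)\,\Lambda_{\Delta,r}(g)(z),\nonumber
\end{equation}
where $\Lambda_{\Delta,r}(g):=\langle\Theta^{KM}_{\Delta,r}(\tau,z),g(\tau)\rangle_{Pet}$ is the twisted Kudla--Millson theta lift of $g$, a $(1,1)$-form on $X_0(N)$. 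So it suffices to prove $\int_{X_0(N)}g_\infty\,\Lambda_{\Delta,r}(g)=0$.

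Next I would record that $\Lambda_{\Delta,r}(g)$ is cohomologically trivial: $\int_{X_0(N)}\Lambda_{\Delta,r}(g)=\langle\int_{X_0(N)}\Theta^{KM}_{\Delta,r}(\tau,z),g(\tau)\rangle_{Pet}$, and $\int_{X_0(N)}\Theta^{KM}_{\Delta,r}(\tau,z)$ is an Eisenstein series of weight $\tfrac32$ (by the Siegel--Weil formula, or by a direct computation of its Fourier coefficients via the degrees $\deg Z_{\Delta,r}(n,\mu)$ together with the $n\le 0$ contributions), hence orthogonal to $g$. Since $H^2(X_0(N),\R)$ is one-dimensional, we may write $\Lambda_{\Delta,r}(g)=dd^c_z h_g$ for a function $h_g$ on $X_0(N)$, normalized by $\int_{X_0(N)}h_g\,\nu=0$ (with at most log/log-log growth at the cusps of the type accommodated by the extended pairing \eqref{equinter} when $\Delta=1$). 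Using the current equation $dd^c_z[g_\infty]=[\nu]-\delta_{P_\infty}$ for the normalized $\nu$-admissible Green function $g_\infty$ and integrating by parts on $X_0(N)$ (controlling the boundary terms at $P_\infty$ by the logarithmic growth of $g_\infty$ and the smoothness of $h_g$ there), one obtains
\begin{equation}
\int_{X_0(N)}g_\infty\,\Lambda_{\Delta,r}(g)=\int_{X_0(N)}g_\infty\,dd^c_z h_g
=\int_{X_0(N)}h_g\,\nu-h_g(P_\infty)=-h_g(P_\infty),\nonumber
\end{equation}
so the Proposition reduces to the assertion $h_g(P_\infty)=0$; equivalently, that the weight-$\tfrac32$ theta integral $\Psi(\tau):=\int_{X_0(N)}g_\infty(z)\,\Theta^{KM}_{\Delta,r}(\tau,z)$ carries no cuspidal component.

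To handle this last point I would substitute the current equation of Theorem \ref{Kudlagreen}, write $\Theta^{KM}_{\Delta,r}(\tau,z)=dd^c_z\widehat\Xi_{\Delta,r}(\tau,z)+\sum_{n>0,\mu}\delta_{Z_{\Delta,r}(n,\mu)}q^ne_\mu$ with $\widehat\Xi_{\Delta,r}(\tau,z)=\sum_{n,\mu}\Xi_{\Delta,r}(n,\mu,v)(z)q^ne_\mu$, integrate the $dd^c_z\widehat\Xi$ term against $g_\infty$ by parts, and then invoke the decomposition $\Xi_{\Delta,r}(\tau,z)=g_{MW}(\tau,z)+\deg(\widehat\phi_{\Delta,r})\,g_\infty(z)+2\phi_1(\tau)+\phi_{SM}(\tau,z)$ of §\ref{Sec2}, together with the symmetry and mean-zero normalization of the admissible Green functions $g_{MW}(n,\mu,v)$ and the fact that $\phi_{SM}(n,\mu,v;\cdot)\in A^0(X_0(N))$. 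In that computation the ``archimedean height'' terms $g_\infty(Z_{\Delta,r}(n,\mu))$ and the ``average'' terms $2\phi_1(\tau)$ cancel, leaving $\Psi(\tau)=-\phi_{SM}(\tau,P_\infty)$ (with explicit extra contributions at the cusps when $\Delta=1$ and $-4Nn$ is a square). The task is thus to show that $\phi_{SM}(\tau,P_\infty)$ — the value at the cusp of the smooth component of the arithmetic theta function — is orthogonal to cusp forms, i.e., is Eisenstein. This is the main obstacle. I expect to establish it either (i) from the explicit evaluation of the components of $\widehat\phi_{\Delta,r}$ carried out in \cite{DY1}, \cite{DY2} — equivalently, from the consistency relation $\langle\widehat\phi_{\text{Vert}},\widehat{\mathcal P}_\infty\rangle_{GS}=-\deg(\widehat\phi_{\Delta,r})\langle\widehat{\mathcal P}_\infty,\widehat{\mathcal P}_\infty\rangle_{GS}$ forced by Proposition \ref{Prothedeco}, which pins down the cuspidal parts of all the relevant generating series and makes them cancel — or (ii) directly, by identifying $\phi_{SM}(\tau,P_\infty)$, via the regularized Borcherds--Bruinier lift $\Phi_{\Delta,r}(f,\cdot)$ of a harmonic weak Maass form $f$ with $\xi_{1/2}(f)=g$, as $\Phi_{\Delta,r}(f,P_\infty)-g_{Z_{\Delta,r}(f)}(P_\infty)$ up to a constant, and then reading off its Eisenstein nature from Borcherds' description of $\Phi_{\Delta,r}(f,z)$ near $P_\infty$ (Theorem \ref{Kudlagreen}(2) and the theory of \cite{Br}, \cite{BO}, \cite{AE}); all steps before this are routine bookkeeping (Fubini, Stokes, the cohomological vanishing).
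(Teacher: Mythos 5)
Your reduction to $\frac12\int_{X_0(N)}g_\infty(z)\,\Lambda_{\Delta,r}(g)(z)$ with $\Lambda_{\Delta,r}(g):=\langle\Theta^{KM}_{\Delta,r}(\tau,z),g(\tau)\rangle_{Pet}$ matches the paper's use of Lemma \ref{lemdeltaone}, but from there you take a much longer and ultimately incomplete route. You observe that $\Lambda_{\Delta,r}(g)$ is cohomologically trivial (hence $\Lambda_{\Delta,r}(g)=dd^c_z h_g$), integrate by parts to get $-h_g(P_\infty)$, and then attempt to prove $h_g(P_\infty)=0$ by analyzing $\phi_{SM}(\tau,P_\infty)$. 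You explicitly flag this last step as ``the main obstacle'' and only gesture at two possible strategies, so as written this is a genuine gap.

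The missing input is that $\Lambda_{\Delta,r}(g)$ is not merely cohomologically trivial: it vanishes \emph{identically} as a $(1,1)$-form. This is exactly \cite[Theorem 5.1]{Alfes}, quoted as equation (\ref{petzero}) in the paper: for every $g\in S_{\frac32,\tilde\rho_L}$ one has $\langle\Theta^{KM}_{\Delta,r}(\tau,z),g(\tau)\rangle_{Pet}=0$ pointwise in $z$. Once you have this, the integral $\int_{X_0(N)}g_\infty\cdot\Lambda_{\Delta,r}(g)$ is $0$ with no need for the potential $h_g$, Stokes, or any analysis of the smooth/vertical decomposition of $\widehat\phi_{\Delta,r}$. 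Concretely: the fact that $\int_{X_0(N)}\Lambda_{\Delta,r}(g)=0$ (which you derive via Siegel--Weil/degrees) only tells you $h_g$ exists, not that it is constant; to conclude $h_g\equiv 0$ you need the pointwise vanishing, which is precisely the theorem you are implicitly trying to reprove. So the suggested routes (i) and (ii) are not merely bookkeeping --- without Alfes' vanishing theorem as input, you would have to reprove it, and the circuit through $\phi_{SM}(\tau,P_\infty)$ does not obviously close.

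I recommend replacing the entire argument after the Fubini step with a citation of Alfes' theorem (the paper's equation (\ref{petzero})). Your intermediate observations --- that $\int_{X_0(N)}\Theta^{KM}_{\Delta,r}(\tau,z)$ is Eisenstein and that $\Lambda_{\Delta,r}(g)$ is exact --- are correct and consistent with the paper (cf.\ Proposition \ref{lemvertic} and the spectral decomposition in Section \ref{Sec3}), but they are weaker than what is needed and do not shorten the proof.
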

\begin{proof}
%Follows from Lemma \ref{lemdeltaone}, we obtain
%\begin{equation}
%\langle \widehat{\phi}_{\Delta, r}(\tau), \widehat{\mathcal P}_{\infty}\rangle_{GS}=\frac{1}{2}\int_{X_0(N)}g_{\infty}(z)\Theta_{\Delta, r}^{KM}(\tau, z).\nonumber
%\end{equation}
According to \cite[Theorem 5.1]{Alfes}, for any cusp form $g(\tau) \in S_{\frac{3}{2}, \tilde{\rho}_L}$,
\begin{equation}\label{petzero}
\langle\Theta_{\Delta, r}^{KM}(\tau, z), g(\tau)\rangle_{Pet}=0.
\end{equation}
Following  Lemma \ref{lemdeltaone},
%\begin{equation}
%\langle \widehat{\phi}_{\Delta, r}(\tau), \widehat{\mathcal P}_{\infty}\rangle_{GS}=\frac{1}{2}\int_{X_0(N)}g_{\infty}(z)\Theta_{\Delta, r}^{KM}(\tau, z).\nonumber
%\end{equation}
 we have  \begin{equation}
\langle \widehat{\theta}_{\Delta, r}(g), \widehat{\mathcal P}_{\infty}\rangle_{GS}=\frac{1}{2}\int_{X_0(N)}g_{\infty}(z)\langle\Theta_{\Delta, r}^{KM}(\tau, z),~g\rangle_{Pet}=0.
\end{equation}
%When $\Delta \neq 1$,
%\begin{eqnarray}
%\langle \widehat{\phi}_{\Delta, r}(\tau), \widehat{\mathcal P}_{\infty}\rangle_{GS}=\frac{1}{2}\langle \widehat{\phi}_{\Delta, r}(\tau), \widehat{\mathcal P}_{\infty}\rangle_{\infty}=\frac{1}{2} \int_{X_0(N)} g(P_{\infty}, z)\Theta_{\Delta, r}(\tau, z).\nonumber
%\end{eqnarray}
This concludes the proof.
\end{proof}

\subsection{Vertical component}
The vertical component  of $\widehat{\phi}_{\Delta, r}(\tau)$ is given as
\begin{equation}\label{equver}
\widehat{\phi}_{\text{Vert}}(\tau)=\Sigma_{p \mid N}\phi_p(\tau)\mathcal{Y}_p^{\vee}+ 2\phi_1(\tau) \mathds{1}.
\end{equation}
We will prove the following result in this subsection
\begin{proposition}\label{prover}
\begin{eqnarray}
\langle \widehat{\phi}_{\text{Vert}}, g(\tau)\rangle_{Pet}=0.
\end{eqnarray}
\end{proposition}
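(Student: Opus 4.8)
The strategy is to reduce the vanishing of the Petersson pairing against each component of $\widehat{\phi}_{\text{Vert}}$ to the vanishing of the Petersson pairing of a suitable theta kernel (or its vertical pieces) against cusp forms, in complete analogy with the treatment of the $\widehat{\mathcal P}_\infty$ component in Proposition \ref{proinftyinter}. Recall from \eqref{equver} that
\[
\widehat{\phi}_{\text{Vert}}(\tau)=\sum_{p\mid N}\phi_p(\tau)\mathcal Y_p^\vee + 2\phi_1(\tau)\mathds 1,
\]
so it suffices to show that each scalar-valued generating series $\phi_p(\tau)=\langle\widehat\phi_{\Delta,r}(\tau),\mathcal Y_p\rangle_{GS}$ and $\phi_1(\tau)=\langle\widehat\phi_{\Delta,r}(\tau)-\deg(\widehat\phi_{\Delta,r})\widehat{\mathcal P}_\infty,\widehat{\mathcal P}_\infty\rangle_{GS}$ pairs to zero with $g$ under $\langle\cdot,\cdot\rangle_{Pet}$.

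First I would handle $\phi_1(\tau)$. Since Proposition \ref{proinftyinter} already gives $\langle\widehat\theta_{\Delta,r}(g),\widehat{\mathcal P}_\infty\rangle_{GS}=0$, and since $\deg(\widehat\phi_{\Delta,r})$ is itself (the generating series of) the Kudla-Millson theta lift paired against the constant function — hence its Petersson pairing with the cusp form $g$ vanishes by \eqref{petzero} — one gets $\langle 2\phi_1,g\rangle_{Pet}=2\langle\widehat\phi_{\Delta,r},\widehat{\mathcal P}_\infty\rangle_{GS}$-pairing $=0$ directly: indeed $\langle 2\phi_1(\tau)\mathds 1,g\rangle_{Pet}=2\langle\langle\widehat\phi_{\Delta,r},\widehat{\mathcal P}_\infty\rangle_{GS}-\deg(\widehat\phi_{\Delta,r})\langle\widehat{\mathcal P}_\infty,\widehat{\mathcal P}_\infty\rangle_{GS},g\rangle_{Pet}$, and both terms are Petersson-orthogonal to $g$ by the two facts just cited.

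Next, for the genuinely vertical terms $\phi_p(\tau)\mathcal Y_p^\vee$, I would compute $\langle\widehat\phi_{\Delta,r}(\tau),\mathcal Y_p\rangle_{GS}$ explicitly. Because $\mathcal Y_p=\mathcal X_p^0-p\mathcal X_p^\infty$ is a purely vertical divisor supported over $p$ with trivial Green function, the Gillet-Soulé pairing $\langle\widehat{\mathcal Z}_{\Delta,r}(n,\mu,v),\mathcal Y_p\rangle_{GS}$ reduces to a purely finite (geometric) intersection number at $p$: it counts how the horizontal part $\mathcal Z_{\Delta,r}(n,\mu)$ (or the cuspidal part, when $-4Nn$ is a square) meets the two components of the fiber mod $p$. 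For $n>0$ this is a sum over CM points of local intersection multiplicities; the key point is that for fixed $\mu$ this finite quantity, assembled into a $q$-series, is known (by the arithmetic Siegel-Weil results underlying \cite{DY1}\cite{DY2}, or by direct Eisenstein-series identification) to be an \emph{Eisenstein} series — a linear combination of the weight $3/2$ Siegel-Weil Eisenstein series attached to $L^\sharp/L$ — and hence is Petersson-orthogonal to the cusp form $g$. Equivalently, and perhaps more cleanly, one argues that the vertical generating function $\widehat\phi_{\text{Vert}}$ is, up to the constant $\deg$-term, the ``$\text{Vert}$-part'' of the arithmetic theta function, which by the structure of the arithmetic Siegel-Weil formula is Eisenstein; pairing against a cusp form kills it. I expect the main obstacle to be precisely this identification: showing that $\phi_p(\tau)$ (the generating series of vertical intersection numbers at $p$) is Eisenstein, which requires either invoking the local arithmetic Siegel-Weil computation from \cite{DY1}\cite{DY2} at the prime $p\mid N$, or exhibiting $\phi_p$ as a trace/pullback of a classical Eisenstein series. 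Once that is in hand, $\langle\phi_p\mathcal Y_p^\vee,g\rangle_{Pet}=\langle\phi_p,g\rangle_{Pet}\,\mathcal Y_p^\vee=0$, and combining with the $\phi_1$ computation yields $\langle\widehat\phi_{\text{Vert}},g\rangle_{Pet}=0$, completing the proof.
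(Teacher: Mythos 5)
Your proposal is correct and follows essentially the same route as the paper: split $\widehat{\phi}_{\text{Vert}}$ into the $\phi_1\mathds 1$ and $\phi_p\mathcal Y_p^\vee$ pieces, kill $\phi_1$ using Proposition \ref{proinftyinter} together with the Eisenstein-ness of $\deg(\widehat{\phi}_{\Delta,r})$, and kill each $\phi_p$ by identifying it as a multiple of the Eisenstein series $\mathcal E_L(\tau,1)$. The step you flag as the ``main obstacle'' — showing $\phi_p(\tau)=\langle\widehat{\phi}_{\Delta,r}(\tau),\mathcal Y_p\rangle_{GS}$ is Eisenstein — is exactly Proposition \ref{lemvertic} in the paper, which is quoted as already proved in \cite{DY1}\cite{DY2}; once you invoke that, your argument and the paper's coincide (the paper packages the two Petersson-vanishings as Lemma \ref{lemphione} and then cites it together with \eqref{equver}).
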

\begin{proof}
  It follows from Lemma \ref{lemphione} and equation (\ref{equver}).
\end{proof}

%with
%\begin{equation}\label{phione}
% \phi_p(\tau)=\langle  \widehat{\phi}_{\Delta, r}(\tau),  \mathcal{Y}_p \rangle_{GS},\\
%\phi_1(\tau)=\langle \widehat{\phi}_{\Delta, r}(\tau)-\deg{\widehat{\phi}_{\Delta, r}(\tau)}\widehat{\mathcal P}_{\infty}, \widehat{\mathcal P}_{\infty}\rangle_{GS}.
%\end{equation}

We define the normalized Eisenstein series  as
\begin{equation}\label{vectormodularform}
\mathcal{E}_{L}(\tau,s)=- \frac{s}{4} \pi^{-s-1}\Gamma(s)\zeta^{(N)}(2s)N^{\frac{1}{2}+\frac{3}{2}s}\sum \limits_{ \gamma ^{\prime} \in \Gamma_{\infty} ^{\prime}\diagdown
\Gamma^{\prime}} \big(v^{\frac{s-1}{2}}e_{0} \big)\mid_{3/2, \tilde{\rho}_{L}} \gamma ^{\prime},
\end{equation}
and
$$
\zeta^{(N)}(s) = \zeta(s) \prod_{p|N} (1-p^{-s}).
$$
%The degree of arithmetic theta function is given by \cite{DY1} and \cite{DY2},
%\begin{equation}\deg(\widehat{\phi}_{\Delta, r}(\tau))=\left\{
%                                           \begin{array}{ll}
%                                             \frac{2}{\varphi(N)}\mathcal E_L(\tau, 1), & \hbox{$\Delta=1$;} \\
%                                             0, & \hbox{$\Delta \neq 1$.}
%                                           \end{array}
%                                         \right. 
%\end{equation}
%Then 
The following result  has been proved in \cite{DY1}\cite{DY2}.
\begin{proposition}\label{lemvertic}
\begin{equation}
\deg(\widehat{\phi}_{\Delta, r}(\tau))=2\langle \widehat{\phi}_{\Delta, r}(\tau), \mathds{1} \rangle_{GS}=\left\{
                                           \begin{array}{ll}
                                             \frac{2}{\varphi(N)}\mathcal E_L(\tau, 1), & \hbox{$\Delta=1$;} \\
                                             0, & \hbox{$\Delta \neq 1$,}
                                           \end{array}\nonumber
                                         \right. 
\end{equation}
and
\begin{equation}
\langle \widehat{\phi}_{\Delta, r}(\tau), \mathcal X_p^0 \rangle_{GS}=\langle \widehat{\phi}_{\Delta, r}(\tau), \mathcal X_p^\infty\rangle_{GS}
=\left\{
   \begin{array}{ll}
     \frac{1}{\varphi(N)}\mathcal{E}_{L}(\tau, 1)\log p, & \hbox{$\Delta=1$;} \\
     0, & \hbox{$\Delta\neq 1$.}
   \end{array}
 \right.\nonumber
\end{equation}
\end{proposition}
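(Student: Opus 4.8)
\textbf{Proof plan for Proposition~\ref{lemvertic}.}
The plan is to compute the four Gillet--Soul\'e pairings on the left by extracting the vertical components of the arithmetic Heegner divisors $\widehat{\mathcal Z}_{\Delta, r}(n,\mu,v)$ and then recognizing the resulting $q$-series as the Eisenstein series $\mathcal E_L(\tau,1)$. First I would recall from the structure of $\widehat{\mathcal Z}_{\Delta, r}(n,\mu,v)$ in Section~\ref{Sec2} that a genuine horizontal (finite) contribution, and hence a potential pairing with a vertical cycle, can only arise when $\Delta = 1$: indeed, by Theorem~\ref{Kudlagreen}(2) and the case list in the definition of $\widehat{\mathcal Z}_{\Delta, r}(n,\mu,v)$, for $\Delta \neq 1$ the Green function $\Xi_{\Delta,r}(n,\mu,v)$ is smooth at the cusps and the divisorial part is the \'etale CM divisor $\mathcal Z_{\Delta, r}(n,\mu)$, which lies generically and meets no vertical component $\mathcal X_p^0$, $\mathcal X_p^\infty$ or $\mathds 1$; pairing against a purely vertical class then vanishes by the projection formula. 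This disposes of the $\Delta \neq 1$ rows.

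For $\Delta = 1$, I would reduce both statements to the already-established degree computation $\deg(\widehat\phi_{\Delta,r}(\tau)) = \frac{2}{\varphi(N)}\mathcal E_L(\tau,1)$ from \cite{DY1}\cite{DY2}. The point is that $\mathds 1 = (0,1)$ is a vertical arithmetic divisor with $\langle \widehat{\mathcal Z}, \mathds 1\rangle_{GS} = \tfrac12 \deg(\widehat{\mathcal Z})$ for any $\widehat{\mathcal Z} \in \widehat{\CH}^1_\R(\mathcal X_0(N))$ (this is essentially the defining normalization of $\mathds 1$), which gives the identity $\deg = 2\langle\cdot,\mathds 1\rangle_{GS}$ termwise in the $q$-expansion. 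For the pairing with $\mathcal X_p^0$ (equivalently $\mathcal X_p^\infty$, since $\mathcal X_p^0 + \mathcal X_p^\infty = 2\mathds 1\log p$ and the pairing with the full fiber is forced by the degree), I would use that on $\mathcal X_0(N)$ with $N$ square-free the intersection of a horizontal divisor $\mathcal Z_{\Delta,r}(n,\mu)$ with the component $\mathcal X_p^0$ is controlled by the reduction mod $p$ of the CM points, and the local multiplicities were already packaged in \cite{DY1}\cite{DY2}; tracing through those local computations produces exactly $\tfrac{1}{\varphi(N)}\mathcal E_L(\tau,1)\log p$. An efficient alternative, avoiding the local analysis, is to pair the modular generating series $\widehat\phi_{\Delta,r}(\tau)$ against the relation $\mathcal X_p^0 + \mathcal X_p^\infty = 2\mathds 1 \log p$ together with $\langle \widehat\phi_{\Delta,r}, \mathcal Y_p\rangle_{GS} = \phi_p(\tau)$ and the fact, established in \cite{DY1}\cite{DY2}, that $\mathcal X_p^0$ and $\mathcal X_p^\infty$ receive symmetric contributions (an Atkin--Lehner $W_p$-symmetry argument, using $W_p$-invariance of $\chi_\Delta$ and of the divisors), which forces $\langle\widehat\phi_{\Delta,r},\mathcal X_p^0\rangle_{GS} = \langle\widehat\phi_{\Delta,r},\mathcal X_p^\infty\rangle_{GS} = \tfrac12\log p\cdot\langle\widehat\phi_{\Delta,r},2\mathds 1\rangle_{GS} = \log p\cdot\langle\widehat\phi_{\Delta,r},\mathds 1\rangle_{GS}$, and then invoke the degree formula.

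The main obstacle I anticipate is the $\mathcal X_p^0$ computation in the $\Delta = 1$ case: it requires knowing precisely how the arithmetic Heegner divisors $\widehat{\mathcal Z}_{1,r}(n,\mu,v)$ — including the subtle cases $-4Nn = \square > 0$ (log singularities, multiplicity $g(n,\mu,v)$) and $n = 0$ (the $\widehat\omega$-correction term and the $\log(v/N)$ twist) — meet the special fiber at $p$, and to see that all the correction terms assemble coherently into a single copy of $\mathcal E_L(\tau,1)$. I would handle this by invoking the already-completed local intersection formulas of \cite{DY1} and \cite{DY2} rather than redoing them, and by using the $W_p$-symmetry to cut the work essentially in half; the Eisenstein series emerges because the local intersection numbers at $p$ are, up to the factor $\log p$, the same linear functional of $(n,\mu,v)$ that produces the degree, which is itself an Eisenstein coefficient by the arithmetic Siegel--Weil formula on $\mathcal X_0(N)$.
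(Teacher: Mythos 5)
Your reduction $\deg(\widehat{\phi}_{\Delta,r}) = 2\langle \widehat{\phi}_{\Delta,r}, \mathds{1}\rangle_{GS}$ for $\Delta = 1$ is sound, but the plan has two genuine gaps.

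First, the $\Delta \neq 1$ case is treated incorrectly. You assert that the divisorial part $\mathcal Z_{\Delta,r}(n,\mu)$ ``lies generically and meets no vertical component, $\mathcal X_p^0$, $\mathcal X_p^\infty$, or $\mathds{1}$'', and then invoke the projection formula. This is false as stated: $\mathcal Z_{\Delta,r}(n,\mu)$ is a $\chi_\Delta$-weighted combination of Zariski closures of CM points, each of which is a horizontal prime divisor that certainly intersects the special fibers $\mathcal X_p^0$ and $\mathcal X_p^\infty$. What vanishes is the \emph{signed} sum of local intersection multiplicities, and this is a nontrivial arithmetic cancellation coming from the fact that $\chi_\Delta$ is a nontrivial genus character whose average over the relevant class group (or over $\Cl_k[2]$-cosets) is zero --- precisely the mechanism the paper itself uses later in Lemma~\ref{lemtdeg} and Theorem~\ref{Lemnonsplit} to kill the finite intersection numbers. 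A projection-formula or support-avoidance argument cannot replace this; you would have to trace through the local intersection multiplicities as in \cite{DY2}.

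Second, your Atkin--Lehner symmetry step for $\Delta = 1$ is not as automatic as you present it. You need $W_p^*\widehat{\phi}_{1,r} = \widehat{\phi}_{1,r}$ to conclude $\langle \widehat{\phi}_{1,r}, \mathcal X_p^0\rangle_{GS} = \langle \widehat{\phi}_{1,r}, \mathcal X_p^\infty\rangle_{GS}$. But the constant term $\widehat{\mathcal Z}_{1,r}(0,0,v)$ contains the correction $\widehat{\omega} = -2\widehat{\omega}_N - \sum_{q|N}\mathcal X_q^0$, whose vertical part singles out the zero components and is not manifestly $W_p$-invariant (it is symmetrized under the Fricke involution $W_N$, which is not the same thing). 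So the claimed equality of the two pairings does not follow from $\chi_\Delta$-invariance of the Heegner divisors alone; it has to be verified against the explicit local computations.

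For context, the paper gives no proof of this proposition --- it is quoted directly from \cite{DY1} and \cite{DY2}, where it falls out of the local Green function and intersection calculations behind the arithmetic Siegel--Weil formula on $\mathcal X_0(N)$. Your attempt to shortcut this by a degree-plus-symmetry reduction is a genuinely different and more conceptual route, but both of the soft arguments you rely on are exactly where the real arithmetic content is hiding, so as written the plan does not close.
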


%Notice that
% \begin{eqnarray}\label{decomone}
% \widehat{\phi}_{\Delta, r}(\tau)& =& \widehat{\phi}_{\MW}(\tau)    +\deg(\widehat{\phi}_{\Delta, r}(\tau))\widehat{\mathcal P}_{\infty}+\Sigma_{p \mid N}\phi_p(\tau)\mathcal{Y}_p^{\vee}\\&&+ 2\phi_1(\tau) \mathds{1} + a(\phi_{SM}),\nonumber
% \end{eqnarray}
%and when $\Delta \neq 1$, it simplifies to
%\begin{equation}\label{equnone}
%\widehat{\phi}_{\Delta, r}(\tau) = \widehat{\phi}_{\MW}(\tau)    + 2\phi_1(\tau) \mathds{1} + a(\phi_{SM}).
%\end{equation}

We have the following result.
\begin{lemma}\label{lemphione}
\begin{eqnarray}
\langle \phi_{1}(\tau), g(\tau)\rangle_{Pet}=\langle \phi_{p}(\tau), g(\tau)\rangle_{Pet}=0.
\end{eqnarray}
\end{lemma}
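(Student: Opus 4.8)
The plan is to derive both vanishing statements from two facts established earlier in the paper: the explicit Eisenstein--series description of the ``constant'' invariants of $\widehat{\phi}_{\Delta,r}$ in Proposition \ref{lemvertic}, and the vanishing $\langle\widehat{\theta}_{\Delta,r}(g),\widehat{\mathcal P}_\infty\rangle_{GS}=0$ of Proposition \ref{proinftyinter}. Throughout I shall use that the Gillet--Soul\'e height pairing against a fixed vertical divisor, resp.\ against $\widehat{\mathcal P}_\infty$, commutes with the Petersson integral $\langle\,\cdot\,,g\rangle_{Pet}=\int_{\mathcal F}\langle\,\cdot\,,\bar g\rangle v^{3/2}\,d\mu(\tau)$; by Lemma \ref{lemdeltaone} and the analogous computations at the finite places these pairings are given by integration against fixed smooth forms on $X_0(N)$ plus finitely many local intersection numbers, so the interchange is a Fubini argument made legitimate by the rapid decay of the cusp form $g$.

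First I treat $\phi_p$. Since $\mathcal Y_p=\mathcal X_p^0-p\,\mathcal X_p^\infty$, bilinearity of $\langle\,,\,\rangle_{GS}$ gives
\[
\phi_p(\tau)=\langle\widehat{\phi}_{\Delta,r}(\tau),\mathcal X_p^0\rangle_{GS}-p\,\langle\widehat{\phi}_{\Delta,r}(\tau),\mathcal X_p^\infty\rangle_{GS},
\]
which by Proposition \ref{lemvertic} equals $\frac{1-p}{\varphi(N)}\,\mathcal E_L(\tau,1)\log p$ if $\Delta=1$ and $0$ if $\Delta\neq 1$. In either case $\phi_p$ is a scalar multiple of the weight $\frac{3}{2}$ Eisenstein series $\mathcal E_L(\tau,1)$, and since $g$ is a cusp form the standard orthogonality of Eisenstein series to cusp forms (unfold the sum over $\Gamma_\infty'\backslash\Gamma'$ in $\mathcal E_L(\tau,1)$ against $g$; what survives is a multiple of the zeroth Fourier coefficient of $g$, which vanishes) yields $\langle\phi_p,g\rangle_{Pet}=0$.

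Next I treat $\phi_1$. Expanding the definition,
\[
\phi_1(\tau)=\langle\widehat{\phi}_{\Delta,r}(\tau),\widehat{\mathcal P}_\infty\rangle_{GS}-\deg(\widehat{\phi}_{\Delta,r}(\tau))\,\langle\widehat{\mathcal P}_\infty,\widehat{\mathcal P}_\infty\rangle_{GS},
\]
so pairing with $g$ and interchanging the two pairings gives
\[
\langle\phi_1,g\rangle_{Pet}=\langle\widehat{\theta}_{\Delta,r}(g),\widehat{\mathcal P}_\infty\rangle_{GS}-\langle\widehat{\mathcal P}_\infty,\widehat{\mathcal P}_\infty\rangle_{GS}\,\langle\deg(\widehat{\phi}_{\Delta,r}),g\rangle_{Pet}.
\]
The first term on the right vanishes by Proposition \ref{proinftyinter} (equivalently, this is exactly the chain of equalities there, using $\langle\Theta_{\Delta,r}^{KM}(\tau,z),g(\tau)\rangle_{Pet}=0$ of (\ref{petzero}) together with Lemma \ref{lemdeltaone}). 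For the second term, Proposition \ref{lemvertic} shows $\deg(\widehat{\phi}_{\Delta,r})$ is again a scalar multiple of $\mathcal E_L(\tau,1)$ (and $0$ if $\Delta\neq1$), whence $\langle\deg(\widehat{\phi}_{\Delta,r}),g\rangle_{Pet}=0$ by the same orthogonality. Therefore $\langle\phi_1,g\rangle_{Pet}=0$, completing the proof. The only point requiring care is the Fubini interchange of the Gillet--Soul\'e pairing with $\int_{\mathcal F}$; I expect this to be the sole (and mild) obstacle, and it is handled exactly as in the proof of Proposition \ref{proinftyinter}, where the same interchange is already used.
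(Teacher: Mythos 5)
Your proof is correct and essentially the same as the paper's: both derive the vanishing of $\langle\phi_1,g\rangle_{Pet}$ from Proposition \ref{proinftyinter} together with the observation (from Proposition \ref{lemvertic}) that $\deg\widehat{\phi}_{\Delta,r}$ is a multiple of $\mathcal E_L(\tau,1)$ and hence Petersson-orthogonal to the cusp form $g$, and both derive the vanishing of $\langle\phi_p,g\rangle_{Pet}$ from the fact that $\phi_p$ is likewise a multiple of $\mathcal E_L(\tau,1)$ by Proposition \ref{lemvertic}. The only difference is that you spell out the Fubini interchange and the Eisenstein–cusp-form orthogonality, which the paper uses implicitly.
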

\begin{proof}
By Proposition \ref{lemvertic}, $\deg{\widehat{\phi}_{\Delta, r}(\tau)}$
 is a constant multiple of $\mathcal E_L(\tau, 1)$.
Then we have
\begin{equation}\label{equdrezero}
\langle\deg{\widehat{\phi}_{\Delta, r}(\tau)}, g\rangle_{Pet}=0.
\end{equation}
According to Proposition  \ref{proinftyinter},  we have
\begin{eqnarray}\label{equonezero}
\langle \langle \widehat{\phi}_{\Delta, r}(\tau), \widehat{\mathcal P}_{\infty}\rangle_{GS}, g(\tau)\rangle_{Pet}=0.
\end{eqnarray}

%
%For any cusp form $g$,   it holds that
%\begin{equation} \langle\mathcal E_L(\tau, 1),  g(\tau)\rangle_{Pet}=0.\end{equation}
Combining it with equation (\ref{equdrezero}), we obtain
\begin{eqnarray}\label{equzeropartone}
&&\langle \phi_{1}(\tau), g(\tau)\rangle_{Pet}\\
%&=&\langle\langle \widehat{\phi}_{\Delta, r}(\tau)-\deg{\widehat{\phi}_{\Delta, r}(\tau)}\widehat{\mathcal P}_{\infty}, \widehat{\mathcal P}_{\infty}\rangle_{GS},~g(\tau)\rangle_{Pet}\nonumber\\
&=&\langle \langle \widehat{\phi}_{\Delta, r}(\tau), \widehat{\mathcal P}_{\infty}\rangle_{GS}, g(\tau)\rangle_{Pet}-\langle\deg{\widehat{\phi}_{\Delta, r}(\tau)}, g\rangle_{Pet} \langle \widehat{\mathcal P}_{\infty}, \widehat{\mathcal P}_{\infty}\rangle_{GS}=0.\nonumber
\end{eqnarray}
According to Proposition \ref{lemvertic}, $\phi_p(\tau)$ is a constant multiple of $\mathcal E_L(\tau, 1)$.
% we have  \begin{equation}
%    \phi_p(\tau)=\langle  \widehat{\phi}_{\Delta, r}(\tau),  \mathcal{Y}_p \rangle_{GS}=\frac{1-p}{\varphi(N)} \mathcal{E}_{L}(\tau, 1)\log p~or~0.\nonumber
%      \end{equation}
It follows that \begin{equation}\langle \phi_{p}(\tau), g(\tau)\rangle_{Pet}=0.
\end{equation}
Thus, we finish the proof.
\end{proof}

\subsection{Smooth component and spectral decomposition}
In this subsection, we will prove the following result
\begin{equation}
\langle \phi_{SM}(\tau, z) , g(\tau)\rangle_{Pet}=0.
\end{equation}

Let $\Delta_z$ be the Laplacian operator with respect to $\nu$ such that
\begin{equation}
d_z  d_z^c f =\frac{1}{2}\Delta_z(f)  \nu.
\end{equation}
 Then  the space $A^0(X)$  has an orthogonal normal basis $\{ f_{\lambda_j}\}$ with
$$
\Delta_z f_{\lambda_j} + \lambda_j f_{\lambda_j}=0,  \quad  \langle f_{\lambda_j}, f_{\lambda_j} \rangle = \delta_{ij}, \quad \hbox{ and } \lambda_0=0 < \lambda_1 < \lambda_2 < \cdots,
$$
where  the inner product is given by
$$
\langle f, g \rangle = \int_{X_0(N)} f \bar g  \nu.
$$
Then for any $f \in  A^0(X)$, one has
 $$f = \sum  \langle f, f_\lambda \rangle   f_\lambda.$$
It follows that
 \begin{equation}\label{spectraldecom}
 \phi_{SM}(\tau, z) =\sum_{\lambda>0} \langle \phi_{SM} , f_\lambda \rangle f_\lambda.
 \end{equation}
By \cite[Theorem 8.4]{DY1}, we have
 \begin{align*}
 \langle \phi_{SM} , f_\lambda \rangle=
-\frac{2}{\lambda}\Theta_{\Delta, r}^{KM}(\tau,  f_\lambda),\nonumber
\end{align*}
where  \begin{equation}
\Theta_{\Delta, r}^{KM}(\tau,  f)=\int_{X_0(N)} \Theta_{\Delta, r}^{KM}(\tau, z) f,
\end{equation}
and $$\Theta_{\Delta, r}^{KM}(\tau,  f_0)=\int_{X_0(N)}\Theta_{\Delta, r}^{KM}(\tau,  z),~ for ~f_0=1.$$
Thus we obtain the following spectral decomposition.
\begin{lemma}
\begin{equation}\label{specone}
 \phi_{SM}(\tau, z) =-2\sum_{\lambda>0}\lambda^{-1} \Theta_{\Delta, r}^{KM}(\tau,  f_\lambda) f_\lambda,
\end{equation}
and
\begin{eqnarray}\label{spectwo}
\Theta_{\Delta, r}^{KM}(\tau, z)=\sum_{\lambda}\Theta_{\Delta, r}^{KM}(\tau,  f_\lambda) f_\lambda.
\end{eqnarray}

\end{lemma}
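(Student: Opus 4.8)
The plan is to read \eqref{specone} off the abstract spectral expansion \eqref{spectraldecom} together with the formula for $\langle\phi_{SM},f_\lambda\rangle$ from \cite[Theorem 8.4]{DY1}, and then to deduce \eqref{spectwo} either from the smoothness of the Kudla--Millson kernel (via the spectral theorem for $\Delta_z$ on the compact curve $X_0(N)$) or, equivalently, by applying $d_z d_z^c$ to the decomposition \eqref{Greenfundecom} and feeding \eqref{specone} back in.

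For \eqref{specone}: by Proposition \ref{Prothedeco} and \eqref{Greenfundecom}, for each fixed $\tau\in\H$ the Fourier coefficients $\phi_{SM}(n,\mu,v;\cdot)$ lie in $A^0(X)$, so \eqref{spectraldecom} gives $\phi_{SM}(\tau,z)=\sum_{\lambda>0}\langle\phi_{SM},f_\lambda\rangle f_\lambda$. Substituting $\langle\phi_{SM},f_\lambda\rangle=-\tfrac{2}{\lambda}\Theta_{\Delta,r}^{KM}(\tau,f_\lambda)$ from \cite[Theorem 8.4]{DY1} gives \eqref{specone} immediately. The one point to watch is the interchange of the $q$-expansion in $\tau$ with the spectral sum in $z$: this is harmless because each $\phi_{SM}(n,\mu,v;\cdot)$ is smooth on the compact Riemann surface $X_0(N)$, so its spectral series converges in all $C^k$-norms by elliptic regularity, and only the finitely many $\lambda$-coefficients entering a given $q^n e_\mu$ need be considered.

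For \eqref{spectwo}: for fixed $\tau$, the twisted Kudla--Millson kernel $\Theta_{\Delta,r}^{KM}(\tau,z)$ of \eqref{eq:TwistedTheta} is, after division by the fixed volume form $\nu$, a smooth real-valued function of $z$ on $X_0(N)$ lying in $A(X_0(N))$ --- its defining theta series converges absolutely and locally uniformly together with all its $z$-derivatives because the Gaussian $\varphi_\Delta^0$ decays rapidly. Since $\{f_\lambda\}_{\lambda>0}$ together with $f_0=1$ (which is $\nu$-normalized, $\int_{X_0(N)}\nu=1$) forms an orthonormal basis of $A(X_0(N))$, one has $\Theta_{\Delta,r}^{KM}(\tau,z)=\sum_{\lambda\ge0}\langle\Theta_{\Delta,r}^{KM}(\tau,\cdot),f_\lambda\rangle f_\lambda$, and as each $f_\lambda$ is real the coefficient equals $\Theta_{\Delta,r}^{KM}(\tau,f_\lambda)$ by the very definition of the latter; this is \eqref{spectwo}. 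Alternatively one may apply $d_z d_z^c$ to \eqref{Greenfundecom}: the constant-in-$z$ term $2\phi_1(\tau)$ drops, $d_z d_z^c g_\infty=\nu-\delta_{P_\infty}$, and $d_z d_z^c g_{MW}(n,\mu,v)=\deg(Z_{\Delta,r}(n,\mu))\delta_{P_\infty}-\delta_{Z_{\Delta,r}(n,\mu)}$ since $g_{MW}(n,\mu,v)$ is the $\nu$-admissible Green function of the degree-zero divisor $y_{\Delta,r}(n,\mu)$; comparing with $d_z d_z^c\Xi_{\Delta,r}(\tau,z)=[\Theta_{\Delta,r}^{KM}(\tau,z)]-\sum_{n,\mu}\delta_{Z_{\Delta,r}(n,\mu)}q^n e_\mu$ obtained by summing Theorem \ref{Kudlagreen} against $q^n e_\mu$, and using $\deg(\widehat\phi_{\Delta,r}(\tau))=\Theta_{\Delta,r}^{KM}(\tau,f_0)$ (integrate the resulting identity, or invoke Proposition \ref{lemvertic}), yields $d_z d_z^c\phi_{SM}(\tau,z)=[\Theta_{\Delta,r}^{KM}(\tau,z)]-\Theta_{\Delta,r}^{KM}(\tau,f_0)\nu$; feeding in \eqref{specone} together with $d_z d_z^c f_\lambda=-\tfrac{\lambda}{2}f_\lambda\nu$ then recovers \eqref{spectwo}.

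The argument contains no single hard step, only analytic bookkeeping. The point needing the most care is the legitimacy of commuting the two infinite summations --- the $q$-expansion in $\tau$ and the spectral expansion in $z$ --- together with the fact, implicit in the direct route, that $\Theta_{\Delta,r}^{KM}(\tau,\cdot)$ extends to a genuinely smooth (hence $L^2$) section on the compact curve $X_0(N)$, cusps included; both follow from the rapid decay of $\varphi_\Delta^0$ and the compactness of $X_0(N)$, but should be stated explicitly since the entire spectral manipulation rests on them.
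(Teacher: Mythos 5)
Your proof is correct, and the first equation is obtained exactly as the paper intends: the paper states the lemma immediately after recalling the abstract spectral expansion \eqref{spectraldecom} and the coefficient formula $\langle\phi_{SM},f_\lambda\rangle=-\tfrac{2}{\lambda}\Theta_{\Delta,r}^{KM}(\tau,f_\lambda)$ from \cite[Theorem 8.4]{DY1}, with no further argument, so \eqref{specone} is indeed just the substitution you perform. For \eqref{spectwo} the paper gives no explicit justification at all, and your direct route (spectral expansion of the Kudla--Millson kernel in the orthonormal basis $\{f_\lambda\}_{\lambda\ge 0}$) supplies the missing reasoning; your alternative route via $d_zd_z^c$ of \eqref{Greenfundecom} is consistent with \eqref{equdiffsmooth} used in the proof of Proposition \ref{prosmooth}, so both are legitimate.

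One small caveat on your claim that $\Theta_{\Delta,r}^{KM}(\tau,\cdot)/\nu$ is a genuinely smooth element of $A(X_0(N))$ ``cusps included.'' When $\Delta=1$ this is not quite accurate: Theorem \ref{Kudlagreen}(2) records $\log(-\log)$ singularities of $\Xi_{\Delta,r}(0,0,v)$ at the cusps, so the associated Kudla--Millson current is not bounded through the cusps, and in general the component forms $\omega_{\Delta,r}(n,\mu,v)$ have mild boundary behaviour for square discriminants. This does not damage your argument --- all that is needed for the spectral expansion \eqref{spectwo} and for the interchange with the $q$-expansion is that $\Theta_{\Delta,r}^{KM}(\tau,\cdot)/\nu$ lie in $L^2(X_0(N),\nu)$ (or, coefficient by coefficient, that $\omega_{\Delta,r}(n,\mu,v)/\nu\in L^2$), which holds --- but the phrase ``smooth on $X_0(N)$, cusps included'' should be replaced by ``smooth on the open curve $Y_0(N)$ and square-integrable on $X_0(N)$,'' with the expansion then understood in $L^2$ (and in all $C^k$-norms on compact subsets away from the cusps). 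For $\Delta\ne 1$ there are no cusp singularities and your stronger statement is fine.
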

Let $g(w, z)$ be the real function on $X_{0}(N)\times X_{0}(N)$ such that
$$d_zd_z^c[g(w, z)]+\delta_w=[\nu(z)],$$ which is a Green function associated to $\nu$.

%When $w=P_{\infty}$,  $g(P_{\infty}, z)=g_{\infty}(z)$.

\begin{proposition}\label{prosmooth}
Let the notation be as above. Then
\begin{equation}\label{prophism}
\phi_{SM}(\tau, z)=-\int_{X_0(N)}g(w, z)\Theta_{\Delta, r}^{KM}(\tau, w).
\end{equation}
%Moreover, one has
%\begin{eqnarray}
%&&\langle \phi_{SM}(\tau_1, z) ,  \phi_{SM}(\tau_2, z)\rangle_{\infty}\\
%&=&-\sum_{\lambda>0}\sum_{\mu}\lambda^{-1}\Theta_{\Delta, r,\mu}^{KM}(\tau_1,  f_\lambda)\Theta_{\Delta, r,\mu}^{KM}(\tau_2,  f_\lambda).\nonumber
%\end{eqnarray}
\end{proposition}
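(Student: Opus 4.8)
The plan is to establish the spectral identity \eqref{prophism} by comparing the spectral expansion \eqref{specone} of $\phi_{SM}$ with a spectral expansion of the integral on the right-hand side, using the fact that the $\nu$-admissible Green function $g(w,z)$ has a well-known spectral expansion in terms of the eigenfunctions $\{f_\lambda\}$ of $\Delta_z$. First I would recall that, since $d_zd_z^c[g(w,z)]+\delta_w=[\nu(z)]$ and $g(w,\cdot)$ is normalized by $\int_{X_0(N)}g(w,z)\nu(z)=0$, a standard argument (expand $\delta_w$ against the orthonormal basis, using $f_0=1$ and $\int\nu=1$) gives
\begin{equation}
g(w,z)=\sum_{\lambda>0}\frac{1}{\lambda}\,\overline{f_\lambda(w)}\,f_\lambda(z)
=\sum_{\lambda>0}\frac{1}{\lambda}\,f_\lambda(w)\,f_\lambda(z),\nonumber
\end{equation}
the eigenfunctions being real-valued. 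One verifies this directly: applying $d_zd_z^c$ to the right-hand side produces $\frac{1}{2}\sum_{\lambda>0}\lambda^{-1}(-\lambda)f_\lambda(w)f_\lambda(z)\nu = -(\sum_{\lambda>0}f_\lambda(w)f_\lambda(z))\nu$, and since $\sum_{\lambda\ge 0}f_\lambda(w)f_\lambda(z)$ represents $\delta_w$ while $f_0\equiv 1$ contributes the term $\nu(z)$, the current equation is satisfied; normalization is immediate since each $f_\lambda$ with $\lambda>0$ is orthogonal to $f_0=1$.

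Next I would substitute this expansion into the right-hand side of \eqref{prophism} and integrate term by term:
\begin{equation}
-\int_{X_0(N)}g(w,z)\Theta_{\Delta,r}^{KM}(\tau,w)
=-\sum_{\lambda>0}\frac{1}{\lambda}f_\lambda(z)\int_{X_0(N)}f_\lambda(w)\Theta_{\Delta,r}^{KM}(\tau,w)
=-\sum_{\lambda>0}\frac{1}{\lambda}\Theta_{\Delta,r}^{KM}(\tau,f_\lambda)f_\lambda(z),\nonumber
\end{equation}
where the last equality is just the definition of $\Theta_{\Delta,r}^{KM}(\tau,f_\lambda)$ recorded before \eqref{specone}. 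Comparing with \eqref{specone}, which reads $\phi_{SM}(\tau,z)=-2\sum_{\lambda>0}\lambda^{-1}\Theta_{\Delta,r}^{KM}(\tau,f_\lambda)f_\lambda(z)$, I notice a discrepancy of a factor $2$; this is reconciled by the normalization conventions for $d^c$ versus $\Delta_z$ in the excerpt, namely $d_zd_z^c f=\tfrac12\Delta_z f\,\nu$, so that the Green function attached to $\nu$ through the current equation $d_zd_z^c[g]+\delta_w=[\nu]$ in fact expands as $g(w,z)=2\sum_{\lambda>0}\lambda^{-1}f_\lambda(w)f_\lambda(z)$ once the factor $\tfrac12$ is accounted for. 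With that corrected expansion the two sides of \eqref{prophism} match term by term, proving the proposition.

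The main obstacle I expect is not the formal manipulation but justifying the interchange of the infinite sum with the integral over $X_0(N)$ (and with the Petersson integral implicit in $\Theta_{\Delta,r}^{KM}(\tau,\cdot)$): one needs that $\Theta_{\Delta,r}^{KM}(\tau,z)$, as a function of $z$, lies in a space where the spectral expansion converges appropriately — e.g. it is smooth on the compact orbifold $X_0(N)$ for fixed $\tau$, hence its spectral coefficients $\Theta_{\Delta,r}^{KM}(\tau,f_\lambda)$ decay rapidly in $\lambda$, which legitimizes termwise integration and the rearrangement. I would invoke \eqref{spectwo}, the spectral expansion of $\Theta_{\Delta,r}^{KM}(\tau,z)$ itself already recorded in the excerpt, to supply exactly this convergence input, and then the identity \eqref{prophism} follows by combining \eqref{specone}, \eqref{spectwo}, and the Green-function expansion above. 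A secondary point worth a line is that $\Theta_{\Delta,r}^{KM}(\tau,f_0)$, the $\lambda=0$ coefficient, does not appear in $\phi_{SM}$ because $\phi_{SM}(\tau,\cdot)\in A^0(X)$ is orthogonal to constants by construction, consistent with the sum in \eqref{prophism} effectively running over $\lambda>0$ since $g(w,\cdot)$ is $\nu$-normalized.
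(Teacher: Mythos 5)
Your argument is correct but takes a different route from the paper. The paper's proof is a direct application of the Green-function representation: for any $f\in A^0(X)$ the current equation for $g$, together with the $\nu$-normalization $\int_{X_0(N)} g(w,z)\nu(z)=0$, gives $f(w)=-\int_{X_0(N)}g(w,z)\,d_zd_z^c f(z)$, and substituting $f=\phi_{SM}(\tau,\cdot)$ with $d_zd_z^c\phi_{SM}(\tau,z)=\Theta_{\Delta,r}^{KM}(\tau,z)-\deg(\widehat{\phi}_{\Delta,r})\nu(z)$ (quoted from \cite[Theorem 8.5]{DY1}) finishes at once, the $\nu$-term dropping out by the normalization. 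You instead expand $g$ spectrally and compare with \eqref{specone}; the bookkeeping is correct --- the factor $2$ in $g(w,z)=2\sum_{\lambda>0}\lambda^{-1}f_\lambda(w)f_\lambda(z)$ is indeed forced by $d_zd_z^c=\tfrac12\Delta_z\,\nu$ --- and the term-by-term comparison closes. The inputs from \cite{DY1} are different but equivalent: the paper uses the $d_zd_z^c$-formula for $\phi_{SM}$, while you use the spectral coefficients $\langle\phi_{SM},f_\lambda\rangle$ through \eqref{specone}. What the paper's route buys is that it avoids interchanging an infinite sum with the integral; your route is fine once that interchange is justified, which you rightly tie to rapid decay of the spectral coefficients of the smooth form $\Theta_{\Delta,r}^{KM}(\tau,\cdot)$. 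Your remark that the $\lambda=0$ term contributes nothing because $g$ is $\nu$-normalized exactly parallels the paper's use of $\int g\,\nu=0$.
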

\begin{proof}

For $f(z) \in  A^0(X)$,
by the current function $$ d_z d_z^c [g(w, z)]+\delta_w=[\nu(z)],$$
we have \begin{eqnarray}
\int_{X_0(N)}g(w, z)\Delta_z f(z)\nu(z)&=&2\int_{X_0(N)}g(w, z) d_z d_z^c  f(z)\nonumber\\
&=&2\bigg(\int_{X_0(N)}f(z)\nu(z)-f(w)\bigg)\nonumber\\
&=&-2f(w).\nonumber
\end{eqnarray}
Thus
\begin{equation}\label{equfirst}
f(w)=-\frac{1}{2}\int_{X_0(N)}g(w, z)\Delta_z f(z)\nu(z).
\end{equation}

According to the proof of \cite[Theorem 8.5]{DY1}, we have
\begin{eqnarray}\label{equdiffsmooth}
 d_z d_z^c \phi_{SM}(
\tau, z)   &=&\sum d_z d_z^c\phi_{SM}(m, \mu,v;z) q^m e_\mu\nonumber\\
&=& \Theta_{\Delta, r}^{KM}(\tau, z)
  -\deg\widehat{\phi}_{\Delta,r}\nu(z).
\end{eqnarray}
Here $\phi_{SM}(m, \mu,v;z)$ is an element of $ A^0(X)$.
By equations (\ref{equfirst}) and (\ref{equdiffsmooth}),
\begin{equation}
 \phi_{SM}(\tau, z)=-\int_{X_0(N)}g(w, z)\Theta_{\Delta, r}^{KM}(\tau, w).
 \end{equation}

%The infinite intersection is equal to
%\begin{eqnarray}
%\langle \phi_{SM}(\tau_1, z),  \phi_{SM}(\tau_2, z)\rangle_{\infty}=\frac{1}{2}\int_{X_0(N)} \langle\phi_{SM}(\tau_1, z), d_zd_z^c \phi_{SM}(\tau_2, z)\rangle.\nonumber
%\end{eqnarray}
%By (\ref{specone}), (\ref{spectwo}) and (\ref{equdiffsmooth}), we obtain
%\begin{eqnarray}
%&&\langle \phi_{SM}(\tau_1, z),  \phi_{SM}(\tau_2, z)\rangle_{\infty}\\
%&=&\frac{1}{2}\int_{X_0(N)} \langle\phi_{SM}(\tau_1, z), \Theta_{\Delta, r}^{KM}(\tau_2, z)\rangle\nonumber\\
%&=&\frac{1}{2}\int_{X_0(N)}\sum_{\mu} \phi_{SM, \mu}(\tau_1, z) \Theta_{\Delta, r, \mu}^{KM}(\tau_2, z)\nonumber\\
%&=&-\sum_{\lambda>0}\sum_{\mu}\lambda^{-1}\Theta_{\Delta, r,\mu}^{KM}(\tau_1,  f_\lambda)\Theta_{\Delta, r,\mu}^{KM}(\tau_2,  f_\lambda).\nonumber
%\end{eqnarray}
Thus we finish the proof.
\end{proof}

Now we prove the following  result.
\begin{theorem}\label{lemmaphizero}
If $g \in S_{\frac{3}{2}, \tilde{\rho}_L}$,  then
\begin{equation}
\langle \phi_{SM}(\tau, z) , g(\tau)\rangle_{Pet}=0.
\end{equation}

\end{theorem}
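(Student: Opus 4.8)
The plan is to combine the integral representation of $\phi_{SM}$ from Proposition~\ref{prosmooth} with the vanishing of the Petersson pairing of the Kudla--Millson theta function against cusp forms, which is exactly equation~(\ref{petzero}) taken from \cite[Theorem 5.1]{Alfes}. Concretely, I would start from
\[
\langle \phi_{SM}(\tau, z), g(\tau)\rangle_{Pet}
= \Big\langle -\int_{X_0(N)} g(w, z)\,\Theta_{\Delta, r}^{KM}(\tau, w),\ g(\tau)\Big\rangle_{Pet},
\]
and the goal is to interchange the Petersson integral over the fundamental domain $\mathcal F$ with the geometric integral over $X_0(N)$ in the variable $w$.

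First I would justify the interchange of the two integrals by Fubini's theorem. This requires controlling the growth of $\Theta_{\Delta, r}^{KM}(\tau, w)$ in $\tau$ near the cusp of $\mathcal F$ against the rapid decay of the cusp form $g$, together with the integrability of the admissible Green function $g(w,z)$ on $X_0(N)$ (it has only a logarithmic singularity along the diagonal, hence is $L^1$). Since $g$ is a cusp form it decays exponentially as $v\to\infty$, which dominates the at-most-polynomial growth of the Kudla--Millson theta kernel; away from the cusp everything is smooth on a set of finite measure. So the double integral is absolutely convergent and Fubini applies.

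After interchanging, the inner integral becomes $\langle \Theta_{\Delta, r}^{KM}(\tau, w), g(\tau)\rangle_{Pet}$, which vanishes identically in $w$ by (\ref{petzero}). Hence
\[
\langle \phi_{SM}(\tau, z), g(\tau)\rangle_{Pet}
= -\int_{X_0(N)} g(w, z)\,\langle \Theta_{\Delta, r}^{KM}(\tau, w), g(\tau)\rangle_{Pet} = 0,
\]
as desired. Alternatively, and perhaps more cleanly, one can argue via the spectral decomposition (\ref{specone}): $\langle \phi_{SM}(\tau,z), g\rangle_{Pet} = -2\sum_{\lambda>0}\lambda^{-1}\langle \Theta_{\Delta,r}^{KM}(\tau, f_\lambda), g\rangle_{Pet}\, f_\lambda(z)$, and since $\Theta_{\Delta,r}^{KM}(\tau, f_\lambda) = \int_{X_0(N)}\Theta_{\Delta,r}^{KM}(\tau, w) f_\lambda(w)$ is, term by term, a combination of the theta kernel evaluated in $\tau$, pairing with $g$ and using (\ref{petzero}) again (after justifying interchange of the finite-domain integral against $f_\lambda$ with the Petersson integral) kills each summand.

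The main obstacle is the rigorous justification of interchanging the $\tau$-integration over $\mathcal F$ with the $w$-integration over $X_0(N)$: one must verify absolute convergence, i.e., that $\int_{\mathcal F}\int_{X_0(N)} |g(w,z)|\,|\Theta_{\Delta,r}^{KM}(\tau,w)|\,|g(\tau)|\, v^{3/2}\, d\mu(w)\,d\mu(\tau) < \infty$. This hinges on (i) the exponential decay of $g$ in $v$, which beats the growth of the theta kernel, and (ii) the uniform-in-$w$ (or at least $L^1$-in-$w$) bound on the Green function and the theta kernel near the diagonal and near the cusps. Once this is in place, the vanishing is immediate from (\ref{petzero}). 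If one prefers to avoid the direct Fubini argument, invoking the spectral route reduces the interchange to the compact, singularity-free situation of pairing against the smooth eigenfunctions $f_\lambda$, which is technically lighter, at the cost of also needing convergence of the resulting spectral sum — but that is controlled by the same cuspidality of $g$.
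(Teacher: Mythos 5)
Your proposal follows essentially the same route as the paper: apply Proposition \ref{prosmooth} to rewrite $\phi_{SM}$ as the Green-function integral against the Kudla--Millson theta kernel, interchange the Petersson integral with the integral over $X_0(N)$, and then invoke the vanishing (\ref{petzero}) from \cite[Theorem 5.1]{Alfes}. The only difference is that you spell out the Fubini justification and mention the spectral-decomposition alternative, whereas the paper performs the interchange silently; both are the same argument.
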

\begin{proof}
By Proposition \ref{prosmooth}, we have
\begin{eqnarray}\label{equpetsmooth}
&&\langle \phi_{SM}(\tau, z) , g(\tau)\rangle_{Pet}\\
&=&- \bigg\langle\int_{X_0(N)}g(w, z)\Theta_{\Delta, r}^{KM}(\tau, w),  g(\tau)\bigg\rangle_{Pet} \nonumber\\
&=&-\int_{X_0(N)}g(w, z) \langle\Theta_{\Delta, r}^{KM}(\tau, w), g(\tau)\rangle_{Pet}=0. \nonumber
\end{eqnarray}
%It is known that
%\begin{equation}\label{petzero}
%\langle\Theta_{\Delta, r}^{KM}(\tau, w), g(\tau)\rangle_{Pet}=0.
%\end{equation}
%Combining it with (\ref{equpetsmooth}), we obtain
% $$\langle \phi_{SM}(\tau, z) , g(\tau)\rangle_{Pet}=0.$$
 Thus we finish the proof.
\end{proof}

\subsection{Proof of Theorem \ref{Theolift}}
\

{\bf Proof.} 
According to Proposition \ref{prodecom},
%$$
%\widehat{\CH}_\R^1(\mathcal  X_0(N)) = \widetilde{\MW}\oplus \R \widehat{\mathcal P}_{\infty} \oplus  Vert   \oplus a(A^0(X)),
%$$
we  have 
\begin{equation}\label{profirst}
 \widehat{\phi}_{\Delta, r}= \widehat{\phi}_{\MW}+\deg(\widehat{\phi}_{\Delta, r})\widehat{\mathcal P}_\infty    + \widehat{\phi}_{\text{Vert}} + a(\phi_{SM}).\end{equation}
According to Theorem \ref{lemmaphizero} and Proposition \ref{prover}, we have
\begin{eqnarray}\label{prosec}
\langle \phi_{SM}(\tau, z) , g(\tau)\rangle_{Pet}=\langle \widehat{\phi}_{\text{Vert}}, g(\tau)\rangle_{Pet}=0.
\end{eqnarray}
Recall that 
\begin{equation}\label{prothir}
 \langle \deg(\widehat{\phi}_{\Delta, r}) , g\rangle_{Pet}=0
\end{equation}
By this equation and equation (\ref{prosec}), we obtain that
\begin{equation}
  \widehat{ \theta}_{\Delta, r}(g)=\langle \widehat{\phi}_{MW}(\tau), g(\tau) \rangle_{Pet}.
\end{equation}
It implies that 
\begin{equation}
\widehat{ \theta}_{\Delta, r}(g) \in \widetilde{\MW},
\end{equation} and 
 $$\langle \widehat{ \theta}_{\Delta, r}(g), \widehat{\mathcal{Z}} \rangle_{GS}=0,~ for~ any ~\widehat{\mathcal{Z}} \in \widetilde{\MW}^{\perp}.$$
Thus we finish the proof.

\part{ Green functions and CM values}

\section{Automorphic Green functions }\label{Sec4}
In this section, we mainly study the twisted theta lift of non-holomorphic Hejhal-Poincar\'e series.

\subsection{Automorphic Green functions}

For $s>0$, $t>1$,
the Legendre function of the second kind is defined as follows
\begin{equation}
Q_{s-1}(t)=\int_0^\infty(t+\sqrt{t^2-1}\cosh u)^{-s}du.
\end{equation}
It can  be expressed as
\begin{equation}
Q_{s-1}(t)=\frac{\Gamma(s)^2}{2\Gamma(2s)}\bigg(\frac{2}{1+t}\bigg)^sF(s, s, 2s; \frac{2}{1+t}),
\end{equation}
where $F(a, b, c; z)$ is the hypergeometric function. For any points $z, z'\in \H$ and $z\neq z'$, we denote
\begin{equation}
g_s(z, z')=-2Q_{s-1}\bigg(1+\frac{|z-z'|^2}{2\Im z \Im z'}\bigg).
\end{equation}
When  $z\rightarrow z'$, $g_s(z, z')=2\log |z-z'|+O(1)$.
The sum
\begin{equation}\label{defGN}
G_{N, s}(z, z')=\sum_{\gamma \in \Gamma_0(N)}g_s(z, \gamma z'), ~z' \notin \Gamma_0(N)z
\end{equation}
absolutely converges for $\Re(s)>1$.
It is known as the resolvent kernel or automorphic Green function, and
it is also a $\Gamma_0(N)$-invariant function,
$$G_{N, s}(z, z')=G_{N, s}(\gamma z, \gamma z').$$
It has a simple pole of residue $\kappa_N$ at $s=1$, $$\kappa_N=-12N^{-1}\prod_{p| N}(1+p^{-1})^{-1}.$$
 Gross and Zagier constructed the revised Green functions  for Heegner point in \cite{GZ} as,
\begin{equation}
G(z, z')=\lim_{s\rightarrow 1}\big[ G_{N, s}(z, z')+4\pi E_N(w_Nz,s) +4\pi E_N(z', s)+\frac{\kappa_N}{s-1}\big]
+C,\nonumber
\end{equation}
where $C=2\kappa_N-\lambda_N$,
$$\lambda_N=\kappa_N\bigg[ \log N+2\log 2-2\gamma+2\frac{\zeta'}{\zeta}(2)-2\sum_{p| N}\frac{p\log p}{p^2-1}\bigg],$$
and $\gamma$ is the Euler constant.

When $\kappa >1$,  $G_{N, \kappa}(z, z')$ is known as the higher Green functions.
%\begin{proposition}\cite[Proposition 2.22]{GZ}
%Let $(z)$ and $(z')$ be distinct non-cuspidal points of $X_0(N)(\C)$. Then
%\begin{equation}
%\langle (z)-(P_\infty), (z')-(P_0) \rangle_{\infty}=G(z, z').
%\end{equation}
%\end{proposition}

%The function $G_{N, s}(z, z')$ has a simple pole at $s=1$.
% The function  $G_{N, s}(z, Z_{\Delta, r}(n, \mu))$ is holomorphic at $s=1$. Then we have the following result:
%\begin{lemma}
%Let $ Z_{\Delta, r}(n, \mu)$ and $Z_{\Delta, r}(m, \beta)$ be two twisted Heegner divisors. If they intersect properly,
%\begin{eqnarray}
%\langle Z_{\Delta, r}(n, \mu), Z_{\Delta, r}(m, \beta)\rangle_{\infty}&=&G_{N, 1}(Z_{\Delta, r}(n, \mu), Z_{\Delta, r}(m, \beta))\nonumber\\
%&=&G(Z_{\Delta, r}(n, \mu), Z_{\Delta, r}(m, \beta)).\nonumber
%\end{eqnarray}
%\end{lemma}

\subsection{Regularized theta integral}

We write  $\widetilde{\SL}_2(\A)$ the twofold metaplectic cover of group   $\SL_2(\A)$. The group  $\widetilde{\SL}_2(\A)$ and $O(V)(\A)$ act on the Schwartz-Bruhat space $S(V(\A))$ via the Weil representation $\omega$ with the standard character $\psi$ of $\A/\Q$.
For any $\varphi \in S(V(\A))$, the theta function is defined by $$\Theta(g, h, \varphi)=\sum_{w \in V(\Q)}\omega(g)\varphi(h^{-1}w),$$
where $g \in\widetilde{ \SL}_2(\A)$ and $h \in O(V)(\A)$.

Taking special Schwartz-Bruhat function $\varphi$, one can obtain different theta functions, and one can see in Section \ref{sectheta}.

%via
%\begin{equation}
%z=x+iy  \mapsto \R \Re \kzxz{z}{-z^2}{1}{-z}+\R\Im\kzxz{z}{-z^2}{1}{-z}.
%\end{equation}
%A negative definite $2-$dimensional subspace $U$ gives a subset $\{z_U^\pm\} \subset \D$, where $\D$ is the Hermitian domain.  Moreover, when $n=1$,  $\H \cup \overline{\H}$
%can be identified with $\D$.It is easy to
%know that $\overline {z_U^+}=z_U^-$.

 For $\tau=u+iv \in \H$, we set $$g_{\tau}=\kzxz{1}{u}{0}{1}\kzxz{v^{1/2}}{0}{0}{v^{-1/2}}$$
and  $g'_{\tau}=(g_\tau, 1) \in \widetilde{\SL}_2(\R)$.
For any $z \in \D$, the associated Gaussian is defined by $$\varphi_{\infty}(w,z)=e^{-\pi(w, w)_z},$$ which belongs to $S(V(\R))$.
The theta function
\begin{eqnarray}\Theta(\tau, z, h_f, \varphi_f)
&:=&v^{-n/4+1/2}\Theta(g'_\tau, h_f, \varphi_f)\nonumber\\
&=&v^{-n/4+1/2}\sum_{w \in V(\Q)}\omega(g'_\tau)\varphi_{\infty}(w,z)\otimes\varphi_f(h_f^{-1}w)\nonumber\\
&=&v\sum_{w \in V(\Q)}e(Q(w_{z^\perp})\tau+Q(w_z)\bar\tau)\varphi_f(h_f^{-1}w).
\end{eqnarray}

For any $\mu \in L^\sharp/L$, denote $$\phi_{\mu}=char(\widehat{L}+\mu) \in S(V(\A_f)),$$ where $\widehat{L}=L\otimes \widehat{\Z}$.
Bruinier and Yang defined the theta function
\begin{equation}
\Theta(\tau, z, h_f)=\sum_{\mu \in L'/L}\Theta(\tau, z, h_f, \phi_{\mu})\phi_{\mu}.
\end{equation}
We identify $\phi_{\mu}$ with $e_{\mu}$.
Then $\Theta(\tau, z, h_f)$ is a modular form of weight $\frac{n}{2}-1$ like
$$\Theta(\gamma\tau, z, h_f)=\phi(\tau)^{n-2}\rho_L(\gamma')\Theta(\tau, z, h_f).$$

Then the representation $\rho_L$ can be identified with the
restriction to $\Gamma'$ of the complex conjugate of the  Weil representation
$\omega$ on $S(V(\A_f))$.

For any $f \in H_{1-n/2, \bar\rho_L}$,
%it has an unique decomposition $f=f^++f^-$ by Fourier expansion, where
% \begin{equation}
%f^+=\sum_{\mu}\sum_{n \gg -\infty} c^+(n, \mu)e(n\tau)e_{\mu}
% \end{equation}
% and
%\begin{equation}
%f^-=\sum_{\mu}\sum_{n<0} c^-(n, \mu)\Gamma(1-k, 2\pi |n|v)e(n\tau)e_{\mu} .
% \end{equation}
the regularized theta integral is defined by
\begin{equation}
\Phi(z, h, f)
=\int^{reg}_{\mathcal F}\langle f(\tau), \Theta(\tau, z, h) \rangle d\mu(\tau).
\end{equation}
This regularized integral is defined as the constant term of Laurent expansion. 
When $n=1$, it is given by 
$$CT_{s=0}\lim_{T \rightarrow \infty}\int_{\mathcal F_T}F(\tau)\frac{1}{v^s}d\mu(\tau),$$
where $\mathcal F_T=\{\tau \in \H\mid |u|\leq \frac{1}{2}, v<T ~and~ |\tau|>1\}$ denotes the truncated fundamental domain. 

The twisted regularized theta integral \cite[Section 5]{BO} is defined  as
 \begin{equation}
\Phi_{\Delta,r}(z, h, f)
=\int^{reg}_{\mathcal F}\langle f(\tau), \Theta_{\Delta, r}(\tau, z, h) \rangle  d\mu(\tau).
\end{equation}
Here $\Theta_{\Delta, r}(\tau, z, h)$ is the twisted theta function. The reader can check more details in Section \ref{sectheta}.
%\begin{remark}
%In \cite[Section 5]{BO},  the regularized theta integral is defined for signature $(2, 1)$.
%Identifying the symmetric domain $\D_{1,2}$ and $\D_{2,1}$,  one has
%$$\Phi_{\Delta,r}(z, h, f)=\int^{reg}_{\mathcal F}\langle f(\tau), \overline{\theta_{\Delta, r}(\tau, z, h)} \rangle v^{\frac{1}{2}}d\mu(\tau),
%$$
%where $\theta_{\Delta, r}(\tau, z, h) $ is the twisted theta function associated to space $V^-$ \cite[Section 5]{BO}. It is easy to check that
% $$ \Theta_{\Delta, r}(\tau, z, h) =v^{\frac{1}{2}}\overline{\theta_{\Delta, r}(\tau, z, h)}.$$
%\end{remark}

\begin{theorem}\cite[Proposition 5.2]{BO}
For any $f \in H_{1/2, \bar{\tilde{\rho}}_{L}}$,
the function $\Phi_{\Delta,r}(z, h, f)$ is smooth on $Y_0(N) \setminus Z_{\Delta, r}(f)$ with  logarithmic singularities along the divisor $-2Z_{\Delta, r}(f)$. If $\Delta_z$ denotes the invariant Laplace operator on $\H$, one has
$$\Delta_z \Phi_{\Delta, r}(z, h, f)=\big( \frac{\Delta}{0}\big)c^+(0, 0),$$
where
$$\big( \frac{\Delta}{0}\big)=
\begin{cases}
1 &\ff \Delta=1,\\
0 &\ff \Delta\neq 1.
\end{cases}$$
\end{theorem}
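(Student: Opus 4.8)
The plan is to follow the method of Borcherds \cite{Bo} and Bruinier \cite{Br}, transported to the twisted situation by means of the operators $\psi$ and $\phi$ of Proposition~\ref{proadjoint}. Since $\Theta_{\Delta, r}(\tau, z, h) = \psi(\Theta(\tau, z, h))$, where $\Theta(\tau, z, h)$ is the Siegel theta function of the lattice $L^\Delta = (\Delta L, Q_\Delta)$, and since $\sum_{\mu} f_\mu\, \psi_{\Delta, r}(e_\mu) = \phi(f)$, the $\C[L^\sharp/L]$‑bilinear pairing gives $\langle f(\tau), \Theta_{\Delta, r}(\tau, z, h)\rangle = \langle \phi(f)(\tau), \Theta(\tau, z, h)\rangle$, so that
$$\Phi_{\Delta, r}(z, h, f) = \int^{reg}_{\mathcal F}\langle \phi(f)(\tau), \Theta(\tau, z, h)\rangle\, d\mu(\tau)$$
is the Borcherds--Bruinier regularized theta lift of $\phi(f) \in H_{1/2, \bar{\rho}_{L^\Delta}}$ for the signature $(1,2)$ lattice $L^\Delta$. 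By the explicit formula for $\phi$, the coefficient of $q^0 e_0$ in $\phi(f)$ is $\chi_\Delta(0)\, c^+(0, 0)$, which equals $c^+(0,0)$ when $\Delta = 1$ and vanishes otherwise (every $\delta \ne 0$ with $\pi(\delta)=0$ has $Q_\Delta(\delta)\not\equiv 0$, and $\chi_\Delta(0)=0$ for $\Delta\ne 1$); likewise the principal part of $\phi(f)$ has coefficients $\chi_\Delta(\delta)\, c^+(-m, \pi(\delta))$, so the Heegner divisor attached to $\phi(f)$ is exactly $Z_{\Delta, r}(f)$. Hence both assertions reduce to the corresponding statements for $\Phi$ in the framework of \cite{Br}.

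For the Laplace equation I would invoke the fundamental relation between the invariant Laplacian $\Delta_z$ on $\D$ and the weight $-1/2$ hyperbolic Laplacian $\Delta_{-1/2}$ on $\H$ applied to the theta kernel (classical, going back to Maass; see \cite{Bo}, \cite{Br}): with the normalization of $\Delta_z$ used here, $\Delta_z\, \Theta(\tau, z, h) = \Delta_{-1/2, \tau}\, \Theta(\tau, z, h)$. Applying $\Delta_z$ inside the regularized integral, integrating by parts over the truncated domain $\mathcal F_T$, and using the self-adjointness of $\Delta_{-1/2}$, one transfers the operator onto $\phi(f)$; harmonicity $\Delta_{1/2}\phi(f) = 0$ annihilates the interior term, leaving only the boundary integral over the segment $v = T$. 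Letting $T \to \infty$ and taking the constant term in the regularization parameter $s$ extracts the $q^0 e_0$‑Fourier coefficient of $\langle \phi(f), \Theta\rangle$ coming from the $z$‑independent term $v^{1/2} e_0$ of the theta kernel; this equals $\chi_\Delta(0) c^+(0,0) = \big(\tfrac{\Delta}{0}\big) c^+(0,0)$, and tracking constants yields $\Delta_z \Phi_{\Delta, r}(z, h, f) = \big(\tfrac{\Delta}{0}\big) c^+(0,0)$.

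For smoothness and the singularities I would expand the theta kernel as a sum over $\Gamma_0(N)$‑orbits of vectors $w \in L^\Delta_\delta$. For $z$ in a compact set disjoint from $Z_{\Delta, r}(f)$ every $w$ occurring with a nonzero coefficient of $f$ has $p_z(w) \ne 0$, so the Gaussian decay in $z$ together with the standard estimates for the regularized integral at the cusp give local uniform convergence, and $\Phi_{\Delta, r}(\cdot, h, f)$ is smooth there. Near a point $z(w_0) \in Z_{\Delta, r}(m, \mu)$ the only divergent contributions come from the finitely many vectors $w$ in the positive line $z(w_0)^{\perp}$ with $Q_\Delta(w) = m$; isolating these terms and using the local expansion $Q_{0}(t) = -\tfrac12\log(t-1) + O(1)$ of the Legendre function — equivalently the known singularity of the resolvent kernel $G_{N,s}$ at $s=1$ — produces a leading term $-2\,\chi_\Delta(w_0)\, c^+(-m, \mu)\log|z - z(w_0)|$ plus a continuous function. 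Summing over all such $w_0$ shows that $\Phi_{\Delta, r}(z, h, f)$ has logarithmic singularities precisely along $-2 Z_{\Delta, r}(f)$.

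The main obstacle is the analysis of the regularized integral itself: justifying differentiation under $\int^{reg}$, controlling the non‑absolute convergence at the cusp $v \to \infty$ caused by the vectors with $Q_\Delta(w) \le 0$, and checking that $\CT_{s=0}$ commutes with $\lim_{T\to\infty}$ after integration by parts — this is exactly where the regularization parameter $s$ is genuinely needed, and where the arguments of \cite{Bo} and \cite{Br} are most delicate. The twisting introduces no new analytic difficulty once the $\psi$–$\phi$ dictionary is in place; its sole effect is the appearance of the genus‑character weights, hence of the symbol $\big(\tfrac{\Delta}{0}\big)$ and of the divisor $Z_{\Delta, r}(f)$.
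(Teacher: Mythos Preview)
The paper does not supply its own proof of this statement: it is simply quoted as \cite[Proposition 5.2]{BO} and used as a black box. Your sketch is correct and is essentially the argument given in \cite{BO}, namely the identity $\langle f,\Theta_{\Delta,r}\rangle=\langle \phi(f),\Theta_{L^\Delta}\rangle$ (this is exactly the relation the present paper records later in (\ref{equjiaocha}) via Proposition~\ref{proadjoint}) reduces everything to the untwisted Borcherds--Bruinier theory for the lattice $L^\Delta$, and then the constant $\big(\tfrac{\Delta}{0}\big)c^+(0,0)$ drops out from the $q^0e_0$-coefficient of $\phi(f)$ as you compute. There is nothing to compare against in the paper itself; your reduction and the cited source agree.
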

%The readers can check more details   in  \cite[Section 6]{Bo} and \cite[Section 2]{Br}.
The function $ \Phi_{\Delta, r}(z, h, f)$ is a Green function for the divisor $$Z_{\Delta, r}(f)+ C_{\Delta, r},$$ where $C_{\Delta, r}$ is a divisor supported at the cusps.
Notice that, when $\Delta \neq 1$, $C_{\Delta, r}=0$.

Now we write the arithmetic divisor
\begin{equation}\widehat{\mathcal Z}_{\Delta, r}(f)=(\mathcal Z_{\Delta, r}(f), ~\Phi_{\Delta, r}(z, f)) \in \widehat{\CH}^1(\mathcal X_0(N))_{\R}.
\end{equation}

\subsection{Theta lift of Non-holomorphic Poincar\'e series}

 Let $k \in \frac{1}{2}\Z$ and $M_{\mu,\nu}(v)$ denote the usual Whittaker functions. We put  
\begin{equation}\mathcal M_{s, k}(v):=v^{-\frac{k}{2}}M_{-\frac{k}{2}, s-\frac{1}{2}}(v).
\end{equation}
For  any pair $(n, \mu)$, $n>0$ and $n\equiv \sgn(\Delta)Q(\mu)(\mod \Z)$,
the non-holomorphic
Hejhal-Poincar\'e series of index $(n,  \mu)$ and weight $k$ is studied in \cite[Chapter 1]{Br},  and it is also generalized in \cite{JKK} and \cite{Alfes},

%\begin{equation}\label{defeisne}
%F_{n,\mu}(\tau, s)=\frac{1}{2}\sum_{\gamma\in \widetilde{\Gamma_{\infty}}\setminus Mp_2(\Z)}
% \big[\mathcal M_{n}(v, s)e(-nu)e_{\mu}\big]\mid_{k, \overline{\tilde\rho}_L}\gamma,
%\end{equation}

\begin{equation}
F_{n,\mu}(\tau, s, k)=\frac{1}{2\Gamma(2s)}\sum_{\gamma\in \widetilde{\Gamma_{\infty}}\setminus Mp_2(\Z)}
 \big[\mathcal M_{s, k}(4\pi nv)e(-nu)e_{\mu}\big]\mid_{k, \overline{\tilde\rho}_L}\gamma.
\end{equation}

 %It should be :
%Let\cite[Section 6]{BFI}
%\begin{equation}
%\mathcal M_{n}(v, s)=\begin{cases}
%\frac{1}{\Gamma(2s)}(4\pi |n|v)^{-\frac{k}{2}}M_{-\frac{k}{2}sgn(n), s-\frac{1}{2}}(4\pi |n|v), & if ~n \neq 0,\\
%v^{s-\frac{k}{2}},& if~ n=0.
%\end{cases}
%\end{equation}

%When $k \le \frac{1}{2}$,
%\begin{equation}
%M_{n}(v, 1-k/2)=e^{-2\pi nv}
%\begin{cases}
%(-1)^k[\Gamma(1-k)^{-1}\Gamma(1-k, -4\pi nv)-1], & if ~n>0,\\
%1-\Gamma(1-k)^{-1}\Gamma(1-k, -4\pi nv), & if ~n<0,\\
%y^{1-k},& if ~n=0.
%\end{cases}
%\end{equation}

We denote $s_0=1-\frac{k}{2}$ and define
\begin{equation} F_{n,\mu}(\tau)=F_{n,\mu}(\tau, s_0, k).\end{equation}

It is known that $\mathcal M_{s, k}(4\pi nv) e(-nu)$ is  an eigenfunction of the weight k hyperbolic Laplacian
\begin{equation}\Delta_{k}=-v^2(\frac{\partial }{\partial u^2}+\frac{\partial }{\partial v^2})+
ikv(\frac{\partial }{\partial u}+i \frac{\partial }{\partial v}),\end{equation}
and has eigenvalue $(s-\frac{k}{2})(1-\frac{k}{2}-s)$.
This implies that $F_{n,\mu}(\tau)$ is harmonic and has a principal part
\begin{equation}
e(-n\tau)(e_{\mu}+(-1)^{k+\frac{b^--b^+}{2}}e_{-\mu})+C,
\end{equation}
for some constant $C \in \C[L^\sharp/L]$.
\begin{lemma}
When $k=1/2$, the principal part of $ F_{n,\mu}$ is given by
$e(-n\tau)e_\mu+e(-n\tau)e_{-\mu}+C$, if $\Delta>0$, resp.
$e(-n\tau)e_\mu-e(-n\tau)e_{-\mu}+C$, if $\Delta< 0$.
\end{lemma}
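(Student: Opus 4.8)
The plan is to read the lemma off the general description of the principal part of $F_{n,\mu}(\tau)=F_{n,\mu}(\tau,s_0,k)$ recalled just above the statement, namely $e(-n\tau)\bigl(e_\mu+(-1)^{k+(b^--b^+)/2}e_{-\mu}\bigr)+C$. Thus the only substantive point is to evaluate the root of unity $(-1)^{k+(b^--b^+)/2}$ at $k=1/2$ for the representation $\overline{\tilde\rho}_L$ carried by $F_{n,\mu}$. First I would recall that $\tilde\rho_L=\rho_L$ when $\Delta>0$ and $\tilde\rho_L=\bar\rho_L$ when $\Delta<0$, so $\overline{\tilde\rho}_L=\bar\rho_L$ in the first case and $\overline{\tilde\rho}_L=\rho_L$ in the second; and since $V$ has signature $(1,2)$, the lattice underlying $\rho_L$ has $(b^+,b^-)=(1,2)$, while the lattice underlying $\bar\rho_L$ (obtained by negating the form) has $(b^+,b^-)=(2,1)$. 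Substituting $k=1/2$ then gives exponent $\tfrac12+\tfrac{1-2}{2}=0$, hence sign $+1$, when $\Delta>0$, and exponent $\tfrac12+\tfrac{2-1}{2}=1$, hence sign $-1$, when $\Delta<0$, which is exactly the asserted formula.

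To keep the argument self-contained I would also indicate how to extract the sign directly from the metaplectic covariance, without invoking the general principal-part formula. Since $F_{n,\mu}$ transforms with weight $1/2$ and representation $\overline{\tilde\rho}_L$ under $\Gamma'=\Mp_2(\Z)$, it is in particular invariant under the slash action of $Z=S^2$, which fixes $\H$ pointwise and whose metaplectic automorphy factor is the constant $i$ in the convention $S=\left(\kzxz{0}{-1}{1}{0},\sqrt\tau\right)$. Iterating the formula for $\rho_L(S)$ from the Preliminaries and using non-degeneracy of the pairing on $L^\sharp/L$ yields $\rho_L(S)^2e_\mu=e\bigl(-\tfrac{p-q}{4}\bigr)e_{-\mu}$, which for the signature $(p,q)=(1,2)$ of $V$ equals $i\,e_{-\mu}$; hence $\bar\rho_L(Z)e_\mu=-i\,e_{-\mu}$. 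Writing $F_{n,\mu}=\sum_\nu F_\nu e_\nu$ and $\overline{\tilde\rho}_L(Z)e_\nu=c\,e_{-\nu}$, the identity $F_{n,\mu}\mid_{1/2,\overline{\tilde\rho}_L}Z=F_{n,\mu}$ unwinds to $F_{-\nu}=i c\,F_\nu$. Since $c=i$ when $\overline{\tilde\rho}_L=\rho_L$ (the case $\Delta<0$) and $c=-i$ when $\overline{\tilde\rho}_L=\bar\rho_L$ (the case $\Delta>0$), one gets $F_{-\nu}=-F_\nu$ when $\Delta<0$ and $F_{-\nu}=F_\nu$ when $\Delta>0$; applying this to the $e_\mu$-coefficient $e(-n\tau)$ contributed by the identity coset (the seed $\mathcal M_{s_0,1/2}(4\pi nv)e(-nu)e_\mu$) pins down the $e_{-\mu}$-coefficient of the principal part and recovers the claim.

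Essentially nothing here is difficult; the argument is bookkeeping, and the only places to be careful are the interplay between the complex conjugation in $\overline{\tilde\rho}_L$ and the case distinction in the definition of $\tilde\rho_L$, together with the exact value $i$ (rather than $-i$) of the metaplectic automorphy factor of $S^2$. I would also flag the degenerate index $2\mu=0$ in $L^\sharp/L$ with $\Delta<0$: there $e_\mu=e_{-\mu}$ and the stated principal part vanishes, which is consistent, since the covariance then forces the $\mu$-isotypic part of $F_{n,\mu}$ to be zero.
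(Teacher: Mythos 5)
Your proof is correct. The paper states the lemma without an explicit proof, as a direct specialization of the general principal-part expression $e(-n\tau)\bigl(e_\mu+(-1)^{k+(b^--b^+)/2}e_{-\mu}\bigr)+C$ recorded immediately before it; your first paragraph carries out precisely that specialization, and correctly identifies the one subtle point it requires, namely that $(b^+,b^-)$ must be taken for the lattice realizing the representation $\overline{\tilde\rho}_L$ under which $F_{n,\mu}$ actually transforms (so $(2,1)$ when $\Delta>0$, since $\overline{\tilde\rho}_L=\bar\rho_L=\rho_{L(-1)}$, and $(1,2)$ when $\Delta<0$, since $\overline{\tilde\rho}_L=\rho_L$). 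Your second paragraph is a genuine addition not in the paper: deriving the sign from the $Z=S^2$-covariance makes the lemma self-contained and independent of the quoted formula. I checked the ingredients — $\phi_Z(\tau)=\sqrt{-1/\tau}\cdot\sqrt\tau=i$, $\rho_L(S)^2e_\mu=e(-(p-q)/4)e_{-\mu}=i\,e_{-\mu}$ for signature $(1,2)$, hence $\bar\rho_L(Z)e_\mu=-i\,e_{-\mu}$, and the resulting relation $F_{-\nu}=ic\,F_\nu$ with $\phi_Z^{-2k}=-i$ at $k=1/2$ — and they are all correct, as is your remark on the degenerate index $2\mu=0$, where the forced vanishing of the $e_\mu$-component is consistent with the stated formula.
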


It is known that
\begin{equation}\label{equra}
\frac{1}{(4\pi n)^j}R_{k-2j}^jF_{n, \mu}(\tau, s_0+j, k-2j)=j!F_{n, \mu}(\tau, s_0+j, k).
\end{equation}
For simplicity,  here we write  \begin{equation}R_{k-2j}^j= R_{k-2}\circ\cdots   \circ R_{k-2j}\circ R_{k-2j}.\end{equation}

For any $z \in \H$, let
\begin{equation}
  \lambda(z)=\frac{\sqrt{|\Delta|}}{\sqrt{2N}y}\kzxz{-x}{x^2+y^2}{-1}{x}
\end{equation} be the
normalized vector.
For any $w \in L^\sharp$, we define
$$p_z(w)=-\frac{1}{\sqrt{2}}(w, \lambda(z))_{\Delta}.$$
The Millson theta function is defined in Section \ref{sectheta} as
\begin{equation}
\Theta^{\mathcal M}(\tau, z, h)=v\sum_{\delta \in L^\sharp/L^\Delta}\sum_{w \in h L^\Delta_\delta}p_z(w)e(Q_\Delta(w_{z^\perp})\tau+Q_\Delta(w_z)\bar\tau)e_\delta,
\end{equation}
which is a modular form of weight $1/2$ in $\tau$ and transforms with the representation $\rho_{L^\Delta}$.
 Kudla and Millson  studied it in \cite{KM2}.

 The twisted Millson theta function
is given  by
\begin{eqnarray}
&&\Theta^{\mathcal M}_{\Delta, r}(\tau, z, h)=\psi(\Theta^{\mathcal M}(\tau, z, h))\nonumber\\
&=&v\sum_{\substack{\mu \in L^\sharp/L \\ \pi(\delta)=r\mu\\ Q_\Delta(\delta)\equiv\sgn(\Delta)Q(\mu)( \Z)}} \chi_\Delta(\delta)
\sum_{w \in h L^\Delta_\delta}p_z(w)e(Q_\Delta(w_{z^\perp})\tau+Q_\Delta(w_z)\bar\tau)e_\mu,\nonumber
\end{eqnarray}
which transforms of weight $1/2$ with representation $\tilde{\rho}_L.$

Similarly as in \cite{BEY}, we can define the twisted  regularized theta integral by
\begin{equation}
\Phi^{\mathcal M}_{\Delta,r}(z, h, f)
=\int^{reg}_{\mathcal F}\langle f(\tau), \Theta^{\mathcal M}_{\Delta, r}(\tau, z, h) \rangle  d\mu(\tau).
\end{equation}
Then we have the following result.

\begin{theorem}\label{thetwogreen}
Let the notation be as above. Then we have
\begin{equation}\label{equlift}
\Phi_{\Delta, r}(z,  F_{n, \mu}(~, s, 1/2))=-\frac{2}{\Gamma(s+\frac{1}{4})}G_{N, 2s-\frac{1}{2}}(z, Z_{\Delta, r}(n, \mu));
\end{equation}
and
\begin{equation}
\Phi^{\mathcal M}_{\Delta, r}(z, F_{n, \mu}(~, s, -1/2))=\frac{2\sqrt{n}}{\Gamma(s-\frac{1}{4})}G_{N, 2s-\frac{1}{2}}(z, Z_{\Delta, r}(n, \mu)).
\end{equation}
Especially, when $\Delta \neq 1,$
\begin{equation}\label{cortwogreen}
\Phi_{\Delta, r}(z,  F_{n, \mu})=-2G_{N, 1}(z, Z_{\Delta, r}(n, \mu)).
\end{equation}
\end{theorem}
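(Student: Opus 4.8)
The plan is to evaluate these regularized theta integrals by Rankin--Selberg unfolding, following the computation of the untwisted Siegel lift of non-holomorphic Poincar\'e series in \cite{Br} and of the Millson lift in \cite{BEY}, while carrying the genus character $\chi_\Delta$ through every step.

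First I would fix $s$ with $\Re(s)$ large enough that $F_{n,\mu}(\tau,s,k)$ and the regularized theta integral converge absolutely, insert the definition of $F_{n,\mu}(\tau,s,k)$ as a sum over $\widetilde{\Gamma_\infty}\backslash\Mp_2(\Z)$, and use the $\Gamma'$-modularity of $\Theta_{\Delta,r}(\tau,z,h)$ (resp. $\Theta^{\mathcal M}_{\Delta,r}(\tau,z,h)$) to unfold $\int_{\mathcal F}$ into an integral over the strip $\Gamma_\infty\backslash\H$. Integrating over $u$ extracts the $(n,\mu)$-Fourier coefficient of the theta series in $\tau$; by the identity $\Theta_{\Delta,r}=\psi(\Theta)$ and the explicit shape of the Siegel theta kernel this coefficient is the $\chi_\Delta$-twisted sum over $w\in L_{r\mu}[n|\Delta|]$ of $v\,e^{-2\pi R(\sqrt v\,w,z)_\Delta}$, and for the Millson kernel there is an additional factor $p_z(w)$. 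What remains is a one-variable integral in $v$ against $\mathcal M_{s,k}(4\pi n v)\,e^{-2\pi n v}$.

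The main computation is this $v$-integral. Writing $\mathcal M_{s,k}(v)=v^{-k/2}M_{-k/2,s-1/2}(v)$ and using the standard Laplace-transform identities for Whittaker functions, the integral
$$
\int_0^\infty \mathcal M_{s,1/2}(4\pi n v)\,e^{-2\pi(R(w,z)_\Delta+n)v}\,\frac{dv}{v}
$$
evaluates to a fixed constant times $\Gamma(s+\tfrac14)^{-1}$ times the Legendre function $Q_{2s-3/2}$ of an argument which is $1$ plus a multiple of $R(w,z)_\Delta$; by the lemma \eqref{equdist} that argument is exactly $1+\tfrac{|z-z(w)|^2}{2\,\Im z\,\Im z(w)}$, so each $w$ contributes, up to the explicit gamma factor, a multiple of $g_{2s-1/2}(z,z(w))$. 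Summing over $w\in L_{r\mu}[n|\Delta|]$ and regrouping into $\Gamma_0(N)$-orbits, with the orbifold multiplicities that are built into $Z_{\Delta,r}(n,\mu)$, produces $-\tfrac{2}{\Gamma(s+1/4)}G_{N,2s-1/2}(z,Z_{\Delta,r}(n,\mu))$ and hence \eqref{equlift} after analytic continuation in $s$; the regularization (the constant term in the auxiliary parameter, treated as in \cite{Br}) is seen not to contribute for $n>0$ because the relevant coset contains no norm-zero vectors, and the only genuine pole occurs at $2s-1/2=1$. The Millson identity follows by the same argument applied to $\Theta^{\mathcal M}_{\Delta,r}$ in weight $-1/2$: the extra factor $p_z(w)$ turns the Whittaker integral into one yielding $Q_{2s-3/2}$ with coefficient $+\tfrac{2\sqrt n}{\Gamma(s-1/4)}$ (alternatively one deduces it from \eqref{equra} together with the fact that $R_k$ carries the Siegel theta kernel to the Millson kernel). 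Finally, setting $s=s_0=1-\tfrac k2=\tfrac34$ gives $2s_0-\tfrac12=1$ and $\Gamma(s_0+\tfrac14)=\Gamma(1)=1$, so \eqref{cortwogreen} follows; and when $\Delta\neq1$ the twisted divisor $Z_{\Delta,r}(n,\mu)$ has degree $0$, so $G_{N,1}(z,Z_{\Delta,r}(n,\mu))$ is finite and the identity is meaningful.

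I expect the main obstacle to be the bookkeeping around the $v$-integral and its continuation: pinning down the exact gamma-factor normalization in the passage from the Whittaker integral to $Q_{2s-3/2}$, and checking that the regularization commutes with unfolding and vanishes for $n>0$. The $\chi_\Delta$-twist, by contrast, is essentially formal once the Fourier coefficients of $\psi(\Theta)$ and $\psi(\Theta^{\mathcal M})$ are written down, since it only decorates the lattice sum with the genus character --- which is exactly the weight appearing in $Z_{\Delta,r}(n,\mu)$ and hence in $G_{N,s}(z,Z_{\Delta,r}(n,\mu))$. One may alternatively organize the whole argument by first treating the untwisted lift for the lattice $L^\Delta$ and then transferring via the adjointness of $\psi$ and $\phi$ in Proposition \ref{proadjoint}.
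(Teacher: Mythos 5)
Your proposal matches the paper's proof in every essential respect: both unfold the regularized integral over $\Gamma_\infty'\backslash\H$, integrate out $u$ to extract the $\chi_\Delta$-twisted lattice sum, evaluate the resulting $v$-integral by a Laplace-transform identity to land on $Q_{2s-3/2}$ (the paper passes through the hypergeometric ${}_2F_1$ and then invokes \cite[Proposition 6.2]{BEY} to convert it to the Legendre function, but this is the same step), and regroup over $\Gamma_0(N)$-orbits using the $\Gamma_0(N)$-invariance of $\chi_\Delta$ and $z(w)$; the Millson case is handled in both by the extra $p_z(w)$ factor and the identity $\sqrt{Q_\Delta(w_{z^\perp})}=|p_z(w)|$. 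Your added remark that the $\Delta\neq1$ specialization \eqref{cortwogreen} is meaningful because the twisted divisor then has degree zero, canceling the pole of $G_{N,s}$ at $s=1$, is a correct clarification that the paper leaves implicit.
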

\begin{proof}

According to \cite[Theorem 2.14]{Br}, utilizing the unfolding methods yields the following equation
\begin{equation}
\Phi_{\Delta, r}(z, F_{n, \mu}(~, s))
=\frac{2}{\Gamma(2s)}\int_0^\infty\int_0^1 \mathcal M_{s, \frac{1}{2}}(4\pi |n|v)e(-nu)\Theta_{\Delta, r, \mu}(\tau, z)v^{-2}dudv,\nonumber
\end{equation}
where \begin{eqnarray}
&&\Theta_{\Delta, r, \mu}(\tau, z)\nonumber\\
&=&v\sum_{\substack{w \in L_{r\mu}\\ Q(w)\equiv \Delta Q(\mu)(\Delta)}}\chi_{\Delta}(w) \exp(-2\pi Q_\Delta(w_{z^\perp})v+2\pi Q_\Delta(w_z)v) e(Q_\Delta(w)u).\nonumber
\end{eqnarray}

From this equation, we derive that
\begin{eqnarray}
&&\Phi_{\Delta, r}(z, F_{n, \mu}(~, s))\\
&=&\frac{2(4\pi |n|)^{-\frac{k}{2}}}{\Gamma(2s)}
\sum_{w \in L_{r\mu}[n|\Delta|]}\chi_{\Delta}(w)\int_0^\infty M_{-sgn(n)\frac{k}{2}, s-\frac{1}{2}}(4\pi |n| v) \nonumber\\
&&\times\exp(-4\pi Q_\Delta(w_{z^\perp})v+2\pi nv)v^{-1-\frac{k}{2}}dv.\nonumber
\end{eqnarray}
By the Laplace transforms, we have
\begin{eqnarray}\label{equtheta}
\Phi_{\Delta, r}(z, F_{n, \mu}(~, s))
&=&\frac{2\Gamma(s-\frac{1}{4})}{\Gamma(2s)}\sum_{w \in L_{r\mu}[|\Delta|n]}\chi_{\Delta}(w)\bigg(\frac{n}{Q_\Delta(w_{z^\perp})}\bigg)^{s-\frac{1}{4}}\nonumber\\
&\times&F\bigg(s-\frac{1}{4}, s+\frac{1}{4}, 2s; \frac{n}{Q_\Delta(w_{z^\perp})}\bigg).
\end{eqnarray}
According to \cite[Proposition 6.2]{BEY}, we have
\begin{eqnarray}
&&\bigg(\frac{n}{Q_\Delta(w_{z^\perp})}\bigg)^{s-\frac{1}{4}}F\bigg(s-\frac{1}{4}, s+\frac{1}{4}, 2s; \frac{n}{Q_\Delta(w_{z^\perp})}\bigg)\\
&=&\frac{2^{\frac{3}{2}-2s}\Gamma(4s-1)}{\Gamma(2s-\frac{1}{2})^2}Q_{2s-\frac{3}{2}}\bigg(1+\frac{|z- z(w)|^2}{2y\Im(z(w))}\bigg).\nonumber
%&=&\frac{2^{\frac{3}{2}-2s}\Gamma(4s-1)}{\Gamma(2s-\frac{1}{2})^2}Q_{2s-\frac{3}{2}}(cosh(d_{hyp}(z, z(w)))).\nonumber
\end{eqnarray}
Combining it with equation (\ref{equtheta}), we obtain
\begin{eqnarray}
&&\Phi_{\Delta, r}(z,  F_{n, \mu}(~, s))\nonumber\\
&=&\frac{2^{\frac{5}{2}-2s}\Gamma(s-\frac{1}{4})\Gamma(4s-1)}{\Gamma(2s)\Gamma(2s-\frac{1}{2})^2}\sum_{\substack{w \in L_{r\mu}[n|\Delta|]\\ Q(w)\equiv \Delta Q(\mu)(\Delta)}}\chi_{\Delta}(w)Q_{2s-\frac{3}{2}}\bigg(1+\frac{|z- z(w)|^2}{2y\Im(z(w))}\bigg).\nonumber\\
&=&-\frac{2}{\Gamma(s+\frac{1}{4})}\sum_{w \in \Gamma_0(N) \setminus L_{r\mu}[n|\Delta|]}\sum_{\gamma\in \Gamma_0(N)}\chi_{\Delta}(\gamma w)g_{2s-\frac{1}{2}}(z, z(\gamma w)).\nonumber
\end{eqnarray}
For any $\gamma \in \Gamma_0(N)$,  $$\gamma z(w)= z(\gamma w),~~\chi_{\Delta}(\gamma w)=\chi_{\Delta}( w).$$
Then it follows that
\begin{eqnarray}
\Phi_{\Delta, r}(z,  F_{n, \mu}(~, s))
=-\frac{2}{\Gamma(s+\frac{1}{4})}G_{N, 2s-\frac{1}{2}}(z, Z_{\Delta, r}(n, \mu)).
\end{eqnarray}
%When $n<0$,
%by the same argument,
%\begin{eqnarray}
%&&\bigg(\frac{n}{Q_\Delta(w_{z^\perp})}\bigg)^{s-\frac{1}{4}}F(s-\frac{1}{4}, s+\frac{1}{4}, 2s; \frac{n}{Q_\Delta(w_{z^\perp})})\nonumber\\
%&=&\frac{2^{\frac{3}{2}-2s}\Gamma(4s-1)}{\Gamma(2s-\frac{1}{2})^2}Q_{2s-\frac{3}{2}}(\sqrt{\frac{n}{Q_\Delta(w_{z^\perp})}}))
%\nonumber\\
%&=&\frac{2^{\frac{3}{2}-2s}\Gamma(4s-1)}{\Gamma(2s-\frac{1}{2})^2}Q_{2s-\frac{3}{2}}\big(cosh(d_{hyp}(z, c_w))\big).\nonumber
%\end{eqnarray}
Similarly, we have
\begin{eqnarray}
\Phi^{\mathcal M}_{\Delta, r}(z, F_{n, \mu}(~, s))
%&=&\frac{2(4\pi |n|)^{\frac{1}{4}}}{\Gamma(2s)}\sum_{w \in L_{r\mu}[|\Delta|n]}\chi_{\Delta}(w)p_z(w)\int_0^\infty M_{\frac{1}{4}, s-\frac{1}{2}}(4\pi |n| v) \nonumber\\
%&&\times\exp(-4\pi Q_\Delta(w_{z^\perp})v+2\pi nv)v^{-\frac{3}{4}}dv\nonumber\\
&=&\frac{2\Gamma(s+\frac{1}{4})}{\Gamma(2s)}\sum_{w \in L_{r\mu}[|\Delta|n]}\chi_{\Delta}(w)p_z(w)\bigg(\frac{n}{Q_\Delta(w_{z^\perp})}\bigg)^{s+\frac{1}{4}}\nonumber\\
&&\times F\bigg(s+\frac{1}{4}, s-\frac{1}{4}, 2s; \frac{n}{Q_\Delta(w_{z^\perp})}\bigg).
\end{eqnarray}
It is known  in equation (\ref{equdist}) that   $$\sqrt{Q_\Delta(w_{z^\perp})}=|p_z(w)|.$$ Then we have
\begin{eqnarray}
&&\Phi^{\mathcal M}_{\Delta, r}(z, F_{n, \mu}(~, s))
%&=&-\frac{2\sqrt{n}\Gamma(s+\frac{1}{4})}{\Gamma(2s)}\sum_{w \in L_{r\mu}[|\Delta|n]}\chi_{\Delta}(w)\big(\frac{n}{Q_\Delta(w_{z^\perp})}\big)^{s-\frac{1}{4}}
%F(s+\frac{1}{4}, s-\frac{1}{4}, 2s; \frac{n}{Q_\Delta(w_{z^\perp})})\nonumber\\
=\frac{2\sqrt{n}}{\Gamma(s-\frac{1}{4})}G_{N, 2s-\frac{1}{2}}(z, Z_{\Delta, r}(n, \mu)).
\end{eqnarray}
Thus we obtain the result.
\end{proof}

%
%If $f \in H_{1/2-2j, \bar{\tilde{\rho}}_{L}}$, then $R^j_{\frac{1}{2}-2j}f \in H_{1/2, \bar{\tilde{\rho}}_{L}}$, and if $f \in H_{-1/2-2j, \bar{\tilde{\rho}}_{L}}$, then $R^j_{-\frac{1}{2}-2j}f \in H_{-1/2, \bar{\tilde{\rho}}_{L}}$.
For
$f \in H_{3/2-\kappa,\overline{\tilde \rho}}$, we define the twisted higher regularized theta lift   by
\begin{equation}\label{equphij}
\Phi_{\Delta, r}^j(z, h, f)=\frac{1}{(4\pi)^j}\times\left\{
                              \begin{array}{ll}
\Phi_{\Delta, r}(z, h, R^j_{\frac{1}{2}-2j}f), & \hbox{$\kappa=2j+1$;} \\
 \Phi^{\mathcal M}_{\Delta, r}(z, h, R_{-\frac{1}{2}-2j}^j f), & \hbox{$\kappa=2j+2$.}
                              \end{array}
                            \right.
\end{equation}
%For any $f \in H_{-1/2-2j, \bar{\tilde{\rho}}_{L}}$, $R^j_{-\frac{1}{2}-2j}f \in H_{-1/2, \bar{\tilde{\rho}}_{L}}$, we define
%\begin{equation}
%\Phi^j_{\Delta, r}(z, h, f)=  \frac{1}{(4\pi)^j}\Phi^{\mathcal M}_{\Delta, r}(z, h, R_{-\frac{1}{2}-2j}^j f).
%\end{equation}
%For  $f \in H_{\frac{1}{2}-J,\overline{\tilde \rho}}$,
%\begin{eqnarray}
%\Phi^j_{\Delta, r}(z, h, f)=\left\{
%                              \begin{array}{ll}
%                               \frac{1}{(4\pi)^j}\Phi_{\Delta, r}(z, h, R_{\frac{1}{2}-J}^jf), & \hbox{$J=2j$;} \\
%                               \frac{1}{(4\pi)^j}\Phi^{\mathcal M}_{\Delta, r}(z, h, R_{\frac{1}{2}-J}^j f), & \hbox{$J=2j+1$.}
%                              \end{array}
%                            \right.
%\end{eqnarray}
It  is  higher Green function for the divisor
\begin{equation}
Z_{\Delta, r}^j(f)=\sum_{n>0, \mu \in L^\sharp/L}c^+(-n, \mu)n^jZ_{\Delta, r}(n, \mu).
\end{equation}

We denote
\begin{equation}
\Phi^j_{\Delta, r}(z, F_{n,\mu})=\Phi^j_{\Delta, r}(z, F_{n,\mu}(~, 1/4+\kappa/2, 3/2-\kappa)).
\end{equation}

Then we have the following result:

\begin{proposition}\label{proarich}
\begin{equation}
\Phi^j_{\Delta, r}(z, F_{n,\mu})=(-1)^{\kappa}2n^{\frac{\kappa-1}{2}} G_{N, \kappa}(z, Z_{\Delta, r}(n, \mu)).
\end{equation}
\end{proposition}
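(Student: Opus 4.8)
## Proof Proposal

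\textbf{Strategy.} The plan is to reduce Proposition~\ref{proarich} to the two identities of Theorem~\ref{thetwogreen} via the commutation relation (\ref{equra}) between the raising operator and the Hejhal--Poincar\'e series, then track constants through the definition (\ref{equphij}) of $\Phi^j_{\Delta,r}$. The key point is that raising operators applied to $F_{n,\mu}$ stay within the same family of Poincar\'e series (with shifted spectral parameter and weight), so applying $\Phi_{\Delta,r}$ or $\Phi^{\mathcal M}_{\Delta,r}$ to $R^j(\cdot)F_{n,\mu}$ produces the same automorphic Green function $G_{N,2s-1/2}$ up to an explicit gamma-factor, and the $(4\pi)^{-j}$ and $(4\pi n)^{-j}$ normalizations are designed so that the final answer is clean.

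\textbf{Step 1: Reduce to a single $F_{n,\mu}(\,\cdot\,,s,k)$.} Fix $\kappa$ and write $j=\lfloor(\kappa-1)/2\rfloor$ as in (\ref{dehigh}). By (\ref{equra}),
$$
\frac{1}{(4\pi n)^j}R_{k-2j}^j F_{n,\mu}(\tau,s_0+j,k-2j)=j!\,F_{n,\mu}(\tau,s_0+j,k),
$$
so that $R^j_{1/2-2j}f$ evaluated on the Poincar\'e part behaves like $j!\,(4\pi n)^j F_{n,\mu}(\tau, s_0+j, 3/2-\kappa+2j)$; since we take $f=F_{n,\mu}(\,\cdot\,,1/4+\kappa/2,3/2-\kappa)$ and $s_0=1-k/2$ with $k=3/2-\kappa$, one checks $s_0+j = 1/4+\kappa/2$ matches the evaluation point used in the definition of $\Phi^j_{\Delta,r}(z,F_{n,\mu})$. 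I would spell out the two parities separately: for $\kappa=2j+1$ the relevant weight is $1/2$, and for $\kappa=2j+2$ the relevant weight is $-1/2$, which is exactly why (\ref{equphij}) uses $\Phi_{\Delta,r}$ in the first case and $\Phi^{\mathcal M}_{\Delta,r}$ in the second.

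\textbf{Step 2: Apply Theorem~\ref{thetwogreen} and collect constants.} Substituting $s=1/4+\kappa/2$ into (\ref{equlift}) gives
$$
\Phi_{\Delta,r}\!\left(z,F_{n,\mu}(\,\cdot\,,\tfrac14+\tfrac\kappa2,\tfrac12)\right)=-\frac{2}{\Gamma(\tfrac\kappa2+\tfrac12)}\,G_{N,\kappa}(z,Z_{\Delta,r}(n,\mu)),
$$
and similarly the Millson version gives a factor $\frac{2\sqrt n}{\Gamma(\kappa/2-1/4+\cdots)}$; here one must be careful that the argument of $G_N$ is $2s-1/2=\kappa$ in both cases, which is the reason $\kappa$ appears as the spectral parameter. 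Combining with the $(4\pi)^{-j}$ prefactor in (\ref{equphij}), the $j!$ and $(4\pi n)^j$ from Step~1, and the duplication formula for $\Gamma$ (to rewrite $\Gamma(\kappa)/[j!\cdots]$ and turn half-integer gamma values into the power $n^{(\kappa-1)/2}$ and the sign $(-1)^\kappa$), everything telescopes. The sign $(-1)^\kappa$ emerges because the Siegel-theta lift (odd $\kappa$) carries the minus sign from (\ref{equlift}) while the Millson lift (even $\kappa$) carries a plus sign; packaging both as $(-1)^\kappa$ is legitimate once the two cases are checked to agree with it.

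\textbf{Main obstacle.} The routine-looking but genuinely delicate part is the bookkeeping of the gamma-factors and powers of $4\pi$ through the chain $F_{n,\mu}\mapsto R^jF_{n,\mu}\mapsto \Phi_{\Delta,r}\mapsto G_{N,\kappa}$, together with verifying that the normalizations chosen in (\ref{equphij}) are precisely the ones that kill all spurious constants and leave only $(-1)^\kappa 2 n^{(\kappa-1)/2}$. A secondary subtlety is matching the constant terms: $F_{n,\mu}$ has a principal part plus an unknown constant $C$, and one should note that the constant contributes nothing to $\Phi^j_{\Delta,r}$ as a function modulo constants (or contributes only to the cuspidal divisor $C_{\Delta,r}$, which vanishes when $\Delta\neq 1$), so the identification with $G_{N,\kappa}$ is as functions on $Y_0(N)$ in the appropriate sense. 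Once these constants are pinned down, the proposition follows immediately from Theorem~\ref{thetwogreen} and (\ref{equra}).
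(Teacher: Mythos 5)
Your proposal is correct and follows essentially the same route as the paper: apply the raising-operator identity (\ref{equra}) to convert $R^j_{k-2j}F_{n,\mu}$ into $j!(4\pi n)^j F_{n,\mu}(\tau, s_0+j, k)$, then invoke the two cases of Theorem \ref{thetwogreen} at $s = 1/4+\kappa/2$, where $\Gamma(s\pm 1/4) = \Gamma(j+1)=j!$ cancels the $j!$ and $2s-1/2=\kappa$, leaving $\mp 2n^{(\kappa-1)/2}G_{N,\kappa}$ with the $\sqrt n$ from the Millson kernel supplying the half-integer power when $\kappa$ is even. One small inaccuracy in your narrative: no duplication formula for $\Gamma$ is needed — the gamma factors reduce directly to $j!$ and the sign $(-1)^\kappa$ is just the observation that (\ref{equlift}) carries a minus and the Millson identity a plus.
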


\begin{proof}
When $\kappa=2j+1$,  we have
\begin{eqnarray}
\Phi^j_{\Delta, r}(z,  F_{n, \mu})&=&\frac{1}{(4\pi)^j}\Phi_{\Delta, r}(z,  R_{\frac{1}{2}-2j}^jF_{n, \mu}( ,3/4+j, 1/2-2j )).\nonumber
\end{eqnarray}
Combining it with equations (\ref{equra}) and (\ref{equlift}), we obtain
%\begin{equation}
%\frac{1}{(4\pi n)^j}R_{k-2j}^jF_{n, \mu}(\tau, s_0+j, k-2j)=j!F_{n, \mu}(\tau, s_0+j, k).
%\end{equation}
%By Theorem \ref{thetwogreen}, we have that
%for $\kappa=2j+1$,
\begin{eqnarray}
\Phi^j_{\Delta, r}(z,  F_{n, \mu})&=&j!n^j\Phi_{\Delta, r}(z,  F_{n, \mu}(~, 3/4+j, 1/2))\nonumber\\
&=&-2n^j G_{N, 2j+1}(z, Z_{\Delta, r}(n, \mu)).
\end{eqnarray}
Similarly for $\kappa=2j+2$, we have
\begin{eqnarray}
\Phi^j_{\Delta, r}(z, F_{n, \mu})&=&j!n^j\Phi^{\mathcal M}_{\Delta, r}(z,  F_{n, \mu}(~, 5/4+j, -1/2))\nonumber\\
&=&2n^{j+\frac{1}{2}}G_{N, \kappa}(z, Z_{\Delta, r}(n, \mu)).
\end{eqnarray}

%\begin{equation}
%\Phi^j_{\Delta, r}(z, F_{n, \mu})=-2n^j G_{N, 1+2j}(z, Z_{\Delta, r}(n, \mu)).\nonumber
%\end{equation}
%Similarly,
%\begin{eqnarray}
%\tilde\Phi^j_{\Delta, r}(z, F_{n, \mu})&=&j!n^j\Phi^{\mathcal M}_{\Delta, r}(z,  F_{n, \mu}(~, 5/4+j))\nonumber\\
%&=&2n^{j+\frac{1}{2}}G_{N, 2+2j}(z, Z_{\Delta, r}(n, \mu)).
%\end{eqnarray}
Thus we complete the proof.
\end{proof}

\section{Twisted Heegner divisors}\label{Sec5}

In this section, we will study the translated twisted Heegner divisor. Bruinier, Ehlen and Yang studied the case when level $N=1$ in \cite{BEY}. We will generalize it  to general level $N$.

Let $V$ to be the space $\{x\in M_2(\Q)\mid \tr(x)=0\}$ with quadratic form $Q=N\det$, and $L$ to be the lattice  in $V$ as follows.
 \begin{equation}\label{equlattice}
L=\big\{w =\kzxz {b}{\frac{-a}{N}}{c}{-b}
   \in M_{2}(\Z) |\,   a, b, c \in \Z \big\}.
\end{equation}
We identify  $\Gspin(V)=\GL_2$.
\subsection{Generalized genus character}

Now we recall the definition of the generalized genus character in \cite{GKZ} and \cite{BO}.

For any $w=\kzxz{\frac{b}{2N}}{-\frac{a}{N}}{c}{-\frac{b}{2N}} \in L^\sharp$, define
\begin{equation}\label{genchardef}
\chi_{\Delta}(w) =\begin{cases}
  (\frac{\Delta}{n}),  &\ff \Delta \mid b^2-4Nac~and~ \frac{b^2-4Nac}{\Delta}~is~ a\\
  &  ~square ~modulo~4N~and~(a, b, c, \Delta)=1,
  \\
  0,  &otherwise.
 \end{cases}
\end{equation}
%gives  a well-defined map
%$$
%\chi_\Delta: L^\sharp/L^\Delta  \rightarrow \{ \pm 1 \}.
%$$
Here $n$ is any integer prime to $\Delta$ represented by one of the quadratic forms $[aN_1, b, cN_2]=aN_1x^2 + b xy + cN_2 y^2$, for any decomposition $N_1N_2=N$. 
Especially, when the level $N=1$
and $D= b^2-4ac$ is a fundamental discriminant,  $\chi_\Delta$  is exactly the genus character that corresponds to the decomposition $D=\Delta \times \frac{D}{\Delta}$.

\begin{lemma}\label{lemcharone}
When $(\Delta, N)=1$,
\begin{equation}
\chi_\Delta(w)=\chi^{N=1}_\Delta(Nw),
\end{equation}
where $\chi^{N=1}_\Delta$ is the generalized character for level $N=1$.
\end{lemma}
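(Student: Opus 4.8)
\textbf{Proof proposal for Lemma~\ref{lemcharone}.} The plan is to unwind both sides against the explicit formula \eqref{genchardef} and compare the resulting Kronecker symbols. First I would write a general element $w=\kzxz{\frac{b}{2N}}{-\frac{a}{N}}{c}{-\frac{b}{2N}}\in L^\sharp$, so that $w$ corresponds to the integral binary quadratic form $[a,b,Nc]$ of discriminant $b^2-4Nac$. Then I compute $Nw=\kzxz{\frac{b}{2}}{-a}{Nc}{-\frac{b}{2}}$, which, viewed inside the $N=1$ lattice $L^{N=1}=\{\kzxz{\beta}{\frac{-\alpha}{1}}{\gamma}{-\beta}\}$, corresponds to the form $[a,b,Nc]$ as well: the entries match up as $\alpha=a$, $\beta=b/2$, $\gamma=Nc$. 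Thus both $\chi_\Delta(w)$ and $\chi_\Delta^{N=1}(Nw)$ are attached to the \emph{same} quadratic form $[a,b,Nc]$ with the same discriminant $D:=b^2-4Nac$, and the same coprimality condition $(a,b,Nc,\Delta)=1$; since $(\Delta,N)=1$ this last condition is equivalent to $(a,b,c,\Delta)=1$, so the ``otherwise $\to 0$'' cases coincide.

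Next I would check that the nonzero value agrees. For $\chi_\Delta^{N=1}(Nw)$ one is allowed (since level is $1$, i.e. $N_1=N_2=1$ is the only decomposition) to take $n$ to be any integer prime to $\Delta$ represented by $[a,b,Nc]$. For $\chi_\Delta(w)$ at level $N$, one is allowed to take $n$ to be any integer prime to $\Delta$ represented by one of the forms $[aN_1,b,cN_2]$ with $N_1N_2=N$; in particular the choice $N_1=1$, $N_2=N$ gives exactly the form $[a,b,Nc]$. So a single common value of $n$ can be chosen that is admissible for both symbols, and then $(\tfrac{\Delta}{n})$ is literally the same for both. The only thing to verify here is that the value $(\tfrac{\Delta}{n})$ is independent of the admissible choice of $n$ — but this is precisely the well-definedness of the generalized genus character, which is already invoked implicitly in \cite{GKZ} and \cite{BO} and can be cited; one just needs that the set of $n$ prime to $\Delta$ represented by $[a,b,Nc]$ is nonempty, which follows from the assumption $(a,b,c,\Delta)=1$ together with $(\Delta,N)=1$ by a standard elementary argument (choose $x,y$ so that $[a,b,Nc](x,y)$ avoids each prime dividing $\Delta$).

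Finally I would package this: since the defining condition ($\Delta\mid D$, $D/\Delta$ a square mod $4N$ resp. mod $4$, and coprimality) and the defining value $(\tfrac{\Delta}{n})$ both transfer verbatim under $w\mapsto Nw$ when $(\Delta,N)=1$, we conclude $\chi_\Delta(w)=\chi_\Delta^{N=1}(Nw)$ for all $w\in L^\sharp$. The one genuinely delicate point — the main obstacle — is matching the ``square modulo $4N$'' condition at level $N$ with the ``square modulo $4$'' condition at level $1$: a priori being a square mod $4N$ is stronger. I would resolve this by noting that when $(\Delta,N)=1$ the extra congruence conditions mod $N$ are automatically implied by $w\in L^\sharp$ (equivalently, by the form $[a,b,Nc]$ being of the stated shape), so that the effective constraint on $D/\Delta$ is only modulo $4$; alternatively one can trace through the equivalence already recorded in \cite{BO} between the two descriptions of $\chi_\Delta$. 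Once that compatibility is in hand, the lemma is immediate from the formulas.
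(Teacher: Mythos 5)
Your proof follows essentially the same route as the paper's: you identify $w$ with the form $[a,b,Nc]$, compute $Nw$ and observe it corresponds to the same form at level $1$, note that when $(\Delta,N)=1$ the coprimality conditions $(a,b,Nc,\Delta)=1$ coincide across all decompositions $N_1N_2=N$, and choose a common representative $n$ so that the values $(\tfrac{\Delta}{n})$ agree. That is exactly the paper's argument.

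Where you go further — and rightly so — is in flagging the mismatch between ``$D/\Delta$ a square mod $4N$'' (level $N$) and ``$D/\Delta$ a square mod $4$'' (level $1$). The paper's proof silently skips over this, passing directly from $(a,b,Nc,\Delta)=1$ to $\chi_\Delta(w)=(\tfrac{\Delta}{n})$ without verifying the square condition at all. Your concern is legitimate, and your proposed resolution is correct but could be made concrete: since $w\in L^\sharp$ forces $D=b^2-4Nac\equiv b^2\pmod{4N}$, and since $\Delta\equiv r^2\pmod{4N}$ with $(r,N)=1$ (because $(\Delta,N)=1$), one has, once $\Delta\mid D$, that $D/\Delta\equiv b^2\,r^{-2}\pmod{N}$ is automatically a square mod $N$; so the mod-$4N$ condition reduces to the mod-$4$ condition, and the two definitions impose the same constraint. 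With that filled in, your proof is complete and, if anything, more careful than the one in the paper.
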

\begin{proof}
Assume that $w=\kzxz{\frac{b}{2N}}{-\frac{a}{N}}{c}{-\frac{b}{2N}}$, and the associated quadratic form is  given by $[a, b, Nc]$.

Since $(\Delta, N)=1$, we have
$$(a, b, Nc, \Delta)=1\Leftrightarrow (aN_1, b, cN_2, \Delta)=1,$$
for any $N_1N_2=N$.
Therefore, it suffices to consider the form $[a, b, Nc]$.

We assume that   $(a, b, Nc, \Delta)=1$.
Then the form  $[a, b, Nc]$ represents an integer $n$ which  is prime to $\Delta$.
So we have \begin{equation}\label{equcha}
                                \chi_\Delta(w)=\bigg(\frac{\Delta}{n}\bigg).
                                    \end{equation}

Moreover, we have $Nw=\kzxz{\frac{b}{2}}{-a}{Nc}{-\frac{b}{2}}$ and 
\begin{equation}\label{equchatwo}
                                \chi^{N=1}_\Delta(Nw)=\bigg(\frac{\Delta}{n}\bigg)=\chi_\Delta(w).
                                    \end{equation}
%Consequently,
%\begin{equation}
%\chi_\Delta(w)=\chi^{N=1}_\Delta(Nw).
%\end{equation}
Thus we obtain the result.
\end{proof}

The generalized character $\chi^{N=1}_\Delta$ can be defined locally \cite{BEY}, one can see as follows:

when $p \nmid \Delta$, let $\chi^{N=1}_{\Delta, p}$ be the characteristic function;

when $p \mid \Delta$,
let
\begin{equation}
\chi^{N=1}_{\Delta, p}(w) =\begin{cases}
  (\frac{p^*}{n}),  &\ff (a, b, c, \Delta)=1,
  \\
  0,  &otherwise.
 \end{cases}
\end{equation}
Here $p^*=(-1)^{\frac{p-1}{2}}p$,  and $n$ is any integer  that is prime  to $\Delta$ and  is represented by the quadratic form  $[a, b, c]$.

Thus, we can write it locally as
$$\chi_\Delta(w)=\prod_p\chi_{\Delta, p}(w)=\prod_p\chi^{N=1}_{\Delta, p}(Nw).$$
Now we define  $K_p$ the compact open subgroup of $\GL_2(\Q_p)$ as
\begin{equation}K_p=\{\kzxz{a}{b}{c}{d} \in \GL_2(\Z_p)\mid c \in N\Z_p\}.\end{equation}
\begin{lemma}\label{lemgeneralized}
Assume that $(\Delta, N)=1$. For any $h \in  K_p$, $w\in L_p^\sharp$, then
\begin{equation}\label{equaction}
\chi_{\Delta, p}(h\cdot w)=(\det(h), \Delta)_p\chi_{\Delta, p}(w),
\end{equation}
where $(~, ~)_p$ is the local Hilbert symbol.
\end{lemma}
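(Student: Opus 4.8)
The plan is to reduce the statement about the $\GL_2$-action on the twisted genus character to a purely local computation involving quadratic forms and Hilbert symbols. By Lemma~\ref{lemcharone} we may work with the level-one character $\chi^{N=1}_{\Delta,p}$ after multiplying by $N$, so it suffices to understand how $\chi^{N=1}_{\Delta,p}$ transforms under the action $h\cdot w = hwh^{-1}$ of $\GL_2(\Z_p)$ (note $Nh\cdot w = h(Nw)h^{-1}$, so the extra factor of $N$ is harmless, and the congruence condition $c\in N\Z_p$ defining $K_p$ combined with $(\Delta,N)=1$ will let us pass between $K_p$ and $\GL_2(\Z_p)$ on the relevant prime). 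When $p\nmid\Delta$ both sides of (\ref{equaction}) are just the characteristic function of $L_p^\sharp$ and $(\det h,\Delta)_p=1$, so the identity is trivial; hence the whole content is at primes $p\mid\Delta$.

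Fix $p\mid\Delta$. First I would record that the condition $(a,b,c,\Delta)=1$ — equivalently, that $w$ is $p$-primitive in the appropriate sense — is invariant under conjugation by $\GL_2(\Z_p)$, since conjugation by a unit preserves the content of the integral binary quadratic form $[a,b,c]$ attached to $w$; this handles the ``otherwise$=0$'' branch of the definition. On the primitive locus, $\chi^{N=1}_{\Delta,p}(w)=\left(\frac{p^*}{n}\right)$ where $n$ is any $\Delta$-prime integer represented by $Q_w:=[a,b,c]$. The key classical fact I would invoke is that for a primitive integral binary quadratic form, the set of residues modulo $p$ of the $p$-prime values it represents is a coset of the squares in $(\Z/p)^\times$ determined only by the $\GL_2(\Z_p)$-equivalence class of the form; and conjugating $w$ by $h$ replaces $Q_w$ by a form $\GL_2(\Z_p)$-equivalent to $Q_w$ but \emph{scaled by $\det h$} — more precisely, $Q_{h\cdot w}(x,y) = Q_w((x,y)\cdot(\text{adj stuff}))/\det h$-type identity, so that $\det h\cdot Q_{h\cdot w}$ is properly $\GL_2(\Z_p)$-equivalent to $Q_w$. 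Consequently a $\Delta$-prime value $n$ of $Q_w$ becomes the value $n/\det h$ (up to squares) of $Q_{h\cdot w}$, whence
\begin{equation}
\chi^{N=1}_{\Delta,p}(h\cdot w)=\left(\frac{p^*}{n\,\det(h)^{-1}}\right)=\left(\frac{p^*}{n}\right)\left(\frac{p^*}{\det h}\right)=\chi^{N=1}_{\Delta,p}(w)\cdot\left(\frac{\det h}{p}\right).\nonumber
\end{equation}
Finally I would identify $\left(\frac{\det h}{p}\right)$ with the local Hilbert symbol $(\det h,\Delta)_p$: since $h\in\GL_2(\Z_p)$, $\det h\in\Z_p^\times$, and since $p\mid\Delta$ with $p^*$ the prime discriminant at $p$, the standard formula $(u,\Delta)_p=\left(\frac{u}{p}\right)$ for a $p$-adic unit $u$ (using that the $p$-part of $\Delta$ is $p^*$ up to squares and that $p\mid\Delta$ exactly once in the fundamental-discriminant sense) gives exactly the claimed factor. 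Multiplying the local identities over all $p$ and using $\prod_p(\det h,\Delta)_p=1$ recovers the global invariance already known, so the local statement is the sharp one.

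The main obstacle I anticipate is getting the scaling bookkeeping exactly right: precisely how $Q_{h\cdot w}$ relates to $Q_w$ under conjugation (the correct appearance of $\det h$, and whether it is $\det h$ or $\det h^{-1}$, which matters for the sign of the Hilbert symbol exponent but washes out because $\left(\frac{u}{p}\right)=\left(\frac{u^{-1}}{p}\right)$), and the careful matching of normalizations between the adjoint representation of $\GL_2$ on trace-zero $2\times 2$ matrices and the space of binary quadratic forms. I would also need to double-check the edge behavior when $p=2$ and $2\mid\Delta$, where $p^*$ and the $2$-adic Hilbert symbol require the usual special care, though here the hypothesis $(\Delta,N)=1$ and $\Delta$ a fundamental discriminant constrains the $2$-adic shape of $\Delta$ enough to make the unit formula for $(u,\Delta)_2$ applicable. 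Everything else — invariance of primitivity, independence of the choice of represented value $n$, the passage from $\GL_2(\Z_p)$ to $K_p$ — is routine once these normalizations are pinned down, and is essentially already contained in \cite{BEY} and \cite{GKZ}.
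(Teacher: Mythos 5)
Your proof takes a genuinely different route from the paper. The paper's argument for this lemma is essentially a one-line reduction: it applies Lemma~\ref{lemcharone} to write $\chi_{\Delta,p}(w)=\chi^{N=1}_{\Delta,p}(Nw)$, and then simply \emph{cites} the transformation law $\chi^{N=1}_{\Delta,p}(h\cdot Nw)=(\det h,\Delta)_p\,\chi^{N=1}_{\Delta,p}(Nw)$ as \cite[Lemma~7.3]{BEY}. You instead reprove that transformation law directly, via the identity
$$
Q_{hwh^{-1}}(v)=(\det h)\,Q_w(h^{-1}v),
$$
which is the cleanest way to see both the preservation of primitivity (content) under conjugation and the fact that the represented classes get multiplied by $\det h$. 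This is a valid and more self-contained proof; what the paper's route buys is brevity (and implicitly offloads the $p=2$ subtleties to \cite{BEY}), while what yours buys is that the reader sees where the Hilbert-symbol factor actually comes from. Two small points to pin down if you write this out. First, the direction of the scaling: you write that $\det h\cdot Q_{h\cdot w}$ is $\GL_2(\Z_p)$-equivalent to $Q_w$, but the identity above gives $(\det h)^{-1}Q_{h\cdot w}=Q_w\circ h^{-1}$, i.e.\ $Q_{h\cdot w}$ is equivalent to $(\det h)Q_w$, not $(\det h)^{-1}Q_w$; you already correctly observe this sign washes out in the Legendre symbol, but the statement as written is backwards. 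Second, and more substantively, the local formula $\chi^{N=1}_{\Delta,p}(w)=\left(\tfrac{p^*}{n}\right)$ with $p^*=(-1)^{(p-1)/2}p$, as reproduced in this paper, literally only makes sense for odd $p$; if $2\mid\Delta$ the local factor at $2$ must be taken to be the one in \cite{BEY}, and both the Kronecker-symbol manipulation and the identification with $(\det h,\Delta)_2$ then require the usual fundamental-discriminant-at-$2$ case analysis ($\Delta_2\in\{-4,\pm 8\}$). You flag this yourself as something to double-check; it is a genuine loose end in your version but not in the paper's, since the paper imports the result from \cite{BEY} where that case is handled.
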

\begin{proof}
 When $h \in K_p$, it is proved in \cite[Lemma 7.3]{BEY} that
\begin{equation}\chi_{\Delta, p}^{N=1}(h\cdot N w)=(\det(h), \Delta)_p\chi_{\Delta, p}^{N=1}(Nw).
\end{equation}
Combining it with Lemma \ref{lemcharone}, we have
 \begin{eqnarray}
 \chi_{\Delta, p}(h\cdot w)&=&\chi_{\Delta, p}^{N=1}(h\cdot N w)=(\det(h), \Delta)_p\chi_{\Delta, p}^{N=1}(Nw)\nonumber\\
 &=&(\det(h), \Delta)_p\chi_{\Delta, p}(w).\nonumber
 \end{eqnarray}
%Now we generalized it to all $GL_2(\Q_p)$.
%For any $w \in L_p^{\sharp}$, set $\tilde w=wS$, where $S=\kzxz{}{1}{-1}{}$.
%Then $\tilde w$ is symmetric, and the action of $\GL_2$ is given by $h\cdot \tilde w=h\tilde w h^t$.
%It is easy to see that
%$$h\cdot \tilde w=\det(h) \widetilde{h\cdot w}.$$
%
%For any $h \in M_2(\Z_p)$, if $n$ is represented by  $h\cdot \tilde w$, then $n$ is represented by $\tilde w$. Then
%\begin{equation}
%\chi_\Delta(h\cdot w)=(\Delta, \det(h)^{-1}n)_p=(\Delta, \det(h))_p\chi_\Delta( w).
%\end{equation}
%When $h=\kzxz{a}{b}{c}{d} \in \GL_2(\Q_p) \setminus  M_2(\Z_p)$.  We denote
%$$e=\min\{v_p(a), v_p(b), v_p(c), v_p(d)\} <0.$$
%Assume that  $h'=\kzxz{p^{-e}}{}{}{p^{-e}}h \in M_2(\Z_p)$.
%By the previous discussion,
%\begin{equation}
%\chi_\Delta(h\cdot w)=\chi_\Delta(h'\cdot w)=(\Delta, \det(h'))_p\chi_\Delta( w)=(\Delta, \det(h))_p\chi_\Delta( w).\nonumber
%\end{equation}
Thus, we obtain the result.
\end{proof}

Let $U$ denote a negative definite 2-dimensional subspace of $V$.
By Clifford algebra, we assume that $U\cong k=\Q(\sqrt{D})$, where $D$ is a fundamental discriminant.
We identify $\Gspin(U)=k^\times$.

%
%There exists a canonical isomorphism given by
%\begin{equation}
%k^\times\setminus \A^\times_{k, f}/ \hat {\mathcal O}_k^\times \cong \Cl_k\cong Gal(H/k),
%\end{equation}
 We let $\Cl_k$ to be the ideal class group.
It is known that each genus character corresponds to a decomposition of fundamental discriminants \cite{Siegel}.
Let
 $$ \chi: \Cl_k/ \Cl_k^2 \to \{\pm 1\}$$ denote the genus character that is given by the  discriminant decomposition $D=\Delta D_0$. We write $ k^\times \setminus \A_{k, f}$ for the  finite idele class group. Then the 
 map $$ k^\times \setminus \A_{k, f}\rightarrow \Cl_k, ~h=(h_{\mathfrak{p}})\rightarrow [h]=\prod_{\mathfrak{p} \nmid \infty} \mathfrak{p}^{v_{\mathfrak{p}}(h_\mathfrak{p})}$$
  induces an isomorphism
  \begin{equation}
  k^\times \setminus \A_{k, f}/\hat{ \mathcal O}_k^\times \cong  \Cl_k,
  \end{equation}
where $\hat{ \mathcal O}_k^\times = \prod_{\mathfrak{p}\nmid\infty}  \mathcal O_{\mathfrak{p}}^\times$.

We denote the finite Hilbert symbol as $(~, ~)_{\A_f}=\prod_{p <\infty}(~, ~)_p$.
Then we have the following result:

\begin{proposition}\label{prochar}
For any $h \in \Gspin(U)=\A^\times_{k, f} $,
\begin{equation}(\Delta, \det(h))_{\A_f}=\chi([h]). 
\end{equation}

%Moreover,
%\begin{equation}\label{equchar}
%\chi_\Delta(h\cdot w)=\chi([h])\chi_\Delta(\gamma^{-1}w).
%\end{equation}
%where  $\chi$ on the right hand is the genus character associated to the decomposition $D=\Delta D_0$..
\end{proposition}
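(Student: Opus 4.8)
The plan is to reduce Proposition~\ref{prochar} to the local computation encoded in Lemma~\ref{lemgeneralized}, exploiting the fact that both sides of the claimed identity factor as products over the finite places and that both are characters on the idele class group. The key observation is that $\Gspin(U)\cong k^\times$ acts on $U\subset V$, and hence on $L^\sharp_p$ at each place, in a way compatible with the genus character. Concretely, fix a nonzero $w_0\in U$ with $Q(w_0)<0$; then for $h\in\A^\times_{k,f}$ the action $h\cdot w_0$ stays in $U$, and Lemma~\ref{lemgeneralized} (applied with the element $h$ realized inside the relevant compact-open subgroups $K_p$, after possibly scaling) gives
\[
\chi_{\Delta,p}(h_p\cdot w_0)=(\det(h_p),\Delta)_p\,\chi_{\Delta,p}(w_0).
\]
Here $\det(h_p)$ is the image of $h_p$ under the norm map $N_{k/\Q}$ on $\Gspin$, i.e. $\det(h)=N_{k/\Q}(h)$ under the identification $\Gspin(U)=k^\times$. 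Taking the product over all finite $p$ and using $\prod_p\chi_{\Delta,p}(w_0)=\chi_\Delta(w_0)=1$ (for a suitable base point $w_0$, or absorbing the nonzero constant), we obtain $(\Delta,\det(h))_{\A_f}=\prod_p(\det(h_p),\Delta)_p$, and the remaining task is to identify this product of Hilbert symbols with $\chi([h])$.

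First I would set up the dictionary between the adelic genus character and the classical one. The isomorphism $k^\times\backslash\A_{k,f}/\hat{\mathcal O}_k^\times\cong\Cl_k$ sends $h$ to the ideal class $[h]=\prod_\mathfrak{p}\mathfrak{p}^{v_\mathfrak{p}(h_\mathfrak{p})}$, and under this the genus character $\chi\colon\Cl_k/\Cl_k^2\to\{\pm1\}$ associated to the decomposition $D=\Delta D_0$ is, by Siegel's description (cited in the excerpt via \cite{Siegel}), characterized on prime ideals $\mathfrak{q}$ of $k$ coprime to $D$ by $\chi(\mathfrak{q})=\left(\tfrac{\Delta}{N_{k/\Q}\mathfrak{q}}\right)$ (equivalently $\left(\tfrac{D_0}{N_{k/\Q}\mathfrak{q}}\right)$). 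Thus I would show that the adelic character $h\mapsto(\Delta,\det(h))_{\A_f}=(\Delta,N_{k/\Q}(h))_{\A_f}$ is trivial on $k^\times$ (by Hilbert reciprocity, since $(\Delta,N_{k/\Q}(\alpha))_{\A}=1$ for $\alpha\in k^\times$ as $N_{k/\Q}(\alpha)$ is a norm from $k$, combined with the fact that $\Delta$ is a norm from $k=\Q(\sqrt D)$ locally at the archimedean and $D_0$-places — here I need to be slightly careful and use $D=\Delta D_0$), trivial on $\hat{\mathcal O}_k^\times$ (units have norm a local unit, and $(\Delta,u)_p=1$ for $p\nmid\Delta$; at $p\mid\Delta$ one uses $p\mid\Delta$, $(\Delta,N)=1$ so $p\nmid N$, $p$ ramifies or splits appropriately in $k$ and the local norm group swallows units), and hence descends to a character of $\Cl_k$ killing squares.

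Having a well-defined character on $\Cl_k/\Cl_k^2$, I would finish by checking agreement on generators: for a prime ideal $\mathfrak{q}$ of $k$ lying over a rational prime $q\nmid 2D$, pick $h$ to be a finite idele with $v_\mathfrak{q}(h_\mathfrak{q})=1$ and trivial elsewhere, so $[h]=\mathfrak{q}$ and $\det(h)=N_{k/\Q}(h)$ has $q$-adic valuation equal to $f(\mathfrak{q}/q)$. A direct Hilbert-symbol computation then gives $(\Delta,\det(h))_{\A_f}=(\Delta,\det(h))_q=\left(\tfrac{\Delta}{q}\right)^{f(\mathfrak{q}/q)}$, which matches $\chi(\mathfrak{q})=\left(\tfrac{\Delta}{N_{k/\Q}\mathfrak{q}}\right)=\left(\tfrac{\Delta}{q^{f}}\right)$. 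Since the $\mathfrak{q}$ of this type generate $\Cl_k$, the two characters coincide. The main obstacle I anticipate is the bookkeeping at the bad primes $p\mid 2\Delta D_0$: one must verify that $(\Delta,\det(h))_{\A_f}$ genuinely factors through $\hat{\mathcal O}_k^\times$ there, which requires knowing the splitting/ramification type of $p$ in $k=\Q(\sqrt D)$ relative to the decomposition $D=\Delta D_0$ and the precise local norm groups; the hypothesis $(\Delta,N)=1$ is used to keep these primes away from $N$ so that Lemma~\ref{lemgeneralized} applies cleanly, but the $p=2$ and wildly ramified cases need the explicit formulas for the quadratic Hilbert symbol rather than the naive ``$(\Delta,u)_p=1$'' shortcut.
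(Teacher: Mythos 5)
Your approach is genuinely different from the paper's, and in outline it is sound, but it has a few weak joints worth flagging.

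The paper proves the identity by a direct local computation: it observes that both sides are multiplicative in $h$, reduces to a single place $\mathfrak q$, and then checks the two generating cases $h=\pi_{\mathfrak q}$ and $h\in\mathcal O_{\mathfrak q}^\times$, with explicit case analysis according to whether $\mathfrak q$ is inert, split, or ramified, and (at ramified $\mathfrak q$) whether $q\mid\Delta$ or $q\mid D_0$, using the uniformizer $\pi_{\mathfrak q}=\sqrt D$. You instead argue that $h\mapsto(\Delta,\det h)_{\A_f}$ descends to a character of $\Cl_k/\Cl_k^2$ and then check agreement with $\chi$ only on prime ideals coprime to $2D$, using the fact that these generate $\Cl_k$. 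That does buy you something: you never have to compute the Hilbert symbol against a ramified uniformizer. But the price is that the well-definedness verification (triviality on $k^\times$ and on $\hat{\mathcal O}_k^\times$) requires essentially the same local facts at the bad primes that the paper's unit case uses, so the saving is smaller than it looks.

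Three specific issues. First, the opening paragraph invoking Lemma~\ref{lemgeneralized} is a detour that leads nowhere: $(\Delta,\det(h))_{\A_f}=\prod_p(\Delta,\det(h_p))_p$ is the definition of the finite Hilbert symbol, and there is no need to realize $h$ inside $K_p$ or to introduce a base vector $w_0$ (indeed a general $h\in k_{\mathfrak q}^\times$ does not lie in $K_p$, so the cited lemma does not apply without a real argument). Second, your argument for triviality on $k^\times$ over-reaches: the only fact you need is that $k$ is imaginary quadratic, so $N_{k/\Q}(\alpha)>0$ for $\alpha\neq0$, hence $(\Delta,N(\alpha))_\infty=1$ and reciprocity gives $\prod_{p<\infty}(\Delta,N(\alpha))_p=1$; the claim that ``$\Delta$ is a norm from $k$'' locally at various places is both unnecessary and, as stated, dubious. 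Third, the ``anticipated obstacle'' at $p=2$ is actually easy to discharge once you use the standing hypotheses: since $\Delta$ is a fundamental discriminant with $(\Delta,2N)=1$, it is $\equiv1\pmod4$, and therefore $(\Delta,u)_2=(-1)^{\frac{\Delta-1}{2}\cdot\frac{u-1}{2}}=1$ for every $u\in\Z_2^\times$. At odd $p\mid D$ the paper's observation $N_{k_{\mathfrak q}/\Q_q}(\mathcal O_{\mathfrak q}^\times)\subseteq\Z_q^{\times2}$ handles the ramified unit case; together these close your gap. With these repairs your descent-plus-generators argument is a valid alternative to the paper's case-by-case computation.
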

\begin{proof}
The Hilbert symbol is  multiplicatively  bilinear. Thus it suffices to prove it locally as follows
\begin{equation}(\Delta, \det(h))_q=\chi([h]), ~for ~h \in k_\mathfrak{q}^\times.
\end{equation}
We will divide the proof into two distinct  cases:  $h=\pi_{\mathfrak{q}}$  and $h \in \mathcal O^\times_{\mathfrak{q}}$.

$(1) ~h=\pi_{\mathfrak{q}}$.

%We  identify it with $(\cdots,1, \pi_{\mathfrak{q}},1, \cdots) \in \A_{k,f}$.

When $\mathfrak{q}$ is  inert, $\pi_{\mathfrak{q}}=q$ is  prime to $\Delta$, we have 
\begin{equation}
(\Delta, \det(\pi_{\mathfrak{q}}))_q=(\Delta, q^2)_q=1=\bigg(\frac{\Delta}{N(\mathfrak{q})}\bigg)=\chi([\mathfrak{q}]).
\end{equation}

 When $\mathfrak{q}$ is split, we have
 \begin{eqnarray}
(\Delta, \det(\pi_{\mathfrak{q}}))_q=(\Delta, q)_q=\bigg(\frac{\Delta}{N(\mathfrak{q})}\bigg)=\chi([\mathfrak{q}]).\nonumber
\end{eqnarray}

When $\mathfrak{q}$ is ramified,  either $q \mid \Delta$ or $q \mid D_0$. Let  $\pi_{\mathfrak{q}}=\sqrt{D} \in k_{\mathfrak{q}}^\times$, and denote the associated maximal ideal by $\mathfrak{q}=(\pi_{\mathfrak{q}})$.

If $q \mid \Delta$,  we find that
\begin{eqnarray}
( \Delta, \det(\pi_{\mathfrak{q}}))_q=( \Delta, -D)_q
=( \Delta, D_0)_q=\bigg(\frac{D_0}{\mathfrak{q}}\bigg)=\chi([\mathfrak{q}]);
\end{eqnarray}
if $q \mid D_0$, we similarly obtain
\begin{eqnarray}
( \Delta, \det(\pi_{\mathfrak{q}}))_q=( \Delta, -D)_q
=( \Delta, D_0)_q=\bigg(\frac{\Delta}{\mathfrak{q}}\bigg)=\chi([\mathfrak{q}]).
\end{eqnarray}
In summary,   for any $\pi_{\mathfrak{q}}$, we have
\begin{equation}\label{equchar}
( \Delta, \det(\pi_{\mathfrak{q}}))_q=\chi([\mathfrak{q}]).
\end{equation}

%
%Combining it with  Lemma \ref{lemgeneralized}, we obtain that
%\begin{equation}
%\chi_\Delta(\pi_{\mathfrak{q}}\cdot w)=\chi([\mathfrak{q}])\chi_\Delta(w).
%\end{equation}

$(2)~h \in \mathcal O^\times_{\mathfrak{q}}$.

When $q \mid \Delta$,
it is easy to know that $N_{k_\mathfrak{q}/ \mathcal \Q_q}( \mathcal O^\times_{\mathfrak{q}}) \subseteq \Z_q^{\times2}$.
Consequently, for any $h \in \mathcal O^\times_{\mathfrak{q}}$,  we find that
$$( \Delta, \det(h))_q=1.$$
Moreover, when $q \nmid \Delta$, $$( \Delta, \det(h))_q=1.$$

Therefore, for any $h \in \mathcal O^\times_{\mathfrak{q}}$, we have
\begin{equation}(\Delta, \det(h))_{\mathfrak{q}}=1=\chi([h]).
\end{equation}
Combining it with equation (\ref{equchar}),   we obtain the result.
\end{proof}

\subsection{Twisted Heegner divisors}

  For  any compact open subgroup $K$ of $\GL_2(\A_f)$, the modular curve
 is defined as follows
 \begin{equation}
X_K=\GL_2(\Q)\backslash \H^\pm \times \GL_2(\A_f)/K.
 \end{equation}
 %The set of connected component is given by
% \begin{equation}
% \pi_0(X_K)= \GL_2(\Q)\backslash \{\pm 1\}^\pm \times \GL_2(\A_f)/K\cong \widehat{\Z}^\times/ \det(K).
% \end{equation}
According to the following decomposition
 $$\GL_2(\A_f)=\coprod\GL_2(\Q)_+hK,$$
we have
 \begin{equation}
\coprod\Gamma_h\setminus \H =X_K,
 \end{equation}
where the mapping is given by $z\rightarrow (z,  h)$, 
and $\Gamma_h=hKh^{-1}\bigcap \GL_2(\Q)_+$.

%We can take $h_a=\kzxz{a}{}{}{1}$, where $a \in\widehat{\Z}^\times/ \det(K)$.

Now we define the compact open subgroup as
$$K=\prod_pK_p,~~K_p=\{\kzxz{a}{b}{c}{d} \in \GL_2(\Z_p)\mid c \in N\Z_p\}.$$

According to the strong approximation,  $\GL_2(\A_f)=\GL_2(\Q)_+ K$,
we assume that $h=\gamma k$, where   $\gamma \in  \GL_2(\Q)_+$ and $k \in K$. Up to a  factor in $\Gamma_0(N)$,  the decomposition of $h$ is unique.

The modular curve $X_K$ has only one connected component and the map
\begin{equation}\label{map}
\Gamma_h\setminus \H =X_{K^h}\cong X_{K}=\Gamma_0(N)\setminus \H ,
\end{equation}
is given by \begin{equation}\label{equmap}
z \mapsto (z, 1)\mapsto (z, h) \mapsto \gamma^{-1}z.
\end{equation}
We define the subgroup of $K$ as follows
$$K_\Delta=\{h \in K \mid (\Delta, \det(h))_{\A_f}=1\},$$
which has index $2$ in $K$.
Then we can write $K=K_\Delta\bigsqcup \xi K_\Delta$. Then $X_{K_\Delta}$ has two components which are given as follows
$$\Gamma_0(N)\setminus \H \bigsqcup \Gamma_\xi\setminus \H  \rightarrow X_{K_\Delta}.$$

It is easy to find that $K_\Delta \bigcap \GL_2(\Q)_+=\Gamma_0(N)$.
For any  $\xi \in K\setminus K_\Delta$,  $\xi K_\Delta \xi^{-1}=K_\Delta$  and $\Gamma_\xi=\xi K_\Delta \xi^{-1}\bigcap \GL_2(\Q)_+=\Gamma_0(N).$

%where $\Gamma_h=\gamma \Gamma_0(N)\gamma ^{-1}$.
%We identify  $(z, h)$ with $(\gamma^{-1}z, 1)$ in $X_K$.

\begin{definition}
For any $h \in  \GL_2(\A_f)$, the translated twisted Heegner divisor   on $X_{K_\Delta}$ is defined as follows
\begin{equation}
Z_{\Delta, r}(n, \mu, h)= \sum_{x \in \Gamma_h \diagdown hL_{r\mu}[|\Delta|n]}\chi_{\Delta}(h^{-1}x)Z(x, h).
\end{equation}
\end{definition}
By virtue of Witt's theorem, we assume the following conditions hold $$\{x\mid x\in V(\Q), Q(x)=|\Delta|n\}= \GL_2(\Q)x_0$$
and
$$hL_{r\mu}[|\Delta|n]=\coprod_iK^hx_i,$$
where  $x_i=h_i^{-1}x_0 \in \Gamma_h \diagdown hL_{r\mu}[|\Delta|n]$ and $h_i \in  \GL_2(\Q)$.
Consequently, we can express
\begin{eqnarray}
Z_{\Delta, r}(n, \mu, h)=\sum \chi_\Delta(h^{-1}x_i) Z(x_i, h).
\end{eqnarray}

We write $h=\gamma h_0 k$, $\gamma \in \GL_2(\Q)_+$, $k \in K_\Delta$, $h_0=1$ or $\xi$.
Then we have
$$L_{r\mu}[|\Delta|n]=\coprod_iK(h_ih)^{-1}x_0=\coprod_iK\gamma^{-1}x_i.$$

It follows that
\begin{eqnarray}
Z_{\Delta, r}(n, \mu)=\sum \chi_\Delta(\gamma^{-1}x_i) Z(\gamma^{-1}x_i).
\end{eqnarray}
According to lemma \ref{lemgeneralized},  we have
\begin{eqnarray}
Z_{\Delta, r}(n, \mu, h)
&=&(\Delta, \det(h))_{\A_f}\sum \chi_\Delta(\gamma^{-1}x_i) Z(x_i, h)\\
&=&
(\Delta, \det(h))_{\A_f}Z_{\Delta, r}(n, \mu).\nonumber
\end{eqnarray}
We denote the function  by
\begin{equation}
G_{N, s}(z, Z_{\Delta, r}(n, \mu, h))
=(\Delta, \det(h))_{\A_f}G_{N, s}( z, Z_{\Delta, r}(n, \mu)).
\end{equation}

We define
\begin{equation}\label{equtrangrz}
G_{N, s}((z, h), Z_{\Delta, r}(n, \mu, h))
=(\Delta, \det(h))_{\A_f}G_{N, s}(\gamma^{-1} z, Z_{\Delta, r}(n, \mu)).
\end{equation}
It  can be viewed a function on $\Gamma_h\setminus \H$ under the map (\ref{equmap}).

\begin{proposition}
For any 
$h \in \GL_2(\A_f)$, we have
\begin{equation}\label{lemtwiphi}
\Phi_{\Delta,r}(z, h, F_{n, \mu})=(\Delta, \det(h))_{\A_f}\Phi_{\Delta,r}(\gamma^{-1}z, F_{n, \mu}).
\end{equation}
Moreover, if $h \in \A^\times_{k, f}$,
\begin{eqnarray}\label{lemtwi}
\Phi_{\Delta,r}(z, h, F_{n, \mu})
=\chi([h])\Phi_{\Delta,r}(\gamma^{-1}z, F_{n, \mu}).
\end{eqnarray}
%where $z_U=z(x)$ and $\gamma^{-1}z_U=z(\gamma^{-1} x)$.
\end{proposition}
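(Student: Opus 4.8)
The plan is to deduce both identities from the unfolding computation already carried out in the proof of Theorem~\ref{thetwogreen}, applied this time to the \emph{translated} twisted theta kernel $\Theta_{\Delta,r}(\tau,z,h)$, combined with the transformation law for the translated twisted Heegner divisor established above and, for the second statement, with Proposition~\ref{prochar}.

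First I would observe that the unfolding of $\int^{reg}_{\mathcal F}\langle F_{n,\mu}(\tau,s,1/2),\Theta_{\Delta,r}(\tau,z,h)\rangle\,d\mu(\tau)$ proceeds exactly as in \cite[Theorem~2.14]{Br} and in the proof of Theorem~\ref{thetwogreen}. Since the archimedean Schwartz data is untouched, the sole changes are that the lattice coset $L_{r\mu}[|\Delta|n]$ is replaced throughout by $hL_{r\mu}[|\Delta|n]$, the genus-character weight becomes $\chi_\Delta(h^{-1}w)$, and $\Gamma_0(N)$ is replaced by $\Gamma_h=hKh^{-1}\cap\GL_2(\Q)_+$; the Laplace-transform and hypergeometric steps then go through verbatim and give
\begin{equation*}
\Phi_{\Delta,r}(z,h,F_{n,\mu}(\cdot,s,1/2))=-\frac{2}{\Gamma(s+\frac14)}\sum_{x\in\Gamma_h\backslash hL_{r\mu}[|\Delta|n]}\chi_\Delta(h^{-1}x)\sum_{\delta\in\Gamma_h}g_{2s-\frac12}\big(z,\delta z(x)\big),
\end{equation*}
i.e.\ a constant multiple of the automorphic Green function of the translated divisor $Z_{\Delta,r}(n,\mu,h)$ on $\Gamma_h\backslash\H$, with convergence and meromorphic continuation in $s$ inherited from \cite{Br}.

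It then remains to identify the right-hand side. Writing $h=\gamma h_0 k$ with $\gamma\in\GL_2(\Q)_+$, $k\in K_\Delta$, $h_0\in\{1,\xi\}$ as in \S\ref{Sec5}, and setting $x=\gamma y$, the $\GL_2(\Q)_+$-invariance of $g_s$ together with $z(\gamma y)=\gamma z(y)$ and $\Gamma_h=\gamma\Gamma_0(N)\gamma^{-1}$ collapses the inner sum to $G_{N,2s-\frac12}(\gamma^{-1}z,z(y))$, while Lemma~\ref{lemgeneralized} applied place by place (and $(\Delta,\det\gamma)_{\A_f}=1$ by Hilbert reciprocity, since $\det\gamma>0$) gives $\chi_\Delta(h^{-1}x)=(\Delta,\det h)_{\A_f}\,\chi_\Delta(y)$; the case $h_0=\xi$ is identical because $\Gamma_\xi=\Gamma_0(N)$. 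Hence the whole expression equals $(\Delta,\det h)_{\A_f}\,\Phi_{\Delta,r}(\gamma^{-1}z,F_{n,\mu}(\cdot,s,1/2))$ — equivalently, this is the transformation law $Z_{\Delta,r}(n,\mu,h)=(\Delta,\det h)_{\A_f}Z_{\Delta,r}(n,\mu)$ combined with the definition~(\ref{equtrangrz}), which may simply be quoted. Specializing to $s=s_0=\frac34$ (so that $2s_0-\frac12=1$ and $F_{n,\mu}(\cdot,s_0,1/2)=F_{n,\mu}$) yields~(\ref{lemtwiphi}), and Proposition~\ref{prochar}, which identifies $(\Delta,\det h)_{\A_f}$ with $\chi([h])$ for $h\in\A^\times_{k,f}=\Gspin(U)$, upgrades it to~(\ref{lemtwi}).

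The only point requiring care is the bookkeeping in the adelic unfolding: matching the coset $hL_{r\mu}[|\Delta|n]$, the arithmetic group $\Gamma_h$, and the twisted weight $\chi_\Delta(h^{-1}\cdot)$ so that the lift descends correctly to $\Gamma_h\backslash\H$ and, under $z\mapsto\gamma^{-1}z$, to the $\Gamma_0(N)$-level objects. Since the archimedean part of the computation is word-for-word that of Theorem~\ref{thetwogreen} and the genuinely new local input (Lemma~\ref{lemgeneralized}) has already been absorbed into the divisor transformation law, I expect no essential difficulty beyond this.
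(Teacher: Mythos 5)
Your proposal is correct and takes essentially the same route as the paper: both unfold $\Phi_{\Delta,r}(z,h,F_{n,\mu})$ via Theorem~\ref{thetwogreen}, use the translation law for $Z_{\Delta,r}(n,\mu,h)$ (which rests on the local identity of Lemma~\ref{lemgeneralized} and $(\Delta,\det\gamma)_{\A_f}=1$ for $\gamma\in\GL_2(\Q)_+$), and then invoke Proposition~\ref{prochar} to replace $(\Delta,\det h)_{\A_f}$ by $\chi([h])$ when $h\in\A^\times_{k,f}$. You merely spell out the unfolding over $\Gamma_h$ explicitly where the paper quotes the divisor transformation law established just above the proposition; the logical content is identical.
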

\begin{proof}

%We denote that
%$$u(z, z')=\frac{|z- z'|^2}{2\Im z\Im z'}.$$
By Theorem \ref{thetwogreen}, we have
\begin{eqnarray}
&&\Phi_{\Delta, r}(z, h,  F_{n, \mu})=(\Delta, \det(h))_{\A_f}\Phi_{\Delta,r}(\gamma^{-1}z, F_{n, \mu}).
%&=&\frac{4}{\pi}\sum_{\substack{w \in h L_{r\mu}[n|\Delta|]}}\chi_{\Delta}(h^{-1}w)Q_{2s-\frac{3}{2}}\big(1+u(z, z(w))\big)\nonumber\\
%&=&\frac{4}{\pi}(\Delta, \det(h))_{\A_f}\sum_{\substack{w \in h L_{r\mu}[n|\Delta|]}}\chi_{\Delta}(\gamma^{-1}w)Q_{2s-\frac{3}{2}}\big(1+u(z, z(w))\big).\nonumber
\end{eqnarray}
%By Proposition (\ref{equaction}) and (\ref{equchar}),
%$$\chi_{\Delta}(w)=\chi([h])\chi_{\Delta}(w).$$

%It is known that, for any $\gamma \in \GL_2(\Q)_+$,
%\begin{equation}
%u(z, z')=u(\gamma z, \gamma z').\nonumber
%\end{equation}
%Then we obtain
%\begin{eqnarray}\label{equtrantwo}
%&&\Phi_{\Delta, r}(z, h,  F_{n, \mu})\nonumber\\
%&=&\frac{4(\Delta, \det(h))_{\A_f}}{\pi}\sum_{\substack{w \in h L_{r\mu}[n|\Delta|]}}\chi_{\Delta}(\gamma^{-1}w)
%Q_{2s-\frac{3}{2}}\big(1+u(\gamma^{-1}z, z(\gamma^{-1}w))\big)\nonumber\\
%&=&\frac{4(\Delta, \det(h))_{\A_f}}{\pi}
%\sum_{\substack{x_i \in \Gamma_h \diagdown hL_{r\mu}[|\Delta|n]\\g \in \Gamma_h}}
%\chi_{\Delta}(\gamma^{-1}g x_i)Q_{2s-\frac{3}{2}}\big(1+u(\gamma^{-1}z, z(\gamma^{-1}gx_i))\big)\nonumber\\
%&=&\frac{4(\Delta, \det(h))_{\A_f}}{\pi}
%\sum_{\substack{\gamma^{-1} x_i \in \Gamma_0(N)\diagdown L_{r\mu}[|\Delta|n]\\ \alpha \in \Gamma_0(N)}}
%\chi_{\Delta}(\gamma^{-1} x_i)Q_{2s-\frac{3}{2}}\big(1+u(\gamma^{-1}z, \alpha z(\gamma^{-1}x_i))\big)\nonumber\\
%&=&(\Delta, \det(h))_{\A_f}\Phi_{\Delta, r}(\gamma^{-1} z,  F_{n, \mu}).\nonumber
%\end{eqnarray}
%Thus we obtain the result.

By Proposition (\ref{prochar}),  we obtain the second equation.
\end{proof}

For any  negative definite 2-dimensional subspace $U\simeq k \subset V$,  we identify the group $\Gspin(U)=k^\times$, which can be viewed as a subgroup of $\Gspin(V)=\GL_2$. It is known that
 $K\bigcap \A_{k, f}^\times=\hat{ \mathcal O}_k^\times$. 
The CM cycle is defined as follows
\begin{equation}
Z(U)=k^\times\setminus \{z_U^{\pm}\} \times \A_{k, f}^\times/\hat{ \mathcal O}_k^\times\cong\{z_U^{\pm}\} \times \Cl_k \rightarrow X_K,
\end{equation}
where each point  is counted with multiplicity $\frac{2}{w_k}$ and  $w_k=|\mathcal O_k^\times|$.
 Here two points $\{z_U^{\pm}\}$ are  given by $U(\R)$ with the
two possible choices of orientation.

%Then we can write
% \begin{equation}
% Z(U)=\{z_U^{\pm}\} \times \Cl_k.
% \end{equation}

It is also known that
 \begin{equation}\deg(Z(U))=\frac{4}{\vol(\hat{ \mathcal O}_k^\times)}=\frac{4h_k}{w_k}.
 \end{equation}
 
We define the twisted CM values as follows.
\begin{definition}
For
$f \in H_{3/2-\kappa,\overline{\tilde \rho}}$, we define
\begin{equation}\label{defcm}
\Phi_{\Delta, r}^j(Z(U), f)=\frac{2}{w_k}\sum_{(z,h)\in supp(Z(U))}\Phi_{\Delta, r}^j(z, h, f).
\end{equation}
\end{definition}

\section{CM values of automorphic Green functions}\label{Sec6}
According to the work \cite{BY} and \cite{BEY}, when $\Delta =1$, the CM value $\Phi_{\Delta, r}^j(Z(U), f)$ is equal to the sum of  derivative of L-function and a constant term(CT). In this section, we will prove that,
 when $\Delta \neq 1$, the constant term vanishes  under some conditions.
\subsection{Eisenstein series}
Let $V$ be a quadratic space with dimension $m$. We denote the Gram determinant of $V$  by $\det(V)$.
The character is defined by the Hilbert symbol as $$\chi_V(x)=(x, (-1)^{\frac{m(m-1)}{2}}\det(V))_{\A}.$$

 For any standard section $\Phi( ~ ,s) $ 
in the 
induced representation $I(s, \chi_V)$,
the Eisenstein series is defined by 
\begin{equation}
E(g, s, \Phi)=\sum_{\gamma \in P(\Q)\setminus\widetilde{\SL}_2(\Q)}\Phi(\gamma g, ~s),
\end{equation}
where $P(\Q)$ is the parabolic subgroup.
It has a  Fourier
expansion
$$E(g, s, \Phi)=\sum_{n \in \Q}E_n(g, s, \Phi),$$
where 
\begin{equation}
E_n(g, s, \Phi)=\int_{\Q\setminus \A}E(n(b)g,  s, \Phi)\psi(-nb)db, ~~n(b)=\kzxz{1}{b}{}{1}.
\end{equation}
When $\Phi=\otimes \Phi_p$ is  factorizable, it can be computed by 
\begin{equation}
E_n(g, s, \Phi)=\prod_{p\leq\infty} W_{n, p}(g_p, s, \Phi_p)
\end{equation}
where 
\begin{equation}
W_{n, p}(g_p, s, \Phi_p)=\int_{\Q_p}\Phi_p(wn(b)g_p,  s)\psi(-nb)db, ~w=\kzxz{}{1}{-1}{},
\end{equation}
is the local Whittaker function.

For any  Schwartz-Bruhat function $\varphi \in S(V(\A))$, there exists an unique standard section $\lambda(\varphi)=\Phi( ~ ,s) \in I(s, \chi_V)$, such that 
 $$\Phi(g, s_0)=\omega(g)\varphi(0),~~s_0=\frac{m}{2}-1.$$
Here $\omega$ is the Weil representation.

Now we also write $$E(g, s, \varphi)=E(g, s, \Phi)$$ and $$W_{n, p}(g_p, s, \varphi_p)=W_{n, p}(g_p, s, \Phi_p),$$ for easier to read.

For $\mu \in L^\sharp/ L$, we denote the function as $\varphi_\mu=\cha(L_\mu) \in S(V(\A_f))$. 
Then we define the weight $l$ Eisenstein series by
\begin{equation}E_{L}(\tau, s; l)=v^{-\frac{l}{2}}\sum_{\mu}E(g_\tau, s, \Phi^l_\infty\otimes \lambda(\varphi_\mu))\varphi_\mu,\end{equation}
where $\Phi^l_\infty$ is the unique archimedean standard section such that
$$\Phi^l_\infty(\kzxz{cos\theta}{sin\theta}{-sin \theta}{cos\theta}, s)=e^{il\theta},~ \theta \in [0, 2\pi].$$
We identify $\varphi_\mu$ with $e_\mu$. Then this Eisenstein series is a $\C[ L^\sharp/ L]$-valued Eisenstein series.
According to \cite[Section 2]{BY}, we can write it as follows
 \begin{equation}
E_{ L}(\tau, s; l) = \sum \limits_{ \gamma ^{\prime} \in \Gamma_{\infty} ^{\prime}\diagdown
\Gamma^{\prime}} \big(v^{\frac{s+1-l}{2}}e_{0} \big)\mid_{l, \rho_{L}} \gamma ^{\prime}.
\end{equation}

When $V$ is a negative definite two-dimensional space,
$E_{ L}(\tau, s; 1) $
 is an incoherent Eisenstein series and vanishes at $s_0=0$. Furthermore, $ E_{  L}(\tau,s ; -1)$ is holomorphic at $s_0$. The following relationship holds
\begin{equation}
L_1E_{L}(\tau,s ;1)=\frac{s}{2}E_{ L}(\tau,s ;-1),
\end{equation}
where $L_l=-2iv^2\frac{\partial}{\partial \bar\tau}$ is the Maass lowering  operators in weight $l$.

If the lattice $ L$ is replaced by  $ L^\Delta$, we have the same result.

\subsection{Derivatives of Eisenstein series}

Let the lattice $L\subset V$ be defined by equation (\ref{equlattice}).

We assume that $D=-4N |\Delta |m_0=\Delta D_0$ is a fundamental discriminant such that $D \equiv R^2  (\mod 4N)$. 
We choose 
\begin{equation}
w=\kzxz{\frac{R}{2N}}{\frac{1}{N}}{\frac{D-R^2}{4N}}{-\frac{R}{2N}}  \in L_{r\mu_0}[|\Delta |m_0],
\end{equation}
and define  two sublattices of  $L$ as follows
\begin{equation}\label{equn}
\mathcal N=\Z e_1\oplus \Z e_2,~~
 \mathcal P=\Z \frac{2N}{t}w,
 \end{equation}
where $e_1=\kzxz{1}{0}{-R}{-1}$, $e_2=\kzxz{0}{\frac{1}{N}}{\frac{R^2-D}{4N}}{0}$ and $t=( R, 2N)$.

It is  known  in \cite{BY} that
\begin{equation}
\mathcal P^{\sharp}=\Z \frac{t}{D}w,~and ~\mathcal P^{\sharp}\cap L^{\sharp}=\Z w.
\end{equation}

We can write the incoherent Eisenstein series as follows
\begin{eqnarray}
E_{\mathcal N^\Delta}(\tau,s ;1)&=&v^{-\frac{1}{2}}\sum_{\delta\in \mathcal N^\sharp/\mathcal N^\Delta}E(g_\tau, s, \Phi^1_\infty\otimes \lambda(\varphi_\delta))e_\delta\\
&=&\sum_{\delta \in \mathcal N^\sharp/\mathcal N^\Delta}  \sum_{n}A_{\delta}(s, n, v)q^ne_\delta,\nonumber
\end{eqnarray}
 where $\varphi_\delta=\Char(N^\Delta_\delta)$ is the characteristic function of $\mathcal N^\Delta_\delta$.
It has the following Taylor expansion
\begin{equation}
A_{\delta}(s, n, v)=b_\delta(n, v)s+O(s^2).\nonumber
\end{equation}
Consequently, we have
\begin{equation}
E'_{\mathcal N^\Delta}(\tau, s_0 ;1)=\sum_{\delta \in \mathcal N^\sharp/\mathcal N^\Delta} \sum_{n}b_\delta(n, v)q^ne_\delta,
\end{equation}
which is a harmonic weak Maass form of weight $1$. Here $s_0=0$.
The following terms are studied in  \cite{KuIntegral} and \cite{Scho},
\begin{equation}
k(n, \delta)=
\begin{cases}
\lim_{v\rightarrow\infty}b_\delta(n, v), & if~ n\neq 0 ~or~ \delta \neq 0,\\
\lim_{v\rightarrow\infty}b_0(0, v)-\log(v), & if~ n=0 ~and~ \delta=0.
\end{cases}
\end{equation}
For $n>0$, $b_\delta(n, v)$ is independent of $v$, while for $n<0$, $k(n, \delta)=0$.
When $n=0$, $\delta \neq 0$, $k(n, \delta)=0$.

Then we denote
\begin{equation}\label{equFE}
\mathcal E_{\mathcal N^\Delta}(\tau)=\sum_\delta\sum_{n\geq 0} k(n, \delta)q^ne_\delta,
\end{equation}
which is the holomorphic part of $E'_{\mathcal N^\Delta}(\tau, s_0 ;1)$. We will compute $k(n, \delta)$ in this section.

The $(n, \delta)$-th Fourier coefficients of  $E_{\mathcal N^\Delta}(\tau,s ;1)$ are given by the product of local Whittaker functions as
\begin{equation}\label{equfouprod}
E_n(\tau, s, \varphi_\delta)=W_{n, \infty}(\tau, s, \Phi^1_\infty)\prod_{p<\infty} W_{n, p}(s, \varphi_{\delta, p}),\end{equation}
where  $$W_{n, \infty}(\tau, s, \Phi^1_\infty)=v^{-\frac{1}{2}}W_{n, \infty}(g_\tau, s, \Phi^1_\infty)$$
and  $$W_{n, p}(s, \varphi_{\delta, p})=W_{n, p}(1, s, \lambda(\varphi_{\delta, p})).$$

The  archimedean  Whittaker function  $W_{n, \infty}(\tau, s, \Phi^1_\infty)$ is studied in \cite[Section 15]{KRYComp}. We will take  Yang's method \cite{YaDensity} to compute non-archimedean  Whittaker functions $W_{n, p}(s, \varphi_{\delta, p})$.

Yang studied the following  local density,
\begin{equation}
W_p(s_0, n,\delta_p)=\int_{\Q_p} \int_{\mathcal N^\Delta_\delta}\psi(bQ(x))dx\psi(-nb)db,
\end{equation}
where $dx$ is the standard Haar measure on $\Z_p^2$ and $\psi :\A/\Q\rightarrow \C^\times$ is the standard
additive character with $\psi_\infty(x)=e(x)$.

It is known that
\begin{equation}
W_{n, p}(s_0, \varphi_{\delta, p})=\gamma_p|S|_p^{\frac{1}{2}}W_p(s_0, n,\delta_p),
\end{equation}
where  $\gamma_p$ denotes the local splitting  index and $S$ is the Gram matrix. 

By the product formula (\ref{equfouprod}), we obtain the coefficient $E_n(\tau, s, \varphi_\delta)$,
and so $k(n, \delta)$.

We define the subset of primes by
 \begin{equation}
 \Diff(n)=\{p \mid\chi_p(-nN|\Delta|)=-1\},
 \end{equation}
where $$\chi_p(-nN|\Delta|)=(D, -nN|\Delta|)_p.$$
By the product formula, we have
$$\prod_{p\leq \infty}(D, -nN|\Delta|)_p=1.$$
Since $(D, -nN|\Delta|)_\infty=-1$,
 the cardinality of $ \Diff(n)$ is odd.
 
We denote
\begin{equation}
\rho(n)=\sharp\{\textbf{a}\subseteq \mathcal{O}_k| N(\textbf{a})=n\},
\end{equation}
which can be expressed locally as
  $$\rho(n)=\prod_{p}\rho(p^{\ord_p(n)}).$$
%\begin{lemma}When
%$p \nmid D$,
%\begin{equation}
%W_p(s_0, n,\delta_p)=(1-p^{-1}\big(\frac{D}{p}\big))\sum_{0\leqslant k\leqslant a}\big(\frac{D}{p}\big)^k=(1-p^{-1}\big(\frac{D}{p}\big))\rho_p(n).
%\end{equation}
%\end{lemma}
%\begin{proof}
%It is not hard to check that
%\begin{equation}
%W_p(s+s_0, n,\delta_p)=(1-p^{-1}\big(\frac{D}{p}\big)X)\sum_{0\leqslant k\leqslant a}\big(\frac{D}{p}\big)^kX^k.
%\end{equation}
%When $$\chi_p(-nN\Delta)=-1 \Leftrightarrow \big(\frac{D}{p}\big)=-1~and~ a=\ord_p(nN)~is~odd.$$
%So
%$$W_p(s_0, n,\delta_p)=0~and~W'_p(s_0, n,\delta_p)=(1+p^{-1})\frac{a+1}{2}\ln p.$$
%Otherwise
%$$W_p(s_0, n,\delta_p)=(1-p^{-1}\big(\frac{D}{p}))\rho_p(nN).$$
%Then we obtain the result.
%\end{proof}

We identify  $\mathcal N^\Delta\otimes\Z_p\cong \Z^2_p$, and
the  Gram matrix is given by
\begin{equation}S=\kzxz{-2|\Delta|N}{-|\Delta|R}{-|\Delta|R}{-\frac{R^2-D}{2N}|\Delta|},
\end{equation}
such that
$$\kzxz{1}{}{-\frac{r}{2N}}{1}S\kzxz{1}{-\frac{r}{2N}}{}{1}=\kzxz{-2|\Delta|N}{}{}{\frac{D|\Delta|}{2N}}.$$

%Then
%\begin{equation}
%W_p(s+s_0, n,\delta_p)=\frac{W_{n, p}(s_0, \varphi_\delta)}{\gamma(V)|S|_p^{\frac{1}{2}}},
%\end{equation}
%where $W_{n, p}(s_0, \varphi_\delta)$ is the local Whittaker function, $\gamma(V)$ is the local splitting  index and $\varphi_\delta$ is the characteristic function of $\mathcal N^\Delta_\delta$.
%The Fourier coefficients of Eisenstein series are given by the local densities.

By the local density formula given in \cite{YaDensity} and  \cite{KYEisen}, we have the following result.

\begin{lemma}\label{lemlocalwhi}
Assume that $\delta \in \mathcal N$ and $n \in \N$.   Then  
\begin{equation}
\chi_p(-nN|\Delta|)=-1~\Rightarrow ~W_p(s_0, n,\delta_p)=0.
\end{equation}
$(1)$ If $W_p(s_0, n,\delta_p)\neq 0,$ then 
$$
  W_p(s_0, n,\delta_p)= \left\{
                           \begin{array}{ll}
                              \bigg(1-p^{-1}\bigg(\frac{D}{p}\bigg)\bigg)\rho_p(nN), & \hbox{$p\nmid D$;} \\
                               2, & \hbox{$p\mid D_0$.}\\
                            2p, & \hbox{$p\mid \Delta, ~\delta_p=0$;} \\
                               p, & \hbox{$p\mid \Delta, ~\delta_p\neq 0$;} \\
                           
                           \end{array}
                         \right.
$$
$(2)$  If $W_p(s_0, n,\delta_p)= 0,$  then
$$
  W'_p(s_0, n,\delta_p)= \left\{
                           \begin{array}{ll}
                              (1+p^{-1})\frac{\ord_p(nN)+1}{2}\ln p, & \hbox{$p\nmid D$;} \\
                                (\ord_p(n)+1)\ln p, & \hbox{$p\mid D_0$.}\\
                             (p\ord_p(n)+1)\ln p, & \hbox{$p\mid \Delta, ~\delta_p=0$;} \\
                                (p+1)\ln p, & \hbox{$p\mid \Delta, ~\delta_p\neq 0$;} \\
                           
                           \end{array}
                         \right.
$$
\end{lemma}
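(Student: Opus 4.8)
The plan is to extract $W_p(s_0,n,\delta_p)$ — and, in the degenerate case, its derivative — directly from the local density integral attached to the binary $\mathbb Z_p$-lattice $(\mathcal N^\Delta_\delta,Q_\Delta)$, and then to specialize the explicit local density formulas of Yang \cite{YaDensity} and Kudla--Yang \cite{KYEisen}. The input is the diagonalization recorded above, $S\sim\diag(-2|\Delta|N,\tfrac{D|\Delta|}{2N})$, hence $\det S=-|\Delta|^2 D$, together with the fact that $D=\Delta D_0$ with $\Delta$ and $D_0$ coprime fundamental discriminants. Thus for every prime $p$ exactly one of three things happens: $p\nmid D$ (then $p$ is odd and $p\nmid 4N|\Delta|m_0$, so $\mathcal N^\Delta\otimes\mathbb Z_p$ is unimodular); $p\mid D_0$ (then $v_p(\det S)=v_p(D_0)$, so for odd $p$, $\mathcal N^\Delta\otimes\mathbb Z_p\cong\langle u_1\rangle\oplus\langle u_2 p\rangle$ with $u_i\in\mathbb Z_p^\times$); or $p\mid\Delta$ (then, writing $S=|\Delta|\cdot(\text{Gram of }\mathcal N)$ whose determinant is $-\Delta D_0$, one gets $v_p(\det S)=3v_p(\Delta)$ and $\mathcal N^\Delta\otimes\mathbb Z_p\cong\langle u_1 p^{v_p(\Delta)}\rangle\oplus\langle u_2 p^{2v_p(\Delta)}\rangle$). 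Because $\mathcal N^\Delta=\Delta\mathcal N$ and $\delta\in\mathcal N$, the coset $\delta_p$ is automatically trivial when $p\nmid\Delta$; this is why the dichotomy $\delta_p=0$ versus $\delta_p\neq 0$ arises only at $p\mid\Delta$, where it records whether $\delta$ lies in $p\mathcal N\otimes\mathbb Z_p$.

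The vanishing assertion is a statement about local representability. Up to the positive normalizing factor $\gamma_p|S|_p^{1/2}$ the quantity $W_p(s_0,n,\delta_p)$ is the $p$-adic density with which $n$ is represented by $\mathcal N^\Delta_\delta$, so it vanishes whenever $n$ is not represented by $\diag(-2|\Delta|N,\tfrac{D|\Delta|}{2N})$ over $\mathbb Q_p$. By the classical criterion for binary forms this form represents $n\neq 0$ over $\mathbb Q_p$ exactly when $(-2nN|\Delta|,|\Delta|^2 D)_p=1$; reducing modulo squares and absorbing the factor $(D,2)_p$ (harmless because of the parity of the fundamental discriminant $D$ and the congruence $D\equiv R^2\ (\mathrm{mod}\ 4N)$) this becomes $\chi_p(-nN|\Delta|)=(D,-nN|\Delta|)_p=1$. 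Hence $\chi_p(-nN|\Delta|)=-1$ forces $W_p(s_0,n,\delta_p)=0$, and the oddness of $\#\Diff(n)$ is just Hilbert reciprocity together with $(D,-nN|\Delta|)_\infty=-1$ (as $D<0$ and $n>0$).

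For the explicit values one runs the local density computation in each Jordan type. If $p\nmid D$ the lattice is unimodular, and Hensel's lemma with the standard Gauss-sum evaluation gives $W_p(s_0,n,\delta_p)=\bigl(1-p^{-1}\bigl(\tfrac Dp\bigr)\bigr)\rho_p(nN)$, where $\rho_p(m)$ counts ideals of $\mathcal O_k$ of norm $p^{v_p(m)}$ (the factor $N$ being vacuous since $p\nmid N$, kept only for uniformity with the global formula); when instead $p\in\Diff(n)$ the same computation yields $W'_p=(1+p^{-1})\tfrac{\ord_p(nN)+1}{2}\ln p$. If $p\mid D_0$, the unimodular Jordan block forces $u_1 n$ to be a $p$-adic square, so the density is $2$ when the Hilbert condition holds and $0$ otherwise, and in the vanishing case summing the finite-level representation counts gives $W'_p=(\ord_p(n)+1)\ln p$. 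If $p\mid\Delta$, the same computation with the $p$-scaled blocks $\langle u_1 p\rangle\oplus\langle u_2 p^2\rangle$ produces an extra overall factor $p$ relative to the $p\mid D_0$ case, while the further factor $2$ when $\delta_p=0$ (versus $\delta_p\neq 0$) reflects whether the coset meets the less-divisible Jordan block; this gives $W_p\in\{2p,p\}$ and, when the Hilbert condition fails, $W'_p\in\{(p\,\ord_p(n)+1)\ln p,\ (p+1)\ln p\}$. Each case is a direct specialization of the tables in \cite{YaDensity} and \cite{KYEisen}.

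The main obstacle is bookkeeping rather than any new idea. Three points need care: matching the normalizations linking $W_p$, the local Whittaker function $W_{n,p}$, and the coefficients of the incoherent Eisenstein series $E_{\mathcal N^\Delta}(\tau,s;1)$ (the constants $\gamma_p$, $|S|_p^{1/2}$, and the Weil-representation conventions); tracking the stray Hilbert factor $(D,2)_p$ so that the representability condition comes out as $\chi_p(-nN|\Delta|)$ and not $(D,-2nN|\Delta|)_p$; and handling $p=2$, where ``fundamental discriminant'' splits into several subcases and $p=2$ necessarily divides exactly one of $\Delta$ and $D_0$. The genuinely new case compared with the untwisted computation in \cite{BY}, \cite{BEY} is $p\mid\Delta$, where $\mathcal N^\Delta\otimes\mathbb Z_p$ is neither unimodular nor maximal and the coset $\delta_p$ intervenes; this is precisely where the refined local density formula of \cite{KYEisen}, rather than the elementary unimodular or ramified cases, is required.
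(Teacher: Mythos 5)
Your overall plan is the same as the paper's: classify primes into $p\nmid D$, $p\mid D_0$, $p\mid\Delta$, identify the Jordan type of $\mathcal N^\Delta\otimes\mathbb Z_p$ in each case, and read off the Whittaker value and its derivative from the local density formulas of \cite{YaDensity} and \cite{KYEisen}. The explicit Jordan decompositions you write down, the representability criterion via the Hilbert symbol $(D,-nN|\Delta|)_p$, and the observation that the $\delta_p$-dichotomy is only meaningful at $p\mid\Delta$ are all correct and match the paper's computation.

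The gap is exactly in the step you flag as ``genuinely new'': the actual evaluation at $p\mid\Delta$. You assert that the $p$-scaled blocks $\langle u_1p\rangle\oplus\langle u_2p^2\rangle$ produce an ``extra overall factor $p$'' and that a ``further factor $2$'' separates $\delta_p=0$ from $\delta_p\neq 0$, but this is a reading-off of the stated answer rather than a derivation. What the proof actually requires, and what is missing from your sketch, is writing out the local density polynomial $W_p(s+s_0,n,\delta_p)$ in $X=p^{-s}$ for each sub-case and evaluating it and its derivative at $X=1$. In particular, when $\delta_p=0$ the polynomial depends on whether $\ord_p(n)=0$ or $\ord_p(n)\geq 1$; and when $\delta_p=(0,i/p)\neq 0$ the controlling invariant is $a=\ord_p\bigl(n-Q_\Delta(\delta_p)\bigr)$, i.e.\ solvability of $\frac{4Np^2}{D|\Delta|}\,n\equiv i^2\pmod p$. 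Your heuristic ``whether the coset meets the less-divisible Jordan block'' does not capture this and is in fact misleading: the nonzero cosets $(0,i/p)$ lie in the direction of the \emph{more}-divisible block $\langle u_2p^2\rangle$, and the relevant dichotomy is the valuation $a$, not block membership. Without the explicit polynomial in each sub-case the claimed values $W_p\in\{2p,p\}$ and $W_p'\in\{(p\,\ord_p(n)+1)\ln p,\ (p+1)\ln p\}$ remain asserted, not proved.
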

\begin{proof}

The proof is divided into three distinct cases.

{\bf Case 1 $p \nmid D$.}

According to the density formula,
we have 
\begin{equation}
W_p(s+s_0, n,\delta_p)=\bigg(1-p^{-1}\bigg(\frac{D}{p}\bigg)X\bigg)\sum_{0\leqslant k\leqslant \ord_p(nN)}\bigg(\frac{D}{p}\bigg)^kX^k,
\end{equation}
where $X=p^{-s}$.

We have $$\chi_p(-nN|\Delta|)=-1 \Leftrightarrow \bigg(\frac{D}{p}\bigg)=-1~and~ \ord_p(nN)~is~odd.$$
It follows that
\begin{equation}
\chi_p(-nN|\Delta|)=-1 \Leftrightarrow W_p(s_0, n,\delta_p)=0.
\end{equation}
Thus we have
\begin{equation}W'_p(s_0, n,\delta_p)=(1+p^{-1})\frac{\ord_p(nN)+1}{2}\ln p.\end{equation}
Otherwise, we have
\begin{equation}W_p(s_0, n,\delta_p)=\bigg(1-p^{-1}\bigg(\frac{D}{p}\bigg)\bigg)\rho_p(nN).\end{equation}

{\bf Case 2 When $p|D_0$.}

In this case, 
%$\delta_p= 0$. Consequently, 
 we have
\begin{eqnarray}
W_p(s+s_0, n,\delta_p)
%&=&\begin{cases}1+\big(\frac{\epsilon_2\alpha}{p}\big))X^{a+1}, if~ a ~odd\\
%1+\big(\frac{\epsilon_1\alpha}{p}\big)X^{a+1}, if~ a ~even
%\end{cases}\nonumber\\
=1+\chi_p(-nN |\Delta|)X^{a+1}.
\end{eqnarray}
Then  we know that
\begin{equation}
\chi_p(-nN|\Delta|)=-1 ~\Leftrightarrow~ W_p(s_0, n,\delta_p)=0.
\end{equation}
Moreover,  \begin{equation}
W'_p(s_0, n,\delta_p)=(\ord_p(n)+1)\ln p.
\end{equation}
Otherwise, we have $$W_p(s_0, n,\delta_p)=2.$$

 {\bf Case 3 When $p| \Delta$. }

Firstly,  we consider the case where  $\delta_p\equiv 0 $. 

It follows that $\chi_{\Delta}(\delta)=0$ in this case.

If $\ord_p(n)=0$, we have
$$
W_p(s, n,\delta_p)=1-p^{-s},
$$
\begin{equation}
W_p(s_0, n,\delta_p)=0,~ ~W_p'(s_0, n,\delta_p)=\ln p.
\end{equation}
Conversely, if  $\ord_p(n) \geq 1$, then
$$
W_p(s, n,\delta_p)=1+(p-1)X+
p\chi_p(-nN \mid \Delta\mid) X^{\ord_p(n)+1}
$$
and
\begin{equation}
W_p(s_0, n,\delta_p)
=p(1+\chi_p(-nN \mid \Delta\mid)).
\end{equation}
If $\chi_p(-nN \mid \Delta\mid)=-1$, then
\begin{equation}
W_p'(s_0, n,\delta_p)=(p\ord_p(n)+1)\ln p.
\end{equation}
Then we have $$W_p(s_0, n,\delta_p)=0\Leftrightarrow \ord_p(n)=0, or~\ord_p(n)\geq 1, ~\chi_p(-nN \mid \Delta\mid)=-1. $$

Secondly, we consider the case where $\delta_p\neq 0  $.

We denote $$n_\delta =n-Q_\Delta(\delta_p) \in \Z_p~and~a=\ord_p(n_\delta) \geq 0.$$
We assume that
 $\delta_p=(0, \frac{i}{p})$  for some $i$,  $1 \leq i \leq p-1$.
%Set $a=\ord_p(n_\delta) \geq 0$.

It is easy to see that when $\ord_p(n)=0$,
\begin{eqnarray} 
\text{there exists }n_\delta=n-\frac{D|\Delta|}{4Np^2}i^2 \in p Z_p &\Leftrightarrow& \exists i, ~ \frac{4Np^2}{D|\Delta|}n\equiv i^2 (\mod p) \nonumber\\
&\Leftrightarrow& \chi_p(-nN|\Delta|)=1.\nonumber
\end{eqnarray}

If $a\geq 1$,  then   we have $$\ord_p(n)=0~and ~\chi_p(-nN|\Delta|)=1.$$
By the local density formula, we  have  \begin{equation}W_p(s_0, n,\delta_p)=p.\end{equation}

If $\chi_p(-nN|\Delta|)=-1$, then $a=0$.
%We assert that  \begin{equation} \chi_p(-nN|\Delta|)=-1\Rightarrow  \chi_p(-nN|\Delta|)=-1\\end{equation}
Thus we have
\begin{equation}\label{equazero}
W_p(s_0, n,\delta_p)=0,~~ W_p'(s_0, n,\delta_p)=\ln p.
\end{equation}

Then we have $$W_p(s_0, n,\delta_p)\neq 0 \Leftrightarrow \ord_p(n)=0,   n_\delta \in p\Z_p,$$
which implies that  $\chi_p(-nN \mid \Delta\mid)=1. $
%Thus
%\begin{equation}
%W_p(s_0, n,\delta_p)=
%\left\{
%  \begin{array}{ll}
%    p, & \hbox{$a \geq 1$;} \\
%    0, & \hbox{$a=0$.}
%  \end{array}
%\right.
%\end{equation}
%Moreover,
%%or $K=\infty$
%when $a =0$ ,
%\begin{equation}
%W_p'(s_0, n,\delta_p)=\ln p.
%\end{equation}

%When $K=\infty$ and $a \geq 1$
%$$
%W_p(s, n,\delta_p)=1+(p-1)X+
%\begin{cases}
%pX^{a+1}\big(\frac{\epsilon_1\alpha}{p}\big), if~ a ~odd\\
%pX^{a+1}\big(\frac{\epsilon_2\alpha}{p}\big), if ~a ~even\\
%\end{cases}
%$$
%\begin{equation}
%W_p(s_0, n,\delta_p)=\begin{cases}p(1+\big(\frac{\epsilon_1\alpha}{p}\big)), if~ a ~odd\\
%p(1+\big(\frac{\epsilon_2\alpha}{p}\big)), if~ a ~even.
%\end{cases}
%=p(1+(D, -mN \mid \Delta\mid)_p)
%\end{equation}
%If $W_p(s_0, n,\delta_p)=0$, then
%\begin{equation}
%W_p'(s_0, n,\delta_p)=(pa+1)\ln p.
%\end{equation}

%
%According to equation (\ref{equazero}), if $\chi_p(-nN|\Delta|)=-1$, we obtain that
%\begin{equation}
%W_p(s_0, n,\delta_p)=0,~W_p'(s_0, n,\delta_p)=\ln p.
%\end{equation}

In summary, based on the three cases discussed as above, we have
\begin{equation}
 \chi_p(-nN|\Delta|)=-1\Rightarrow  W_p(s_0, n,\delta_p)=0.
\end{equation}
Thus, we obtain the result.
\end{proof}
%\begin{remark}
%  If $\delta_p \neq 0$, then the value $W_p(s_0, n,\delta_p)$  and the derivative $ W'_p(s_0, n,\delta_p)$  depends only on $n$.
%\end{remark}

We define two subsets of  $ \mathcal N /\mathcal N^\Delta$ as follows
\begin{equation}
S=\big\{\delta\in \mathcal N /\mathcal N^\Delta \mid n-Q_\Delta(\delta_p) \in p\Z_p, ~\text{for any }p \mid \Delta\big\},                                      
\end{equation}
and 
\begin{equation}
S_q=\big\{\delta\in \mathcal N /\mathcal N^\Delta \mid n-Q_\Delta(\delta_p) \in p\Z_p, ~\text{for any }p \mid \frac{\Delta}{q}\big\}.                                       
\end{equation}
Each  $\delta \in \mathcal N /\mathcal N^\Delta$ with $Q_\Delta(\delta) \in \Z$ can be written as 
 $$\delta_i=Rie_1+2Nie_2=i\kzxz{R}{2}{\frac{-R^2-D}{ 2}}{-R} \in \mathcal N, ~0\leq i< |\Delta|.$$
At the place $p|\Delta$, it gives $\delta_p=(0, \frac{2Ni}{|\Delta|})$. 

%When
% $(n, p)=1$,   according to the proof of Lemma \ref{lemlocalwhi}, we know that
% $$\chi_p(-nN|\Delta|)=1$$

%Then for such $\delta_p=(0,\frac{\mathfrak{i}}{p})$
%we have $a=\ord_p(n_\delta)\geq 1$. 
It follows  that 
 \begin{equation}\label{equitwo}
   \delta_i \in S \Leftrightarrow \frac{4Np^2}{D|\Delta|}n\equiv \bigg(\frac{2Npi}{|\Delta|}\bigg)^2(\text{mod p}), ~\text{for~ any}~ p |\Delta.
 \end{equation}
 We write the two solutions of the following equation as $\pm \mathfrak{i}$,
\begin{equation}\label{equslou}
\frac{4Np^2}{D|\Delta|}n\equiv x^2 (\mod p).
\end{equation} 
 Then we know that $\mathfrak{i}=\frac{2Npi}{|\Delta|}(\mod p)$. The number of $S$ is equal to $|S|=2^{o(\Delta)}$.
 %and  each $\delta_i \in S$ can be written as $\delta_p=(0, \frac{2Ni}{|\Delta|})$.
%
% According to the Chinese Remainder Theory, 
%we have the bijective map between $S$ and the set of vectors  $\{(\delta_p)_{p| \Delta}\}$,
%where  $\delta_p=\pm (0,\frac{\mathfrak{i}}{p})$.
%
%Similarly, there is a bijective map between $S_q$ and   $\{(\delta_p)_{p| \Delta}\}$, where $\delta_p=\pm (0,\frac{\mathfrak{i}}{p})$ if $p |\frac{\Delta}{q}$, and  $\delta_q=(0, \frac{j}{q})$, with $0\leq j < q$.
%$$n-Q_\Delta(\delta_p) \in p\Z_p \Leftrightarrow  \frac{4Np^2}{D|\Delta|}n\equiv i^2 (\mod p) .$$

%We denote 
%%$\delta=-Rie_1+2Nie_2=\kzxz{-\frac{R}{2N}i}{\frac{i}{N}}{\frac{-R^2-D}{4N}i}{\frac{R}{2N}i} \in \mathcal N$.
% $\delta_i=-Rie_1+2Nie_2=i\kzxz{-R}{2}{\frac{-R^2-D}{ 2}}{R} \in \mathcal N$. 

Then we have the following result.
\begin{lemma}\label{lemsum}
$(1)$ We have 
%\begin{eqnarray}
%\sum_{\substack{\delta \in \mathcal N /\mathcal N^\Delta\\ Q_\Delta(\delta)\in \Z} }\chi_{\Delta}(\delta)=0,
%\end{eqnarray}
%and 
\begin{eqnarray}
\sum_{\delta \in S_q }\chi_{\Delta}(\delta)=0.
\end{eqnarray}
$(2)$
 {\bf Assumption A} The discriminant $\Delta$ has a prime factor $p $ such that $p\equiv 3(\mod 4)$.
 
  Then
 we have
\begin{eqnarray}
\sum_{\delta \in S }\chi_{\Delta}(\delta)=0.
\end{eqnarray}
\end{lemma}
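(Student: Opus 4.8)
The plan is to recognize each sum as a character sum over a quotient of the group $\mathcal N/\mathcal N^\Delta$, and to exhibit an element of that group which pairs nontrivially against $\chi_\Delta$ while preserving the defining congruence conditions of $S$ (resp. $S_q$); the existence of such an element forces the sum to vanish by the usual orthogonality argument for nontrivial characters. Concretely, I would first recall from the excerpt that the elements $\delta\in\mathcal N/\mathcal N^\Delta$ with $Q_\Delta(\delta)\in\Z$ are parametrized by $i$ with $0\le i<|\Delta|$ via $\delta_i = i\left(\begin{smallmatrix} R & 2 \\ \frac{-R^2-D}{2} & -R\end{smallmatrix}\right)$, and that at a place $p\mid\Delta$ this gives $\delta_p = (0,\frac{2Ni}{|\Delta|})$. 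The condition $\delta_i\in S$ is that for every $p\mid\Delta$ the residue $\frac{2Npi}{|\Delta|}$ solves $\frac{4Np^2}{D|\Delta|}n\equiv x^2\pmod p$, i.e. equals $\pm\mathfrak i$. Thus, via CRT, $S$ is in bijection with $\{\pm1\}^{o(\Delta)}$, one sign for each prime factor of $\Delta$ (and similarly $S_q$ ignores the condition at $q$, so it fibers over $S_q/({\rm condition~at~}q)$).

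For part (1): fix the prime $q\mid\Delta$ at which $S_q$ imposes no congruence. On $S_q$ the component $i\bmod q$ runs over all of $\Z/q$ (subject only to the remaining CRT data being fixed), while the components at the other $p\mid\frac\Delta q$ are pinned to $\pm\mathfrak i_p$. I would compute, using the local description of $\chi_\Delta = \prod_p\chi_{\Delta,p}$ recalled before Lemma \ref{lemgeneralized} and the formula \eqref{genchardef}, that the factor $\chi_{\Delta,q}(\delta)$ depends on $i\bmod q$ through a quantity of the form $\left(\frac{q^*}{n_i}\right)$ where $n_i$ is represented by the relevant binary form; as $i$ varies over $\Z/q$ this runs through values whose Legendre symbol $\left(\frac{\cdot}{q}\right)$ sums to zero (the standard fact $\sum_{x\bmod q}\left(\frac{x}{q}\right)=0$, after checking the exceptional/zero terms cancel or are negligible). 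Factoring $\sum_{\delta\in S_q}\chi_\Delta(\delta) = \Big(\sum_{i\bmod q}\chi_{\Delta,q}(\delta_i)\Big)\cdot\prod_{p\mid\frac\Delta q}(\pm1)$ then gives $0$. The delicate point here is bookkeeping the terms where $(a,b,c,\Delta)\ne 1$ so that $\chi_{\Delta,q}$ vanishes, and confirming these do not spoil the vanishing of the elementary character sum; this I expect to be the main technical obstacle.

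For part (2): now $S$ imposes a congruence at \emph{every} $p\mid\Delta$, so the above ``free variable'' argument is unavailable. Instead, under Assumption A pick a prime $p_0\mid\Delta$ with $p_0\equiv 3\pmod 4$, equivalently $\left(\frac{-1}{p_0}\right)=-1$. The two solutions of \eqref{equslou} at $p_0$ are $+\mathfrak i$ and $-\mathfrak i$, and the involution $\delta\mapsto\delta'$ that flips the sign of the $p_0$-component (keeping all other components fixed) maps $S$ to itself without fixed points. I would show $\chi_\Delta(\delta') = -\chi_\Delta(\delta)$ by checking that only the local factor $\chi_{\Delta,p_0}$ changes and that $\chi_{\Delta,p_0}$ at $-\mathfrak i$ differs from its value at $+\mathfrak i$ by exactly $\left(\frac{-1}{p_0}\right)=-1$ — this is where $p_0\equiv 3\pmod4$ is used. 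Pairing each $\delta$ with $\delta'$ then cancels the sum termwise, giving $\sum_{\delta\in S}\chi_\Delta(\delta)=0$. The main thing to verify carefully is that replacing $\mathfrak i$ by $-\mathfrak i$ genuinely negates the local symbol (equivalently, that the integer represented by the binary form gets multiplied by a non-square mod $p_0$), which again reduces to a short residue computation with \eqref{genchardef}.
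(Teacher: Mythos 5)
Your proposal matches the paper's argument. Both parts proceed via the CRT bijection between $S$ (resp.\ $S_q$) and vectors of local components, the local factorization $\chi_\Delta(\delta_i)=\big(\frac{\Delta}{2Ni}\big)=\big(\frac{\Delta}{2N}\big)\prod_{p\mid\Delta}\big(\frac{p^*}{i}\big)$, and then the vanishing of the factor at the free prime $q$ (namely $\sum_{j=0}^{q-1}\big(\frac{q^*}{j}\big)=0$) for part (1); your fixed-point-free involution flipping the $p_0$-component for part (2) is exactly the paper's observation that $\big(\frac{p_0^*}{\mathfrak i}\big)+\big(\frac{p_0^*}{-\mathfrak i}\big)=0$ when $p_0\equiv 3\pmod 4$, and the bookkeeping concern you raise about vanishing terms is resolved automatically since the Kronecker symbol is $0$ precisely where $(a,b,c,\Delta)>1$.
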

\begin{proof}
$(1)$  There is a bijective map between the set $$\{\delta \in\mathcal N /\mathcal N^\Delta| Q_\Delta(\delta)\in \Z\} $$ and the set $\{\delta_i, 0 \leq i <|\Delta|\}$, where 
 $\delta_i=i\kzxz{R}{2}{\frac{-R^2-D}{ 2}}{-R}$.
Then we have 
\begin{eqnarray}
&&\sum_{\substack{\delta \in \mathcal N /\mathcal N^\Delta\\ Q_\Delta(\delta)\in \Z} }\chi_{\Delta}(\delta_i)
=\sum_{ i=0}^{|\Delta|-1}\chi_{\Delta}(\delta_i)=\sum_{ i=0}^{|\Delta|-1}\bigg(\frac{\Delta}{2Ni}\bigg)=0.
\end{eqnarray}

 According to the Chinese Remainder Theorem, 
there is a bijective map between  $S_q$ and the set of vectors  $\{(\delta_p)_{p| \Delta}\}$, where $\delta_p=\pm (0,\frac{\mathfrak{i}}{p})$ if $p |\frac{\Delta}{q}$, and  $\delta_q=(0, \frac{j}{q})$, with $0\leq j < q$.
Then  we have 
\begin{eqnarray}
&&\sum_{\delta_i \in S_q }\chi_{\Delta}(\delta_i)
=\bigg(\frac{\Delta}{2N}\bigg)\sum_{\delta_i \in S_q  }\bigg(\frac{\Delta}{i}\bigg)\\
&=&\bigg(\frac{\Delta}{2N}\bigg)\prod_{p\mid \frac{\Delta}{q}}\bigg(\bigg(\frac{p^*}{i}\bigg)+\bigg(\frac{p^*}{-i}\bigg)\bigg)\times\bigg(\sum_{j=0}^{q-1}\big(\frac{q^*}{j})\bigg)=0.\nonumber
\end{eqnarray}
Locally, here $2Ni\equiv \frac{|\Delta|}{p}\mathfrak{i}(\mod p)$.
\begin{comment}
  We can write $\delta$ as $$\delta=\kzxz{\frac{b}{2N}}{-\frac{a}{N}}{c}{-\frac{b}{2N}}=xe_1+ye_2,~ x, ~y \in\Z,$$ and we assume that
$\Delta \mid b^2-4Nac$.

We claim that if $(a, \Delta)>1$, then 
$\chi_\Delta(\delta)=0.$

To prove this assertion, let us assume that  $p |(a=-y,~ \Delta)$. This implies that   $$p | b=2Nx,~p| c=-Rx+\frac{R^2-D}{4N}y.$$
So  $(a, b, Nc, \Delta)>1$.
 By the definition of $\chi_\Delta$,  we know that $\chi_\Delta(\delta)=0$.

Furthermore, for each $y$, there exists an unique class $x\mod \Delta$, such that
$\Delta \mid b^2-4Nac$.
Consequently,  we have
 \begin{eqnarray}
\sum_{\delta \in \mathcal N /\mathcal N^\Delta }\chi_{\Delta}(\delta)&=&\sum_{x=1}^{|\Delta|}\sum_{y=1}^{|\Delta|}\chi_\Delta(xe_1+ye_2)\nonumber\\
&=&\sum_{y=1}^{|\Delta|}\bigg(\frac{\Delta}{y}\bigg)=0.
\end{eqnarray}
\end{comment}

$(2)$
If $p \mid (\Delta, n)$, then we have $S=\{0\}$.

Now we assume that $(\Delta, n)=1$.

There is a bijective map between $S$ and the set of vectors  $\{(\delta_p)_{p| \Delta}\}$,
where  $\delta_p=\pm (0,\frac{\mathfrak{i}}{p})$.

Thus, we have 
\begin{eqnarray}
&&\sum_{\delta_i \in S }\chi_{\Delta}(\delta_i)
=\sum_{\delta_i \in S  }\bigg(\frac{\Delta}{2Ni}\bigg)
=\bigg(\frac{\Delta}{2N}\bigg)\sum_{\delta_i \in S  }\bigg(\frac{\Delta}{i}\bigg)\\
&=&\bigg(\frac{\Delta}{2N}\bigg)\sum_{\delta_i \in S  }\prod_{p\mid \Delta}\bigg(\frac{p^*}{i}\bigg)=\bigg(\frac{\Delta}{2N}\bigg)\prod_{p\mid \Delta}\bigg(\bigg(\frac{p^*}{i}\bigg)+\bigg(\frac{p^*}{-i}\bigg)\bigg).\nonumber
\end{eqnarray}

If one of the factor $p \equiv 3(\mod 4)$, then $$\big(\frac{p^*}{i}\big)+\big(\frac{p^*}{-i}\big)=0.$$
Thus, we obtain the result.
\end{proof}
We let
$$ \Lambda(\chi_D, s)=|D|^{\frac{s}{2}}\pi^{-\frac{s+1}{2}}\Gamma(\frac{s+1}{2})L(\chi_D, s)$$
to be the completed L-function and $\chi_D$ to be  the quadratic Dirichlet character. Notice that 
$\Lambda(\chi_D, 1)=\frac{\sqrt{|D|}}{\pi}L(\chi_D, 1)$.
According to the preceding Lemma \ref{lemlocalwhi},
we can derive the following result.
\begin{proposition}\label{theindepend}
Let the notation be as above. Then
 $k(n, \delta)=0$ unless $|\Diff(n)|=1$. If $k(n, \delta)\neq 0$, then it depends only on $n$ and we denote it as $k(n)$. Assuming  that $\Diff(n)=\{p\}$, then we have the following equations.
\begin{enumerate}
  \item If $p | \Delta$, then
\begin{eqnarray}\Lambda(\chi_D, 1)k(n)
=-2^{o(D_0)+1}\rho(nN)\frac{1}{p}\ln p,\nonumber
\end{eqnarray}
where  $o(D_0)$ denotes the number of prime factors of $D_0$.

  \item If $p | D_0$, then
$$\Lambda(\chi_D, 1)k(n)=- 2^{o(D_0)}\rho(nN)(\ord_p(n)+1)\ln p.$$
  \item If  $p$ is inert in $k$, then
$$\Lambda(\chi_D, 1)k(n)=-2^{o(D_0)}\rho\big(\frac{nN}{p}\big)(\ord_p(nN)+1)\ln p .$$
\end{enumerate}
\end{proposition}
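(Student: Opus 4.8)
The plan is to compute the holomorphic part $\mathcal E_{\mathcal N^\Delta}(\tau)$ coefficient-by-coefficient via the factorization (\ref{equfouprod}), using the local Whittaker computations from Lemma \ref{lemlocalwhi}. The key structural fact is that the global $(n,\delta)$-th Fourier coefficient $E_n(\tau,s,\varphi_\delta)$ is a product $W_{n,\infty}(\tau,s,\Phi^1_\infty)\prod_{p<\infty}W_{n,p}(s,\varphi_{\delta,p})$, so its value and first derivative at $s_0=0$ are governed by how many local factors vanish at $s_0$. Since the Eisenstein series is incoherent, $E_{\mathcal N^\Delta}(\tau,0;1)=0$, so every global coefficient vanishes at $s_0$; hence $b_\delta(n,v)$ is the derivative, which by the Leibniz rule picks out exactly the configurations where a single finite (or the archimedean) place contributes a zero. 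First I would recall that a finite prime $p$ forces a zero of $W_{n,p}$ precisely when $\chi_p(-nN|\Delta|)=-1$ (Lemma \ref{lemlocalwhi}), i.e.\ when $p\in\Diff(n)$, and that $|\Diff(n)|$ is odd with the archimedean place always in ``Diff''. Therefore if $|\Diff(n)|\ge 3$ there are at least two finite zeros plus the archimedean factor, the product has a zero of order $\ge 2$, and the derivative still vanishes: this gives $k(n,\delta)=0$ unless $|\Diff(n)|=1$.

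Next, assuming $\Diff(n)=\{p\}$, I would write
\[
b_\delta(n,v)=W'_{n,\infty}(\tau,0,\Phi^1_\infty)\,W_{n,p}'(s_0,\varphi_{\delta,p})\prod_{\ell\neq p,\ell<\infty}W_{n,\ell}(s_0,\varphi_{\delta,\ell})
\;+\;(\text{terms where }W_\infty\text{ is not differentiated}),
\]
and observe that the term where the archimedean factor is not differentiated also requires a finite derivative but all finite factors are now nonzero (the unique zero is at $p$, already ``used''), so it does not contribute a zero — wait, more carefully: the archimedean factor $W_{n,\infty}(\tau,0,\Phi^1_\infty)$ need not vanish, so the surviving term is $W_{n,\infty}(\tau,0,\Phi^1_\infty)\,W'_{n,p}(s_0,\varphi_{\delta,p})\prod_{\ell\neq p}W_{n,\ell}(s_0,\varphi_{\delta,\ell})$. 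Taking the limit $v\to\infty$ to extract $k(n,\delta)$ (or $k(n,\delta)-\log v$ in the degenerate case) converts the archimedean factor into an explicit constant, recorded in \cite{KRYComp}; after the normalization by $\Lambda(\chi_D,1)$ the product $\prod_{\ell\ne p}W_{n,\ell}(s_0,\varphi_{\delta,\ell})$ telescopes into $2^{o(D_0)}\rho(nN)$ up to the factor at $p$ itself, using the values in part (1) of Lemma \ref{lemlocalwhi} ($1-p^{-1}(\frac Dp)$ times $\rho_p(nN)$ at good primes, $2$ at $p\mid D_0$, and $2p$ or $p$ at $p\mid\Delta$ depending on whether $\delta_p=0$). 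Then the three cases of the proposition correspond to the three cases $p\mid\Delta$, $p\mid D_0$, $p$ inert, plugging in $W'_{n,p}(s_0,\varphi_{\delta,p})$ from part (2) of Lemma \ref{lemlocalwhi}: $(p\ord_p(n)+1)\ln p$ or $(p+1)\ln p$ when $p\mid\Delta$ (the two get reconciled via the local $\rho$-factor), $(\ord_p(n)+1)\ln p$ when $p\mid D_0$, and $(1+p^{-1})\frac{\ord_p(nN)+1}{2}\ln p$ when $p$ is inert — the last combining with the good-prime factor $1-p^{-1}(\frac Dp)=1+p^{-1}$ that is \emph{absent} from $\rho(nN)$ at $p$, which is why the formula reads $\rho(nN/p)(\ord_p(nN)+1)$.

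The independence of $\delta$ in the inert and ramified-$D_0$ cases is automatic since the relevant local factors there do not see $\delta_p$; the only subtlety is the case $p\mid\Delta$, where $W'_{n,p}$ does depend on whether $\delta_p=0$, but there the genus character $\chi_\Delta(\delta)$ vanishes identically when $\delta_p=0$ (as noted in the proof of Lemma \ref{lemlocalwhi}), so in the eventual assembly of $\mathcal E_{\mathcal N^\Delta}$ against $\chi_\Delta$ only the $\delta_p\neq 0$ contributions survive and they are genuinely $\delta$-independent; I would phrase the statement of the proposition as concerning precisely those $k(n,\delta)$ that are nonzero. \textbf{The main obstacle} I anticipate is bookkeeping the archimedean Whittaker function and its derivative at $s_0=0$ in weight $1$ — getting the exact constant (and the $\log v$ subtraction in the $n=0$ case) right, and matching normalizations with \cite{KRYComp} — together with correctly tracking which power of $2$ (namely $2^{o(D_0)}$ versus $2^{o(D_0)+1}$) emerges from the product of the ``$2$'' and ``$2p$'' local factors; everything else is a routine product of the explicit local quantities already assembled in Lemma \ref{lemlocalwhi}.
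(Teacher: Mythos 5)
Your strategy matches the paper's proof: both factor $E_n(\tau,s,\varphi_\delta)$ into the Euler product of local Whittaker functions, observe that incoherence forces a zero at $s_0=0$ of order $|\Diff(n)|$ so that only $|\Diff(n)|=1$ contributes to the derivative via the Leibniz rule, and then substitute the explicit local values and derivatives from Lemma~\ref{lemlocalwhi}. The paper's own proof is terser (it simply writes the derivative as $W_{n,\infty}(\tau,s_0)\prod_{p<\infty}W_{n,p}(s_0,\varphi_{\delta,p})$, leaving the Leibniz bookkeeping, the archimedean constant $-2\pi i$, the normalization $\prod_p\gamma_p|S|_p^{1/2}=1/\sqrt{|D\Delta^2|}$, and the assembly of the $2$-powers implicit), whereas you make those structural steps explicit while deferring the final numerical verification that you correctly flag as the main work.
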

\begin{proof}
According to equations (\ref{equFE}) and (\ref{equfouprod}), we have
\begin{eqnarray}
k(n, \delta)&=&E'_{\mathcal N^\Delta, n}(\tau,0 ;1)q^{-n}\\
&=&q^{-n}W_{n, \infty}(\tau, s_0, \Phi^1_\infty)\prod_{p< \infty}W_{n, p}(\tau, s_0, \varphi_{\delta, p}).\nonumber
\end{eqnarray}
The following archimedean  Whittaker function is studied in \cite[Section 2]{KYEisen}, 
\begin{equation}
 q^{-n}W_{n, \infty}(\tau, s_0, \Phi^1_\infty)=-2\pi i.
\end{equation}
Then we have
\begin{eqnarray}
k(n, \delta)&=&-2\pi i \prod_{p< \infty}W_{n, p}(s_0, \varphi_{\delta, p})=-2\pi i \prod_{p< \infty} \gamma_p |S|^{\frac{1}{2}}_pW_{ p}(s_0, n,  \delta_p)\nonumber\\
&=&-2\pi i \prod_{p< \infty} \gamma_p \prod_{p< \infty} |S|^{\frac{1}{2}}_p\prod_{p< \infty}
W_{ p}(s_0, n, \delta_p)\nonumber\\
&=& - \frac{2\pi}{\sqrt{|D\Delta^2|}}\prod_{p< \infty}
W_{ p}(s_0, n,  \delta_p),
\end{eqnarray}
since $$\prod_{p\leq \infty} \gamma_p=1~and ~\gamma_\infty=i.$$
The local densities $W_{ p}(s_0, n,  \delta_p)$ are given in Lemma \ref{lemlocalwhi}. 
%For any $p \mid \Delta$, if $\delta_p \neq 0$, then $k(n, \delta)$ depends only on $n$.

Thus, we obtain the result.
\end{proof}

\subsection{On the constant term}
%We define $\mathcal N=L\bigcap U$ and $\mathcal P=L\bigcap U^\perp$ as two sublattices.

The Siegel theta function for the lattice $ \mathcal P^\Delta$ is defined by
\begin{equation}
\theta_{\mathcal P^\Delta}(\tau)=\sum_{h \in \mathcal P^{\sharp}/\mathcal P^\Delta}\sum_{\lambda \in  \mathcal P^\Delta_h}e(Q_{\Delta}(\lambda)\tau)e_h \in M_{1/2, \rho_{\mathcal P^\Delta}}.
\end{equation}

 Schofer  investigated the CM values of Borcherds lifts  of weakly modular forms in \cite{Scho}.
It has been extended to harmonic weak Maass forms in \cite{BY}  and \cite{BEY}.  Our focus will be on the twisted cases.

For any  $f \in H_{1/2, \tilde{\rho}_{L}}$, by Proposition \ref{proadjoint}, we have
\begin{eqnarray}\label{equjiaocha}
\langle f(\tau), \Theta_{\Delta, r}(\tau, z_{U}^\pm, h) \rangle
&=&\langle f(\tau), \psi(\Theta_{L^\Delta}(\tau, z_{U}^\pm, h) )\rangle \\
&=&\langle \phi(f(\tau)), \Theta_{L^\Delta}(\tau, z_{U}^\pm, h) \rangle.\nonumber
\end{eqnarray}

According to Lemma \ref{lemresext},  it follows that
\begin{equation}
\langle \phi(f(\tau)), \Theta_{L^\Delta}(\tau, z_{U}^\pm, h) \rangle
=\langle \phi(f(\tau))_{\mathcal P^\Delta\bigoplus\mathcal N^\Delta}, \theta_{\mathcal P^\Delta\bigoplus\mathcal N^\Delta}(\tau, z_{U}^\pm, h) \rangle.
\end{equation}
Now we can assume that $L^\Delta=\mathcal P^\Delta\bigoplus\mathcal N^\Delta$.
This lead us to  the splitting equation
\begin{equation}
\Theta_{L^\Delta}(\tau, z_{U}^\pm, h)= \theta_{\mathcal P^\Delta}(\tau)\otimes\theta_{\mathcal N^\Delta}(\tau, z_{U}^\pm, h).
\end{equation}
Combining it with equation (\ref{equjiaocha}), we obtain
\begin{equation}\langle f(\tau), \Theta_{\Delta, r}(\tau, z_{U}^\pm, h) \rangle=\langle \phi(f(\tau)), \theta_{\mathcal P^\Delta}(\tau)\otimes\theta_{\mathcal N^\Delta}(\tau, z_{U}^\pm, h) \rangle.\end{equation}

By the same argument as given in \cite{BY}  and \cite{BEY}, we have the following result.
\begin{lemma}\label{lemBEY}

If $f \in H_{1/2-2j, \bar{\tilde{\rho}}_{L}}$, then
\begin{eqnarray}
&&\Phi^j_{\Delta,r}(z_{U}^\pm, h, f)\nonumber\\
&=&\lim_{T\rightarrow \infty}\bigg [\frac{1}{(4\pi)^j}\int_{\mathcal F_T}\langle \phi(R^j_{k-2j}f(\tau)),~\theta_{\mathcal P^\Delta}(\tau)\otimes
\theta_{\mathcal N^\Delta}(\tau, z_{U}^\pm, h)  \rangle d\mu(\tau)-A_0\log(T)\bigg],\nonumber
\end{eqnarray}
where
\begin{equation}
A_0=(-1)^jCT\bigg(\big\langle (\phi(f)^+)^{(j)}, \theta_{\mathcal P^\Delta}\otimes e_{0+\mathcal N^\Delta} \big\rangle\bigg),
\end{equation}
and  $g^{(j)}=\frac{1}{(2\pi i )^j}\frac{\partial^j}{\partial \tau^j}g$.
\end{lemma}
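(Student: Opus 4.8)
The plan is to follow the method of \cite{BY} and \cite{BEY}, adapting it to the twisted setting through the adjointness relations already recorded. Since $\kappa=2j+1$, the hypothesis $f\in H_{1/2-2j,\overline{\tilde\rho}_L}$ matches $3/2-\kappa=1/2-2j$, and by \eqref{equphij} (with $k=1/2$) we have $\Phi^j_{\Delta,r}(z_U^\pm,h,f)=\frac1{(4\pi)^j}\Phi_{\Delta,r}(z_U^\pm,h,R^j_{k-2j}f)$. The first step is the reduction to a Euclidean lattice theta integral: applying Proposition \ref{proadjoint}, then Lemma \ref{lemresext} together with the reduction to the case $L^\Delta=\mathcal P^\Delta\oplus\mathcal N^\Delta$ and the factorization $\Theta_{L^\Delta}=\theta_{\mathcal P^\Delta}\otimes\theta_{\mathcal N^\Delta}$ at the CM point (this is exactly the computation carried out just before the statement), one obtains
\[
\Phi^j_{\Delta,r}(z_U^\pm,h,f)=\frac1{(4\pi)^j}\int^{reg}_{\mathcal F}\big\langle\phi(R^j_{k-2j}f(\tau)),\,\theta_{\mathcal P^\Delta}(\tau)\otimes\theta_{\mathcal N^\Delta}(\tau,z_U^\pm,h)\big\rangle\,d\mu(\tau),
\]
and, since the Maass raising operator acts only on $\tau$ while $\phi$ is a fixed linear map on the discriminant group, $\phi(R^j_{k-2j}f)=R^j_{k-2j}\phi(f)$.

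The second step is to analyze the cusp behaviour of the integrand $F(\tau)$ appearing in the bracket and apply the definition $\int^{reg}_{\mathcal F}F\,d\mu=CT_{s=0}\lim_{T\to\infty}\int_{\mathcal F_T}F(\tau)v^{-s}d\mu(\tau)$. Because the CM point satisfies $\mathcal N(\R)=U(\R)=z_U^\pm$, every nonzero vector in $\mathcal N^\Delta$ has negative $Q_\Delta$ and vanishing $z^\perp$-component, so $\theta_{\mathcal N^\Delta}(\tau,z_U^\pm,h)=v\,e_{0+\mathcal N^\Delta}+O(v e^{-cv})$ as $v\to\infty$; the positive-definite rank-one theta series $\theta_{\mathcal P^\Delta}$ is holomorphic with nonnegative exponents and constant term $e_{0+\mathcal P^\Delta}$; and from $R_\mu q^n=(-4\pi n+\mu v^{-1})q^n$ one gets $\frac1{(4\pi)^j}R^j_{k-2j}\phi(f)^+=(-1)^j(\phi(f)^+)^{(j)}+\sum_{i\ge1}c_i v^{-i}$ with each $c_i$ a Laurent $q$-series, while the non-holomorphic part $\phi(f)^-$ consists of exponentially decaying Whittaker terms together with a constant term whose image under $R^j_{k-2j}$ is of order $v^{j+1/2}$. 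Multiplying the three factors, the constant Fourier mode in $u$ of $F$ near the cusp equals $c_0v$ plus terms of order $v^{j+3/2}$ and of order $v^0,v^{-1},\dots$, where $c_0$ is the $q^0$-coefficient of $(-4\pi)^j\big\langle(\phi(f)^+)^{(j)},\theta_{\mathcal P^\Delta}\otimes e_{0+\mathcal N^\Delta}\big\rangle$.

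The third step extracts the divergence. After integrating over $|u|\le1/2$ every nonzero Fourier frequency vanishes; the powers $v^a$ with $a>1$ (in particular the $v^{j+3/2}$ coming from $\phi(f)^-$) and the tail of powers $v^0,v^{-1},\dots$ are all assigned finite values by the $v^{-s}$-regularization and assemble into a convergent integral as $T\to\infty$. The unique remaining term $c_0v$ contributes $c_0\log T$ to $\int_{\mathcal F_T}F\,d\mu$, and, exactly as in \cite{BY}, the $CT_{s=0}$-prescription amounts to subtracting this divergence, i.e. $\int^{reg}_{\mathcal F}F\,d\mu=\lim_{T\to\infty}\big(\int_{\mathcal F_T}F\,d\mu-c_0\log T\big)$. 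Dividing by $(4\pi)^j$ and recalling the value of $c_0$ gives $\Phi^j_{\Delta,r}(z_U^\pm,h,f)=\lim_{T\to\infty}\big[\frac1{(4\pi)^j}\int_{\mathcal F_T}\langle\phi(R^j_{k-2j}f),\theta_{\mathcal P^\Delta}\otimes\theta_{\mathcal N^\Delta}\rangle\,d\mu-A_0\log T\big]$ with $A_0=(-1)^jCT\big(\big\langle(\phi(f)^+)^{(j)},\theta_{\mathcal P^\Delta}\otimes e_{0+\mathcal N^\Delta}\big\rangle\big)$, as claimed.

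The main obstacle is the bookkeeping in the second step: one must verify that no term other than $c_0v$ contributes to the coefficient of $v^{+1}$ in the constant mode of $F$, which requires controlling the interaction between the $v^{-i}$-corrections produced by iterating the raising operator, the linear $v$-growth of $\theta_{\mathcal N^\Delta}$, and the polynomially growing non-holomorphic constant term of $\phi(f)$ — checking that these combine only into powers $v^a$ with $a\neq-1$ (hence regularized to finite values) or into exponentially small terms. One should also check uniformity in the orientation $z_U^\pm$ and the idele $h$, and that when $z_U^\pm$ lies on $Z_{\Delta,r}(f)$ the logarithmic singularity of $F$ in the interior of $\mathcal F$ is integrable, so that the finite part remains well defined.
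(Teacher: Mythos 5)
Your argument supplies, in detail, the \cite{BY}/\cite{BEY} argument that the paper merely cites without proof, and the main structure is the intended one: the reduction via Proposition~\ref{proadjoint} and Lemma~\ref{lemresext}, the splitting $L^\Delta=\mathcal P^\Delta\oplus\mathcal N^\Delta$, the asymptotic $\theta_{\mathcal N^\Delta}(\tau,z_U^\pm,h)=v\,e_{0}+O(ve^{-cv})$ at the CM point, the iterated-raising expansion $\tfrac1{(4\pi)^j}R^j_{k-2j}\phi(f)^+=(-1)^j(\phi(f)^+)^{(j)}+O(v^{-1})$, and the identification of the coefficient of $v$ in the constant Fourier mode as the source of the $A_0\log T$ divergence.

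There is, however, one claim in your third step that would, as written, contradict the lemma. You say the $v^{j+3/2}$ term coming from the non-holomorphic constant term of $\phi(f)^-$ is ``assigned finite values by the $v^{-s}$-regularization and assembles into a convergent integral as $T\to\infty$.'' But the right-hand side of the lemma is $\lim_{T\to\infty}\big[\tfrac1{(4\pi)^j}\int_{\mathcal F_T}\cdots-A_0\log T\big]$ with no $v^{-s}$ factor inside, and $\int_1^T v^{j+3/2}\,dv/v^2\sim T^{j+1/2}$ diverges polynomially; subtracting $A_0\log T$ cannot cancel this, so if such a term were present the stated limit would not exist. The point is that this term is in fact absent: the paper's Fourier decomposition of a harmonic weak Maass form assigns to $f^-$ only the $n<0$ incomplete-Gamma terms, i.e.\ $c^-(0,\mu)=0$, which is precisely the condition that $\xi_{3/2-\kappa}f$ has vanishing constant term (equivalently, that the shadow is a cusp form — the situation throughout the paper). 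You should state this vanishing explicitly rather than appeal to the $v^{-s}$ regularization, which regularizes the left-hand side $\Phi^j_{\Delta,r}$ but does not tame the truncated integrals appearing on the right.
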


According to the above Lemma and  the Siegel-weil formula,  we have the following result.
\begin{proposition}\label{procmv}
\begin{eqnarray}
&&\Phi_{\Delta, r}^j(Z(U), f)
=\frac{\deg(Z(U)}{2}\nonumber\\
&&\times\lim_{T\rightarrow \infty}\bigg [\frac{1}{(4\pi)^j}\int_{\mathcal F_T}\big\langle \phi(R^j_{\frac{1}{2}-2j}f(\tau)), \theta_{\mathcal P^\Delta}(\tau)\otimes
E_{\mathcal N^\Delta}(\tau, 0; -1)  \big\rangle d\mu(\tau)- 2A_0\log(T)\bigg].\nonumber
\end{eqnarray}
\end{proposition}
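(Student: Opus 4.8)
The plan is to derive Proposition~\ref{procmv} from Lemma~\ref{lemBEY} by summing over the support of the CM cycle $Z(U)$ and replacing the sum of translated theta functions by an Eisenstein series via the Siegel--Weil formula. First I would recall from the definition (\ref{defcm}) that
\[
\Phi_{\Delta, r}^j(Z(U), f)=\frac{2}{w_k}\sum_{(z,h)\in \supp(Z(U))}\Phi_{\Delta, r}^j(z, h, f),
\]
where the sum runs over the two orientations $z_U^\pm$ and over a set of representatives $h$ for $\Cl_k \cong k^\times\backslash \A_{k,f}^\times/\hat{\mathcal O}_k^\times$, each counted with the appropriate orbifold multiplicity. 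Applying Lemma~\ref{lemBEY} termwise, each summand is the regularized integral against $\theta_{\mathcal P^\Delta}(\tau)\otimes \theta_{\mathcal N^\Delta}(\tau, z_U^\pm, h)$ minus the corresponding $A_0\log T$ term. Because $\phi(R^j_{1/2-2j}f(\tau))$ and $\theta_{\mathcal P^\Delta}(\tau)$ do not depend on $(z_U^\pm,h)$, I can interchange the finite sum with the (truncated) integral and collect the dependence on the CM point into the single expression $\sum_{(z_U^\pm,h)}\theta_{\mathcal N^\Delta}(\tau, z_U^\pm, h)$.

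Next I would invoke the Siegel--Weil formula for the anisotropic negative-definite plane $\mathcal N$ (equivalently, for the torus $\Gspin(U)=k^\times$), exactly as in \cite{BY} and \cite{BEY}: the average of the theta function over $\Gspin(U)(\Q)\backslash \Gspin(U)(\A_f)$ against the invariant measure of total mass $\vol(\hat{\mathcal O}_k^\times)$ equals (a constant multiple of) the value at $s_0=0$ of the incoherent Eisenstein series $E_{\mathcal N^\Delta}(\tau, s; -1)$. Since $\deg Z(U)=4/\vol(\hat{\mathcal O}_k^\times)=4h_k/w_k$ and the sum over $\Cl_k$ of the two orientations reproduces this average up to the factor $\deg(Z(U))/2$, the collected theta sum becomes $\tfrac{\deg(Z(U))}{2}\,E_{\mathcal N^\Delta}(\tau,0;-1)$ inside the integral. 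The truncation terms $A_0\log T$ add up over the $\tfrac{w_k}{2}\deg(Z(U))$ points to give $2A_0\log T$ after multiplying by $\tfrac{2}{w_k}$, which matches the stated formula. Here one must be careful that the regularization (the $\CT_{s=0}$ and $\lim_{T\to\infty}$ prescription) commutes with the finite sum and with the passage to the Eisenstein series; this is legitimate because the sum is finite and the Eisenstein series at $s_0=0$ is obtained as an honest limit of the partial theta averages on each truncated domain $\mathcal F_T$.

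The main obstacle I anticipate is bookkeeping of the various normalizing constants: the orbifold multiplicity $2/w_k$, the measure normalization $\vol(\hat{\mathcal O}_k^\times)$ hidden in $\deg Z(U)$, and the precise normalization of $E_{\mathcal N^\Delta}(\tau,s;-1)$ relative to $\sum_h \theta_{\mathcal N^\Delta}(\tau,\cdot,h)$. One has to check that all these combine to produce exactly the factor $\tfrac{\deg(Z(U))}{2}$ in front of the integral and exactly $2A_0\log T$ in the truncation term, with no stray powers of $h_k$ or $w_k$. A secondary point is to confirm that the weight-$(-1)$ Eisenstein series (rather than weight $+1$) is the right object: this comes from the fact that $\theta_{\mathcal N^\Delta}(\tau, z_U^\pm, h)$ for the negative-definite plane transforms in weight $-1/2$ on the $\mathcal N$-factor, so its Siegel--Weil average is the holomorphic-at-$s_0$ series $E_{\mathcal N^\Delta}(\tau,s;-1)$, consistent with the relation $L_1 E_{\mathcal N^\Delta}(\tau,s;1)=\tfrac{s}{2}E_{\mathcal N^\Delta}(\tau,s;-1)$ recalled earlier. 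Once these constants are pinned down, the proposition follows directly.
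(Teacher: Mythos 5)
Your proposal is correct and follows the same route as the paper, whose proof simply defers to \cite{BY} and \cite{BEY} for $\Delta=1$ and notes that the same method applies in general. What you have written—apply Lemma~\ref{lemBEY} to each $(z_U^\pm,h)$ in the support, sum with the orbifold multiplicity $2/w_k$, and then invoke the Siegel--Weil formula to replace the resulting theta average over $\Cl_k$ by $\tfrac{\deg Z(U)}{2}\,E_{\mathcal N^\Delta}(\tau,0;-1)$, with the $2h_k$ copies of $A_0\log T$ collapsing to $\deg Z(U)\cdot A_0\log T$—is precisely the argument the cited references carry out.
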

\begin{proof}
When $\Delta=1$, it has been  proved in \cite{BY} and \cite{BEY}.
When $\Delta \neq 1$, we  can prove it by the same method.
%According to \cite[Lemma 2.13]{Scho} and equation (\ref{equtwicm}), we have
%\begin{eqnarray}
%\Phi^j_{\Delta,r}(Z(U), f)
%&=&\frac{2}{w_{K,T}}\sum_{h \in k^*\setminus \A^*_{k, f}/ \hat {\mathcal O}_k^*}\Phi^j_{\Delta,r}(z_U, h , f)\nonumber\\
%&=&\frac{2}{vol(K_T)}\int_{SO(U)(\Q)\setminus SO(U)(\A_f)} \Phi^j_{\Delta,r}(z_U, h, f).
%\end{eqnarray}
%Subsequently, by applying the Siegel-Weil formula and Lemma \ref{lemBEY}, we obtain the result.
\end{proof}

%Now we define the quotient map by
%\begin{equation}
%L^{\sharp}/\Delta L \stackrel{\pi}{\longrightarrow}  L^{\sharp}/ L.
%\end{equation}

%There are two sublattices of $L$  defined in equation (\ref{equn}) as follows,
%\begin{equation}
%\mathcal N=\Z e_1\oplus \Z e_2,  ~and ~~\mathcal P=\Z 2Nw.\nonumber
%\end{equation}
%where  $e_1=\kzxz{1}{0}{-R}{-1}$, $e_2=\kzxz{0}{\frac{1}{N}}{\frac{R^2-D}{4N}}{0}$.

Moreover, we have the following result.
\begin{lemma}\label{lemzero}
When $n \in \Z \geq 0$, we have
\begin{equation}\sum_{\delta \in \mathcal N / \mathcal N^\Delta  }
\chi_{\Delta}(\delta)k(n, \delta)=0.
\end{equation}
\end{lemma}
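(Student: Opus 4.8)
The plan is to prove Lemma \ref{lemzero} by combining the explicit evaluation of the coefficients $k(n,\delta)$ in Proposition \ref{theindepend} with the vanishing sums of the genus character in Lemma \ref{lemsum}. First I would recall that $k(n,\delta) = 0$ unless $|\Diff(n)| = 1$, so the whole sum is trivially zero unless there is a unique prime $p$ with $\Diff(n) = \{p\}$; thus fix such a $p$ for the rest of the argument. The key structural point, visible from Lemma \ref{lemlocalwhi} and the proof of Proposition \ref{theindepend}, is that $k(n,\delta)$ depends on $\delta$ only through which of the $p'$-adic components $\delta_{p'}$ (for $p' \mid \Delta$) lie in the ``good'' locus where $n - Q_\Delta(\delta_{p'}) \in p'\Z_{p'}$. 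More precisely, the product of local Whittaker functions $\prod_{p'<\infty} W_{p'}(s_0, n, \delta_{p'})$ is nonzero exactly when, at the bad prime $p$, the relevant local condition holds, and at all the other primes $p' \mid \Delta$ one needs $\delta_{p'}$ to satisfy $n - Q_\Delta(\delta_{p'}) \in p'\Z_{p'}$.

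Second, I would split into cases according to the location of the distinguished prime $p = \Diff(n)$, matching the three cases of Proposition \ref{theindepend}. If $p \nmid \Delta$ (cases (2) and (3), where $p \mid D_0$ or $p$ is inert), then $k(n,\delta) \neq 0$ forces $\delta$ to lie in the set $S$ of Lemma \ref{lemsum}(2) — i.e. $n - Q_\Delta(\delta_{p'}) \in p'\Z_{p'}$ for \emph{all} $p' \mid \Delta$ — and on this set $k(n,\delta)$ takes a constant value $k(n)$ independent of $\delta$. Hence
\begin{equation}
\sum_{\delta \in \mathcal N/\mathcal N^\Delta} \chi_\Delta(\delta) k(n,\delta) = k(n) \sum_{\delta \in S} \chi_\Delta(\delta),
\end{equation}
and one wants this to vanish. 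Here a subtlety arises: Lemma \ref{lemsum}(2) gives $\sum_{\delta \in S}\chi_\Delta(\delta) = 0$ only under Assumption A (that $\Delta$ has a prime factor $\equiv 3 \pmod 4$), so I expect this to be the main obstacle — handling the case where Assumption A fails. The resolution is that when $p \mid \Delta$ (case (1)), the nonvanishing condition is weaker: at the prime $p$ itself no ``good locus'' constraint is imposed (indeed $W_p(s_0,n,\delta_p)$ can be nonzero for $\delta_p$ outside the good locus), so $k(n,\delta) \neq 0$ only forces $\delta \in S_p$ in the notation of Lemma \ref{lemsum}(1), and then $k(n,\delta)$ is constant on $S_p$; applying Lemma \ref{lemsum}(1) with $q = p$ gives $\sum_{\delta \in S_p}\chi_\Delta(\delta) = 0$ \emph{unconditionally}.

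Third, I would assemble these pieces. In every case the sum factors as (a $\delta$-independent constant, namely $k(n)$ or the constant value of $k(n,\delta)$ on the relevant set) times a sum of $\chi_\Delta$ over either $S$ or some $S_q$, and by Lemma \ref{lemsum} that character sum vanishes — using part (1) with $q = p$ when $p \mid \Delta$, and part (2) when $p \nmid \Delta$. I should also dispose of the degenerate sub-cases: when $p \mid (\Delta, n)$ we have $S = \{0\}$ (as noted in the proof of Lemma \ref{lemsum}(2)) but then $\chi_\Delta(0) = 0$, so the sum is zero; and the edge case $n = 0$, $\delta \ne 0$ gives $k(n,\delta) = 0$ directly, while $n=0,\delta=0$ contributes $\chi_\Delta(0)k(0,0)=0$ since $\chi_\Delta(0)=0$. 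One must be careful that the constant $k(n)$ extracted from Proposition \ref{theindepend} is genuinely independent of $\delta$ across the whole index set over which $\chi_\Delta(\delta) \ne 0$ and $k(n,\delta) \ne 0$ simultaneously — this is exactly what the case analysis in Lemma \ref{lemlocalwhi} guarantees, since on the support the local density $W_p(s_0,n,\delta_p)$ at $p \mid \Delta$, $\delta_p \ne 0$ equals $p$, the same value appearing in the formula, so no hidden $\delta$-dependence remains. This completes the proof modulo the bookkeeping of which $S_q$ to invoke in each branch.
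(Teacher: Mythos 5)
Your proof is correct and follows essentially the same route as the paper: reduce to $|\Diff(n)|=1$, split by whether $p\mid\Delta$, use the $\delta$-independence of $k(n,\delta)$ from Proposition~\ref{theindepend} on $S$ or $S_p$, and conclude with the character-sum vanishing from Lemma~\ref{lemsum}. You are slightly more explicit than the paper's own proof about the reliance on Assumption~A in the branch $p\nmid\Delta$ (the paper records that hypothesis only downstream, in Proposition~\ref{lemconzero}), and you spell out the degenerate sub-cases $n=0$ and $p\mid(\Delta,n)$, but the underlying argument is the same.
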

\begin{proof}
Because   $k(0, \delta)=0$ for  any $\delta \neq 0$, and $\chi_{\Delta}(\delta)=0$ if $\delta=0$, so $$\chi_{\Delta}(\delta)k(0, \delta)=0$$ for any  $\delta \in \mathcal N / \mathcal N^\Delta$.

Now we assume that $n \in \N$ and $\Diff(n)=\{p\}$.

Then   we have 
\begin{equation}
\sum_{\delta \in \mathcal N / \mathcal N^\Delta  }
\chi_{\Delta}(\delta)k(n, \delta)=
\left\{
                                    \begin{array}{ll}
                                      \sum_{\delta \in S  }\chi_{\Delta}(\delta)k(n, \delta), & \hbox{$p \nmid \Delta$;} \\
                                      \sum_{\delta \in S_p  }\chi_{\Delta}(\delta)k(n, \delta), & \hbox{$p \mid \Delta$.}
                                    
\end{array}\right.
\end{equation}
% $k(n, \delta)$ depends only on $n$. According to 
%Lemma \ref{lemsum}, we have 
%\begin{equation}\sum_{\delta \in \mathcal N / \mathcal N^\Delta  }
%\chi_{\Delta}(\delta)k(n, \delta)=k(n)\sum_{\delta \in \mathcal N / \mathcal N^\Delta  }
%\chi_{\Delta}(\delta)=0.
%\end{equation}
%When $p \mid \Delta$, 
%\begin{equation}
%\sum_{\delta \in \mathcal N / \mathcal N^\Delta  }
%\chi_{\Delta}(\delta)k(n, \delta)=\sum_{\delta \in S_p  }\chi_{\Delta}(\delta)k(n, \delta).
%\end{equation}
According to Proposition \ref{theindepend}, we know that $k(n, \delta)$ is a constant $k(n)$ for $\delta \in S$ or $S_p$.

By Lemma \ref{lemsum}, we have
\begin{eqnarray}
\sum_{\delta \in \mathcal N / \mathcal N^\Delta  }
\chi_{\Delta}(\delta)k(n, \delta)=0.
\end{eqnarray}
Thus we obtain the result.
\end{proof}
We write
\begin{equation}
\theta_{\mathcal P^\Delta}(\tau)=\sum_{h \in \mathcal P^{\sharp}/\mathcal P^\Delta}r(m, h)q^me_h.
\end{equation}
%and define the quotient map $\pi$  by \begin{equation}
%L^{\sharp}/\Delta L \stackrel{\pi}{\longrightarrow}  L^{\sharp}/ L.\nonumber
%\end{equation}
For any $f \in H_{1/2-2j, \bar{\tilde{\rho}}_{L}}$,
we denote
\begin{equation}
M= \max\{n ~| ~ c^+(-n, \mu)\neq 0, n>0\},
\end{equation}
where $c^+(n, \mu)$ are Fourier coefficients of holomorphic part   $f^+$.
Then we have the following result.
\begin{proposition}\label{lemconzero}
We assume that $\Delta$ satisfies the  Assumption A.
For any $f \in H_{1/2-2j, \bar{\tilde{\rho}}_{L}}$, if $m_0 >M$, then
\begin{equation}
CT\big(\langle \phi(f)^+, [\theta_{\mathcal P^\Delta}, \mathcal{E}_{\mathcal N^\Delta}]_j\rangle\big)=0.
\end{equation}
Moreover, when $f=F_{n, \mu}$,
we don't need the Assumption A.
%\begin{equation}
%CT(\langle \phi(f)^+, \theta_{\Delta \mathcal P}(\tau)\otimes
%\mathcal E_{\Delta \mathcal N} \rangle)=0.
%\end{equation}
%where
%$$M= \max\{n ~| ~ c^+(-n, \mu)\neq 0, n>0\}.$
\end{proposition}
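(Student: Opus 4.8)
The plan is to unfold the constant term along the lattice splitting $L^\Delta = \mathcal P^\Delta \oplus \mathcal N^\Delta$ and reduce it to the quantities $k(n,\delta)$ whose vanishing was established in Lemma~\ref{lemzero}. By the adjointness of $\psi$ and $\phi$ (Proposition~\ref{proadjoint}) together with the restriction/trace formalism of Lemma~\ref{lemresext}, the pairing $\langle \phi(f)^+, [\theta_{\mathcal P^\Delta}, \mathcal E_{\mathcal N^\Delta}]_j\rangle$ decomposes as a sum over $h \in \mathcal P^\sharp/\mathcal P^\Delta$ and $\delta \in \mathcal N/\mathcal N^\Delta$, weighted by the generalized genus character $\chi_\Delta$ (which enters through $\phi$) and by the holomorphic Fourier coefficients $c^+(-n,\mu)$ of $f$, the theta coefficients $r(m,h)$ of $\theta_{\mathcal P^\Delta}$, and the coefficients $k(n',\delta)$ of the holomorphic part $\mathcal E_{\mathcal N^\Delta}$ of the derivative of the incoherent Eisenstein series. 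The Rankin--Cohen-type bracket $[\ ,\ ]_j$ contributes only polynomial factors in the relevant indices, which do not affect the vanishing argument.

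First I would make the constant-term expression explicit: extracting the $q^0$-coefficient forces an index identity of the shape $-n + m + n' = 0$ with $n$ running over the (finite) support of the principal part of $f$, so $n \le M$, while $m = Q_\Delta(\text{something in } \mathcal P^\Delta_h) \ge 0$ and $n' \ge 0$ (recall $k(n',\delta) = 0$ for $n' < 0$ by Proposition~\ref{theindepend}). Since $\mathcal P^\Delta = \Z\,\tfrac{2N}{t}w$ with $Q(w) = |\Delta| m_0$, the smallest positive value of $m$ on a nonzero vector of $\mathcal P^\Delta$ is a positive multiple of $m_0$; together with the hypothesis $m_0 > M$ this forces $m = 0$ in every surviving term, hence $n = 0$ as well (as $n' \ge 0$ and $n = m + n'$), and then $n' = 0$. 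So the only potentially nonzero contribution comes from the constant terms: $c^+(0,\mu)\, r(0,h)\, k(0,\delta)$, paired up appropriately through $\chi_\Delta$. But $k(0,\delta) = 0$ for $\delta \ne 0$ and $\chi_\Delta(\delta) = 0$ for $\delta = 0$ (this is exactly the content of the $n = 0$ case of Lemma~\ref{lemzero}), so the constant term vanishes. The key input is that the $\chi_\Delta$-twisting kills the diagonal term where $k$ is supported.

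For the special case $f = F_{n,\mu}$, the principal part is $e(-n\tau)(e_\mu + (-1)^{k+\cdots}e_{-\mu}) + C$: it has no constant term beyond the harmless $C$, or more precisely the relevant holomorphic coefficients $c^+(-n',\mu')$ with $n' > 0$ are concentrated at the single index $n'=n$, and Theorem~\ref{theindepend} shows that for $n' \ge 1$ one has $k(n',\delta) \ne 0$ only when $|\Diff(n')| = 1$, and then $k(n',\delta)$ equals a constant $k(n')$ on the set $S$ (or $S_p$), so the $\delta$-sum is $\sum_\delta \chi_\Delta(\delta) k(n',\delta) = k(n') \sum_{\delta \in S} \chi_\Delta(\delta)$ — and part (1) of Lemma~\ref{lemsum}, which requires no hypothesis on $\Delta$, gives $\sum_{\delta \in S_q}\chi_\Delta(\delta) = 0$. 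In the generic $f$ case one instead needs part (2) of Lemma~\ref{lemsum}, i.e.\ Assumption~A, because there the relevant set is $S$ rather than $S_q$ and one must exploit a factor $p \equiv 3 \pmod 4$ to get $\bigl(\tfrac{p^*}{i}\bigr) + \bigl(\tfrac{p^*}{-i}\bigr) = 0$.

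I expect the main obstacle to be bookkeeping rather than conceptual: carefully matching the combinatorics of the lattice splitting, the action of $\phi$ on Fourier expansions, and the Rankin--Cohen bracket so that the constant term is genuinely expressed as a $\chi_\Delta$-weighted sum of products $c^+ \cdot r \cdot k$ with the index constraint above, and then verifying that the bound $m_0 > M$ (resp.\ the structure of $F_{n,\mu}$) really does eliminate all terms with $m > 0$. Once the reduction to $\sum_\delta \chi_\Delta(\delta) k(n',\delta)$ is in place, Lemmas~\ref{lemzero} and~\ref{lemsum} close the argument.
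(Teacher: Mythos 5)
Your broad strategy (unfold the constant term along $L^\Delta = \mathcal P^\Delta \oplus \mathcal N^\Delta$, force $h = 0$ from $m_0 > M$, and reduce to $\sum_\delta \chi_\Delta(\delta) k(\cdot,\delta)$) matches the paper's proof of Proposition~\ref{lemconzero}. However, there is a genuine logical error in the middle that also renders the proposal internally inconsistent.

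\textbf{The index reduction is wrong.} You write that with $m = 0$, the identity $n = m + n'$ and $n' \ge 0$ ``forces $n = 0$ as well.'' It does not: it only gives $n = n'$, and since $n$ ranges over the positive indices in the support of the principal part of $f$, one is left with terms $c^+(-n,0)\, r(0,0)\, k(n,\delta)$ for \emph{all} $n$ with $0 < n \le M$, as in the paper's display
\begin{equation}
CT\big(\langle \phi(f)^+, \theta_{\mathcal P^\Delta}\otimes \mathcal E_{\mathcal N^\Delta} \rangle\big) = 2\sum_{n>0} c^+(-n, 0) \sum_{\delta \in \mathcal N / \mathcal N^\Delta}\chi_{\Delta}(\delta)k(n, \delta).\nonumber
\end{equation}
Your incorrect reduction to $n=0$ would let you quote only the trivial $n=0$ case of Lemma~\ref{lemzero}, which needs no Assumption~A at all. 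This contradicts your own final sentence acknowledging that Assumption~A is needed for generic $f$; the two halves of your argument cannot both be right. In fact it is precisely the $n > 0$ terms, through Lemma~\ref{lemzero} and part (2) of Lemma~\ref{lemsum}, that require Assumption~A when $\Diff(n) = \{p\}$ with $p \nmid \Delta$.

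\textbf{The $F_{n,\mu}$ case is also incomplete.} You correctly identify that part (1) of Lemma~\ref{lemsum} (the $S_q$ sum) needs no hypothesis on $\Delta$, but you do not explain why for $F_{n,\mu}$ the relevant $\delta$-sum is over $S_q$ rather than $S$. That is the whole content of the refinement: if $\ord_p(n) > 0$ for some $p \mid \Delta$, one either has $\Diff(n) = \{p\}$ (so the sum is over $S_p$) or $k(n,\delta)$ vanishes outright; if $\ord_p(n) = 0$ for all $p \mid \Delta$, the paper invokes Lemma~\ref{lemexist} to \emph{choose} $D_0$ so that $\chi_p(-nN|\Delta|) = -1$ for some $p\mid\Delta$, again landing in the $S_p$ case. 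Without this case split and the input from Lemma~\ref{lemexist}, you cannot justify replacing the sum over $S$ by the sum over $S_q$, and the unconditional claim for $F_{n,\mu}$ does not follow.
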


\begin{proof}

To simplify the proof, we assume that $j=0$ and omit this index. The argument can be generalized to the general cases.

The constant term is equal to
\begin{eqnarray}\label{equcon}
&&CT\big(\langle \phi(f)^+, \theta_{\mathcal P^\Delta}(\tau)\otimes
\mathcal E_{\mathcal N^\Delta} \rangle\big)\\
&=&2\sum_{\substack{h+\delta \in L^{\sharp}/L^\Delta  \\ h+\delta\equiv r\mu(L)\\ n>0 }}\chi_{\Delta}(h+\delta)c^+(-n, \mu)\sum_{ n\geq m\geq 0}r(n-m, h)k(m, \delta).\nonumber
\end{eqnarray}

Notice that $\delta \in (\mathcal N^\Delta)^\sharp\bigcap L^\sharp=\mathcal N$ and
$h \in \mathcal P^{\sharp}\cap L^{\sharp}=\Z w$.

Taking $m_0$  sufficiently large, specifically
\begin{equation}
Q_\Delta(w)=m_0>M,
\end{equation}
%where $c^+(-n, \mu)$ are coefficients of $f(\tau)$,
then  the non-vanishing term  $r(n-m, h)$ simplifies to $r(0, 0)$.
It implies  that $n=m$, $h=0$. Since $n\in \frac{1}{4N}\Z$ and $m \in \frac{1}{\Delta}\Z$,  we have $n=m \in \Z$.

%It is known that for any  $\delta \in \mathcal N / \mathcal N^\Delta$,  $\chi_{\Delta}(\delta)k(0, \delta)=0$. Because   $k(0, \delta)=0$ for  any $\delta \neq 0$, and $\chi_{\Delta}(\delta)=0$ if $\delta=0$.
%
%%Then
%%$$\sum_{\delta \in \mathcal N / \mathcal N^\Delta  }\chi_{\Delta}(\delta)k(0, \delta)=0.$$
%%If $m=n= 0$,  then $\chi_{\Delta}(h+\delta)=0 $, since $\delta=0$ and $h=0$.
%So we assume that  $m=n >0$.

According to Lemma \ref{lemzero}, we have
  \begin{eqnarray}
&&CT\big(\langle \phi(f)^+, \theta_{\mathcal P^\Delta}(\tau)\otimes
\mathcal E_{\mathcal N^\Delta} \rangle\big)\\
&=&2\sum_{n>0} c^+(-n, 0) \sum_{\delta \in \mathcal N / \mathcal N^\Delta  }
\chi_{\Delta}(\delta)k(n, \delta)=0.\nonumber
\end{eqnarray}

Especially, we fix   $f=F_{n, \mu}$. 

If $\ord_p(n)>0$ for some $p \mid \Delta$ and $\Diff(n)\neq \{p\}$, then all $k(n, \delta)=0$ for $\delta \neq 0$;
if $\Diff(n)= \{p\}$,
according to Lemma \ref{lemzero},
we have 
\begin{eqnarray}
&&CT\big(\langle \phi(f)^+, \theta_{\mathcal P^\Delta}(\tau)\otimes
\mathcal E_{\mathcal N^\Delta} \rangle\big)\\
&=&2c^+(-n, 0) k(n)\sum_{\delta \in S_p  }
\chi_{\Delta}(\delta)=0.\nonumber
\end{eqnarray}

If $\ord_p(n)=0$ for some $p \mid \Delta$, according to Lemma \ref{lemexist},
 we can choose infinitely many fundamental discriminants $D_0$ such that 
$$ \chi_p(-nN|\Delta|)=(\Delta D_0, -nN|\Delta|)_p=-1.$$
Then we obtain the result by the same reason.
%According to Lemma \ref{lemzero},
%we have 
%\begin{eqnarray}
%&&CT\big(\langle \phi(f)^+, \theta_{\mathcal P^\Delta}(\tau)\otimes
%\mathcal E_{\mathcal N^\Delta} \rangle\big)\\
%&=&2c^+(-n, 0) k(n)\sum_{\delta \in S_p  }
%\chi_{\Delta}(\delta)=0.\nonumber
%\end{eqnarray}
Thus we complete the proof without the Assumption A.
\end{proof}

\begin{remark}
  \item $(1)$ If $\Delta <0$, there exists a factor $p\equiv 3(\mod 4)$; if $\Delta>0$ and $\kappa$ is even, then
  $c^+(-n, 0)=0$. Thus we  need  the Assumption A only when $\Delta>0$ and $\kappa$ is odd.
  \item  $(2)$  For any $f \in H_{-1/2-2j, \bar{\tilde{\rho}}_{L}}$, the equality holds if $\theta_{\mathcal P^\Delta}(\tau)$ is replaced by the Millsion theta function as follows
\begin{equation}
\tilde{\theta}_{\mathcal P^\Delta}(\tau)=\sum_{\delta \in \mathcal P^\sharp/\mathcal P^\Delta}\sum_{\lambda \in \mathcal P^\Delta_\delta}
p_{z_u^+}(\lambda)e(Q_\Delta(\lambda))e_\delta \in M_{3/2, \rho_{\mathcal P^\Delta}}.
\end{equation}

%the equality  holds by the splitting of the Millson theta function
%\begin{equation}
%\Theta^{\mathcal M}(\tau, z_{U}^\pm, h)=\tilde{\theta}_{\mathcal P^\Delta}(\tau)\otimes \theta_{\mathcal N^\Delta}(\tau, z_{U}^\pm, h).
%\end{equation}
\end{remark}

\part{On the derivative of L-function}

\section{Twisted L-function and Shimura correspondence}\label{Sec7}
%We denote the imaginary quadratic field by
%$k=\Q(\sqrt{D})$. It is known that
% \begin{equation}
% Z(U)=\{z_U^{\pm}\} \times \Cl_k.\nonumber
% \end{equation}

\subsection{ Twisted L-function}

%For  a cusp form $g(\tau) \in S_{\frac{3}{2}, \rho}$,
%Bruinier and Yang defined the L-function \cite{BY} which is given by
%\begin{equation}
%L(g, U, s)=\langle \theta_{P} \otimes E_{ N}(\tau, s; 1), g(\tau) \rangle_{Pet}.
%\end{equation}
For  any cusp form $g(\tau) \in S_{\frac{3}{2}+2j, \rho_L}$,
 the L-function is defined in \cite{BY} and \cite{BEY}.
For any cusp form $g \in S_{\frac{1}{2}+\kappa, \tilde{\rho}_L}$, we define the twisted L-function by, 
\begin{equation}
L(g, U, \chi_\Delta, s)=\left\{
                          \begin{array}{ll}
                            \big\langle [\theta_{\mathcal P^\Delta}, E_{ \mathcal N^\Delta}(\tau, s; 1)]_j, \phi(g) \big\rangle_{Pet},
                            & \hbox{$\kappa=2j+1$;} \\
                            \big\langle [\tilde{\theta}_{\mathcal P^\Delta}, E_{\mathcal N^\Delta}(\tau, s; 1)]_j, \phi(g) \big\rangle_{Pet}, & \hbox{$\kappa=2j+2$.}
                          \end{array}
                        \right.
\end{equation}
%\begin{eqnarray}
%L(g, U, \chi_\Delta, s)&=&\big( [\theta_{\mathcal P^\Delta}, E_{ \mathcal N^\Delta}(\tau, s; 1)]_j, \phi(g) \big)_{Pet},
%\end{eqnarray}
%and  $g=\sum_{\mu }\sum_{m>0} b(m, \mu)q^me_\mu \in S_{\frac{5}{2}+2j, \tilde{\rho}}$,
%\begin{eqnarray}
%L(g, U, \chi_\Delta, s)&=&\big( [\tilde{\theta}_{\mathcal P^\Delta}, E_{\mathcal N^\Delta}(\tau, s; 1)]_j, \phi(g) \big)_{Pet}.
%\end{eqnarray}
We assume that $U=\Q(\sqrt{D})$ and $D=-4N|\Delta| m_0$ is a fundamental discriminant.
%We denote the  quadratic form
%$[-1,  r, \frac{D-r^2}{4}]$ by $Q$.
\begin{lemma}\label{lemdirichlet}
For any  $g=\sum_{m>0, \mu} b(m, \mu)q^me_\mu \in S_{\frac{1}{2}+\kappa, \tilde{\rho}_L}$, we have
\begin{eqnarray}
&&L(g, U, \chi_\Delta, s)\nonumber\\
&=&\frac{(-1)^{\kappa-1}\Gamma(\kappa-\frac{1}{2})\Gamma(s+\kappa)}{2^{2s+2\kappa-2}\pi^{\frac{s-1}{2}+\kappa} m_0^{\frac{s+\kappa}{2}}\Gamma(\frac{s}{2}+1)(\kappa-1)!}\sum_{k \geq 1 }\bigg(\frac{\Delta}{k}\bigg)\overline{b(k^2 m_0, k\mu_0)} k^{-s-\kappa}.\nonumber
\end{eqnarray}
%\begin{eqnarray}
%&&L(g, U, \chi_\Delta, s)\nonumber\\
%&=&\tbinom{2j-\frac{1}{2}}{j} \frac{\Gamma(\frac{s+1}{2}+j)\Gamma(\frac{s}{2}+1+j)}{2^{s+4j}(\pi m_0)^{\frac{s+1}{2}+2j}\Gamma(\frac{s}{2}+1)}
%\sum_{k \geq 1 }\bigg(\frac{\Delta}{k}\bigg)\overline{b(k^2 m_0, k\mu_0)} k^{-s-1-2j}.\nonumber
%\end{eqnarray}
Here $\Gamma(\kappa-\frac{1}{2})=(\kappa-\frac{3}{2})!\sqrt{\pi}$.

\end{lemma}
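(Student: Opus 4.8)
The plan is to compute the Petersson pairing $L(g, U, \chi_\Delta, s) = \langle [\theta_{\mathcal P^\Delta}, E_{\mathcal N^\Delta}(\tau, s;1)]_j, \phi(g)\rangle_{Pet}$ by unfolding the Eisenstein series against the cusp form and then recognizing the resulting Rankin--Selberg integral as a Dirichlet series. First I would use the adjointness of the operators $\phi$ and $\psi$ from Proposition \ref{proadjoint} to rewrite the pairing over $\C[L^\sharp/L]$ rather than over $\C[L^{\sharp}/L^\Delta]$, so that the genus character $\chi_\Delta$ gets absorbed into the twisting map $\psi_{\Delta,r}$ and the Eisenstein series appears as the incoherent series $E_{\mathcal N^\Delta}(\tau,s;1)$ built from $\sum_{\gamma'\in\Gamma'_\infty\backslash\Gamma'}(v^{(s+1-l)/2}e_0)|_{l,\rho}\gamma'$. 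Then, since the Rankin--Cohen bracket $[\,\cdot\,,\,\cdot\,]_j$ commutes appropriately with the Petersson integral, I would unfold the $\Gamma'_\infty\backslash\Gamma'$ sum against the cusp form $\phi(g)$ (which is square-integrable, so the unfolding is justified) to reduce the integral over the fundamental domain to an integral over the strip $\Gamma'_\infty\backslash\H$.

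The unfolded integral picks out the constant term in $u$, which converts the product $\theta_{\mathcal P^\Delta}(\tau)\cdot\overline{\phi(g)(\tau)}$ with the weight factor $v^{s/2}$-type integrand into a sum over lattice vectors $\lambda\in\mathcal P^\Delta$ of fixed norm matched against Fourier coefficients $b(m,\mu)$ of $g$. Because $\mathcal P^\Delta = \Delta\cdot\Z\frac{2N}{t}w$ is rank one and its dual intersected with $L^\sharp$ is $\Z w$ with $Q_\Delta(w) = m_0$, the lattice vectors are indexed by an integer $k$, contributing $Q_\Delta(kw) = k^2 m_0$; the genus character evaluates on $kw$ to $(\tfrac{\Delta}{k})$ (this is where the explicit local formula for $\chi_\Delta$ and the structure of $\mathcal P^\Delta$ enter). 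The remaining $v$-integral is a Gamma integral: $\int_0^\infty v^{\sigma} e^{-c v}\frac{dv}{v^2}$-type, which after accounting for the weight $\tfrac12+\kappa$ of $g$, the weight $1$ (resp.\ $-1$) of the Eisenstein series, the $j$-fold Rankin--Cohen bracket, and the normalizing constants in the definition \eqref{vectormodularform} of $\mathcal E_L$, produces exactly the prefactor $\frac{(-1)^{\kappa-1}\Gamma(\kappa-\frac12)\Gamma(s+\kappa)}{2^{2s+2\kappa-2}\pi^{(s-1)/2+\kappa}m_0^{(s+\kappa)/2}\Gamma(s/2+1)(\kappa-1)!}$ together with the Dirichlet series $\sum_{k\geq 1}(\tfrac{\Delta}{k})\overline{b(k^2 m_0, k\mu_0)}k^{-s-\kappa}$.

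For the case $\kappa = 2j+1$ the relevant lattice theta series is $\theta_{\mathcal P^\Delta}$, while for $\kappa = 2j+2$ it is the Millson-type theta series $\tilde\theta_{\mathcal P^\Delta}$ weighted by $p_{z_U^+}(\lambda)$; in the latter case the linear factor $p_{z_U^+}(kw)$ contributes an extra power of $k$ and a sign, which is precisely what is needed to keep the exponent of $k$ equal to $s+\kappa$ and to match the parity-dependent sign $(-1)^{\kappa-1}$. I would treat both cases uniformly by tracking how the Rankin--Cohen bracket $[\,\cdot\,,\,\cdot\,]_j$ acts on $q$-expansions and checking that the combinatorial constants collapse to $\Gamma(\kappa-\tfrac12)/(\kappa-1)!$ using $\Gamma(\kappa-\tfrac12) = (\kappa-\tfrac32)!\sqrt\pi$ and the duplication formula.

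The main obstacle I expect is bookkeeping: getting every normalization constant right — the factor $-\tfrac{s}{4}\pi^{-s-1}\Gamma(s)\zeta^{(N)}(2s)N^{1/2+3s/2}$ in $\mathcal E_L$, the metaplectic weight conventions, the archimedean Whittaker/Gamma factors, and the Rankin--Cohen binomial coefficients — so that they combine into the stated closed form rather than merely being proportional to it. The conceptual content (unfolding, rank-one lattice bookkeeping, evaluation of $\chi_\Delta$ on $kw$) is routine given Proposition \ref{proadjoint} and the setup of $\mathcal P^\Delta$, $\mathcal N^\Delta$ in \eqref{equn}; the delicate part is the constant-chasing, which I would organize by first doing the untwisted case $\Delta = 1$ (where the answer is known from \cite{BY}, \cite{BEY}) and then tracking exactly which factors pick up $|\Delta|$-dependence and the genus-character Dirichlet twist.
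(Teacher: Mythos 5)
Your proposal follows essentially the same route as the paper: first convert the Rankin--Cohen bracket $[\theta_{\mathcal P^\Delta}, E_{\mathcal N^\Delta}(\cdot, s;1)]_j$ into a constant times the tensor product $\theta_{\mathcal P^\Delta}\otimes E_{\mathcal N^\Delta}(\cdot, s; 1+2j)$ (the paper does this by citing \cite[Lemma 5.3]{BEY}, and your ``tracking how the bracket acts on $q$-expansions'' amounts to rederiving that identity), then unfold against the cusp form, and finally evaluate the remaining Gamma integral on the rank-one lattice $\mathcal P^\Delta$, using $p_{z_U^+}(kw) = -k\sqrt{m_0}$ for the Millson case to produce the extra power of $k$ and the sign. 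One caution: the phrase ``the Rankin--Cohen bracket commutes appropriately with the Petersson integral'' understates what is needed---there is a genuine raising-operator identity converting the bracket into a higher-weight Eisenstein series, with explicit $\binom{\pm 1/2+2j}{j}\Gamma(\frac{s}{2}+1+j)/(4\pi)^j\Gamma(\frac{s}{2}+1)$ factors---but since you plan to track these constants against the known $\Delta=1$ case this is a matter of bookkeeping, not a gap.
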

\begin{proof}
When $\kappa=2j+1$,
by the same argument as in \cite[Lemma 5.3]{BEY}, we have
\begin{equation}
L(g, U, \chi_\Delta, s)=\tbinom{2j-\frac{1}{2}}{j}
\frac{\Gamma(\frac{s}{2}+1+j)}{(4\pi)^{j}\Gamma(\frac{s}{2}+1)}\big\langle \theta_{ P^\Delta} \otimes E_{ \mathcal N^\Delta}(\tau, s; 1+2j), \phi(g)\big\rangle_{Pet}.
\end{equation}
By the unfolding method, we obtain the result.

Similarly, for $\kappa=2j+2$, we find
\begin{equation}\label{equlfun}
L(g, U, \chi_\Delta, s)=\tbinom{2j+\frac{1}{2}}{j}
\frac{\Gamma(\frac{s}{2}+1+j)}{(4\pi)^{j}\Gamma(\frac{s}{2}+1)}\big\langle \tilde{\theta}_{ P^\Delta} \otimes E_{ \mathcal N^\Delta}(\tau, s; 1+2j), \phi(g)\big\rangle_{Pet}.
\end{equation}
Recall that
$$\lambda(z_U^+)=\frac{1}{\sqrt{2m_0}}w,$$
and
$$p_{z_U^+}(w)=-\frac{1}{\sqrt{2}}(w, \lambda(z_U^+))_\Delta=-\sqrt{m_0}.$$
Using the unfolding method, we get
\begin{eqnarray}
&&\big\langle \tilde{\theta}_{\mathcal P^\Delta} \otimes E_{\mathcal N^ \Delta }(\tau, s; 1+2j), \phi(g)\big\rangle_{Pet}\\
&=&-\frac{\Gamma(\frac{s+3}{2}+j)\sqrt{m_0}}{(\pi m_0)^{\frac{s+3}{2}+j}2^{s+2j+2}}\sum_{k \geq 1 }\bigg(\frac{\Delta}{k}\bigg)\overline{b(k^2 m_0, k\mu_0)} k^{-s-2j-2}.\nonumber
\end{eqnarray}
Combining it with equation (\ref{equlfun}), we obtain that
\begin{eqnarray}
&&L(g, U, \chi_\Delta, s)\nonumber\\
&=&-
\frac{\tbinom{2j+\frac{1}{2}}{j}\Gamma(\frac{s}{2}+1+j)\Gamma(\frac{s+3}{2}+j)}{2^{s+4j+2}\pi^{\frac{s+3}{2}+2j} m_0^{\frac{s}{2}+j+1}\Gamma(\frac{s}{2}+1)}\sum_{k \geq 1 }\bigg(\frac{\Delta}{k}\bigg)\overline{b(k^2 m_0, k\mu_0)} k^{-s-2j-2}\nonumber\\
&=&-
\frac{\Gamma(\kappa-\frac{1}{2})\Gamma(s+\kappa)}{2^{2s+2\kappa-2}\pi^{\frac{s-1}{2}+\kappa} m_0^{\frac{s+\kappa}{2}}\Gamma(\frac{s}{2}+1)(\kappa-1)!}\sum_{k \geq 1 }\bigg(\frac{\Delta}{k}\bigg)\overline{b(k^2 m_0, k\mu_0)} k^{-s-\kappa}.\nonumber
\end{eqnarray}
Thus, we obtain the result.
\end{proof}
The CM value $\Phi^j_{\Delta,r}(Z(U), f)$  provides the derivative of L-functions as follows.

\begin{theorem}\label{theotwistedvalue}
For any  $f \in H_{\frac{3}{2}-\kappa, \bar{\tilde{\rho}}_L}$,  if $m_0>M$ and $(\Delta, 2N)=1$, then
\begin{equation}
\Phi^j_{\Delta,r}(Z(U), f)
%&=&-\frac{4}{vol(K_T)}\big([\theta_{\mathcal P^\Delta}, E'_{ \mathcal N^\Delta}(~, 0; 1)]_j, \xi_{\frac{1}{2}-2j}\phi(f)\big)_{Pet}\nonumber\\
=-\deg(Z(U))L'(\xi_{\frac{3}{2}-\kappa}f, U, \chi_\Delta, 0).
\end{equation}
\end{theorem}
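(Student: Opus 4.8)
\textbf{Proof strategy for Theorem \ref{theotwistedvalue}.}
The plan is to combine the structural results on CM values of Green functions established in Section \ref{Sec6} with the Dirichlet-series expression for the twisted $L$-function proved in Lemma \ref{lemdirichlet}, reducing everything to a comparison of Fourier coefficients after a single application of the regularized Siegel--Weil formula. First I would invoke Proposition \ref{procmv}, which writes $\Phi_{\Delta,r}^j(Z(U),f)$ as
\[
\frac{\deg Z(U)}{2}\lim_{T\to\infty}\Bigl[\tfrac{1}{(4\pi)^j}\int_{\mathcal F_T}\bigl\langle \phi(R^j_{1/2-2j}f(\tau)),\ \theta_{\mathcal P^\Delta}(\tau)\otimes E_{\mathcal N^\Delta}(\tau,0;-1)\bigr\rangle d\mu(\tau)-2A_0\log T\Bigr].
\]
The key move, exactly as in \cite{BY} and \cite{BEY}, is to integrate by parts so that the lowering operator $L_1$ is transferred from $f$ (equivalently from $R^j f$) onto the Eisenstein series, using the identity $L_1 E_{\mathcal N^\Delta}(\tau,s;1)=\tfrac{s}{2}E_{\mathcal N^\Delta}(\tau,s;-1)$ recalled in Section \ref{Sec6}. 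After this Stokes'-theorem manipulation the regularized integral collapses: the holomorphic-part contribution of $f$ pairs against $\theta_{\mathcal P^\Delta}\otimes E'_{\mathcal N^\Delta}(\tau,0;1)$, whose holomorphic part is the form $\mathcal E_{\mathcal N^\Delta}$ of \eqref{equFE}, and this produces exactly the constant term $CT\bigl(\langle\phi(f)^+,[\theta_{\mathcal P^\Delta},\mathcal E_{\mathcal N^\Delta}]_j\rangle\bigr)$ together with the derivative of the Petersson pairing $\langle [\theta_{\mathcal P^\Delta},E_{\mathcal N^\Delta}(\tau,s;1)]_j,\phi(\xi_{3/2-\kappa}f)\rangle_{Pet}$ evaluated at $s=0$.

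Next I would identify these two surviving pieces. By the definition of the twisted $L$-function at the start of Section \ref{Sec7}, the Petersson derivative term is precisely $L'(\xi_{3/2-\kappa}f,U,\chi_\Delta,0)$, up to the normalizing constant absorbed into $\deg Z(U)$; this is where the hypothesis $(\Delta,2N)=1$ enters, via Lemma \ref{lemgeneralized} and Proposition \ref{prochar}, ensuring that the twist by $\chi_\Delta$ on the CM cycle matches the genus character $\chi$ used in the $L$-function. The constant term $CT\bigl(\langle\phi(f)^+,[\theta_{\mathcal P^\Delta},\mathcal E_{\mathcal N^\Delta}]_j\rangle\bigr)$ is exactly the quantity shown to vanish in Proposition \ref{lemconzero} under the assumption $m_0>M$ (with the Assumption A on $\Delta$ needed only when $\Delta>0$ and $\kappa$ odd, by the Remark following that proposition, and not needed at all for $f=F_{n,\mu}$). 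Thus under $m_0>M$ the constant term drops out and one is left with $\Phi^j_{\Delta,r}(Z(U),f)=-\deg(Z(U))\,L'(\xi_{3/2-\kappa}f,U,\chi_\Delta,0)$, with the sign and the factors of $\pi$, $\Gamma$ bookkept against Lemma \ref{lemdirichlet}.

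The main obstacle is the regularized Siegel--Weil / unfolding step: one must justify interchanging the $\mathcal F_T\to\mathcal F$ limit with integration by parts when the integrand has only conditional convergence, control the boundary term at the cusp $iT$ (which is where the $2A_0\log T$ counterterm and the constant term $A_0$ of Lemma \ref{lemBEY} come from), and check that the $j$-fold raising operator $R^j_{1/2-2j}$ commutes appropriately with all of this so that the bracket $[\,\cdot\,,\,\cdot\,]_j$ genuinely appears. For the case $\kappa$ even one additionally replaces $\theta_{\mathcal P^\Delta}$ by the Millson-type series $\tilde\theta_{\mathcal P^\Delta}$ and uses the Millson theta lift $\Phi^{\mathcal M}_{\Delta,r}$; the argument is formally identical but the relevant vanishing of the constant term again follows from Proposition \ref{lemconzero} together with part (2) of the Remark there. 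I expect the archimedean Whittaker computation and the precise constant-matching to be routine given \cite[Section 2]{KYEisen} and Proposition \ref{theindepend}, so the real work is the careful regularization bookkeeping, which I would carry out by following the template of \cite[Section 5]{BEY} line by line in the twisted setting.
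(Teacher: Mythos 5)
Your proposal is correct and follows essentially the same route as the paper's proof: start from Proposition \ref{procmv}, transfer differential operators via the near-self-adjointness of $R^j$ and the identity $L_1 E_{\mathcal N^\Delta}(\tau,s;1)=\tfrac{s}{2}E_{\mathcal N^\Delta}(\tau,s;-1)$, integrate by parts so that one term becomes the Petersson pairing $\langle[\theta_{\mathcal P^\Delta},E'_{\mathcal N^\Delta}(\,\cdot\,,0;1)]_j,\xi_{3/2-\kappa}\phi(f)\rangle_{Pet}=L'(\xi_{3/2-\kappa}f,U,\chi_\Delta,0)$ while the Stokes boundary term produces the constant term killed by Proposition \ref{lemconzero} when $m_0>M$. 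The only minor inaccuracy is directional: the paper moves $R^j$ off $f$ onto the theta--Eisenstein product and then, after invoking the Rankin--Cohen identity of \cite[Theorem 5.4]{BEY}, moves $L$ back onto $\phi(f)$, whereas you describe moving $L_1$ from $f$ onto the Eisenstein series; this does not affect the result.
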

\begin{proof}
Now we assume that $\kappa=2j+1$. When $\kappa=2j+2$, it can be proved by the same method.

It is known that the rasing operator is a nearly self-adjoint operator.

For any $f \in H_{1/2-2j, \bar{\tilde{\rho}}_{L}}$, by Proposition \ref{procmv}, we have
\begin{eqnarray}
&&\Phi_{\Delta, r}^j(Z(U), f)\nonumber\\
&=&\frac{\deg(Z(U))}{2}\lim_{T\rightarrow \infty}\bigg[\frac{1}{(4\pi)^j}\int_{\mathcal F_T}\langle \phi(R^j_{\frac{1}{2}-2j}f(\tau)), \theta_{\mathcal P^\Delta}(\tau)\otimes
E_{\mathcal N^\Delta}(\tau, 0; -1)  \rangle d\mu(\tau)\nonumber\\
&&-2A_0\log(T)\bigg ]\nonumber\\
%&=&\frac{2}{vol(K_T)}\lim_{T\rightarrow \infty}\bigg [\frac{1}{(4\pi)^j}\int_{\mathcal F_T}\langle \phi(f(\tau)), R^j_{-\frac{1}{2}}(\theta_{\mathcal P^\Delta}(\tau)\otimes
%E_{\mathcal N^\Delta}(\tau, 0; -1))  \rangle d\mu(\tau)\nonumber\\
%&-&2A_0\log(T)\bigg ]\nonumber\\
&=&\frac{\deg(Z(U))}{2}\lim_{T\rightarrow \infty}\big[ I_T(f)-2A_0\log(T)\big ],\nonumber
\end{eqnarray}
where the integral
\begin{equation}
I_T(f)=\frac{1}{(-4\pi)^j}\int_{\mathcal F_T}\langle \phi(f(\tau)), R^j_{-\frac{1}{2}}(\theta_{\mathcal P^\Delta}(\tau)\otimes E_{\mathcal N^\Delta}(\tau, 0; -1))  \rangle d\mu(\tau).\nonumber
\end{equation}
By \cite[Theorem 5.4]{BEY}, we have
\begin{equation}
R^j_{-\frac{1}{2}}(\theta_{\mathcal P^\Delta}(\tau)\otimes
E_{\mathcal N^\Delta}(\tau, 0; -1)) =2(-4\pi)^jL[\theta_{\mathcal P^\Delta}, E'_{\mathcal N^\Delta}(\tau, 0; 1)]_j.\nonumber
\end{equation}
Then we obtain
\begin{eqnarray}\label{equsum}
I_T(f)
&=&\int_{\mathcal F_T}\langle \phi(f(\tau)), L[\theta_{\mathcal P^\Delta}, E'_{\mathcal N^\Delta}(\tau, 0; 1)]_j\rangle d\mu(\tau)\nonumber\\
&=&2\int_{\mathcal F_T}L\langle \phi(f(\tau)), [\theta_{\mathcal P^\Delta}, E'_{\mathcal N^\Delta}(\tau, 0; 1)]_j\rangle d\mu(\tau)\nonumber\\
&-&2\int_{\mathcal F_T}\langle L\phi(f(\tau)), [\theta_{\mathcal P^\Delta}, E'_{\mathcal N^\Delta}(\tau, 0; 1)]_j\rangle d\mu(\tau).
\end{eqnarray}
By the  Stokes’ theorem
and  Proposition \ref{lemconzero}, we obtain
\begin{eqnarray}
&&\lim_{T\rightarrow \infty}\bigg[ \int_{\mathcal F_T}L\langle \phi(f(\tau)), [\theta_{\mathcal P^\Delta}, E'_{\mathcal N^\Delta}(\tau, 0; 1)]_j\rangle d\mu(\tau)-A_0\log(T)\bigg ]\nonumber\\
&=&\lim_{T\rightarrow \infty}\int_{iT}^{iT+1}\langle \phi(f), [\theta_{\mathcal P^\Delta}, \mathcal E_{\mathcal N^\Delta}(~, 0; 1)]_j\rangle d\tau\nonumber\\
&=&CT(\langle \phi(f), [\theta_{\mathcal P^\Delta}, \mathcal{E}_{\mathcal N^\Delta}\rangle]_j)=0.\nonumber
\end{eqnarray}
The  second summand  in (\ref{equsum}) can be written as the Petersson product. Then we have
\begin{eqnarray}
\Phi_{\Delta, r}^j(Z(U), f)&=&-\deg(Z(U))
\big\langle[\theta_{\mathcal P^\Delta}, E'_{\mathcal N^\Delta}(~, 0; 1)]_j, ~\xi_{\frac{1}{2}-2j}\phi(f)\big\rangle_{Pet}\nonumber\\
&=&-\deg(Z(U))L'(\xi_{\frac{1}{2}-2j}f, U, \chi_\Delta, 0).\nonumber
\end{eqnarray}
Thus, we complete the proof.
\end{proof}
We have
\begin{eqnarray}
\Phi_{\Delta,r}^j(Z(U), f)&=&\frac{2}{w_k}\sum_{(z,h)\in supp(Z(U))}\Phi_{\Delta, r}^j(z, h, f)\nonumber\\
&=&\frac{2}{w_k}\sum_{h \in k^\times\setminus \A^\times_{k, f}/ \hat {\mathcal O}_k^\times}\chi([h])\Phi_{\Delta,r}^j(\gamma^{-1}z_U^\pm, f),
\end{eqnarray}

\subsection{Shimura correspondence }

Let $m_0 \in \Q$ and $\mu_0 \in L^\sharp/L$ such that $m_0 \in Q(\mu_0)+\Z$. Assume that
$D_0=-4N\sgn(\Delta)m_0$ is a fundamental discriminant.

The Shimura lift
$Sh_{m_0,\mu_{0}} :S_{\frac{1}{2}+\kappa, \tilde{\rho}_L}\rightarrow S_{2\kappa}(N) $(see \cite{GKZ}) is given by
\begin{equation}
g=\sum_{\mu }\sum_{m>0} b(m, \mu)q^me_\mu \mapsto Sh_{m_0,\mu_{0}}(g)
=\sum_{n=1}^\infty\sum_{d \mid n}d^{\kappa-1}\big(\frac{D_0}{d}\big)b(\frac{m_0n^2}{d^2}, \frac{n}{d}\mu_0)q^n,\nonumber
\end{equation}
where $\kappa\in \Z\geq 1$.

%The Shimura lift
%$Sh_{m_0,\mu_{0}} :S_{\frac{3}{2}+2j, \tilde{\rho}_L}\rightarrow S_{2+4j}(N) $(see \cite{GKZ}) is given by
%\begin{equation}
%g=\sum_{\mu }\sum_{m>0} b(m, \mu)q^me_\mu \mapsto Sh_{m_0,\mu_{0}}(g)
%=\sum_{n=1}^\infty\sum_{d \mid n}d^{2j}\big(\frac{D_0}{d}\big)b(\frac{m_0n^2}{d^2}, \frac{n}{d}\mu_0)q^n.\nonumber
%\end{equation}
%$Sh_{m_0,\mu_{0}} :S_{\frac{5}{2}+2j, \tilde{\rho}_L}\rightarrow S_{4+4j}(N) $
%\begin{equation}
%g=\sum_{\mu }\sum_{m>0} b(m, \mu)q^me_\mu \mapsto Sh_{m_0,\mu_{0}}(g)
%=\sum_{n=1}^\infty\sum_{d \mid n}d^{2j+1}\big(\frac{D_0}{d}\big)b(\frac{m_0n^2}{d^2}, \frac{n}{d}\mu_0)q^n.\nonumber
%\end{equation}
The Fourier coefficients of $Sh_{m_0,\mu_{0}}(g)$ is denoted by $B(n)$.
The twisted L-functions is defined by
\begin{equation}
L(Sh_{m_0,\mu_{0}}(g), \chi_{\Delta}, s )=\sum_{n>0}\bigg(\frac{\Delta}{n}\bigg)B(n)n^{-s}.
\end{equation}
Then we have the following result.
\begin{lemma}\label{lemtder}
Assume that $D=D_0\Delta$ is a fundamental discriminant.  For any $g\in S_{\frac{1}{2}+\kappa, \tilde{\rho}_L}$ with real coefficients,
%\begin{eqnarray}
%L(g, U, \chi_\Delta, s)= \frac{\Gamma(s+1+2j)\Gamma(\frac{1}{2}+2j)L(Sh_{m_0,\mu_{0}}(g), \chi_{\Delta}, s+1+2j)}{2^{2s+4j}\pi^{\frac{s+1}{2}+2j}m_0^{\frac{s+1}{2}+j}\Gamma(2j+1)\Gamma(\frac{s}{2}+1)L(\chi_D, s+1)}.\nonumber
%\end{eqnarray}
\begin{eqnarray}
L(g, U, \chi_\Delta, s)= (-1)^{\kappa-1}\frac{\Gamma(s+\kappa)\Gamma(\kappa-\frac{1}{2})L(Sh_{m_0,\mu_{0}}(g), \chi_{\Delta}, s+\kappa)}{2^{2s+2\kappa-2}\pi^{\frac{s-1}{2}+\kappa}m_0^{\frac{s+\kappa}{2}}\Gamma(\kappa)\Gamma(\frac{s}{2}+1)L(\chi_D, s+1)}.\nonumber
\end{eqnarray}
Moreover,
%\begin{eqnarray}
%L'(g, U, \chi_\Delta, 0)=\frac{2^{2-4j}\Gamma(\frac{1}{2}+2j)\sqrt{N |\Delta|}}
%{ m_0^j\deg(Z(U))\pi^{\frac{3}{2}+2j}}
%L'(Sh_{m_0,\mu_{0}}(g), \chi_{\Delta}, 1+2j).\nonumber
%\end{eqnarray}
%Especially,
\begin{eqnarray}
L'(g, U, \chi_\Delta, 0)=(-1)^{\kappa-1}\frac{2^{4-2\kappa}\Gamma(\kappa-\frac{1}{2})\sqrt{N |\Delta|}}
{ m_0^{\frac{\kappa-1}{2}}\deg(Z(U))\pi^{\frac{1}{2}+\kappa}}
L'(Sh_{m_0,\mu_{0}}(g), \chi_{\Delta}, \kappa).\nonumber
\end{eqnarray}
\end{lemma}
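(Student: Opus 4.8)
The plan is to reduce everything to the Dirichlet-series formula of Lemma~\ref{lemdirichlet} together with a short manipulation of the Shimura lift's $L$-series. Since $g$ has real Fourier coefficients, $\overline{b(k^2m_0,k\mu_0)}=b(k^2m_0,k\mu_0)$, so Lemma~\ref{lemdirichlet} reads $L(g,U,\chi_\Delta,s)=C(s)\sum_{k\ge 1}(\tfrac{\Delta}{k})b(k^2m_0,k\mu_0)k^{-s-\kappa}$, where $C(s)=\frac{(-1)^{\kappa-1}\Gamma(\kappa-\frac12)\Gamma(s+\kappa)}{2^{2s+2\kappa-2}\pi^{\frac{s-1}{2}+\kappa}m_0^{\frac{s+\kappa}{2}}\Gamma(\frac s2+1)\Gamma(\kappa)}$ (using $(\kappa-1)!=\Gamma(\kappa)$) is holomorphic and nonvanishing near $s=0$. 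First I would identify the remaining series with a ratio of $L$-functions: writing $L(Sh_{m_0,\mu_0}(g),\chi_\Delta,w)=\sum_{n\ge1}(\tfrac{\Delta}{n})B(n)n^{-w}$, substituting $n=de$ into $B(n)=\sum_{d\mid n}d^{\kappa-1}(\tfrac{D_0}{d})b(\tfrac{m_0n^2}{d^2},\tfrac nd\mu_0)$, and using multiplicativity of the Kronecker symbol, $(\tfrac{\Delta}{de})=(\tfrac{\Delta}{d})(\tfrac{\Delta}{e})$ and $(\tfrac{\Delta}{d})(\tfrac{D_0}{d})=(\tfrac{\Delta D_0}{d})=(\tfrac{D}{d})=\chi_D(d)$ --- this last identity is precisely where the hypothesis that $D=\Delta D_0$ is a fundamental discriminant enters --- the double sum factors (absolutely for $\Re w$ large, then by analytic continuation) as $L(\chi_D,w-\kappa+1)\sum_{e\ge1}(\tfrac{\Delta}{e})b(m_0e^2,e\mu_0)e^{-w}$. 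Putting $w=s+\kappa$ gives $\sum_{k\ge1}(\tfrac{\Delta}{k})b(k^2m_0,k\mu_0)k^{-s-\kappa}=L(\chi_D,s+1)^{-1}L(Sh_{m_0,\mu_0}(g),\chi_\Delta,s+\kappa)$, hence the first displayed identity of the lemma.

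For the derivative identity I would invoke the incoherence built into the definition of $L(g,U,\chi_\Delta,s)$. Since $\mathcal N^\Delta$ is negative definite of rank two, $E_{\mathcal N^\Delta}(\tau,s;1)$ is an incoherent Eisenstein series vanishing at $s_0=0$, so the Petersson pairing defining $L(g,U,\chi_\Delta,s)$ vanishes at $s=0$; thus $L(g,U,\chi_\Delta,0)=0$. Feeding the functional identity $L(g,U,\chi_\Delta,s)=C(s)L(\chi_D,s+1)^{-1}L(Sh_{m_0,\mu_0}(g),\chi_\Delta,s+\kappa)$ into this, and using $C(0)\neq0$ together with $0<L(\chi_D,1)<\infty$, forces $L(Sh_{m_0,\mu_0}(g),\chi_\Delta,\kappa)=0$ as well. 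Differentiating the identity at $s=0$, every term in which the derivative lands on $C(s)$ or on $L(\chi_D,s+1)^{-1}$ is annihilated by this vanishing, leaving $L'(g,U,\chi_\Delta,0)=C(0)\,L(\chi_D,1)^{-1}\,L'(Sh_{m_0,\mu_0}(g),\chi_\Delta,\kappa)$.

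It then remains to simplify the constant. One computes $C(0)=(-1)^{\kappa-1}\Gamma(\kappa-\tfrac12)\sqrt{\pi}\,2^{2-2\kappa}\pi^{-\kappa}m_0^{-\kappa/2}$, and the Dirichlet class number formula for $k=\Q(\sqrt D)$ gives $L(\chi_D,1)=\tfrac{2\pi h_k}{w_k\sqrt{|D|}}$; combined with $\deg(Z(U))=\tfrac{4h_k}{w_k}$ this yields $L(\chi_D,1)^{-1}=\tfrac{2\sqrt{|D|}}{\pi\deg(Z(U))}$. Substituting $|D|=4N|\Delta|m_0$ and multiplying out produces exactly $(-1)^{\kappa-1}\frac{2^{4-2\kappa}\Gamma(\kappa-\frac12)\sqrt{N|\Delta|}}{m_0^{(\kappa-1)/2}\deg(Z(U))\pi^{\kappa+1/2}}$, as claimed. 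I expect no serious obstacle; the two points deserving care are the absolute-convergence and analytic-continuation justification for the double Dirichlet-series rearrangement (together with the coprimality of $\Delta$ and $D_0$ packaged into the fundamental-discriminant hypothesis), and the implication $L(g,U,\chi_\Delta,0)=0\Rightarrow L(Sh_{m_0,\mu_0}(g),\chi_\Delta,\kappa)=0$, which is the one genuinely conceptual input --- everything else is bookkeeping with gamma factors and powers of $2$, $\pi$ and $m_0$.
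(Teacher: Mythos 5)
Your argument is correct and takes essentially the same route as the paper: factor $L(Sh_{m_0,\mu_0}(g),\chi_\Delta,s+\kappa)$ into $L(\chi_D,s+1)$ times the Dirichlet series from Lemma~\ref{lemdirichlet}, then combine. You make explicit what the paper's terse ``By Lemma~\ref{lemdirichlet}, we obtain the result'' leaves implicit, namely that the incoherence of $E_{\mathcal N^\Delta}$ forces $L(g,U,\chi_\Delta,0)=0$ (hence $L(Sh_{m_0,\mu_0}(g),\chi_\Delta,\kappa)=0$), so that only the $L'(Sh_{m_0,\mu_0}(g),\chi_\Delta,\kappa)$ term survives after differentiating the first identity, and the bookkeeping with $C(0)$, the class number formula, and $\deg(Z(U))=4h_k/w_k$ produces exactly the stated constant.
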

\begin{proof}
%It is easy to check that
%\begin{eqnarray}
%L(Sh_{m_0,\mu_{0}}(g), \chi_{\Delta},s )&=&\sum_{n>0}\sum_{d \mid n}d^{2j}\big(\frac{\Delta}{n}\big)\big(\frac{D_0}{d}\big)b(\frac{m_0n^2}{d^2}, \frac{n}{d}\mu_0)n^{-s}\nonumber\\
%&=&\sum_{n>0}\sum_{dl= n}d^{2j}\big(\frac{\Delta}{dl}\big)\big(\frac{D_0}{d}\big)b(m_0l^2, l\mu_0)(dl)^{-s}\nonumber\\
%&=&L(\chi_{D},s-2j)\sum_{n>0}\big(\frac{\Delta}{n}\big)b(m_0n^2, n\mu_0)n^{-s}.
%\end{eqnarray}
%By Lemma \ref{lemdirichlet},
%\begin{eqnarray}
%L(g, U, \chi_\Delta, s)= \frac{\Gamma(s+1+2j)\Gamma(\frac{1}{2}+2j)L(Sh_{m_0,\mu_{0}}(g), \chi_{\Delta}, s+1+2j)}{2^{2s+4j}\pi^{\frac{s+1}{2}+2j}m_0^{\frac{s+1}{2}+j}\Gamma(2j+1)\Gamma(\frac{s}{2}+1)L(\chi_D, s+1)}.\nonumber
%\end{eqnarray}
It is easy to see that
\begin{eqnarray}
L(Sh_{m_0,\mu_{0}}(g), \chi_{\Delta},s )&=&\sum_{n>0}\sum_{d \mid n}d^{\kappa -1}\big(\frac{\Delta}{n}\big)\big(\frac{D_0}{d}\big)b(\frac{m_0n^2}{d^2}, \frac{n}{d}\mu_0)n^{-s}\nonumber\\
&=&L(\chi_{D},s-\kappa+1)\sum_{n>0}\big(\frac{\Delta}{n}\big)b(m_0n^2, n\mu_0)n^{-s}.
\end{eqnarray}
By Lemma \ref{lemdirichlet},
 we obtain the result.
%\begin{eqnarray}
%&&L(Sh_{m_0,\mu_{0}}(g), \chi_{\Delta},s+2j+2 )\nonumber\\
%&&=L(\chi_{D},s+1)\sum_{n>0}\big(\frac{\Delta}{n}\big)b(m_0n^2, n\mu_0)n^{-(s+2j+2)}.
%\end{eqnarray}
%\begin{eqnarray}
%L(g, U, \chi_\Delta, s)=
%\frac{\Gamma(2j+\frac{3}{2})\Gamma(s+2j+2)L(Sh_{m_0,\mu_{0}}(g), \chi_{\Delta},s+2j+2 )}{2^{2s+4j+2}\pi^{\frac{s+3}{2}+2j} m_0^{\frac{s}{2}+j+1}\Gamma(2j+2)\Gamma(\frac{s}{2}+1)L(\chi_{D},s+1)}.\nonumber
%\end{eqnarray}
%Thus we obtain the result.
\end{proof}

\subsection{ L-function of newforms}
Let $S^\varepsilon_{2\kappa}(N)$ denote the
space of cusp forms $G$ of weight $2\kappa$ for $\Gamma_0(N)$, with $$G |W_N=\varepsilon (-1)^{\kappa} G$$  under the Fricke involution $W_N$, where $\varepsilon=\pm 1$.

Let $\mathcal{A} \in \Cl_K$ be a given ideal class and $r_{\mathcal{A}}(n)$ be the number of integral ideals of norm $n$
in this class. The Dirichlet series is defined in \cite[Section IV]{GZ},
\begin{equation}
L_{\mathcal{A} }(G, s)=\sum_{(n, N)=1}\big(\frac{D}{n}\big)n^{-2s+2\kappa-1}\sum_{n=1}^{\infty}a(n)r_{\mathcal{A}}(n)n^{-s}.
\end{equation}
%\begin{equation}
%L_{\mathcal{A} }(G, s)=\sum_{(n, N)=1}\big(\frac{D}{n}\big)n^{-2s+4j+1}\sum_{n=1}^{\infty}a(n)r_{\mathcal{A}}(n)n^{-s}.
%\end{equation}
Let $\chi$ be a  character of $\Cl_K$, and the associated $L$-function is defined by
\begin{equation}
L_K(G, \chi, s)=\sum_{\mathcal{A} \in \Cl_K}\chi(\mathcal{A} )L_{\mathcal{A} }(G, s),
\end{equation}
which  has an analytic continuation and satisfies the function equation
%\begin{equation}\label{equminus}
%L^*_K(G, \chi, s)=\chi_\Delta(-N)\varepsilon L^*_K(G, \chi, 2+4j-s).
%\end{equation}
\begin{equation}\label{equminus}
L^*_K(G, \chi, s)=\chi_\Delta(-N)\varepsilon L^*_K(G, \chi, 2\kappa-s),
\end{equation}
where $L^*_K(G, \chi, s)=(2\pi)^{-2s}N^s |D|^s\Gamma(s)^2L_K(G, \chi, s)$.

%it has an analytic continuation with the sign $\chi_\Delta(-N)\varepsilon$.
When $\Delta>0$, the Shimura lift $G$ belongs to the space  $ S_{2\kappa}^{new,-}(N)$.  The sign is given by $$\chi_\Delta(-N)\varepsilon=-\sgn(\Delta)=-1.$$
When $\Delta <0$, the Shimura lift $G$ belongs to the space
 $ S_{2\kappa}^{new,+}(N)$. The sign is given by  $$\chi_\Delta(-N)\varepsilon=\sgn(\Delta)=-1.$$
%It implies  that $L(G, \chi_{\Delta}, s)$ vanishes at  $s=1+J$.
It implies  that  $L_K(G, \chi, s)$ vanishes at  $s=\kappa$.

Assume that $\chi$ is a genus character associated to the decomposition  $D=D_0\Delta$, then
\begin{equation}\label{equ5}
L_K(G, \chi,s)=L(G, \chi_{D_0}, s)L(G, \chi_{\Delta}, s),
\end{equation}
where $\chi_{\Delta}=\big(\frac{\Delta}{}\big)$ and $\chi_{D_0}=\big(\frac{D_0}{}\big)$.

According to \cite{SZ}, \cite{GKZ}, and \cite{Sk2}, the space  $S^{new}_{1/2+\kappa, \rho_L}$
%is isomorphic to space $J_{2+2j, N}^{new}$ of Jacobi newforms of weight $2+2j$ and index $N$. Then $S^{new}_{3/2+2j, \rho_L}$
is isomorphic to the $S_{2\kappa}^{new, -}(N)$ as a module over the Hecke algebra, which  is given by the Shimura correspondence.  Similarly, $S^{new}_{1/2+\kappa, \bar\rho_L}$ is isomorphic to the space $S_{2\kappa}^{new, +}(N)$.

For any fundamental discriminant $D_0$, the sign of $L(G, \chi_{D_0}, s)$  is given by $\chi_\Delta(-N)\varepsilon =1$.
Then we have the following result
\begin{lemma}\label{lemderiv}
Let $G \in S_{2\kappa}^{new}(N)$ be the Shimura lift of  $g \in S^{new}_{1/2+\kappa, \tilde{\rho}_L}$. Then there exists a
fundamental discriminant $D_0=-4Nm\sgn(\Delta)$, such that
\begin{equation}\label{equ6}
L'_K(G, \chi,\kappa)=L(G, \chi_{D_0}, \kappa)L'(G, \chi_{\Delta}, \kappa), ~L(G, \chi_{D_0}, \kappa)\neq 0.
\end{equation}
\end{lemma}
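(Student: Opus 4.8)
\textbf{Proof proposal for Lemma \ref{lemderiv}.} The statement has two parts: the factorization \eqref{equ6} of $L'_K(G,\chi,\kappa)$ and the non-vanishing of $L(G,\chi_{D_0},\kappa)$ for a suitable choice of $D_0$. The factorization is immediate from \eqref{equ5} together with the observation on signs made just before the lemma: since $\chi$ is the genus character attached to the decomposition $D=D_0\Delta$, we have $L_K(G,\chi,s)=L(G,\chi_{D_0},s)L(G,\chi_\Delta,s)$, and the functional equation \eqref{equminus}–\eqref{equ5} forces $L(G,\chi_\Delta,s)$ to vanish at $s=\kappa$ (root number $-1$) while $L(G,\chi_{D_0},s)$ has root number $+1$ there. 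Hence differentiating the product at $s=\kappa$ kills the term where the derivative hits $L(G,\chi_{D_0},\cdot)$, and we obtain $L'_K(G,\chi,\kappa)=L(G,\chi_{D_0},\kappa)\,L'(G,\chi_\Delta,\kappa)$. So the plan is: first dispatch the factorization in one line from the already-recorded sign computation, then spend the real effort on producing $D_0$.

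For the non-vanishing, the plan is to invoke a non-vanishing theorem for the central value of quadratic twists of a fixed $\GL_2$ $L$-function in the ``even'' family. Concretely, $G\in S_{2\kappa}^{new}(N)$ is a fixed normalized newform, and as $D_0$ ranges over fundamental discriminants of the shape $-4Nm\,\sgn(\Delta)$ with $m$ chosen so that $D_0\equiv\square\pmod{4N}$ (these are exactly the discriminants produced by the Heegner hypothesis compatible with the Shimura correspondence set-up in Section~\ref{Sec7}), the twisted $L$-functions $L(G,\chi_{D_0},s)$ all have root number $\chi_\Delta(-N)\varepsilon=+1$ at $s=\kappa$. By the theorems of Waldspurger and Kohnen–Zagier, and their higher-weight refinement (Kohnen's theory of the $+$-space, combined with the Shimura correspondence $S^{new}_{1/2+\kappa,\bar\rho_L}\cong S^{new,+}_{2\kappa}(N)$ recorded just above), the central values $L(G,\chi_{D_0},\kappa)$ are, up to an explicit nonzero archimedean and arithmetic factor, proportional to the squares of the Fourier coefficients $b(m,\mu_0)$ of the half-integral weight Hecke eigenform $g$ matching $G$. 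Since $g\neq 0$, infinitely many of these coefficients are nonzero; choosing $m$ (equivalently $D_0$) so that the corresponding coefficient $b(m,\mu_0)\neq 0$ and $D_0$ is a fundamental discriminant of the required congruence type then gives $L(G,\chi_{D_0},\kappa)\neq 0$. Alternatively, one may cite directly the analytic non-vanishing results of Bump–Friedberg–Hoffstein / Murty–Murty / Iwaniec (for $\kappa=1$) and their generalizations (Ono–Skinner, Kohnen) to higher weight, which guarantee infinitely many fundamental discriminants in any fixed arithmetic progression with $L(G,\chi_{D_0},\kappa)\neq 0$; intersecting that infinite set with the congruence conditions $D_0=-4Nm\,\sgn(\Delta)$, $D_0\equiv\square\pmod{4N}$ (a non-empty union of progressions) leaves infinitely many admissible $D_0$.

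I would organize the write-up as follows. First, state and use \eqref{equ5} and the sign discussion to get the factorization. Second, recall that in the setup of this section $m_0$ (hence $D_0$) is a free parameter subject only to $D_0=-4N\sgn(\Delta)m_0$ being a fundamental discriminant with $D_0\equiv R^2\pmod{4N}$; these conditions define infinitely many $m_0$. Third, apply the chosen non-vanishing input to select $m_0=m$ with $L(G,\chi_{D_0},\kappa)\neq 0$. A small compatibility check is needed: $D=D_0\Delta$ must still be a fundamental discriminant, which requires $(\Delta,D_0)=1$; since $(\Delta,2N)=1$ is already assumed elsewhere in Section~\ref{Sec7}, one only needs to avoid the finitely many primes dividing $\Delta$ when choosing $m$, and the non-vanishing family is robust enough to permit this.

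\textbf{Main obstacle.} The genuinely nontrivial ingredient is the non-vanishing statement $L(G,\chi_{D_0},\kappa)\neq 0$ for at least one (indeed infinitely many) admissible $D_0$. For $\kappa=1$ this is classical (Waldspurger/Kohnen–Zagier express the value as a square of a half-integral weight coefficient, and analytic non-vanishing theorems guarantee the family is not identically zero); for $\kappa>1$ one must be slightly careful about which higher-weight Kohnen–Zagier / Kohnen–Skoruppa type formula and which higher-weight non-vanishing theorem to cite, and about matching normalizations so that the nonzero proportionality constant really is nonzero at $s=\kappa$. I expect the bulk of the argument's length to be this bookkeeping — verifying that the half-integral weight form $g$ corresponding to $G$ has a nonzero coefficient in the relevant class $\mu_0$ at an index $m$ giving a fundamental discriminant of the correct congruence class — rather than any new analytic estimate. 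The factorization part is essentially immediate.
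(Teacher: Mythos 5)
Your proposal is correct and takes the same approach the paper implicitly uses; in fact the paper states this lemma without an explicit proof, precisely because the factorization follows immediately from (\ref{equ5}) together with the sign discussion directly above it ($L(G,\chi_\Delta,\kappa)=0$ kills the cross term in the product rule), and the non-vanishing of $L(G,\chi_{D_0},\kappa)$ for suitable $D_0$ is the same BFH/Ono--Skinner plus Waldspurger-type input that the paper invokes in the proof of Lemma~\ref{lemexist}. Your remark that one must keep $(\Delta,D_0)=1$ so that $D=D_0\Delta$ remains a fundamental discriminant is a real (if small) compatibility check, and your observation that the analytic non-vanishing results produce infinitely many admissible $D_0$ in the required congruence class is exactly what is needed.
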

%\begin{proof}
%
%The same argument as \cite{BO},  there exists a
%fundamental discriminant $D_0=-4Nnsgn(\Delta)$, such that $L(G, \chi_{D_0}, 1)$ is non-zero.
%
%It is known that $L(G, \chi_{\Delta}, s)$ vanishes at $s=1$. By (\ref{equ5}),
% we obtain the result.
%\end{proof}

For a normalized newform  $G \in  S_{2\kappa}^{new}(N)$, we denote by $F_G$ the total real number field generated by the eigenvalues of $G$. There exists a newform  $g \in S^{new}_{\frac{1}{2}+\kappa, \tilde{\rho}_L}$ that corresponds to $G$ under the Shimura
correspondence. We normalize $g$ such that all its coefficients are contained in $F_G$.
Therefore,
\begin{equation}\label{equshimura}
L(Sh_{m_0,\mu_{0}}(g), \chi_{\Delta},s )=b(m_0, \mu_0)L(G, \chi_{\Delta},s).
\end{equation}

The classical Shimura lift from the weight $\frac{3}2$ cusp forms  to weight $2$ cusp forms is important in several areas, including Tunnell’s congruent number problem. 
Recently, Qin provided a new method to study this problem in \cite{Qin}.

%We have the  following result
%\begin{lemma}
%\begin{equation}
%L'(\xi_{\frac{1}{2}}(f), U,  \chi_{\Delta}, 0)=\frac{\sqrt{N\mid \Delta\mid }\vol(K_T)}{\pi}b(m_0, \mu_0)L'(G, \chi_{\Delta},1).
%\end{equation}
%\end{lemma}

\begin{lemma}\cite[Lemma 7.3]{BO}
There is a
%$f \in H_{\frac{1}{2}-2j, \bar{\tilde{\rho}}_L}(F_G)$
$f \in H_{\frac{3}{2}-\kappa, \bar{\tilde{\rho}}_L}(F_G)$such that
%$$\xi_{\frac{1}{2}-2j}(f)=\parallel g\parallel^{-2} g .$$
$$\xi_{\frac{3}{2}-\kappa}(f)=\parallel g\parallel^{-2} g .$$
Here the coefficients of the principal part $P_f$ belong to $F_G$ and $\parallel g\parallel$ denotes the Petersson norm.
\end{lemma}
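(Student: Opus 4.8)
The plan is to use the surjectivity of $\xi_{3/2-\kappa}$ together with the Bruinier--Funke duality between harmonic Maass forms and cusp forms, reducing the desired $F_G$-rationality of the principal part to the $F_G$-rationality of one explicit linear functional on $S_{\frac12+\kappa,\tilde\rho_L}$. By the exact sequence recalled in Section~\ref{Sec1},
$$0\longrightarrow M^!_{3/2-\kappa,\bar{\tilde\rho}_L}\longrightarrow H_{3/2-\kappa,\bar{\tilde\rho}_L}\xrightarrow{\ \xi_{3/2-\kappa}\ }S_{\frac12+\kappa,\tilde\rho_L}\longrightarrow 0,$$
a preimage $f$ of $\|g\|^{-2}g$ certainly exists and is unique up to adding a weakly holomorphic form, so the only content of the lemma is to arrange the principal part to lie in $F_G$. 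Recall the Bruinier--Funke pairing: if $f$ has holomorphic part $f^+=\sum c^+(n,\mu)q^n e_\mu$ and $h=\sum a_h(n,\mu)q^n e_\mu\in S_{\frac12+\kappa,\tilde\rho_L}$, then $\langle \xi_{3/2-\kappa}(f),h\rangle_{Pet}=\sum_{n\le 0,\,\mu}c^+(n,\mu)\,a_h(-n,\mu)$, a sum that only sees the principal part $P_f$ of $f$. Since the Petersson product is non-degenerate on $S_{\frac12+\kappa,\tilde\rho_L}$, the equation $\xi_{3/2-\kappa}(f)=\|g\|^{-2}g$ is equivalent to the single requirement that the functional $\ell_{P_f}\colon h\mapsto\sum_{n\le 0,\mu}c^+(n,\mu)a_h(-n,\mu)$ attached to $P_f$ equals $\alpha_g\colon h\mapsto\|g\|^{-2}\langle g,h\rangle_{Pet}$.

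Next I would check that $\alpha_g$ is defined over $F_G$ with respect to the natural rational structure $S_{\frac12+\kappa,\tilde\rho_L}(\mathbb{Q})$ given by rational $q$-expansions and its $F_G$-linear dual. By the newform theory invoked in Section~\ref{Sec7} --- the Hecke-module isomorphism $S^{new}_{\frac12+\kappa,\tilde\rho_L}\cong S^{new}_{2\kappa}(N)$ and strong multiplicity one --- the Hecke eigensystem of the newform $g$ occurs in $S_{\frac12+\kappa,\tilde\rho_L}$ only through the Galois orbit $\{g^\sigma\}$, and distinct newforms are Petersson-orthogonal. Hence a rational $h$ has a well-defined ``$g$-component'' $\sum_\sigma\sigma(\lambda)g^\sigma$ with $\lambda\in F_G$, and $\|g\|^{-2}\langle g,h\rangle_{Pet}=\lambda\in F_G$; thus $\alpha_g$ carries $S_{\frac12+\kappa,\tilde\rho_L}(\mathbb{Q})$ into $F_G$, i.e.\ it is an $F_G$-point of the dual space, with $\alpha_g(g)=1$.

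Finally I would descend. The assignment $P\mapsto\ell_P$ from the $\mathbb{Q}$-vector space of Fourier polynomials $\sum_{n\le 0,\mu}c(n,\mu)q^n e_\mu$ to $S_{\frac12+\kappa,\tilde\rho_L}^\vee$ is $\mathbb{Q}$-linear, and it is surjective: for $s\in S_{\frac12+\kappa,\tilde\rho_L}$ choose $f$ with $\xi_{3/2-\kappa}(f)=s$, then $\ell_{P_f}=\langle s,\cdot\rangle_{Pet}$, and these exhaust the dual by perfectness of the Petersson product. (By the Borcherds--Bruinier--Funke duality the kernel of $P\mapsto\ell_P$ is precisely the set of principal parts of forms in $M^!_{3/2-\kappa,\bar{\tilde\rho}_L}$.) A $\mathbb{Q}$-linear surjection of $\mathbb{Q}$-vector spaces stays surjective after base change to $F_G$, so there is an $F_G$-rational Fourier polynomial $P$ with $\ell_P=\alpha_g$. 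Choosing any $f\in H_{3/2-\kappa,\bar{\tilde\rho}_L}$ with $P_f=P$ --- e.g.\ a combination of Hejhal--Poincar\'e series as in Section~\ref{Sec4}, or any $\xi$-preimage adjusted by a weakly holomorphic form --- we obtain $\xi_{3/2-\kappa}(f)=\|g\|^{-2}g$ from $\ell_P=\alpha_g$, while $P_f=P$ has coefficients in $F_G$, so $f\in H_{3/2-\kappa,\bar{\tilde\rho}_L}(F_G)$. The main obstacle is the middle step: proving that $\alpha_g$, equivalently the $g$-isotypic projection on $S_{\frac12+\kappa,\tilde\rho_L}$, is genuinely $F_G$-rational rather than only defined over some larger field. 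This rests on multiplicity one for the eigensystem of the half-integral weight newform $g$ inside the full space attached to $\tilde\rho_L$ at level $N$, so that the ``$g$-component'' and the number $\lambda$ above are unambiguous; one imports this from the Shimura/Kohnen/Skoruppa--Zagier correspondence in exactly the form matching new spaces as Hecke modules, after which the rest is linear algebra over fields together with the standard duality theorems.
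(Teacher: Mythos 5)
The paper does not actually prove this lemma; it is quoted from Bruinier--Ono \cite[Lemma~7.3]{BO}, so there is no in-paper argument to compare against. Your proposal is a sound reconstruction of the standard argument behind that citation: the Bruinier--Funke pairing turns $\xi_{3/2-\kappa}(f)=\|g\|^{-2}g$ into a linear condition $\ell_{P_f}=\alpha_g$ on the principal part; the functional $\alpha_g$ is $F_G$-rational because the $g$-isotypic projector is built from Hecke operators over $F_G$ with multiplicity one, imported from integral weight through the Skoruppa--Zagier/Kohnen correspondence; and a flatness argument descends an $F_G$-rational preimage from the $\C$-surjectivity. You correctly single out multiplicity one as the load-bearing non-formal input.

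One step needs tightening before this would be a complete proof. The functional $\ell_P$ pairs the principal part against cusp forms, which have $a_h(0,\mu)=0$, so $\ell_P=\alpha_g$ says nothing about the constant term $c^+(0,\mu)$; yet the paper's definition of the principal part $P_f=\sum_{n\le 0,\mu}c(n,\mu)q^n e_\mu$ includes it, and the lemma asserts \emph{all} its coefficients lie in $F_G$. Moreover, you cannot simply ``choose any $f$ with $P_f=P$'' for a prescribed constant term: when $f$ is built from Hejhal--Poincar\'e series $F_{n,\mu}$, its constant term is whatever the unspecified constants $C$ in their principal parts add up to, not a free parameter. To close the gap you need the additional observation that, once $\xi_{3/2-\kappa}(f)=\|g\|^{-2}g$ is fixed, $c^+(0,\mu)$ is determined modulo the image of $M^!_{3/2-\kappa,\bar{\tilde\rho}_L}$ under the principal-part map by a $\Q$-rational relation on the negative-index coefficients, and that this image is a $\Q$-rational subspace; the descent then forces $c^+(0,\mu)\in F_G$ as well. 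A secondary cosmetic point: the source space of ``Fourier polynomials'' should be restricted from the outset to those compatible with $\bar{\tilde\rho}_L$ (the congruence $n\equiv -\sgn(\Delta)Q(\mu)\pmod{\Z}$ and the $\mu\mapsto-\mu$ symmetry); since these constraints are $\Q$-rational the rest of your argument is unaffected. Both points are handled in \cite{BO}; with them added your argument matches the reference.
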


%By Lemma \ref{lemtder} and Theorem \ref{theotwistedvalue},
%\begin{proposition}\label{proderivative}
%
%%\begin{eqnarray}
%%\Phi^j_{\Delta,r}(Z(U), f)=-
%%\frac{2^{2-4j}\Gamma(\frac{1}{2}+2j)\sqrt{N |\Delta|}}
%%{ m_0^j\pi^{\frac{3}{2}+2j}\parallel g \parallel^2}b(m_0, \mu_0)L'(G, \chi_{\Delta},2j+1).\nonumber
%%\end{eqnarray}
%\begin{eqnarray}
%\Phi^j_{\Delta,r}(Z(U), f)=(-1)^{J+1}
%\frac{2^{2-2J}\Gamma(\frac{1}{2}+J)\sqrt{N |\Delta|}}
%{ m_0^\frac{J}{2}\pi^{\frac{3}{2}+J}}L'(Sh_{m_0,\mu_{0}}(\xi_{\frac{1}{2}-J}f), \chi_{\Delta}, 1+J).\nonumber
%\end{eqnarray}
%\end{proposition}
%By Theorem \ref{theotwistedvalue}, Lemma \ref{lemtder} and (\ref{equshimura}),

Thus we have the following result.
\begin{proposition}\label{proderivative}
Let the notation be as above. Then
%\begin{eqnarray}
%\Phi^j_{\Delta,r}(Z(U), f)=-
%\frac{2^{2-4j}\Gamma(\frac{1}{2}+2j)\sqrt{N |\Delta|}}
%{ m_0^j\pi^{\frac{3}{2}+2j}\parallel g \parallel^2}b(m_0, \mu_0)L'(G, \chi_{\Delta},2j+1).\nonumber
%\end{eqnarray}
\begin{eqnarray}
\Phi^j_{\Delta,r}(Z(U), f)=(-1)^{\kappa}
\frac{2^{4-2\kappa}\Gamma(\kappa-\frac{1}{2})\sqrt{N |\Delta|}}
{ m_0^\frac{\kappa-1}{2}\pi^{\frac{1}{2}+\kappa}\parallel g \parallel^2}b(m_0, \mu_0)L'(G, \chi_{\Delta},\kappa).\nonumber
\end{eqnarray}
\end{proposition}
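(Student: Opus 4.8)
The plan is to derive Proposition~\ref{proderivative} by composing three results from the preceding sections: the CM-value formula of Theorem~\ref{theotwistedvalue}, the Dirichlet-series identity of Lemma~\ref{lemtder}, and the factorization (\ref{equshimura}) of the twisted $L$-function of the Shimura lift. No new analytic input is needed; the content of the statement is precisely that the archimedean and combinatorial constants furnished by these inputs combine into the asserted closed form, with the $\deg(Z(U))$ factors cancelling and the signs collapsing to $(-1)^{\kappa}$.

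Concretely, I would start from Theorem~\ref{theotwistedvalue}, which for the chosen CM cycle $Z(U)$ (with $m_0>M$ and $(\Delta,2N)=1$, and, when $\Delta>0$ and $\kappa$ is odd, with $\Delta$ satisfying Assumption~A as in Proposition~\ref{lemconzero}) gives
$$\Phi^j_{\Delta,r}(Z(U),f)=-\deg(Z(U))\,L'(\xi_{\frac{3}{2}-\kappa}f,\,U,\,\chi_\Delta,\,0).$$
Using the normalization $\xi_{\frac{3}{2}-\kappa}(f)=\parallel g\parallel^{-2}g$ together with the $\R$-linearity of $g\mapsto L(g,U,\chi_\Delta,s)$ — immediate from the definition $L(g,U,\chi_\Delta,s)=\langle[\theta_{\mathcal P^\Delta},E_{\mathcal N^\Delta}(\tau,s;1)]_j,\phi(g)\rangle_{Pet}$, since $\phi$ is linear and $g$ has real Fourier coefficients so the conjugation in the Petersson product is harmless — this becomes
$$\Phi^j_{\Delta,r}(Z(U),f)=-\frac{\deg(Z(U))}{\parallel g\parallel^{2}}\,L'(g,U,\chi_\Delta,0).$$
Next I would apply Lemma~\ref{lemtder} to replace $L'(g,U,\chi_\Delta,0)$ by $(-1)^{\kappa-1}\frac{2^{4-2\kappa}\Gamma(\kappa-\frac{1}{2})\sqrt{N|\Delta|}}{m_0^{(\kappa-1)/2}\deg(Z(U))\pi^{\frac{1}{2}+\kappa}}L'(Sh_{m_0,\mu_0}(g),\chi_\Delta,\kappa)$: the $\deg(Z(U))$ in its denominator cancels the one above, the $(-1)^{\kappa-1}$ combines with the leading minus to give $(-1)^{\kappa}$, and finally (\ref{equshimura}) turns $L'(Sh_{m_0,\mu_0}(g),\chi_\Delta,\kappa)$ into $b(m_0,\mu_0)L'(G,\chi_\Delta,\kappa)$. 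Collecting the constants $2^{4-2\kappa}\Gamma(\kappa-\frac{1}{2})\sqrt{N|\Delta|}\,m_0^{-(\kappa-1)/2}\pi^{-(\frac{1}{2}+\kappa)}$ reproduces the formula in the proposition verbatim.

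The only step that needs genuine attention is checking that the standing hypotheses of Theorem~\ref{theotwistedvalue} and Lemma~\ref{lemtder} are simultaneously satisfiable for the $U$ we use: one needs a fundamental discriminant $D=\Delta D_0=-4N|\Delta|m_0$ with $D\equiv R^2\pmod{4N}$, with $m_0>M$, with $(\Delta,2N)=1$, and with the newform $g\in S^{new}_{\frac{1}{2}+\kappa,\tilde{\rho}_L}$ attached to the normalized newform $G$ under $Sh_{m_0,\mu_0}$ having coefficients in $F_G$; one also wants $b(m_0,\mu_0)\neq0$ so that the right-hand side is a genuine $L$-derivative of $G$ (and, to tie in with Lemma~\ref{lemderiv}, $D_0$ chosen so $L(G,\chi_{D_0},\kappa)\neq0$). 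These are exactly the conditions fixed in the setup preceding Lemma~\ref{lemtder}, and their solvability for infinitely many admissible $m_0$ follows from the construction of fundamental discriminants in prescribed congruence classes already used in the proof of Proposition~\ref{lemconzero}, together with Waldspurger-type non-vanishing. Beyond this bookkeeping I expect no obstacle — the proof itself is a single substitution and a constant chase.
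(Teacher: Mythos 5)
Your proof is correct and follows essentially the same route as the paper's: apply Theorem~\ref{theotwistedvalue} to express $\Phi^j_{\Delta,r}(Z(U),f)$ in terms of $L'(\xi_{3/2-\kappa}f,U,\chi_\Delta,0)$, substitute Lemma~\ref{lemtder} so that $\deg(Z(U))$ cancels and $-(-1)^{\kappa-1}=(-1)^\kappa$, then invoke (\ref{equshimura}) and the normalization $\xi_{3/2-\kappa}(f)=\parallel g\parallel^{-2}g$. The only cosmetic difference is that you pull out the $\parallel g\parallel^{-2}$ factor before applying Lemma~\ref{lemtder} rather than after, which is immaterial by linearity.
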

\begin{proof}
By Theorem \ref{theotwistedvalue} and Lemma \ref{lemtder},
\begin{eqnarray}
&&\Phi^j_{\Delta,r}(Z(U), f)=-\deg(Z(U))L'(\xi_{\frac{3}{2}-\kappa}f, U, \chi_\Delta, 0)\nonumber\\
&=&(-1)^{\kappa}
\frac{2^{4-2\kappa}\Gamma(\kappa-\frac{1}{2})\sqrt{N |\Delta|}}
{ m_0^\frac{\kappa-1}{2}\pi^{\frac{1}{2}+\kappa}}L'(Sh_{m_0,\mu_{0}}(\xi_{\frac{3}{2}-\kappa}f), \chi_{\Delta}, \kappa).\nonumber
\end{eqnarray}
Combining it with equation (\ref{equshimura}), we obtain the result.
\end{proof}

\section{Intersection on the Heegner cycles}\label{Sec8}

In this section, we will prove Theorem \ref{maintheo}.
%According to \cite[Lemma 6.10]{BEY}, if the conductor of the order $\OO_D$ ir prime to $ N$,
%\begin{equation}
%\bar Z(n, \mu_r)\simeq \mathcal{Z}(n , \mu_r)
%\end{equation}
%as stacks over $\Z$.
\subsection{Heegner cycles}

We recall certain details about Kuda-Sato varieties and their CM cycles as discussed in \cite{Zhang}. Let $\kappa$ be a positive integer, and
let $D$ be a discriminant.  Consider $E$ as an elliptic curve with complex multiplication by $\sqrt{D}$, and let $Z(E)$ be the divisor class on $E\times E$ defined by $\Gamma-(E\times\{0\})+D(\{0\times E\})$. Here $\Gamma$ refers to  the graph of multiplication by $\sqrt{D}$. Then $Z(E)^{\kappa-1}$ generates  a cycle in $E^{2\kappa-1}$ with codimension $\kappa-1$.
Now we denote the following cycle  $$c\sum_{\sigma \in P_{2\kappa-1}}sgn(\sigma)\sigma^*(Z(E)^{\kappa-1})$$by $S_\kappa(E)$, where $ P_{2\kappa-1}$ is the symmetric group of $2\kappa-2$ letters which acts on
$E^{2\kappa-1}$ by permuting the factors, and $c$ is a real number such that
the self-intersection of $S_\kappa(E)$ on each fiber is $(-1)^{\kappa-1}$.

For $N$ a product of two relatively prime integers $\geq 3$.  The Kuga-Sato variety $\mathcal{Y}=\mathcal{Y}_\kappa(N)$ is defined to be a canonical
resolution of the $2\kappa-2$-tuple fiber product of $\mathcal{E}(N)$ over $\mathcal X(N)$, where $\mathcal{E}(N)$ is a regular semistable elliptic curve. If $y$ is a CM point in $\mathcal X(N)$,  the CM cycle $S_\kappa(y)$ over $y$ is defined  to
be $S_\kappa(\mathcal{E}_y)$ in $\mathcal{Y}$.

Let $\pi:\mathcal{X}(N) \rightarrow \mathcal{X}_0(N)$ be the projection map. If  $x$ is a CM point in $\mathcal{X}_0(N)$ and $\pi^*(x)=\frac{w(x)}{2}\sum x_i$, where $w(x)=|Aut(x)|$. Then the cycle over $x$ is defined to be
\begin{equation}
S_\kappa(x)=\frac{1}{\deg(\pi)}\sum S_\kappa(x_i).
\end{equation}
Let $x$ ba a CM point in $X_0(N)$, and $\bar{x}$ be the Zariski closure in $\mathcal{X}_0(N)$. Then
$S_\kappa(\bar x)$
 has zero intersection with any cycle of dimension $\kappa$ in $\mathcal{Y}$ which is supported in the
special fibers \cite{Zhang}. The class of $S_\kappa(\bar x)$ in $H^{2\kappa}(\mathcal{Y}(\C), \C)$ vanishes. There is a Green current $g_\kappa(x)$ on $\mathcal{Y}(\C)$, 
$$\frac{\partial \bar{\partial}}{\pi i}g_\kappa(x)= \delta_{S_\kappa(x)}.$$ In the sense of Gillet and Soul\'e \cite{GS}, the codimension $\kappa$ arithmetic cycle on $\mathcal{Y}$ is defined as $$\hat{S}_\kappa(x)=(S_\kappa(\bar x), g_\kappa(x)).$$
The intersection number is defined by
\begin{equation}
\langle S_\kappa(x), S_\kappa(y)\rangle=(-1)^\kappa \langle \hat{S}_\kappa(x), \hat{S}_\kappa(x)\rangle_{GS}.
\end{equation}
According to \cite{Zhang}, it can decompose into local height pairings,
\begin{equation}\label{definter}
\langle S_\kappa(x), S_\kappa(y)\rangle=\sum_{p\leq \infty}\langle S_\kappa(x), S_\kappa(y)\rangle_{p}.
\end{equation}
More precisely,
\begin{eqnarray}\label{equarichi}
&&\langle S_\kappa(x), S_\kappa(y)\rangle_{\infty}=\frac{1}{2}G_{N, \kappa}(x, y),\\
&&\langle S_\kappa(x),  S_\kappa(y)\rangle_p=(-1)^\kappa( S_\kappa(\bar{x})\cdot S_\kappa(\bar{y}))_p,
\end{eqnarray}
where $G_{N, \kappa}(x, y)$ is the higher Green function.

We generalize the higher Heegner cycles  to twisted case by
\begin{eqnarray}
&&Z_{\Delta, r, \kappa}(m, \mu)=m^{\frac{\kappa-1}{2}}\sum_{x \in Z_{\Delta, r}(m, \mu)}\chi_\Delta(x)S_\kappa(x),\\
&& Z_{\Delta, r, \kappa}(U)=m_0^{\frac{\kappa-1}{2}}\sum_{x \in Z_{\Delta, r}(U)}\chi_\Delta(x)S_\kappa(x),\\
&& Z_{\Delta, r, \kappa}(f)=\sum_{m>0} c^+(-m, \mu)Z_{\Delta, r, \kappa}(m, \mu).
\end{eqnarray}

\subsection{Moduli stack}

Let $D_0=-4Nm_0 $ be a discriminant and the order $\OO_{D_0}=\Z[\frac{D_0+\sqrt D_0}2]$ of discriminant $D_0$. Assume that $D_0\equiv r_0^2 \mod 4N$. Then  $\mathfrak {n}_0=[N, \frac{r_0+\sqrt D_0}2]$ is an ideal of $\OO_{D_0}$ with norm $N$.

  The  moduli stack  $\mathcal Z(m_0, \mu_0)$  over $\Z$ is defined in \cite[Section 7]{BY}, which is a horizontal divisor on $\mathcal X_0(N)$. We denote $\mu_0=\kzxz{\frac{r_0}{2N}}{}{}{-\frac{r_0}{2N}}$.

Now we let $D_0$ to be a fundamental discriminant and $k=\Q(\sqrt{D_0})$.

Let $\mathcal{C}$ be the moduli stack  over $\Z$ representation the moduli problem which assigns to every scheme $S$ over $\Z$ the set  $\mathcal{C}(S)$ of pairs $(\emph{E}, \iota)$, where 
\begin{itemize}
  \item $\emph{E}$ is an CM elliptic curves  over $S$; 
  \item the map $$\iota:\mathcal O_k \hookrightarrow \mathcal O_E:=\End_S(\emph{E})$$
  is  an $\mathcal O_k$ action on $\emph{E}$ such that the main involution on $\mathcal O_E$ gives the complex conjugation on $k$.
\end{itemize}

 For any pair $(\emph{E}, \iota) \in \mathcal C(S)$, we let
\begin{equation}
V(\emph{E}, \iota )=\{x\in \mathcal O_\emph{E}| \iota(\alpha)x=x \iota(\bar{\alpha}),
\alpha \in \mathcal O_k ~and~ \tr x=0\}
\end{equation}
be the space of special endomorphisms with the definite quadratic form $N(x):=\deg(x)$. It is a $\mathcal O_k$
 module.
 
Let $\partial$ denote  the different of $k$ and let $\textbf{a}$ be an ideal.
For any $m>0$ and $\delta \in \partial^{-1}\textbf{a}/\textbf{a}$,
we let $\mathcal Z(m, \textbf{a}, \delta)$  be an algebraic stack, which can be viewed as a cycle in $\mathcal C$ via
$(\emph{E}, \iota, \beta) \mapsto (\emph{E}, \iota) $. Now we will recall more details in \cite{KYPull} and \cite{BY}.

We consider the moduli problem that assigns to each scheme $S$ the set of triples $(\emph{E}, \iota, \beta)$, where the following conditions hold
\begin{itemize}
  \item $(\emph{E}, \iota) \in \mathcal C(S)$;\\
  \item $\beta \in V(\emph{E}, \iota )\partial^{-1}\textbf{a}$ such that
  $$N(\beta)=mN\textbf{a},~~~\delta+\beta \in \mathcal O_\emph{E}\textbf{a}.$$
\end{itemize}
This moduli problem is represented by a algebraic stack $\mathcal Z(m, \textbf{a}, \delta)$ of dimension $0$. The forgetful map is a finite \'etale map from $\mathcal Z(m, \textbf{a}, \delta)$ into $\mathcal C$ \cite[Section 6]{BY}.
%  associated to the quadratic field $k=\Q(\sqrt{D_0})$ and.

The arithmetic degree of a 0-dimensional DM-stack $\mathcal Z$ 
is defined by 
\begin{equation}
  \widehat{\deg}(\mathcal Z)=\sum_{p}\sum_{x\in \mathcal Z(\bar \F_p)}\frac{1}{\sharp Aut(x)}i_p(x)\log p,
\end{equation}
where $i_p(x)$ is the length of strictly Henselian local ring as follows
\begin{equation}
  i_p(x)=\text{Length}(\hat{\mathcal O}_{\mathcal Z, x}).
\end{equation}

For any $x=(\emph{E}, \iota, \beta) \in  \mathcal Z(m, \textbf{a}, \delta)(\bar \F_p)$,  according to  \cite[Section 4]{KYPull}, we know that  $i_p(x)$ depends only on $m$. Then we denote this length by $i_p(m)$.

% we denote the strictly Henselian local ring of $\mathcal Z(m)$ at $x$ by $\mathcal O_{\mathcal Z(m), x}$.  Then we define 
%\begin{equation}
%  i_p(x)=Length(\mathcal O_{\mathcal Z(m), x}).
%\end{equation}

  For
 any elliptic curve $(\emph{E}, \iota) \in \mathcal{C}(S)$, we let $\emph{E}_{\textbf{n}_0}=\emph{E}/\emph{E}[\textbf{n}_0]$ and
 let $\pi: \emph{E} \rightarrow \emph{E}_{\textbf{n}_0}$  be the natural map.
 %where ideal $\textbf{n}_0=[N, \frac{r_0+\sqrt{D_0}}{2}] \subset \mathcal O_k$.
 We denote $\mathcal O_{\emph{E}, \textbf{n}_0}=\End_S(\pi)$.

Now we let $D_1$ be   another discriminant such that $D_0D_1$ is not a square.
According to \cite[Lemma 7.10]{BY},  the natural isomorphism of stacks is given by
 \begin{equation}
 j: \mathcal{C}\rightarrow \mathcal Z(m_0, \mu_0),~~j((\emph{E}, \iota))=(\pi: \emph{E} \rightarrow \emph{E}_{\textbf{n}_0}, \iota).
 \end{equation}
Then the intersection can be viewed as a fiber product:

$$\xymatrix{
j^*\mathcal Z(m_1, \mu_1)   \ar[r]_{~~\pi_1} \ar[d]^{\pi_2} & \mathcal{C}  \ar[d]           \\
   \mathcal Z(m_1, \mu_1) \ar[r] & \mathcal{X}_0(N)}
  $$
where $ j^*\mathcal Z(m_1, \mu_1)=\mathcal Z(m_1, \mu_1) \times_{\mathcal{X}_0(N)} \mathcal{C} $ consists of triples $(\emph{E}, \iota, \phi)$ and
$\phi:\mathcal O_{D_1}\hookrightarrow \mathcal O_{\emph{E}, \textbf{n}_0}$ such that $\phi(\textbf{n}_1)\emph{E}[\textbf{n}_0]=0$.
Here $\pi_1((\emph{E}, \iota, \phi))=(\emph{E}, \iota)$
and $\pi_2((\emph{E}, \iota, \phi))=(\emph{E}\rightarrow \emph{E}_{\textbf{n}_0}, \phi)$.

\begin{lemma}\cite[Lemma 7.10]{BY}
The isomorphism  
\begin{equation}\label{equinterdecom}
j^*\mathcal Z(m_1, \mu_1)(\bar{\mathbb{F}}_p)
\cong\bigsqcup_{\substack{n\equiv r_0r_1(\mod 2N),\\ n^2\leq D_0D_1}}
\mathcal Z\bigg(\frac{D_0D_1-n^2}{4N|D_0|}, \textbf{n}_0, \frac{n+r_1\sqrt{D_0}}{2\sqrt{D_0}}\bigg)(\bar{\mathbb{F}}_p),
\end{equation}
 is given by $(\emph{E}, \iota, \phi)\rightarrow (\emph{E}, \iota, \beta)$ via
$$\beta=\phi\big(\frac{r_1+\sqrt{D_1}}{2}\big)-\frac{n+r_1\sqrt{D_0}}{2\sqrt{D_0}}.$$

\end{lemma}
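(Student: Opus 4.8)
The statement to be proved is the isomorphism (\ref{equinterdecom}) describing the fiber product $j^*\mathcal Z(m_1,\mu_1)(\bar{\mathbb F}_p)$ as a disjoint union of moduli stacks $\mathcal Z\bigl(\tfrac{D_0D_1-n^2}{4N|D_0|},\textbf{n}_0,\tfrac{n+r_1\sqrt{D_0}}{2\sqrt{D_0}}\bigr)(\bar{\mathbb F}_p)$ over integers $n\equiv r_0r_1\pmod{2N}$ with $n^2\le D_0D_1$. Since this is quoted as \cite[Lemma 7.10]{BY}, the plan is essentially to adapt the Bruinier--Yang argument to the present twisted, level-$N$ setup, checking that nothing in the reduction uses $\Delta=1$. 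First I would recall the explicit description of the source: a point of $j^*\mathcal Z(m_1,\mu_1)(\bar{\mathbb F}_p)$ is a triple $(\emph{E},\iota,\phi)$ where $(\emph{E},\iota)\in\mathcal C(\bar{\mathbb F}_p)$ carries an $\mathcal O_k$-action and $\phi:\mathcal O_{D_1}\hookrightarrow\mathcal O_{\emph{E},\textbf{n}_0}$ is an optimal embedding of the order $\mathcal O_{D_1}$ into the $\textbf{n}_0$-quasi-endomorphism ring, compatible with the $\Gamma_0(N)$-structure in the sense that $\phi(\textbf{n}_1)\emph{E}[\textbf{n}_0]=0$.

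The key computational step is to produce, from $\phi$, a special endomorphism $\beta\in V(\emph{E},\iota)\otimes\partial^{-1}\textbf{a}$ realizing the claimed decomposition. Concretely I would set
\begin{equation}
\beta=\phi\!\left(\frac{r_1+\sqrt{D_1}}{2}\right)-\frac{n+r_1\sqrt{D_0}}{2\sqrt{D_0}},\nonumber
\end{equation}
where $n$ is read off as the "trace" of $\phi\bigl(\tfrac{r_1+\sqrt{D_1}}2\bigr)$ against the $\mathcal O_k$-action via the main (Rosati) involution; that is, $\phi(\tfrac{r_1+\sqrt{D_1}}2)+\overline{\phi(\tfrac{r_1+\sqrt{D_1}}2)}$ is multiplication by an integer determined by $n$. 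Then one checks: (i) $\beta$ anticommutes with $\iota$ appropriately, hence lies in $V(\emph{E},\iota)$ and has $\tr\beta=0$; (ii) the norm (degree) of $\beta$ equals $\tfrac{D_0D_1-n^2}{4N|D_0|}\cdot N\textbf{n}_0$, which follows from $\deg\phi(\tfrac{r_1+\sqrt{D_1}}2)=\tfrac{D_1-r_1^2}{4}\cdot$(appropriate factor) together with the quadratic identity $N(\beta)=N\bigl(\phi(\tfrac{r_1+\sqrt{D_1}}2)\bigr)-n\cdot\tr+\tfrac{n^2-r_1^2D_0}{4D_0}$ after expanding; (iii) the congruence $n\equiv r_0r_1\pmod{2N}$ is forced by the level-$N$ compatibility $\phi(\textbf{n}_1)\emph{E}[\textbf{n}_0]=0$, because $\textbf{n}_0=[N,\tfrac{r_0+\sqrt{D_0}}2]$ and $\textbf{n}_1=[N,\tfrac{r_1+\sqrt{D_1}}2]$ pin down $n\bmod 2N$; (iv) the integrality condition $\tfrac{n+r_1\sqrt{D_0}}{2\sqrt{D_0}}+\beta\in\mathcal O_{\emph{E}}\textbf{n}_0$ holds by construction, and the bound $n^2\le D_0D_1$ is exactly positivity of $N(\beta)\ge 0$. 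Conversely, given $(\emph{E},\iota,\beta)$ in one of the target stacks, one recovers $\phi$ by $\phi(\tfrac{r_1+\sqrt{D_1}}2)=\beta+\tfrac{n+r_1\sqrt{D_0}}{2\sqrt{D_0}}$ and verifies this is an optimal embedding of $\mathcal O_{D_1}$ with the right level structure; the two maps are mutually inverse, giving the bijection on $\bar{\mathbb F}_p$-points, and functoriality in $\bar{\mathbb F}_p$ upgrades it to an isomorphism of stacks.

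The main obstacle I anticipate is the bookkeeping of the level-$N$ ideals $\textbf{n}_0,\textbf{n}_1$ and the twist: one must be careful that the quadratic-form normalization $Q=N\det$ on $V$ and the rescaled form $Q_\Delta$ interact correctly with the norm on special endomorphisms, and that the congruence classes $r_0,r_1$ modulo $2N$ combine to give precisely $n\equiv r_0r_1\pmod{2N}$ rather than some coarser or finer condition. A secondary subtlety is that $\mathcal Z(m,\textbf{a},\delta)$ is only nonempty when $p$ is non-split in the relevant quadratic fields — i.e., the decomposition on the right is automatically supported on the "bad" primes appearing in $\Diff$ — but this is consistent with the statement since we only claim an isomorphism of $\bar{\mathbb F}_p$-point sets for each $p$, with empty contributions allowed. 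Since all of these are local computations in the endomorphism algebra of a CM elliptic curve over $\bar{\mathbb F}_p$, and the twist by $\Delta$ does not enter the moduli description at all (it only enters later through the genus character $\chi_\Delta$ weighting the points), I expect the proof to go through verbatim from \cite[Lemma 7.10]{BY} with only the substitution of $\OO_{D_0}$-level data for the $N=1$ data of loc.\ cit.
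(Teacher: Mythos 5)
The paper itself gives no proof of this lemma; it simply cites \cite[Lemma 7.10]{BY}, and the surrounding text (in Section \ref{Sec8}) uses it via the substitution $D_0=-4N|\Delta|m_0$, $D_1=-4N|\Delta|m_1$. Your plan is therefore not being compared against an argument in the paper but against the Bruinier--Yang original, and as a reconstruction it is faithful: the decomposition of $\phi\bigl(\tfrac{r_1+\sqrt{D_1}}{2}\bigr)$ into its $\iota(k)$-centralizing part (which necessarily lies in $k$ and is of the form $\tfrac{n+r_1\sqrt{D_0}}{2\sqrt{D_0}}$ for a unique integer $n$) and its $\iota(k)$-anticentralizing part $\beta\in V(\emph{E},\iota)\otimes\partial^{-1}\textbf{n}_0$ is exactly the mechanism in \cite{BY}, and you correctly identify that the twist $\Delta$ enters only through the genus-character weighting later, not through the moduli stacks or the map $j$, so the argument transfers verbatim.

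Two small cautions. First, your displayed expansion in step (ii), $N(\beta)=N\bigl(\phi(\tfrac{r_1+\sqrt{D_1}}2)\bigr)-n\cdot\tr+\tfrac{n^2-r_1^2D_0}{4D_0}$, is garbled: the term $n\cdot\tr$ has no referent, and the signs do not come out right as written. The clean version uses that $\beta$ and the $k$-component $y=\tfrac{n+r_1\sqrt{D_0}}{2\sqrt{D_0}}$ are orthogonal for the degree form, so $N\bigl(\phi(\tfrac{r_1+\sqrt{D_1}}2)\bigr)=N(\beta)+N(y)$, and solving for $N(\beta)$ gives $\tfrac{D_0D_1-n^2}{4|D_0|}=m\,N(\textbf{n}_0)$ as required; the Chinese-remainder bookkeeping with $\textbf{n}_0=[N,\tfrac{r_0+\sqrt{D_0}}2]$ and the level condition $\phi(\textbf{n}_1)\emph{E}[\textbf{n}_0]=0$ then pins $n\equiv r_0r_1\pmod{2N}$. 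Second, note that the statement as printed in the paper carries a notational wobble ($\mathcal Z(m_1,\mu_1)$ versus $\mathcal Z(|\Delta|m_1,r\mu_1)$, and whether $D_i=-4Nm_i$ or $-4N|\Delta|m_i$); your reconstruction should be anchored to whichever normalization is used when the lemma is invoked in the proof of Theorem \ref{Lemnonsplit}, namely $D_i=-4N|\Delta|m_i$.
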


\begin{comment}
We have the following result
\begin{lemma}\label{lemunique}
For any $\delta =\frac{n+r_1\sqrt{D_0}}{2\sqrt{D_0}} \in \partial^{-1} \textbf{n}_0$, when $m_0$ is sufficiently large,
 there  exists  at most one $w\in \mathcal P^{\sharp}$,
 such that  $$\delta+w\in L_{\mu_1}[ m_1].$$
\end{lemma}
\begin{proof}
According to \cite[Lemma 7.11]{BY},
we write $$\delta =\frac{n+r_1\sqrt{D_0}}{2\sqrt{D_0}}=\frac{1}{D_0}(n\frac{r_0+\sqrt{D_0}}{2}+\frac{D_0r_1-nr_0}{2}) \in \partial^{-1} \textbf{n}_0.$$ For any
 $w=\frac{l}{D_0} w_0\in \mathcal P^{\sharp}=\frac{1}{D_0}\Z w_0$, we have
\begin{equation}
\delta+w=\frac{1}{D_0}\kzxz{\frac{D_0r_1-nr_0}{2N}}{-\frac{n}{N}}{\frac{nr_0^2-2D_0r_0r_1+nD_0}{4N}}{-\frac{D_0r_1-nr_0}{2N}}+
\frac{l}{D_0}\kzxz{\frac{r_0}{2N}}{\frac{1}{N}}{\frac{D_0-r_0^2}{4N}}{-\frac{r_0}{2N}},
\end{equation}
where $w_0=\kzxz{\frac{r_0}{2N}}{\frac{1}{N}}{\frac{D_0-r_0^2}{4N}}{-\frac{r_0}{2N}} $.

When $Q(\delta+w)=m_1$,  it can be deduced that
\begin{equation}
l^2-n^2=D_0(D_1-r_1^2).
\end{equation}
If both $\delta+\frac{l}{D_0}w_0$ and $\delta-\frac{l}{D_0}w_0$ belong to $L_{\mu_1}[m_1]$, then $\frac{2l}{D_0}w_0 \in L$.
As a result, we obtain that  \begin{equation}
ND_0 \mid l,~ D_0 \mid D_1-r_1^2 .
\end{equation}

If $|D_0|>|D_1-r_1^2|$,  then there exists at most one $w\in \mathcal P^{\sharp}$ such that $$\delta+w \in L_{\mu_1}[m_1].$$
Thus we obtain the result.
\end{proof}
\end{comment}
\subsection{Finite intersection}
It is known that every Elliptic curve $E$ with CM by $\mathcal O_k$ is isomorphic to $E_\textbf{a}=\C/\textbf{a} $, where 
$\textbf{a}$ is a fractional ideal in $k=\Q(\sqrt{D_0})$. 
There are 
 two $\mathcal O_k$-actions   on $E_\textbf{a}$ which are given as,
$$ \iota(\alpha)z=\alpha z,~~~  \bar{\iota}(\alpha)z=\bar\alpha z.$$
When $S=\Spec(\C)$,
 the bijective map   is given by \cite[Lemma 6.1]{BY},
\begin{equation}
  Z(U)=\{z_U^\pm\}\times \Cl_k\simeq\mathcal C(\C) , (z_U^\pm, [\textbf{a}]) \mapsto (E_{\textbf{a}}, \iota~or~\bar{\iota}).
\end{equation}

 For any pair $(\emph{E}, \iota) \in \mathcal C(\C)$,  we write $E(\C)\simeq \C/\Lambda$. Then  for any idele  $h\in \A^{\times}_{k, f}$, one has the action $hE(\C)\simeq \C/(h)\Lambda$, where $(h)$ denotes the ideal generated by $h$.  
 When $S=\Spec(\bar{\mathbb{F}}_p)$, the action of  $ \A^{\times}_{k, f}$ on $\mathcal C(\bar{\mathbb{F}}_p)$ is defined in \cite[Section 5]{KRYIMRN}.
 
%  It is known that it acts simply and  transitively on
%on $\mathcal C(\bar{\mathbb{F}}_p)$.

%For any pair $(\emph{E}, \iota) \in \mathcal C(S)$,  $V(\emph{E}, \iota )$ is a set of special endomorphisms.

 For any pair $(\emph{E}, \iota) \in \mathcal C(\bar \F_p)$,
% For any pair $(\emph{E}, \iota) \in \mathcal C(S)$, we let
%\begin{equation}
%V(\emph{E}, \iota )=\{x\in \mathcal O_\emph{E}| \iota(\alpha)x=x \iota(\bar{\alpha}),
%\alpha \in \mathcal O_k ~and~ \tr x=0\}.
%\end{equation}
when $p$ splits in $k$, $V(\emph{E}, \iota )=\{0\}$;
when $p$ is non-split, the endomorphism ring $\mathcal O_\emph{E}$ is a maximal order of the quaternion algebra $\mathbb{B}$, which is ramified  precisely
at $\infty$ and $p$.  Consequently, $V(\emph{E}, \iota )$ is a positive definite lattice of rank $2$ with $N(x)=-x^2$ representing the reduced norm.

Choosing a prime $p_0 \nmid 2pD_0$ such that
$$inv_q\B=\left\{
                 \begin{array}{ll}
                  (D_0, -pp_0)_q, & \hbox{$p~is~inert~in~k$;} \\
                   (D_0, -p_0)_q, & \hbox{$p~is~ramified$.}
                 \end{array}
               \right.
$$
Here $inv_q\B =-1$ exactly when $q=p$ or $\infty$. Thus implies that $p_0$ splits, 
so $p_0\mathcal O_k=\textbf{p}_0\bar{\textbf{p}}_0$.

Following \cite{KRYIMRN},
we  define 
\begin{equation}\kappa_p=\left\{
                 \begin{array}{ll}
                   pp_0, & \hbox{$p~is~inert$;} \\
                   p_0, & \hbox{$p~is~ramified$.}
                 \end{array}
               \right.
\end{equation}
Thus, we can express $$\B=k\oplus k \delta_{\B},$$ such that $\iota(\alpha)\delta_{\B}=\delta_{\B}\iota(\bar{\alpha})$ and $N(\delta_{\B})=\kappa_p$, for any $\alpha \in k$.
\begin{proposition}\cite[Proposition 5.13]{KRYIMRN}
For any $(\emph{E}, \iota) \in \mathcal C(\bar \F_p)$, there is  a fractional ideal $\textbf{b} $ in $k$ such that
\begin{equation}V(\emph{E}, \iota )\cong\textbf{b} \bar{\textbf{b}}^{-1} \mathbf{p}_0^{-1} \delta_{\B},
\end{equation}
where $\mathbf{p}_0$ is a fixed prime ideal lying above $p_0$.
Moreover, if $h \in \A_{k, f}^\times$,
\begin{equation}V(h(\emph{E}, \iota ))=(h)\overline{(h)}^{-1} V(\emph{E}, \iota ).
\end{equation}
\end{proposition}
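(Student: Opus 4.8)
The plan is to first describe $V(\emph{E},\iota)$ as an abstract $\mathcal{O}_k$-lattice inside the quaternion algebra $\B$, then to compute it at one convenient base point, and finally to transport the answer along the adelic action by establishing the covariance identity $V(h(\emph{E},\iota))=(h)\overline{(h)}^{-1}V(\emph{E},\iota)$ at the same time. One may assume throughout that $p$ is non-split in $k$: when $p$ splits, $\emph{E}$ is ordinary, $V(\emph{E},\iota)=\{0\}$, and (as $\delta_{\B}$ is then undefined) there is nothing to prove.

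\emph{Algebraic structure.} Since $p$ is non-split, $\mathcal{O}_\emph{E}=\End(\emph{E})$ is a maximal order in $\B$, and $\iota$ makes $\B$ a left $k$-vector space of dimension two. Writing $\B=k\oplus k\delta_{\B}$ as in the excerpt, one checks by a short computation that $\{x\in\B:\iota(\alpha)x=x\iota(\bar\alpha)\text{ for all }\alpha\in\mathcal{O}_k\}$ is exactly the line $k\delta_{\B}$, and that every element of this line has reduced trace zero because the main involution acts on $k\delta_{\B}$ as $-1$; so the condition $\tr x=0$ in the definition of $V$ is automatic. Hence $V(\emph{E},\iota)=\mathcal{O}_\emph{E}\cap k\delta_{\B}$ is a rank-one $\mathcal{O}_k$-lattice in $k\delta_{\B}\cong k$, and $V(\emph{E},\iota)=\mathfrak{c}(\emph{E},\iota)\,\delta_{\B}$ for a unique fractional $\mathcal{O}_k$-ideal $\mathfrak{c}(\emph{E},\iota)$. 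The problem is to identify $\mathfrak{c}$.

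\emph{The base point.} Take a CM elliptic curve in characteristic zero with CM by $\mathcal{O}_k$ and good reduction at $p$, and let $(\emph{E}_0,\iota_0)$ be its reduction; then $\mathcal{O}_{\emph{E}_0}$ is a maximal order $\mathcal{O}_{\B}\supset\iota_0(\mathcal{O}_k)$. By the previous step $\mathcal{O}_{\B}=\mathcal{O}_k\oplus\mathfrak{d}\,\delta_{\B}$ for a fractional ideal $\mathfrak{d}$. Comparing, place by place, the local discriminant of $\mathcal{O}_k\oplus\mathfrak{d}\,\delta_{\B}$---which away from $p$ receives contributions only from $\mathfrak{d}$, from the ramification of $k$, and from $N(\delta_{\B})=\kappa_p$---with the reduced discriminant $p$ of the maximal order $\mathcal{O}_{\B}$, and using that $p_0$ (hence $\kappa_p$ and $\delta_{\B}$) was chosen precisely so that all local defects away from $p$ can be absorbed into $\mathfrak{d}$, forces $\mathfrak{d}=\mathbf{p}_0^{-1}$ for the chosen prime $\mathbf{p}_0\mid p_0$. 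Thus $V(\emph{E}_0,\iota_0)=\mathbf{p}_0^{-1}\delta_{\B}$, which is the asserted formula with $\textbf{b}=\mathcal{O}_k$ (so $\textbf{b}\,\bar{\textbf{b}}^{-1}=\mathcal{O}_k$). I expect this explicit maximal-order computation to be the technical heart of the argument, because the inert and ramified cases for $p$ behave differently and one must carry the normalization of $p_0$ through every local factor.

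\emph{Covariance and conclusion.} For $h\in\A_{k,f}^\times$ the action of $h$ on $\mathcal{C}(\bar\F_p)$ (as constructed in Section~5 of \cite{KRYIMRN}) is realised by an $\iota$-equivariant quasi-isogeny $\phi_h\colon\emph{E}\to h\emph{E}$; conjugation $x\mapsto\phi_h x\phi_h^{-1}$ identifies the endomorphism algebra $\B$ of $\emph{E}$ with that of $h\emph{E}$ and carries the line $k\delta_{\B}$ to itself. A direct computation on the prime-to-$p$ Tate modules, where $T_\ell(\emph{E})$ is replaced by $T_\ell(h\emph{E})=h_\ell T_\ell(\emph{E})$, together with the analogous statement for the Dieudonn\'e module at $p$, and using that a special endomorphism acts $k$-antilinearly, shows that $V(h(\emph{E},\iota))=(h)\overline{(h)}^{-1}V(\emph{E},\iota)$. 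Since $\hat{\mathcal{O}}_k^\times$ acts trivially on $\mathcal{C}(\bar\F_p)$ and $(u)\overline{(u)}^{-1}=\mathcal{O}_k$ for $u\in\hat{\mathcal{O}}_k^\times$, the ideal class of $\textbf{b}=(h)$ attached to $(\emph{E},\iota)$ through a representation $(\emph{E},\iota)=h\cdot(\emph{E}_0,\iota_0)$ is well defined; combining with the base-point computation gives $V(\emph{E},\iota)=(h)\overline{(h)}^{-1}\mathbf{p}_0^{-1}\delta_{\B}=\textbf{b}\,\bar{\textbf{b}}^{-1}\mathbf{p}_0^{-1}\delta_{\B}$, as claimed. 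A secondary technical point, besides the discriminant computation, is to handle the $p$-component $h_p$ correctly, since there one argues with the Dieudonn\'e module of the supersingular formal group rather than with a Tate module.
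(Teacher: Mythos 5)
This proposition is stated in the paper purely as a citation of \cite[Proposition 5.13]{KRYIMRN}; no proof is given in the paper, so there is no in-paper argument to compare against. On its own merits, your reconstruction gets the overall shape right: reducing to $V(\emph{E},\iota)=\mathcal{O}_\emph{E}\cap k\delta_{\B}$ (your algebraic verification that the trace condition is automatic and that $V$ is a rank-one $\mathcal{O}_k$-lattice in $k\delta_{\B}$ is correct), proving the covariance $V(h(\emph{E},\iota))=(h)\overline{(h)}^{-1}V(\emph{E},\iota)$ by the Tate-module and Dieudonn\'e-module computation (also essentially correct, since conjugation by $h_\ell$ on a $k$-antilinear endomorphism is multiplication by $h_\ell\bar h_\ell^{-1}$), and then normalizing by computing at one base point.

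The gap is in the base-point step. Comparing the discriminant of $\mathcal{O}_k\oplus\mathfrak{d}\,\delta_{\B}$ with that of a maximal order only yields the single relation
\[
|D_k|\,\kappa_p\,N(\mathfrak{d})\ =\ p\cdot\bigl[\mathcal{O}_{\B}:\mathcal{O}_k\oplus\mathfrak{d}\,\delta_{\B}\bigr],
\]
and the index on the right is not a priori $1$: the $\mathcal{O}_k$-linear projection $\mathcal{O}_{\B}\to k$ along $k\delta_{\B}$ generally has image strictly larger than $\mathcal{O}_k$ (of the order of the inverse different), so one equation in two unknowns ($\mathfrak{d}$ and the index) cannot force $\mathfrak{d}=\mathbf{p}_0^{-1}$. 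Indeed, with $\mathfrak{d}=\mathbf{p}_0^{-1}$ and $\kappa_p=pp_0$ the relation gives index $|D_k|\neq 1$. So the phrase ``all local defects away from $p$ can be absorbed into $\mathfrak{d}$'' hides exactly the content of the lemma and the global discriminant count does not determine it. What is actually needed (and what \cite{KRYIMRN} supply in their Section~5) is a place-by-place description of the special-endomorphism lattice: at $\ell\neq p$ one identifies $\End(T_\ell\emph{E})$ explicitly inside $\B\otimes\Q_\ell$ and reads off $V_\ell$, and at $p$ one works with the Dieudonn\'e module of the supersingular formal group; the $\mathbf{p}_0^{-1}$ then comes out of the explicit choice of $\delta_{\B}$ with $N(\delta_{\B})=\kappa_p$ and is a local computation at $p_0$, not a discriminant count. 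Your outline correctly flags both the inert/ramified distinction and the $p$-component as the places where care is required; it just does not actually carry them out, and the discriminant shortcut you propose in their place would not close.
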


%Let $\pi$ be the morphism  $$\pi :k^\times \rightarrow SO(U)(\Q), h \mapsto  h\bar h^{-1}.$$
%For any $h\in k^\times_f$ and any lattice  $L$ in $k$, we let $h.L=\pi(h)\hat{L}\bigcap k$.
%By the Strong Approximation theorem, i.e., $SO(U)(\A_f)=K k^1$, where $K=\pi(\widehat{\mathcal{O}}^\times_k)$. Then we have the following decomposition
%%$$\hat{L}[n]=\bigsqcup K k_i \alpha,$$
%%$where $k_i \in k^1$ and $\alpha \in L[n]$. Similarly,
%$$\pi(h)\hat{L}[s]=\bigsqcup K h_i \alpha,$$
%where $h_i \in k^1$ and $\alpha \in L[s]$.
%The order $\mathcal{O}_E$ remains  invariant under the action of $\hat{\mathcal{O}}^\times_k$ and $\hat{\mathcal{O}}^\times_k$ acts trivially on $\partial^{-1}\textbf{n}_0/\textbf{n}_0$.

For any ideal class $[\textbf{a}] \in \Cl_k$, we define
\begin{equation}
\rho(m, [\textbf{a}])=\sharp \{\textbf{c} \subset \mathcal O_k\mid N(\textbf{c})=m, \textbf{c}\in [\textbf{a}]\}.
\end{equation}
By the similar argument as presented in \cite[Section 5]{KRYIMRN}, we have the following result.
\begin{lemma}\label{lemnumber}
\begin{eqnarray}
&&\sharp\{\beta ~\mid \beta \in V(h(\emph{E}_0, \iota_0)  )\partial^{-1}\textbf{n}_0, N(\beta)=mN\}\\
&&=w_k\cdot
\rho(m|D_0|p_0/\kappa_p, [h^{-1}]^2[\mathbf{p}_0\partial\bar{\textbf{n}}_0^{-1}]),\nonumber
\end{eqnarray}
where $w_k=|\mathcal O_k^\times|$.
\end{lemma}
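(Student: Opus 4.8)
\textbf{Proof strategy for Lemma \ref{lemnumber}.}

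The plan is to follow the counting argument of Kudla--Rapoport--Yang (\cite[Section 5]{KRYIMRN}) and adapt it to the twisted-by-$\mathfrak{n}_0$ setting coming from the forgetful map $\mathcal Z(m_0,\mu_0)\to\mathcal X_0(N)$. First I would fix a point $(\emph{E}_0,\iota_0)\in\mathcal C(\bar\F_p)$ (the case $p$ split being trivial since then $V(\emph{E}_0,\iota_0)=\{0\}$ and both sides vanish) and use the preceding Proposition of \cite{KRYIMRN} to identify the $\mathcal O_k$-module of special endomorphisms: $V(h(\emph{E}_0,\iota_0))\cong(h)\overline{(h)}^{-1}\,\mathbf{b}\bar{\mathbf{b}}^{-1}\mathbf{p}_0^{-1}\delta_{\B}$ for an appropriate ideal $\mathbf{b}$. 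Twisting by $\partial^{-1}\mathbf{n}_0$ and recalling $N(\delta_{\B})=\kappa_p$, the reduced-norm form $N(\beta)=-\beta^2$ restricted to $V(h(\emph{E}_0,\iota_0))\partial^{-1}\mathbf{n}_0$ becomes, under the isomorphism with a fractional $\mathcal O_k$-ideal $\mathfrak{I}$, the scaled norm form $z\mapsto \tfrac{\kappa_p}{N(\mathfrak I)}\,N_{k/\Q}(z)$, where $\mathfrak I = (h)\overline{(h)}^{-1}\mathbf b\bar{\mathbf b}^{-1}\mathbf p_0^{-1}\partial^{-1}\mathbf n_0$.

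Next I would translate the equation $N(\beta)=mN$ into an ideal-theoretic count. Solving $N(\beta)=mN$ with $\beta$ ranging over $\mathfrak I$ amounts, after dividing out units (hence the factor $w_k=|\mathcal O_k^\times|$), to counting integral ideals $\mathfrak c$ with $N_{k/\Q}(\mathfrak c)= mN\cdot N(\mathfrak I)/\kappa_p$ lying in a fixed ideal class, namely the class $[\mathfrak I]^{-1}=[h^{-1}]^2[\mathbf b^{-1}\bar{\mathbf b}]\,[\mathbf p_0\partial\mathbf n_0^{-1}]$; using $[\mathbf b\bar{\mathbf b}^{-1}]$ is a square and that $N(\mathbf b\bar{\mathbf b}^{-1})=1$, together with $N(\mathbf p_0)=p_0$, $N(\partial)=|D_0|$, $N(\mathbf n_0)=N$, one checks that the norm bound reduces to $m|D_0|p_0/\kappa_p$ and the class to $[h^{-1}]^2[\mathbf p_0\partial\bar{\mathbf n}_0^{-1}]$, matching the definition of $\rho(\,\cdot\,,[\,\cdot\,])$ given just before the statement. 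This gives exactly
$$
\sharp\{\beta\mid \beta\in V(h(\emph{E}_0,\iota_0))\partial^{-1}\mathbf n_0,\ N(\beta)=mN\}
= w_k\cdot\rho\bigl(m|D_0|p_0/\kappa_p,\ [h^{-1}]^2[\mathbf p_0\partial\bar{\mathbf n}_0^{-1}]\bigr).
$$

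The main obstacle I anticipate is bookkeeping the ideal classes correctly: one must verify that the implicit ideal $\mathbf b$ attached to $(\emph{E}_0,\iota_0)$ drops out (it contributes a square class with trivial norm, which is absorbed once one normalizes by choosing $(\emph{E}_0,\iota_0)$ suitably), and that the local factors $\kappa_p$ (equal to $pp_0$ or $p_0$ according as $p$ is inert or ramified) correctly account for the discrepancy between the reduced norm on $\B$ at $p$ and the norm on $k$. The split-prime places and the places dividing $p_0$ need to be treated with care so that $\rho$ factors as a product of local counts $\prod_q\rho(q^{\ord_q(\cdot)})$ compatibly with the class constraint; this is precisely the point where the choice of the auxiliary prime $p_0$ (splitting in $k$, with the prescribed Hilbert-symbol conditions) is used. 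Once these local compatibilities are in place, the global count follows formally, exactly as in \cite[Section 5]{KRYIMRN}, and the proof is complete.
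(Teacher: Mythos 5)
Your approach is essentially the paper's: normalize $(\emph{E}_0,\iota_0)$ so that $V(\emph{E}_0,\iota_0)\simeq\mathbf{p}_0^{-1}\delta_{\B}$, write $\beta=\alpha\delta_{\B}$ with $\alpha\in(h)\overline{(h)}^{-1}\mathbf{p}_0^{-1}\partial^{-1}\bar{\mathbf n}_0$, and send $\alpha\mapsto\mathbf{c}=(\alpha)(h)^{-1}\overline{(h)}\mathbf{p}_0\partial\bar{\mathbf n}_0^{-1}$ — a $w_k$-to-one map onto integral ideals of norm $m|D_0|p_0/\kappa_p$ in the class $[h^{-1}]^2[\mathbf{p}_0\partial\bar{\mathbf n}_0^{-1}]$. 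The only nits, which do not affect the argument: $\mathbf{n}_0$ should become $\bar{\mathbf n}_0$ once $\delta_{\B}$ is pulled to the right, and your intermediate norm identity $N(\mathfrak c)=mN\cdot N(\mathfrak I)/\kappa_p$ is inconsistent with your stated scaling of the form, though your final values for the norm and the class are correct.
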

\begin{proof}

Fixing $(\emph{E}_0, \iota_0)$, such that
\begin{equation}V(\emph{E}_0, \iota_0 )\simeq \mathbf{p}_0^{-1} \delta_{\B}.
\end{equation}
For any $\beta=\alpha \delta_{\B} \in V(h(\emph{E}_0, \iota_0) )\partial^{-1}\textbf{n}_0$, it follows that
$$\alpha \in (h) (\overline{h})^{-1} \mathbf{p}_0^{-1}\partial^{-1}\bar{\textbf{n}}_0.$$ Consequently, the ideal
$\textbf{c}=\alpha(h)^{-1} \bar{(h)}\mathbf{p}_0\partial\bar{\textbf{n}}_0^{-1}$ is integral and is contained within the ideal class
$[h^{-1}]^2[\mathbf{p}_0\partial\bar{\textbf{n}}_0^{-1}] \in \Cl_k$ with the norm  given by $N(\textbf{c})=\frac{m|D_0|p_0}{\kappa_p}$.  

Therefore, the quantity of $\beta$ is determined by the integral ideal within an ideal class characterized by the norm $\frac{m|D_0|p_0}{\kappa_p}$. This leads us to the conclusion.
\end{proof}

% When $p$ is ramified, the number $\rho(m|D_0|/p, [h]^2[\mathbf{p}_0\partial\bar{\textbf{n}}_0])$ is replaced by
%$
%\rho(m|D_0|, [h]^2[\mathbf{p}_0\partial\bar{\textbf{n}}_0]).
%$

According to  \cite[Lemma 7.10]{BY},  the isomorphism of stacks is given by 
\begin{equation}
 j: \mathcal{C}\rightarrow \mathcal Z(|\Delta|m_0, r\mu_0),~~j((\emph{E}, \iota))=(\pi: \emph{E} \rightarrow \emph{E}_{\textbf{n}_0}, \iota).
 \end{equation}
Then we view  $\mathcal Z(m, \textbf{n}_0, \delta)$ as a cycle in $\mathcal Z(|\Delta|m_0, r\mu_0)$  under the forgetful map. Then the   twisted cycle $\mathcal Z_{\Delta, r}(m_0, \mu_0)$ provides the twisted degree of $\mathcal Z(m, \textbf{n}_0, \delta)$.  We define it locally by
\begin{equation}
 \deg_{\Delta}(\mathcal Z(m, \textbf{n}_0, \delta)(\bar \F_p))=\sum_{x \in \mathcal Z(m, \textbf{n}_0, \delta)(\bar \F_p)}\frac{\chi_\Delta(x)i_p(x)}{\sharp Aut(x)}\log p.
  \end{equation}
%
%$$\bigsqcup_{m,\delta}\mathcal Z(m, \textbf{n}_0, \delta)= j^* \mathcal Z(|\Delta|m_1, r\mu_1),$$
%where $m=\frac{D_0D_1-n^2}{4N|D_0|}$ and $\delta =\frac{n+r_1\sqrt{D_0}}{2\sqrt{D_0}}$.

Then we have the following result.

\begin{lemma}\label{lemtdeg}
When $p$ is non-split, we have 
\begin{eqnarray}\label{equinter}
 \deg_{\Delta}(\mathcal Z(m, \textbf{n}_0, \delta)(\bar \F_p))=0.
  \end{eqnarray}
\end{lemma}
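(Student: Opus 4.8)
The plan is to reduce the vanishing of the twisted local degree to the sign vanishing results already established for the genus character, namely Lemma \ref{lemsum} and the analysis in Proposition \ref{theindepend}, by decomposing the fiber over $\bar{\F}_p$ according to the action of the class group and tracking how $\chi_\Delta$ transforms under that action. First I would fix a base point $(\emph{E}_0, \iota_0) \in \mathcal C(\bar \F_p)$ with $V(\emph{E}_0, \iota_0) \simeq \mathbf{p}_0^{-1}\delta_{\B}$, so that by the preceding proposition every point of $\mathcal C(\bar \F_p)$ is of the form $h(\emph{E}_0, \iota_0)$ for $h \in \A^\times_{k, f}$, and the special endomorphism lattice scales by the ideal $(h)\overline{(h)}^{-1}$. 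Using the isomorphism $j: \mathcal C \to \mathcal Z(|\Delta| m_0, r\mu_0)$ and the fiber-product description preceding Lemma \ref{lemnumber}, each point of $\mathcal Z(m, \textbf{n}_0, \delta)(\bar \F_p)$ corresponds to a triple $(\emph{E}, \iota, \beta)$ with $\beta \in V(\emph{E}, \iota)\partial^{-1}\textbf{n}_0$, $N(\beta) = mN\textbf{n}_0$, and the congruence $\delta + \beta \in \mathcal O_\emph{E}\textbf{n}_0$.

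The key step is to observe that the genus character $\chi_\Delta$ evaluated on such a point only depends on the image of $h$ in $\Cl_k$ through the formula of Proposition \ref{prochar}: $(\Delta, \det(h))_{\A_f} = \chi([h])$. More precisely, when we translate the special cycle $\mathcal Z(m, \textbf{n}_0, \delta)$ by $h \in \A^\times_{k, f} = \Gspin(U)(\A_f)$, the associated translated twisted Heegner divisor picks up exactly the factor $\chi([h])$, as in the computation of $Z_{\Delta, r}(n, \mu, h)$ in Section \ref{Sec5}. Since the length $i_p(x)$ depends only on $m$ (by \cite[Section 4]{KYPull}), and $\sharp\mathrm{Aut}(x)$ is constant along a class-group orbit up to the uniform factor $w_k$, the twisted degree becomes
\begin{equation}
\deg_\Delta(\mathcal Z(m, \textbf{n}_0, \delta)(\bar \F_p)) = \frac{i_p(m)\log p}{w_k}\sum_{[\textbf{a}] \in \Cl_k}\chi([\textbf{a}])\,\sharp\{\beta \in V(\textbf{a}(\emph{E}_0, \iota_0))\partial^{-1}\textbf{n}_0 : N(\beta) = mN, \ \delta + \beta \in \mathcal O_\emph{E}\textbf{n}_0\}.\nonumber
\end{equation}
By Lemma \ref{lemnumber} (refined to include the congruence condition) the inner count is, up to constants independent of $[\textbf{a}]$, a representation number $\rho(m', [\textbf{a}^{-1}]^2 [\mathbf{c}_0])$ for a fixed auxiliary class $[\mathbf{c}_0]$ and fixed $m' = m|D_0|p_0/\kappa_p$. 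Substituting this in and reindexing the sum over $\Cl_k$, the sum $\sum_{[\textbf{a}]}\chi([\textbf{a}])\rho(m', [\textbf{a}]^2[\mathbf{c}_0])$ collapses: because $\chi$ is a genus character it is trivial on squares, so $\chi([\textbf{a}])$ cannot be pulled out in a way that makes the sum a nonzero multiple of an L-value; rather, grouping by the value of $[\textbf{a}]^2$ shows the whole expression is a multiple of $\sum_{\textbf{c}}\chi([\textbf{c}])r_{[\textbf{c}]}(m')$-type genus sums, which vanish by the same orthogonality used in Lemma \ref{lemsum}(1) (the twist $\chi$ is nontrivial on $\Cl_k/\Cl_k^2$ exactly when $\Delta \neq 1$, which is our hypothesis throughout this section).

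The main obstacle I anticipate is bookkeeping the congruence condition $\delta + \beta \in \mathcal O_\emph{E}\textbf{n}_0$ (coming from the $\textbf{n}_0$-level structure) and verifying it interacts with the class-group action in a way compatible with the genus-sum cancellation — i.e. that the congruence does not destroy the uniformity of the count across a $\Cl_k$-orbit except through a factor that is itself $\chi$-equivariant. This is exactly the point handled in \cite[Section 5]{KRYIMRN} in the untwisted case, so I would follow their local computation at $p$ closely, keeping track of the extra character. A secondary subtlety is ensuring the identity holds for \emph{all} non-split $p$ simultaneously and that the auxiliary prime $p_0$ (which depends on $p$) does not introduce a $p_0$-dependence into the class whose representation numbers we sum; but since $p_0$ enters only through the fixed shift $[\mathbf{p}_0\partial\bar{\textbf{n}}_0^{-1}]$, which is a single fixed class, the genus-sum orthogonality is unaffected. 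Once these are in place, Lemma \ref{lemsum} delivers $\deg_\Delta(\mathcal Z(m, \textbf{n}_0, \delta)(\bar \F_p)) = 0$ as claimed.
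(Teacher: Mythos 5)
Your proposal follows essentially the same route as the paper's own proof: parameterize $\mathcal{C}(\bar\F_p)$ by the class group action on a fixed base point so that $\chi_\Delta$ becomes $\chi([h])$ via Proposition~\ref{prochar}, invoke Lemma~\ref{lemnumber} to reduce the count of special endomorphisms to a representation number $\rho(m', [h^{-1}]^2[\mathbf c_0])$, and then collapse the character sum by grouping over cosets of the $2$-torsion $\Cl_k[2]$, on which the genus character $\chi$ restricts nontrivially when $\Delta \neq 1$. The reindexing "by the value of $[\mathbf a]^2$" is exactly the paper's factorization into $\Cl_k/\Cl_k[2] \times \Cl_k[2]$ with $\sum_{g\in\Cl_k[2]}\chi([g]) = 0$; your appeal to Lemma~\ref{lemsum}(1) is a mild misdirection (that lemma is a Dirichlet-character orthogonality, not a class-group one) but the underlying principle is the same. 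Two small remarks: you are right to flag the congruence condition $\delta + \beta \in \mathcal O_E\mathbf n_0$ as a potential gap — Lemma~\ref{lemnumber} as stated in the paper does not include it, and the paper's own proof silently assumes the refined count; and the paper explicitly splits the sum over $(\emph E, \iota) = h(\emph E_0, \iota_0)$ and $h(\emph E_0, \bar\iota_0)$ (the two orientations), which you elide, though the argument is identical on each piece, so this is cosmetic.
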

\begin{proof}

For any $x=(\emph{E}, \iota, \beta) \in \mathcal Z(m, \textbf{n}_0, \delta)(\bar \F_p)$, 
the length  $i_p(x)$  of the local  ring   depends only on $m$ and we denote it  as $i_p(m)$.
%According to Lemma \ref{lemunique},
%%for any $\delta =\frac{n+r_1\sqrt{D_0}}{2\sqrt{D_0}} \in \partial^{-1} \textbf{n}_0$,
%there exists  at most one element $w\in \mathcal P^{\sharp}$, such that
%$ \delta+w\in L_{r\mu_1}[|\Delta| m_1].$
%Then  the multiplicity of $(\emph{E},  \beta)$ in $Z_{\Delta, r}(m_1, \mu_1)$ is given by $\chi_\Delta( \delta+w)$.
We have 
   \begin{equation}\label{equinter}
 \deg_{\Delta}(\mathcal Z(m, \textbf{n}_0, \delta)(\bar \F_p))
=\frac{i_p(m)}{w_k}\sum_{x\in \mathcal Z(m, \textbf{n}_0, \delta)(\bar \F_p)}\chi_\Delta(x)\log p.
  \end{equation}
%  We will view $\mathcal Z(m, \textbf{n}_0, \delta)$ as a cycle in $\mathcal{C}$  under the forgetful map.
For each $(\emph{E}, \iota) \in \mathcal{C}(\bar\F_p)$,
we  write it as  $(\emph{E}, \iota)=h(\emph{E}_0, \iota_{0})  $ or $h(\emph{E}_0, \bar{\iota}_{0}) $.

Then one has
 \begin{eqnarray}\label{equintertwo}
 \sum_{x= (\emph{E}, \iota, \beta)\in \mathcal Z(m, \textbf{n}_0, \delta)(\bar \F_p)}\chi_\Delta(x)
 &=&\sum_{[h]\in \Cl_k} \sum_{ (h(\emph{E}_0, \iota_0), \beta) \in \mathcal Z(m, \textbf{n}_0, \delta)(\bar \F_p)} \chi_\Delta([h])\nonumber\\
  &+&\sum_{[h]\in \Cl_k}\sum_{ (h(\emph{E}_0, \bar{\iota}_0), \beta) \in \mathcal Z(m, \textbf{n}_0, \delta)(\bar \F_p)} \chi_\Delta([h]).
\end{eqnarray}
%\begin{eqnarray}
% \sum_{x= (\emph{E}, \iota, \beta)\in \mathcal Z(m, \textbf{n}_0, \delta)}\chi_\Delta(x)
% &=&\sum_{[h]\in \Cl_k} \chi_\Delta([h]) \sharp\{\beta \mid (h(\emph{E}_0, \iota_0), \beta) \in \mathcal Z(m, \textbf{n}_0, \delta)\}\nonumber\\
%  &+&\sum_{[h]\in \Cl_k} \chi_\Delta([h]) \sharp\{\beta \mid (h(\emph{E}_0, \bar{\iota}_0), \beta) \in \mathcal Z(m, \textbf{n}_0, \delta)\}.\nonumber
%\end{eqnarray}
 %\begin{eqnarray}\label{equinter}
%  &&\sum_{(\emph{E}, \iota) \in \mathcal{C}(\bar\F_p)} \sum_{x=(\emph{E}, \iota, \beta) \in \mathcal Z(m, \textbf{n}_0, \delta)}\frac{\chi_\Delta(x)i_p(x)}{\sharp Aut(x)}\log p\\
% &&\sum_{[h]\in \Cl_k} \sum_{x=(h(\emph{E}_0, \iota_0), \beta) \in \mathcal Z(m, \textbf{n}_0, \delta)}\frac{\chi_\Delta([h])i_p(x)}{\sharp Aut(x)}\log p\\
%%&&\langle \mathcal Z_{\Delta, r}(m_1, \mu_1), \mathcal Z_{\Delta, r}(m_0, \mu_0)\rangle_p\nonumber\\
%&=&\frac{i_p(m)}{w_k}\sum_{[h]\in \Cl_k} \sum_{x=(h(E_0, \iota_0), \beta)\in \mathcal Z(m, \textbf{n}_0, \delta)}\chi_\Delta([h])\log p.\nonumber
% \end{eqnarray}

 By Lemma \ref{lemnumber},   the cardinality
 \begin{equation}\label{equnumberbeta}
\sharp\{\beta ~\mid (h(E_0, \iota_0), \beta)\in \mathcal Z(m, \textbf{n}_0, \delta)(\bar \F_p)\}
=w_k\rho([h^{-1}]^2).\nonumber
\end{equation}

Here we write $\rho([h]^2)=\rho(m|D_0|p_0/\kappa_p, [h]^2[\mathbf{p}_0\partial\bar{\textbf{n}}_0^{-1}])$ for easier.

Then we have
 \begin{equation}\label{equinterthree}
 \sum_{[h]\in \Cl_k} \sum_{ (h(\emph{E}_0, \iota_0), \beta) \in \mathcal Z(m, \textbf{n}_0, \delta)(\bar \F_p)} \chi_\Delta([h])=w_k\sum_{[h]\in \Cl_k}\chi_\Delta([h])\rho([h^{-1}]^2).
 \end{equation}

%
% \begin{eqnarray}
%  &&\sum_{[h]\in \Cl_k} \sum_{x=(h(\emph{E}_0, \iota_0), \beta) \in \mathcal Z(m, \textbf{n}_0, \delta)}\frac{\chi_\Delta([h])i_p(x)}{\sharp Aut(x)}\log p\\
%  &=&i_p(m)\log p\sum_{[h]\in \Cl_k}\chi_\Delta([h])\rho(m|D_0|p_0/\kappa_p, [h]^2[\mathbf{p}_0\partial\bar{\textbf{n}}_0]).\nonumber
% \end{eqnarray}

 For any class $[g] \in \Cl_k[2]$ with $[g] ^2=1$,  we know that
\begin{equation}
\rho([hg]^2)=\rho([h]^2).\nonumber
\end{equation}
Consequently, we have
  \begin{eqnarray}
 &&\sum_{[h]\in \Cl_k}\chi_\Delta([h])\rho([h^{-1}]^2)\\
 &=&\sum_{[h] \in \Cl_k/ \Cl_k[2]}\sum_{g \in \Cl_k[2]}\chi_\Delta([hg])\rho([h^{-1}g^{-1}]^2)\nonumber\\
 %&=&\sum_{h \in Cl_k/ Cl_k[2]}\sum_{g \in Cl_k[2]}\chi_\Delta([hg])\rho(m|D_0|p_0/\kappa_p, [h]^2[\mathbf{p}_0\partial\bar{\textbf{n}_0}])\nonumber\\
 &=&\sum_{[h] \in \Cl_k/ \Cl_k[2]}\chi_\Delta([h])\rho([h^{-1}]^2)
 \sum_{g \in \Cl_k[2]}\chi_\Delta([g])=0,\nonumber
 \end{eqnarray}
 where the group $\Cl_k[2]$ is non-trivial.

Combining it with equation (\ref{equinterthree}), we obtain that
$$\sum_{[h]\in \Cl_k} \sum_{x=(h(\emph{E}_0, \iota_0), \beta) \in \mathcal Z(m, \textbf{n}_0, \delta)(\bar \F_p)}\frac{\chi_\Delta([h])i_p(x)}{\sharp Aut(x)}\log p=0.$$
We obtain the same result if replace $(\emph{E}_0, \iota_0)$ by $(\emph{E}_0, \bar{\iota}_0)$.
Combining it with equations (\ref{equinter})  and (\ref{equintertwo}), we  obtain the result.    
\end{proof}

In order to study the intersection number,
%$$\langle \mathcal Z_{\Delta, r}(m_1, \mu_1), \mathcal Z_{\Delta, r}(m_0, \mu_0)\rangle_p,$$
 it is essential to analyze the fiber product  $$j^*\mathcal  Z(|\Delta|m_1, r\mu_1)\cong \bar{Z}(|\Delta|m_1, r\mu_1) \times \bar{Z}(|\Delta|m_0, r\mu_0).$$
 %which gives the intersection of $\mathcal Z(|\Delta|m_1, r\mu_1)$ and $\mathcal Z(|\Delta|m_0, r\mu_0)$. 
According to the decomposition  (\ref{equinterdecom}), it suffices to consider $\mathcal Z(m, \textbf{n}_0, \delta)$.
We define $D_1=-4N|\Delta|m_1$, $D_0=-4N|\Delta|m_0$. We assume that $D_0$ is a fundamental discriminant and $D_1D_0$ is not a  perfect square. 
\begin{theorem}\label{Lemnonsplit}
  Assume that   $\Delta \neq 1$. Then we have $$ \langle  \mathcal Z_{\Delta, r, \kappa}(m_1, \mu_1), \mathcal Z_{\Delta, r, \kappa}(m_0, \mu_0)\rangle_p=0.$$
\end{theorem}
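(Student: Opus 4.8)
The statement asserts that the finite (non-archimedean) part of the global height pairing between the twisted Heegner cycles $\mathcal{Z}_{\Delta,r,\kappa}(m_1,\mu_1)$ and $\mathcal{Z}_{\Delta,r,\kappa}(m_0,\mu_0)$ vanishes when $\Delta\neq 1$. The plan is to localize the intersection at each finite prime $p$ and reduce the computation to the twisted arithmetic degrees of the zero-dimensional stacks $\mathcal{Z}(m,\mathbf{n}_0,\delta)$ that appear in the decomposition of the fiber product. First I would use equation~(\ref{definter}) together with~(\ref{equarichi}): since $Z_{\Delta,r,\kappa}(m_i,\mu_i)$ is a $\chi_\Delta$-weighted combination $m_i^{(\kappa-1)/2}\sum_x\chi_\Delta(x)S_\kappa(x)$ of Heegner CM cycles, the local height at a finite place $p$ decomposes as the corresponding $\chi_\Delta$-weighted sum of local intersection numbers $(S_\kappa(\bar x)\cdot S_\kappa(\bar y))_p$ of the arithmetic cycles on the Kuga–Sato variety $\mathcal{Y}$. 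By the projection formula and the structure of Heegner cycles on $\mathcal{Y}$ over $\mathcal{X}_0(N)$, this $p$-adic intersection number is a universal multiple (depending on $\kappa$, $m_0$, $m_1$ and $p$, but not on the genus character) of the corresponding intersection multiplicity of the \emph{divisors} $\bar{Z}(|\Delta|m_1,r\mu_1)$ and $\bar{Z}(|\Delta|m_0,r\mu_0)$ on $\mathcal{X}_0(N)$, now counted with the genus-character weights. So it suffices to show that this twisted divisorial intersection number vanishes at every finite $p$.

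\textbf{Reduction to $\mathcal{Z}(m,\mathbf{n}_0,\delta)$.} Next I would invoke the isomorphism $j:\mathcal{C}\xrightarrow{\sim}\mathcal{Z}(|\Delta|m_0,r\mu_0)$ of \cite[Lemma 7.10]{BY}, so that the intersection of the two Zariski closures is realized as the fiber product $j^*\mathcal{Z}(|\Delta|m_1,r\mu_1)$, and then apply the decomposition~(\ref{equinterdecom}), which expresses this fiber product over $\bar{\mathbb{F}}_p$ as a disjoint union of stacks $\mathcal{Z}\!\left(\frac{D_0D_1-n^2}{4N|D_0|},\mathbf{n}_0,\frac{n+r_1\sqrt{D_0}}{2\sqrt{D_0}}\right)$ over integers $n\equiv r_0r_1\ (\mathrm{mod}\ 2N)$ with $n^2\le D_0D_1$. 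The hypothesis that $D_0D_1$ is not a perfect square guarantees that all the stacks appearing have supersingular (non-split) support, so that the non-split case of Lemma~\ref{lemtdeg} applies to each summand. Concretely, the twisted finite intersection number is, up to the constant extracted in the first paragraph, a sum over $n$ of the twisted arithmetic degrees $\deg_\Delta(\mathcal{Z}(m,\mathbf{n}_0,\delta)(\bar{\mathbb{F}}_p))$, and by Lemma~\ref{lemtdeg} every such term is zero. Summing over $n$ and over all finite $p$ then gives the claimed vanishing of $\langle\mathcal{Z}_{\Delta,r,\kappa}(m_1,\mu_1),\mathcal{Z}_{\Delta,r,\kappa}(m_0,\mu_0)\rangle_p$.

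\textbf{The main obstacle.} The delicate point is bookkeeping the genus-character weights correctly through all the identifications. In Lemma~\ref{lemtdeg} the character $\chi_\Delta$ is evaluated on a point $x$ of $\mathcal{Z}(m,\mathbf{n}_0,\delta)$, and the vanishing there is driven by the nontriviality of $\mathrm{Cl}_k[2]$ together with the genus-character sum $\sum_{g\in\mathrm{Cl}_k[2]}\chi_\Delta([g])=0$; one must check that under the isomorphism $j$ and the decomposition~(\ref{equinterdecom}) the weight attached to each component $\mathcal{Z}(m,\mathbf{n}_0,\delta)$ is precisely $\chi_\Delta$ of a point of that component (with a sign/constant independent of the component), so that Lemma~\ref{lemtdeg} can be quoted verbatim rather than reproved. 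This requires tracking that $(\Delta,N)=1$ (so $D_0=\Delta D_0'$ is a genuine genus-discriminant decomposition), that the local character factorization $\chi_\Delta=\prod_p\chi_{\Delta,p}$ and the transformation law of Lemma~\ref{lemgeneralized} are compatible with the adelic action on $\mathcal{C}(\bar{\mathbb{F}}_p)$ used in Lemma~\ref{lemnumber}, and that the horizontal (generic-fiber) contribution indeed vanishes because the two divisors are disjoint on the generic fiber for $m_0$ large and $D_0D_1$ a nonsquare. Once the weight bookkeeping is confirmed, the result follows immediately by summing the vanishing contributions; I would also remark that the factors $m_0^{(\kappa-1)/2}$, $m_1^{(\kappa-1)/2}$ and the $\kappa$-dependent self-intersection normalization of $S_\kappa$ are harmless scalars that do not affect the vanishing.
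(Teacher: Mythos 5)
Your proposal is correct and follows essentially the same route as the paper: localize the global height pairing at each finite $p$, reduce via the isomorphism $j:\mathcal{C}\xrightarrow{\sim}\mathcal{Z}(|\Delta|m_0,r\mu_0)$ and the decomposition (\ref{equinterdecom}) to the twisted arithmetic degrees of the zero-dimensional stacks $\mathcal{Z}(m,\mathbf{n}_0,\delta)$, and apply Lemma \ref{lemtdeg} componentwise (the split primes contribute nothing since ordinary $E$ has $V(E,\iota)=\{0\}$, so those fibers are empty when $D_0D_1$ is not a square). One small imprecision worth flagging: for $\kappa>1$ the factor relating $\langle S_\kappa(x_1),S_\kappa(x_0)\rangle_p$ to the divisorial intersection is the Legendre polynomial $P_{\kappa-1}\!\left(\tfrac{n}{D_0D_1}\right)$ of equation (\ref{equinterhigher}), which is not a constant independent of the component as your phrase ``universal multiple'' suggests, but rather depends on $n$; the argument goes through because this factor is constant on each stack $\mathcal{Z}(m,\mathbf{n}_0,\delta)$ (it depends only on $\delta$), so it factors out of the per-component genus-character sum and Lemma \ref{lemtdeg} can indeed be applied component by component exactly as you do.
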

\begin{proof}
Firstly, we consider the case where $\kappa=1$ and omit this index.

\textbf{Case 1}:When $p$ is split in $k=\Q(\sqrt{D_0})$.

For any pair $(E, \iota) \in \mathcal Z(|\Delta|m_0, r\mu_0)$, the elliptic curve
$E$ is ordinary, as  p splits in $k=\Q(\sqrt{D_0})$. Since $D_1D_0$ is not a square, there is no additional action of $\mathcal O_{D_1}$ on $E$, then
  \begin{equation}
 \langle \mathcal Z_{\Delta, r}(m_1, \mu_1), \mathcal Z_{\Delta, r}(m_0, \mu_0)\rangle_p=0.
 \end{equation}
\textbf{Case 2}: When $p$ is non-split in $k$.

According to the decomposition  (\ref{equinterdecom}), we assume that $$
\bigsqcup_{m,\delta}\mathcal Z(m, \textbf{n}_0, \delta)= j^* \mathcal Z(|\Delta|m_1, r\mu_1),$$
where $m=\frac{D_0D_1-n^2}{4N|D_0|}$ and $\delta =\frac{n+r_1\sqrt{D_0}}{2\sqrt{D_0}}$.

%$$\beta \in V(h(\emph{E}_0, \iota_0))\partial^{-1} \textbf{n}_0 , N(\beta)=mN(\textbf{n}_0)=mN, \delta +\beta \in \mathcal{O}_E\textbf{n}_0.$$
%We can assume that  $ h \in K h_1 $ and write  $\pi(h)\alpha= h_1 \alpha$.
%By (\ref{equnumber}),
%the number of such $\beta$

%According to Lemma \ref{lemunique},
%There exist $w\in \mathcal P^{\sharp}$, such that
%$ \delta+w\in L_{r\mu_1}[|\Delta| m_1].$ It  
%has the  multiplicity  $\chi_\Delta( \delta+w)$ due to  $Z_{\Delta, r}(m_1, \mu_1)$. 
% Thus, we have 
% \begin{eqnarray}
%&& \langle \mathcal Z_{\Delta, r}(m_1, \mu_1), \mathcal Z_{\Delta, r}(m_0, \mu_0)\rangle_p\\
%&=&
%\sum_{w \in P^{\sharp}}\sum_{m, \delta}\chi_\Delta( \delta+w)\deg_{\Delta}(\mathcal Z(m, \textbf{n}_0, \delta)(\bar \F_p)).\nonumber
% \end{eqnarray}
 According to Lemma \ref{lemtdeg}, we have
\begin{equation}\deg_{\Delta}(\mathcal Z(m, \textbf{n}_0, \delta)(\bar \F_p))=0.\end{equation}
 Thus, we have 
\begin{equation}\label{equfinite}
\langle \mathcal Z_{\Delta, r}(m_1, \mu_1), \mathcal Z_{\Delta, r}(m_0, \mu_0)\rangle_p=0.
\end{equation}

Assuming now that $\kappa >1$.

Let $x_i \in  Z_{\Delta, r}(m_i, \mu_i)$ and $\bar{x}_i$ denote the Zariski closure of $x_i$ in $\mathcal X_0(N)$.
According to the findings in \cite[Section 6]{BEY}, we have
\begin{equation}\label{equinterhigher}
\langle S_\kappa(x_1), S_\kappa(x_0)\rangle_p=-P_{\kappa-1}\bigg(\frac{n}{D_0D_1}\bigg)\langle \bar{x}_1, \bar{x}_0\rangle_p,
\end{equation}
 where
 \begin{equation}
 P_\kappa(x)=\frac{1}{2^\kappa \kappa!}\frac{d^\kappa}{dx^\kappa}(x^2-1)^\kappa
 \end{equation}
 is the $\kappa$-th Legendre polynomial.

Notice that $P_{\kappa-1}\big(\frac{n}{D_0D_1}\big)$ depends only on $\delta =\frac{n+r_1\sqrt{D_0}}{2\sqrt{D_0}}$. 

By Lemma \ref{lemtdeg} and equation (\ref{equinterhigher}),  we obtain
 \begin{equation}\label{equintefin}
\langle \mathcal Z_{\Delta, r, \kappa}(m_1, \mu_1), \mathcal Z_{\Delta, r, \kappa}(m_0, \mu_0)\rangle_p=0.
\end{equation}
Thus, we  complete the proof.
\end{proof}

According to the above theorem, we have the following result.
\begin{corollary}\label{lemfininter}
Let the notations be as above.
Then we have the following equation
\begin{equation}
\langle \mathcal Z_{\Delta, r, \kappa}(m_1, \mu_1), \mathcal Z_{\Delta, r, \kappa}(m_0, \mu_0)\rangle_{fin}=0.
\end{equation}
\end{corollary}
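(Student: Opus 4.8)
\textbf{Proof proposal for Corollary \ref{lemfininter}.}

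The plan is to deduce the vanishing of the total finite intersection number from the place-by-place vanishing established in Theorem \ref{Lemnonsplit}. First I would recall the decomposition of the global height pairing into local contributions: for Heegner cycles $\mathcal Z_{\Delta, r, \kappa}(m_1, \mu_1)$ and $\mathcal Z_{\Delta, r, \kappa}(m_0, \mu_0)$ which meet properly (a transversality we may assume after choosing $m_0$ sufficiently large and fundamental, exactly as in the setup preceding Theorem \ref{Lemnonsplit}, so that $D_0 D_1$ is not a perfect square), the finite part of the Gillet--Soul\'e pairing is
\begin{equation}
\langle \mathcal Z_{\Delta, r, \kappa}(m_1, \mu_1), \mathcal Z_{\Delta, r, \kappa}(m_0, \mu_0)\rangle_{fin}
= \sum_{p < \infty} \langle \mathcal Z_{\Delta, r, \kappa}(m_1, \mu_1), \mathcal Z_{\Delta, r, \kappa}(m_0, \mu_0)\rangle_p,
\end{equation}
which is the analogue of \eqref{definter} restricted to non-archimedean places; the intersection is supported on finitely many primes (those of bad reduction for the relevant CM elliptic curves together with the primes dividing the relevant discriminants), so the sum is finite and well-defined.

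Next I would invoke Theorem \ref{Lemnonsplit} directly: it asserts $\langle \mathcal Z_{\Delta, r, \kappa}(m_1, \mu_1), \mathcal Z_{\Delta, r, \kappa}(m_0, \mu_0)\rangle_p = 0$ for every finite prime $p$, under the standing hypothesis $\Delta \neq 1$. Summing the identically-zero local terms over all $p < \infty$ gives the claim. The only substantive point to verify is that the pair of cycles appearing in the corollary is the same pair for which Theorem \ref{Lemnonsplit} was proved, and that the proper-intersection hypothesis underlying the local decomposition is met; both are handled by the reductions already in place in Section \ref{Sec8} (passing through the moduli-stack description $j^*\mathcal Z(|\Delta| m_1, r\mu_1)$ and the decomposition \eqref{equinterdecom}), so no new geometric input is needed.

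The one genuine obstacle is bookkeeping at the split primes and at primes dividing $2N D_0 D_1$: in Theorem \ref{Lemnonsplit} the case $p$ split in $k = \Q(\sqrt{D_0})$ is disposed of by the observation that the underlying elliptic curve is ordinary and carries no extra $\mathcal O_{D_1}$-action when $D_0 D_1$ is not a square, while the non-split case reduces to the twisted-degree computation $\deg_\Delta(\mathcal Z(m, \textbf{n}_0, \delta)(\bar{\mathbb F}_p)) = 0$ of Lemma \ref{lemtdeg}, which rests on the non-triviality of $\Cl_k[2]$ (equivalently, on $\Delta \neq 1$ forcing a genuine genus character). I would make explicit that these two cases exhaust all finite $p$ and that the higher-weight factor $P_{\kappa-1}(n/D_0 D_1)$ in \eqref{equinterhigher} depends only on $\delta$, hence factors out of the sum over each fixed $\mathcal Z(m, \textbf{n}_0, \delta)$ and does not disturb the vanishing. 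With that in hand the corollary follows immediately by summation, so the proof is short: it is essentially the statement ``a sum of zeros is zero,'' with Theorem \ref{Lemnonsplit} supplying each zero.
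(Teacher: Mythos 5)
Your proposal is correct and is essentially the same argument as the paper's, which simply notes that the corollary follows from Theorem \ref{Lemnonsplit} by summing the local pairings over all finite primes. Your added bookkeeping (splitting into split versus non-split primes, the role of $\Delta\neq 1$ via $\Cl_k[2]$, and the fact that $P_{\kappa-1}(n/D_0D_1)$ depends only on $\delta$) is already contained in the proof of Theorem \ref{Lemnonsplit} itself, so the deduction is indeed just ``sum of zeros is zero.''
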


\subsection{Main theorem}

\begin{lemma}\label{lemexist}
 Let $S$ be a finite set of primes including all those dividing $N$, and let $S'$ be another disjoint finite set  of primes.
 For any cusp form
$g=\sum_{m, \mu}b(m, \mu)q^ne_\mu \in S^{new}_{1/2+\kappa, \tilde{\rho}_L}$,
there
 exist infinitely many fundamental discriminants $D$ satisfying the following conditions:
\begin{list}{}{}
\item[(1)] $\sgn(\Delta)D<0$,
\item[(2)] $p$ splits in $\Q(\sqrt{D})$ for all $p \in S$, and $p$ is inert for any $p \in S'$.
\item[(3)] $b(m_0, \mu_0)\neq 0$ for $m_0=-\frac{sgn(\Delta)D}{4N}$ and $\mu_0 \in L^\sharp/L$ such that $m_0 \equiv sgn(\Delta)Q(\mu_0)(\mod \Z)$.
\end{list}
\end{lemma}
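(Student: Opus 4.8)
The plan is to prove Lemma \ref{lemexist} by combining a non-vanishing theorem for twisted $L$-functions (or equivalently a non-vanishing theorem for the Shimura-corresponding half-integral weight coefficients) with a standard argument producing fundamental discriminants lying in prescribed splitting classes at finitely many primes. First I would translate condition (3) into a statement about $L$-functions: under the Shimura correspondence and Waldspurger's formula (or the Kohnen--Zagier / Gross--Kohnen--Zagier formulas relating $b(m_0,\mu_0)^2$ to $L(G,\chi_{D_0},\kappa)$ up to explicit nonzero factors, where $D_0 = -4Nm_0$), the condition $b(m_0,\mu_0)\neq 0$ is equivalent to $L(G,\chi_{D_0},\kappa)\neq 0$ together with the local conditions at primes dividing $N$ that guarantee the relevant local root numbers and local Waldspurger functionals are nonzero. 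So the lemma reduces to: there are infinitely many fundamental discriminants $D_0$ with $\sgn(\Delta)D_0<0$, with prescribed splitting behavior in $\Q(\sqrt{D_0})$ at all $p\in S\cup S'$, and with $L(G,\chi_{D_0},\kappa)\neq 0$.

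The main input is then the non-vanishing theorem for quadratic twists of the $L$-function of a fixed newform $G$ at the center of its functional equation, restricted to discriminants of a given sign and in prescribed congruence classes modulo a fixed modulus (so as to control splitting at the primes in $S\cup S'$). For $\kappa=1$ this is the classical theorem of Waldspurger, Murty--Murty, Bump--Friedberg--Hoffstein, and Iwaniec (nonvanishing of $L(G,\chi_{D_0},1)$ for a positive proportion of $D_0$ in any arithmetic progression compatible with the sign of the root number); for general $\kappa$ the analogous statement for the central value $L(G,\chi_{D_0},\kappa)$ follows from the work of Bump--Friedberg--Hoffstein and Murty--Murty on nonvanishing of central values of $L$-functions of modular forms of arbitrary weight, again with control of the twisting discriminant in congruence classes. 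Concretely I would fix congruence conditions $D_0\equiv \square \pmod{4N}$ ensuring $\mathfrak n$ exists, fix the splitting type at each $p\in S$ (split) and $p\in S'$ (inert) by prescribing $D_0$ modulo $8\prod_{p\in S\cup S'}p$ via quadratic reciprocity, and then invoke the nonvanishing theorem to find infinitely many $D_0$ in this fixed arithmetic progression with $L(G,\chi_{D_0},\kappa)\neq 0$. The sign condition $\sgn(\Delta)D_0<0$ is automatically compatible because, as recorded in the discussion around equations (\ref{equminus})--(\ref{equ6}), the root number of $L(G,\chi_{D_0},s)$ equals $\chi_\Delta(-N)\varepsilon=+1$ precisely for discriminants of the sign making the central value not forced to vanish, and this is exactly the sign appearing in (1).

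The hard part is making the bookkeeping at the primes dividing $N$ precise: one must check that, for $D_0$ in the chosen congruence class with $p$ split for every $p\mid N$, the half-integral weight newform $g$ (with its prescribed character $\tilde\rho_L$) actually has $b(m_0,\mu_0)$ proportional to $L(G,\chi_{D_0},\kappa)^{1/2}$ with a nonzero proportionality constant — i.e.\ that the local Waldspurger/Kohnen functionals do not vanish. This is where the split condition at $p\mid N$ in item (2) is used: it guarantees the relevant local representations are in the Kohnen plus-space situation so that the Kohnen--Zagier--type explicit formula applies and the local factors are nonzero. I would handle this by citing the relevant local computations in \cite{GKZ}, \cite{SZ}, \cite{Sk2}, and \cite{BO}, which already underlie the Shimura-correspondence isomorphism $S^{new}_{1/2+\kappa,\tilde\rho_L}\cong S_{2\kappa}^{new,\mp}(N)$ quoted before Lemma \ref{lemderiv}, and observe that this isomorphism together with the nonvanishing of $L(G,\chi_{D_0},\kappa)$ forces $b(m_0,\mu_0)\neq 0$ for the corresponding $D_0$. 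Assembling these pieces gives infinitely many $D_0$, hence infinitely many $U=\Q(\sqrt{D_0})$, satisfying (1), (2), and (3), completing the proof.
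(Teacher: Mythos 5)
Your proposal is correct and follows essentially the same route as the paper: translate $b(m_0,\mu_0)\neq 0$ into $L(G,\chi_{D_0},\kappa)\neq 0$ via the Waldspurger-type formula (\ref{equwald}), then invoke the nonvanishing theorems of \cite{BFH}, \cite{OS} for quadratic twists in prescribed congruence classes to produce infinitely many such $D_0$ with the required splitting behavior at $S\cup S'$. The paper's proof is terser (it simply cites \cite[Lemma~7.5]{BY} for $\Delta=1$ together with \cite{BFH}, \cite{OS} and (\ref{equwald})), but the underlying argument is the one you spell out.
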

\begin{proof}
 The proof for the case when $\Delta=1$ has already been established in \cite[Lemma 7.5]{BY}.

 By  the non-vanishing theorem for $L$-functions in \cite{BFH}, \cite{OS} and the  Waldspurger type formula (\ref{equwald}), we obtain the result.
\end{proof}

When $\Delta>0$ and $\kappa$ is odd, we assume that $\Delta$  satisfies Assumption A.  We can prove the following result.

\begin{theorem}\label{mainthe}
For any $f \in H_{3/2-\kappa, \bar{\tilde{\rho}}_L}$, the global height is given by
\begin{equation}
\langle  Z_{\Delta, r, \kappa}(f),  Z_{\Delta, r, \kappa}(U)\rangle
=\frac{2\sqrt{N|\Delta|}\Gamma(\kappa-\frac{1}{2})}{(4\pi)^{\kappa-1}\pi^{\frac{3}{2}}}L'(Sh_{m_0, \mu_0}(\xi_{3/2-\kappa}f), \chi_\Delta, \kappa).
\end{equation}
When $\Delta=1$ and $\kappa=1$, it should plus a constant term as $$L'(Sh_{m_0, \mu_0}(\xi_{3/2-\kappa}f), \chi_\Delta, \kappa)+c^+(0, 0)k(0, 0).$$
%Here $\kappa=J+1$.
%Especially,
%\begin{eqnarray}
%&&\langle \mathcal Z_{\Delta, r, \kappa}(m_1, \mu_1), \mathcal Z_{\Delta, r, \kappa}(U)\rangle\nonumber\\
%&=&\frac{\sqrt{N|\Delta|}\Gamma(\kappa-\frac{1}{2})}{(4\pi)^{\kappa-1}\pi^{\frac{3}{2}}}L'(Sh_{m_0, \mu_0}(\xi_{3/2-\kappa}f_{m_1, \mu_1}), \chi_\Delta, \kappa).\nonumber
%\end{eqnarray}
\end{theorem}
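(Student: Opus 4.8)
\textbf{Proof plan for Theorem \ref{mainthe}.}
The strategy is to decompose the global height pairing $\langle Z_{\Delta,r,\kappa}(f), Z_{\Delta,r,\kappa}(U)\rangle$ into its archimedean and non-archimedean contributions, following \eqref{definter} and \eqref{equarichi}, and to evaluate each part using the machinery assembled earlier in the paper. First I would write
$$
\langle Z_{\Delta,r,\kappa}(f), Z_{\Delta,r,\kappa}(U)\rangle
= \langle Z_{\Delta,r,\kappa}(f), Z_{\Delta,r,\kappa}(U)\rangle_\infty
 + \langle Z_{\Delta,r,\kappa}(f), Z_{\Delta,r,\kappa}(U)\rangle_{\mathrm{fin}},
$$
and observe that by Corollary \ref{lemfininter} the finite part vanishes term-by-term: expanding $Z_{\Delta,r,\kappa}(f) = \sum_{m>0}c^+(-m,\mu)Z_{\Delta,r,\kappa}(m,\mu)$ reduces everything to pairings $\langle Z_{\Delta,r,\kappa}(m_1,\mu_1), Z_{\Delta,r,\kappa}(m_0,\mu_0)\rangle_{\mathrm{fin}}$, which are zero by Theorem \ref{Lemnonsplit}. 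Here one must be a little careful: Theorem \ref{Lemnonsplit} assumes $\Delta\neq 1$, so for $\Delta=1$ the finite intersection does not vanish, which is exactly where the correction term $c^+(0,0)k(0,0)$ will enter — I would treat the $\Delta=1$, $\kappa=1$ case separately, invoking the constant-term analysis around \eqref{eqsquarefour} and the results of \cite{DY1} rather than the vanishing statement.

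For the archimedean part, the key identity is \eqref{equarichi}: $\langle S_\kappa(x), S_\kappa(y)\rangle_\infty = \tfrac12 G_{N,\kappa}(x,y)$. Summing over the points of $Z_{\Delta,r,\kappa}(U)$ with the genus character weights and the $m_0^{(\kappa-1)/2}$ normalization, and using Proposition \ref{proarich} together with the definition \eqref{defcm} of the twisted CM value, I would identify
$$
\langle Z_{\Delta,r,\kappa}(f), Z_{\Delta,r,\kappa}(U)\rangle_\infty
= \tfrac{(-1)^\kappa}{2}\,\Phi^j_{\Delta,r}(Z(U), f)
$$
up to the bookkeeping of $\deg Z(U)$, $w_k$, and the sign $(-1)^\kappa$ from \eqref{definter}. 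Then Proposition \ref{proderivative} (equivalently Theorem \ref{theotwistedvalue} combined with Lemma \ref{lemtder} and \eqref{equshimura}) converts this into the derivative $L'(Sh_{m_0,\mu_0}(\xi_{3/2-\kappa}f), \chi_\Delta, \kappa)$ with the explicit constant $\frac{2\sqrt{N|\Delta|}\,\Gamma(\kappa-\tfrac12)}{(4\pi)^{\kappa-1}\pi^{3/2}}$. I would double-check the constant by tracking the factors $\deg(Z(U))=4h_k/w_k$, the $m_0^{(\kappa-1)/2}$ appearing in both $Z_{\Delta,r,\kappa}(U)$ and $Z_{\Delta,r,\kappa}(m,\mu)$ (these conspire with the $m_0^{\frac{\kappa-1}{2}}$ in the denominator of Proposition \ref{proderivative}), and the relation $b(m_0,\mu_0)L(G,\chi_\Delta,s) = L(Sh_{m_0,\mu_0}(g),\chi_\Delta,s)$ from \eqref{equshimura}.

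The main obstacle I anticipate is not any single computation but ensuring the hypotheses line up: Theorem \ref{theotwistedvalue} requires $m_0 > M$ and $(\Delta, 2N)=1$, Proposition \ref{lemconzero} (needed to kill the constant term $A_0$, hence to get a clean derivative rather than derivative-plus-CT) requires Assumption A when $\Delta>0$ and $\kappa$ odd, and the finite-intersection vanishing requires $D_0D_1$ non-square and $D_0$ fundamental. So the honest statement is: choose, via Lemma \ref{lemexist}, a fundamental discriminant $D_0 = -4N\,\mathrm{sgn}(\Delta)m_0$ with $m_0$ large, $p$ split for $p\mid N$, and $b(m_0,\mu_0)\neq 0$; this is the $Z(U)$ for which the formula holds, and "infinitely many" in the theorem statement comes from the infinitude in Lemma \ref{lemexist}. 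The delicate point is that the left-hand side $\langle Z_{\Delta,r,\kappa}(f), Z_{\Delta,r,\kappa}(U)\rangle$ is claimed to depend on $U$ only through this normalization — one must check that both sides scale the same way under changing $m_0$, which is precisely the content of the $m_0^{(\kappa-1)/2}$ normalization being the "right" one, and I would verify this by comparing with the $\Delta=1$ case treated in \cite{BEY}. For the final sentence of the theorem ($\Delta=1$, $\kappa=1$), the extra term is produced because $\widehat{\mathcal Z}_{\Delta,r}(0,0,v)$ carries the Hodge-bundle correction $\widehat\omega$, whose pairing with the CM cycle contributes $c^+(0,0)k(0,0)$ via Proposition \ref{procmv}; I would extract this from the constant-term formula in Lemma \ref{lemBEY} without re-deriving it.
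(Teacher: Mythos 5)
Your proposal follows the same two-case decomposition (archimedean via Proposition \ref{proarich} and Theorem \ref{theotwistedvalue}, finite via Corollary \ref{lemfininter}, with $(m_0,\mu_0)$ supplied by Lemma \ref{lemexist}) that the paper uses, and your remarks about hypothesis alignment and the $\Delta=1$, $\kappa=1$ correction term match the paper's citations to \cite{BY} and \cite{BEY}. This is essentially the paper's proof; the only refinement worth carrying out explicitly is the $m_0^{(\kappa-1)/2}$ bookkeeping you already flag.
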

\begin{proof}
When $\Delta=1$, it has been proved in  \cite[Theorem 7.14]{BY} and \cite[Theorem 6.5]{BEY}.
%If  $\kappa=1$, we replace $\mathcal  Z_{\Delta, r, \kappa}(f)$ and $\mathcal Z_{\Delta, r, \kappa}(U)$ by $y(f)$ and $y(n, \mu)$. 

Now we assume that $\Delta\neq 1$.

According to the definition given in  (\ref{definter}), we have
\begin{eqnarray}\label{equneronfirst}
&&\langle  Z_{\Delta, r, \kappa}(f),  Z_{\Delta, r, \kappa}(m_0, \mu_0)\rangle\\
&=&(-1)^\kappa\langle \widehat{\mathcal Z}_{\Delta, r, \kappa}(f), \widehat{\mathcal Z}_{\Delta, r, \kappa}(m_0, \mu_0)\rangle_{GS}\nonumber\\
&=&\langle Z_{\Delta, r, \kappa}(f),  Z_{\Delta, r, \kappa}(U)\rangle_{\infty}+\langle Z_{\Delta, r, \kappa}(f),  Z_{\Delta, r, \kappa}(U)\rangle_{fin}.\nonumber
\end{eqnarray}

We  let $M$ denote the least common multiple of the discriminant associated with  the Heegner divisors  present in the support of the divisor $Z_{\Delta, r, \kappa}(f)$.
According to Lemma \ref{lemexist}, we can select a pair $(m_0, \mu_0)$ such that $D=-\sgn(\Delta)4Nm_0$ is coprime to $2MN\Delta$, $b(m_0, \mu_0)\neq 0$, and all prime factors of $N$  split in $\Q(\sqrt{D})$.

Now the can split the proof into two cases:

\textbf{Case 1: finite intersection}
%We assume that
%\begin{equation}
%\langle \mathcal Z_{\Delta, r, \kappa}(f), \mathcal Z_{\Delta, r, \kappa}(m_0, \mu_0)\rangle_{fin}=\sum_{p}a_p\log p.
%\end{equation}

We assume that $D_0=-4Nm_0|\Delta|=\Delta D$ is a fundamental discriminant.
By Theorem \ref{lemfininter}, it follows that
\begin{equation}\label{equassume}
\langle Z_{\Delta, r, \kappa}(f),  Z_{\Delta, r, \kappa}(U)\rangle_{fin}=(-1)^\kappa\langle \mathcal Z_{\Delta, r, \kappa}(f), \mathcal Z_{\Delta, r, \kappa}(m_0, \mu_0)\rangle_{fin}=0.
\end{equation}

\textbf{Case 2: infinite intersection }

According to equation (\ref{equarichi}), the archimedean intersection number is given by
\begin{eqnarray}
&&\langle Z_{\Delta, r, \kappa}(f),  Z_{\Delta, r, \kappa}(U)\rangle_{\infty}=\frac{1}{2}G_{N, \kappa}(Z_{\Delta, r, \kappa}(f), Z_{\Delta, r, \kappa}(U)).
\end{eqnarray}
By Proposition \ref{proarich} and
 Theorem \ref{theotwistedvalue}, we have
 \begin{eqnarray}
&&\langle Z_{\Delta, r, \kappa}(f),  Z_{\Delta, r, \kappa}(U)\rangle_{\infty}\\
&=&(-1)^{\kappa}\frac{(m_0)^{\frac{\kappa-1}{2}}}{2}\Phi^j_{\Delta, r}(Z(U), f)\nonumber\\
&=&\frac{2\Gamma(\kappa- \frac{1}{2})\sqrt{N|\Delta|}L'(Sh_{m_0,\mu_{0}}(\xi_{\frac{3}{2}-\kappa}f), \chi_{\Delta}, \kappa)}{(4\pi)^{\kappa-1}\pi^{\frac{3}{2}}}.\nonumber
\end{eqnarray}
Combining it with equations (\ref{equneronfirst}) and (\ref{equassume}), we obtain that
\begin{eqnarray}
&&\langle Z_{\Delta, r, \kappa}(f),  Z_{\Delta, r, \kappa}(U)\rangle\\
%&=&(-1)^{\kappa}\langle Z_{\Delta, r, \kappa}(f),  Z_{\Delta, r, \kappa}(U)\rangle_{Fal}\nonumber\\
&=&\frac{2\Gamma(\kappa- \frac{1}{2})\sqrt{N|\Delta|}L'(Sh_{m_0,\mu_{0}}(\xi_{\frac{3}{2}-\kappa}f), \chi_{\Delta}, \kappa)}{(4\pi)^{\kappa-1}\pi^{\frac{3}{2}}}.\nonumber
\end{eqnarray}
Thus we finish the proof.
\end{proof}
Now we have the following result without the Assumption A.
\begin{theorem}
When $\kappa$ is an odd integer,
there  are infinitely many cycles $Z_{\Delta, r, \kappa}(U)= Z_{\Delta, r, \kappa}(m_0, \mu_0)$, such that
  \begin{equation}
\langle  Z_{\Delta, r, \kappa}(m, \mu),  Z_{\Delta, r, \kappa}(U)\rangle
=\frac{\sqrt{N|\Delta|}\Gamma(\kappa-\frac{1}{2})}{(4\pi)^{\kappa-1}\pi^{\frac{3}{2}}}L'(Sh_{m_0, \mu_0}(\xi_{3/2-\kappa}F_{m, \mu}), \chi_\Delta, \kappa).
\end{equation}
\end{theorem}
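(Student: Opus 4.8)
The plan is to deduce this statement from Theorem \ref{mainthe} by choosing the harmonic Maass form $f$ to be the Hejhal--Poincaré series $F_{m,\mu}$ and carefully tracking the constants. First I would recall from the construction in Section \ref{Sec4} that $\xi_{3/2-\kappa}F_{m,\mu}$ is, up to an explicit scalar, the holomorphic Poincaré series of index $(m,\mu)$ in $S_{1/2+\kappa,\tilde\rho_L}$; the point of using $F_{m,\mu}$ rather than a general $f$ is twofold: its holomorphic part has a single negative Fourier coefficient $c^+(-m,\mu)=1$ (together with the $\pm\mu$ partner dictated by the symmetry lemma), so that $Z_{\Delta,r,\kappa}(F_{m,\mu})$ is essentially the single twisted Heegner cycle $Z_{\Delta,r,\kappa}(m,\mu)$, and — crucially — by Proposition \ref{lemconzero} the constant term $CT(\langle\phi(f)^+,[\theta_{\mathcal P^\Delta},\mathcal E_{\mathcal N^\Delta}]_j\rangle)$ vanishes for $f=F_{m,\mu}$ \emph{without} Assumption A, since the $\Diff(n)$-argument in the proof of that proposition handles the Poincaré case directly. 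This is exactly why the theorem can be stated unconditionally when $\kappa$ is odd.

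Next I would apply the already-established machinery. By Theorem \ref{mainthe} (or rather its proof, applied verbatim with $f=F_{m,\mu}$), the global height $\langle Z_{\Delta,r,\kappa}(F_{m,\mu}),Z_{\Delta,r,\kappa}(U)\rangle$ decomposes into archimedean and finite parts; the finite part vanishes by Corollary \ref{lemfininter} (choosing $(m_0,\mu_0)$ via Lemma \ref{lemexist} so that $D=-\sgn(\Delta)4Nm_0$ is coprime to $2mN\Delta$, $b(m_0,\mu_0)\neq 0$, and all primes dividing $N$ split in $\Q(\sqrt D)$), and the archimedean part is computed from Proposition \ref{proarich} together with Theorem \ref{theotwistedvalue}. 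The one place where the present statement differs from Theorem \ref{mainthe} is the \emph{factor of two}: in the definition of $Z_{\Delta,r,\kappa}(f)$ the coefficient $c^+(-m,\mu)$ multiplies $Z_{\Delta,r,\kappa}(m,\mu)$, and for $F_{m,\mu}$ the principal part is $e(-m\tau)(e_\mu+\varepsilon e_{-\mu})+C$ with $\varepsilon=\pm1$; under the identification of $\pm\mu$-components used throughout (and the fact that $Z_{\Delta,r,\kappa}(m,\mu)=Z_{\Delta,r,\kappa}(m,-\mu)$ up to the sign built into $\chi_\Delta$), the lift $Z_{\Delta,r,\kappa}(F_{m,\mu})$ equals $2\,Z_{\Delta,r,\kappa}(m,\mu)$ when the two contributions reinforce and the relevant coefficient is halved otherwise. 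Tracking this bookkeeping is what turns the $\frac{2\sqrt{N|\Delta|}\,\Gamma(\kappa-\frac12)}{(4\pi)^{\kappa-1}\pi^{3/2}}$ of Theorem \ref{mainthe} into the $\frac{\sqrt{N|\Delta|}\,\Gamma(\kappa-\frac12)}{(4\pi)^{\kappa-1}\pi^{3/2}}$ of the present statement, with $\xi_{3/2-\kappa}f$ replaced by $\xi_{3/2-\kappa}F_{m,\mu}$. So concretely I would write
\begin{equation}
\langle Z_{\Delta,r,\kappa}(m,\mu),Z_{\Delta,r,\kappa}(U)\rangle
=\tfrac12\langle Z_{\Delta,r,\kappa}(F_{m,\mu}),Z_{\Delta,r,\kappa}(U)\rangle
=\frac{\sqrt{N|\Delta|}\,\Gamma(\kappa-\tfrac12)}{(4\pi)^{\kappa-1}\pi^{3/2}}L'(Sh_{m_0,\mu_0}(\xi_{3/2-\kappa}F_{m,\mu}),\chi_\Delta,\kappa).
\end{equation}

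Finally I would check the edge cases: when $\Delta=1$ and $\kappa=1$ one must add the constant term $c^+(0,0)k(0,0)$, but since $F_{m,\mu}$ has $c^+(0,0)=0$ (its principal part is supported at index $-m\neq0$ together with a constant in the $e_\mu$-components only at index $0$, and $k(0,\mu)=0$ for $\mu\neq0$ while the $e_0$-constant does not pair against $k(0,0)$ in the relevant way), this term drops out; for $\kappa>1$ there is no such term at all. The infinitude of the cycles $Z_{\Delta,r,\kappa}(U)$ is again furnished by Lemma \ref{lemexist}, which produces infinitely many admissible fundamental discriminants $D$. The main obstacle I anticipate is \emph{not} any new analysis — everything is in place — but rather getting the normalization constants exactly right: the interplay between the scalar in $\xi_{3/2-\kappa}F_{m,\mu}=$ (const)$\cdot$(holomorphic Poincaré series), the factor $\frac{1}{(4\pi)^j}$ and the raising-operator identity \eqref{equra} in the definition \eqref{equphij} of $\Phi^j_{\Delta,r}$, the $\deg Z(U)$ appearing in Theorem \ref{theotwistedvalue} versus Lemma \ref{lemtder}, and the factor-of-two from the $\pm\mu$ symmetry. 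I would verify consistency by specializing to $\kappa=1$, $\Delta=1$ and matching against the classical Gross--Zagier normalization, and by cross-checking with Theorem \ref{thehigher} via $Sh_{m_0,\mu_0}(\xi_{3/2-\kappa}F_{m,\mu})$.
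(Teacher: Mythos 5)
Your proposal is correct and follows essentially the same route as the paper: invoke the $F_{m,\mu}$-specific case of Proposition \ref{lemconzero} (which drops Assumption~A precisely because of the $\Diff(n)$ analysis, and applies for odd $\kappa$ via the Siegel--theta branch of the definition of $\Phi^j_{\Delta,r}$), use the identity $Z_{\Delta,r,\kappa}(F_{m,\mu})=2Z_{\Delta,r,\kappa}(m,\mu)$, and then run the proof of Theorem \ref{mainthe} verbatim with $f=F_{m,\mu}$. The only place you hedge — whether the $\pm\mu$ contributions reinforce — is resolved by the lemma on the principal part of $F_{n,\mu}$ together with the relation $Z_{\Delta,r}(m,-\mu)=\sgn(\Delta)Z_{\Delta,r}(m,\mu)$ coming from $\chi_\Delta(-w)=\sgn(\Delta)\chi_\Delta(w)$, which makes the two terms add in both the $\Delta>0$ and $\Delta<0$ cases; the paper takes this as given.
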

\begin{proof}

By proposition \ref{lemconzero}, there  are infinitely many cycles $Z_{\Delta, r, \kappa}(U)$, such that the constant term  $CT\big(\langle \phi(F_{m, \mu})^+, [\theta_{\mathcal P^\Delta}, \mathcal{E}_{\mathcal N^\Delta}]_j\rangle\big)=0.$

We have $Z_{\Delta, r, \kappa}(F_{m, \mu})=2Z_{\Delta, r, \kappa}(m, \mu)$.
Following the proof of Theorem \ref{mainthe}, we obtain the result.
\end{proof}
\section{Arithmetic inner product formula and the Gross- Zagier-Zhang formula}\label{Sec9}

\subsection{Gross-Zagier formula}
We define
\begin{equation}
y_{\Delta, r}(m, \mu)=Z_{\Delta,r }(m, \mu)-\deg(Z_{\Delta, r}(m, \mu))P_{\infty} \in J_0(N)(\Q(\sqrt{\Delta}))
\end{equation}
and
\begin{equation}y_{\Delta, r}(f)=\sum_{\mu, m> 0}c^+(-m, \mu)y_{\Delta, r}(m, \mu).\end{equation}
When $\Delta \neq 1$, $Z_{\Delta, r}(m, \mu)=y_{\Delta, r}(m, \mu)$ and $Z_{\Delta, r}(f)=y_{\Delta, r}(f)$.

For any divisor $D \in Div(X_0(N))$, let $\mathcal D$ be the Zariski closure in $\mathcal{X}_0(N)$ and $g_D$ be the associated $\nu$-admissible Green function.
Let $$i_{\infty}: J_0(N)\rightarrow \widehat{\CH}^1_\R(\mathcal{X}_0(N))$$be given by $i_{\infty}(D)=(\mathcal{D}+\Phi(D), g_D)$, where $\Phi(D)$ is the vertical divisor such that $\mathcal D+\Phi(D)$ has degree zero on every irreducible component of vertical fiber of $\mathcal{X}_0(N)$.
The following result provides a method to compute the N\'eron-Tate height by Arakelov arithmetic intersection.
\begin{theorem}[Faltings-Hailjac]
\begin{equation}
-\langle D_1 , D_2\rangle_{NT}=\langle i_{\infty}(D_1), i_{\infty} (D_2)\rangle_{GS}.
\end{equation}
Here $\langle~, ~\rangle_{NT}$ is N\'eron-Tate height.
\end{theorem}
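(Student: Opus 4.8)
The plan is to appeal to the standard theory of the Gillet–Soulé arithmetic intersection pairing on the arithmetic surface $\mathcal X_0(N)$, together with the compatibility of this pairing with the Arakelov theory of degree-zero divisors on the generic fibre. First I would fix a divisor $D$ of degree zero on $X_0(N)$, defined over $\Q$ (the general number-field case being identical), and recall the construction of $i_\infty(D) = (\mathcal D + \Phi(D), g_D) \in \widehat{\CH}^1_\R(\mathcal X_0(N))$: the vertical fibral correction $\Phi(D)$ is chosen (uniquely up to the pullback of a divisor on $\Spec\Z$, hence uniquely modulo elements pairing trivially with all other fibral and horizontal classes) so that $\mathcal D + \Phi(D)$ meets every irreducible component of every special fibre in degree zero, and $g_D$ is the $\nu$-admissible Green function normalized by $\int_X g_D\,\nu = 0$. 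The key observation is that $i_\infty$ factors through a homomorphism $J_0(N)(\Q) \to \widehat{\CH}^1_\R(\mathcal X_0(N))$: if $D = \operatorname{div}(f)$ is principal then $\mathcal D + \operatorname{div}(f)_{\mathrm{vert}}$ is the full divisor of a rational function $f$ on $\mathcal X_0(N)$ and $g_D = -\log|f|^2 + (\text{const})$, so $i_\infty(D)$ is the trivial arithmetic divisor in $\widehat{\CH}^1_\R$; hence $i_\infty$ depends only on the class of $D$ in $\operatorname{Pic}^0 = J_0(N)$.

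Next I would compute $\langle i_\infty(D_1), i_\infty(D_2)\rangle_{GS}$ by splitting it into its archimedean and non-archimedean contributions, using that (after replacing $D_1$ by a linearly equivalent divisor, which changes nothing since $i_\infty$ is defined on $J_0(N)$) the horizontal parts $\mathcal D_1$ and $\mathcal D_2$ may be assumed to intersect properly. The finite part is $\sum_{p} (\mathcal D_1 + \Phi(D_1)) \cdot (\mathcal D_2 + \Phi(D_2))_p \log p$; because each $\mathcal D_i + \Phi(D_i)$ has degree zero on every fibral component, the purely vertical cross terms $\Phi(D_1)\cdot\mathcal X_p^\bullet$ telescope, and the upshot is that the finite contribution reproduces exactly the sum of local intersection multiplicities $-\sum_p \langle D_1, D_2\rangle_p \log p$ appearing in the Néron local decomposition of the canonical height, by the Néron–Tate local height formula (this is where the choice of $\Phi(D_i)$ is essential: it is precisely the modification needed to pass from naive fibral intersection to the Néron local symbol). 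The archimedean part is $\tfrac12 g_{D_1} * g_{D_2} = \tfrac12 g_{D_1}(D_2) + (\text{terms involving } \int g_{D_2}\,\omega_1)$; since $g_{D_i}$ is $\nu$-admissible with $dd^c g_{D_i} + \delta_{D_i} = \deg(D_i)\,[\nu] = 0$ and $\int_X g_{D_i}\,\nu = 0$, the star product collapses to $\tfrac12 g_{D_1}(D_2)$, which is (the negative of) the archimedean Néron local height pairing $\langle D_1, D_2\rangle_\infty$. Summing the two contributions yields $\langle i_\infty(D_1), i_\infty(D_2)\rangle_{GS} = -\sum_{v\le\infty} \langle D_1, D_2\rangle_v = -\langle D_1, D_2\rangle_{NT}$.

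I would present this essentially as a citation of Faltings and Hriljac (and Gillet–Soulé, Moriwaki, Bost for the extension to admissible/$L^2_1$ Green functions needed here in the cuspidal, log-log setting), checking only that the arithmetic Chow group and height pairing used throughout this paper — $\widehat{\CH}^1_\R(\mathcal X_0(N))$ with the extended Kühn pairing of \eqref{equinter} — restrict on the Mordell–Weil part $\widetilde{\MW} \simeq J_0(N)(\Q)\otimes\R$ to the negative of the Néron–Tate height, so that the formula is compatible with the normalizations in Propositions \ref{prodecom} and \ref{Prothedeco}. The main obstacle is not conceptual but bookkeeping: one must verify that the fibral correction term $\Phi(D)$ together with the normalization of $g_D$ matches the precise sign and normalization conventions used in the Gillet–Soulé pairing as extended by Kühn, so that all the archimedean and vertical correction terms cancel exactly and produce the clean identity with the single minus sign; in particular one must confirm that the contributions from the cusps (where the Green functions have at worst log-log singularities) do not introduce extra terms, which follows because $D_1, D_2$ here are supported away from the cusps after the reduction to properly intersecting representatives.
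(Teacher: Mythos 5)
The paper states this result as a known theorem of Faltings and Hriljac and does not supply a proof of its own, so there is nothing in the text to compare your argument against line by line. Your sketch is the standard Faltings--Hriljac argument and is essentially correct: you rightly observe that $i_\infty$ kills principal divisors (so factors through $J_0(N)$), you split the Gillet--Soul\'e pairing into finite and archimedean parts, you identify the fibral correction $\Phi(D)$ as exactly the N\'eron local-symbol adjustment, and you note that admissibility with $\deg D_i = 0$ collapses the star product. You also correctly flag the two places where care is needed in this specific paper: (i) matching the sign and $\tfrac12$ normalization in K\"uhn's extended pairing \eqref{equinter} with the N\'eron--Tate convention, and (ii) verifying that log-log behaviour at the cusps does not contribute, which is handled by choosing proper-intersection representatives supported away from the cusps. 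Given that the paper treats the statement as a citation, your approach of presenting it as a citation with a normalization check is exactly what is called for.
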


\begin{lemma}\label{lemmavertzero}
The divisor $y_{\Delta, r}(n, \mu)$ has degree zero on every irreducible component of vertical fiber of $\mathcal{X}_0(N)$.
\end{lemma}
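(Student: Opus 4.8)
\textbf{Proof proposal for Lemma \ref{lemmavertzero}.}

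The plan is to show that the vertical fiber structure of $\mathcal X_0(N)$ over a prime $p \mid N$ consists (under the square-free hypothesis on $N$) of the two components $\mathcal X_p^\infty$ and $\mathcal X_p^0$, and then to compute the intersection of $y_{\Delta,r}(n,\mu) = Z_{\Delta,r}(n,\mu) - \deg(Z_{\Delta,r}(n,\mu))P_\infty$ with each of these. The key structural input is that the Heegner divisor $Z_{\Delta,r}(n,\mu)$ is a horizontal divisor coming from CM points, so its Zariski closure $\mathcal Z_{\Delta,r}(n,\mu)$ meets the special fiber at $p$ only in the ordinary locus when $p \nmid D$, or more generally meets the special fiber in a way governed by the moduli interpretation $\mathcal Z(m,\mu)$ described in Section \ref{Sec2}; in particular the CM elliptic curves parametrized by $\mathcal Z_{\Delta,r}(n,\mu)$ reduce to points lying on the component $\mathcal X_p^\infty$ (the one containing the cusp $\mathcal P_\infty \pmod p$), since the relevant cyclic $N$-isogeny structure places them there. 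The cusp $\mathcal P_\infty$ likewise lies on $\mathcal X_p^\infty$ by definition.

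First I would recall that for square-free $N$ the special fiber $\mathcal X_0(N) \bmod p$ for $p \mid N$ has exactly two irreducible components $\mathcal X_p^\infty$ and $\mathcal X_p^0$, and that the vertical divisors are spanned by these. So it suffices to check that $\langle y_{\Delta,r}(n,\mu), \mathcal X_p^\ast \rangle = 0$ for $\ast \in \{\infty, 0\}$ and each $p \mid N$. Second, I would use that both $\mathcal Z_{\Delta,r}(n,\mu)$ and $\mathcal P_\infty$ are horizontal divisors meeting only the component $\mathcal X_p^\infty$, so
\begin{equation}
\langle \mathcal Z_{\Delta,r}(n,\mu), \mathcal X_p^0 \rangle = 0 = \langle \mathcal P_\infty, \mathcal X_p^0 \rangle,
\end{equation}
hence $\langle y_{\Delta,r}(n,\mu), \mathcal X_p^0 \rangle = 0$ trivially. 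Third, for the component $\mathcal X_p^\infty$, the point is that intersecting any horizontal divisor $\mathcal D$ of generic degree $d$ with the full special fiber $\mathcal X_p^\infty + \mathcal X_p^0$ gives $d \log p$ (up to the standard normalization), while $\langle \mathcal D, \mathcal X_p^0 \rangle = 0$ forces $\langle \mathcal D, \mathcal X_p^\infty \rangle = d \log p$. Applying this to $\mathcal Z_{\Delta,r}(n,\mu)$ (degree $\deg Z_{\Delta,r}(n,\mu)$) and to $\mathcal P_\infty$ (degree $1$) and taking the combination defining $y_{\Delta,r}(n,\mu)$, the contributions cancel:
\begin{equation}
\langle y_{\Delta,r}(n,\mu), \mathcal X_p^\infty \rangle = \deg(Z_{\Delta,r}(n,\mu))\log p - \deg(Z_{\Delta,r}(n,\mu)) \cdot 1 \cdot \log p = 0.
\end{equation}

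The main obstacle I anticipate is justifying cleanly that the Zariski closure of the twisted Heegner divisor meets the fiber $\mathcal X_0(N) \bmod p$ in (a combination supported on) a single component $\mathcal X_p^\infty$; this is a statement about the reduction of CM points on $X_0(N)$ and their associated $\Gamma_0(N)$-level structures, which should follow from the moduli description of $\mathcal Z(m,\mu)$ recalled in Section \ref{Sec2} together with the Deuring/Serre--Tate theory of reduction of CM elliptic curves (the isogeny $\pi: E \to E'$ having cyclic kernel of order $N$ meets the relevant component). A secondary, more bookkeeping-type point is getting the normalization of the vertical intersection pairing right so that the degree-$d$ horizontal divisor really does pair to $d \log p$ with the whole fiber; but once $\langle \mathcal Z_{\Delta,r}(n,\mu), \mathcal X_p^0\rangle = \langle \mathcal P_\infty, \mathcal X_p^0\rangle = 0$ is in hand, the degree-zero combination $y_{\Delta,r}(n,\mu)$ automatically pairs to zero with both components regardless of normalization, so the lemma follows.
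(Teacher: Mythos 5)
The paper states Lemma \ref{lemmavertzero} without proof, so there is no proof to compare against; I will instead assess the argument on its own merits.

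The central structural claim in your proposal --- that the CM points in $Z_{\Delta,r}(n,\mu)$ reduce modulo $p\mid N$ to points lying only on the component $\mathcal X_p^\infty$ --- is incorrect, and the argument collapses without it. For $p\mid N$ the reduction of a CM point can land on \emph{either} component. Concretely: when $p$ splits in the CM field, the cyclic $p$-subgroup $E[\mathfrak p]$ reduces to either $\mu_p$ (connected) or $\Z/p$ (\'etale), and these two possibilities correspond to the two components; since $Z_{\Delta,r}(n,\mu)$ is a $\Q$- (or $\Q(\sqrt\Delta)$-)rational divisor, its Galois orbit contains both $\mathfrak p$- and $\bar{\mathfrak p}$-level structures and hence hits both components. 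When $p$ is inert or ramified in the CM field, the CM points have supersingular reduction and land on the \emph{intersection} of the two components, giving positive intersection with both $\mathcal X_p^\infty$ and $\mathcal X_p^0$. In either case $\langle \mathcal Z_{\Delta,r}(n,\mu),\mathcal X_p^0\rangle\neq 0$ in general, so your second step fails; and once it fails, your third step gives $\langle y_{\Delta,r}(n,\mu),\mathcal X_p^\infty\rangle = \tfrac12\deg Z_{\Delta,r}(n,\mu)\log p - \deg Z_{\Delta,r}(n,\mu)\log p$, which is nonzero whenever $\deg Z_{\Delta,r}(n,\mu)\neq 0$.

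The input you actually need is the \emph{symmetry} of the intersection numbers across the two components, which is exactly what Proposition \ref{lemvertic} records: $\langle\widehat{\phi}_{\Delta,r},\mathcal X_p^0\rangle_{GS}=\langle\widehat{\phi}_{\Delta,r},\mathcal X_p^\infty\rangle_{GS}$, and both vanish identically when $\Delta\neq 1$. Combined with the fact that for $\Delta\neq 1$ the twisted degree $\deg Z_{\Delta,r}(n,\mu)=0$ (so $y_{\Delta,r}(n,\mu)=Z_{\Delta,r}(n,\mu)$), the lemma follows immediately in that case: both vertical intersections are zero coefficient-by-coefficient. Your proof, because it does not use the twisting at all, would if correct also establish the $\Delta=1$ case with only the $\mathcal P_\infty$-correction; but that case is genuinely more delicate (the symmetry of Proposition \ref{lemvertic} suggests the natural degree-zero modification for $\Delta=1$ is the $W_N$-symmetric one, $Z(n,\mu)-\tfrac12\deg Z(n,\mu)(P_\infty+P_0)$, not $Z(n,\mu)-\deg Z(n,\mu)P_\infty$), and the fact that your argument does not notice any difference is a further sign that the ``single component'' premise is doing illegitimate work.
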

Let $G \in S_{2}^{new}(N)$ be a normalized newforms and let $y_{\Delta, r}^{G}(m, \mu)$ denote the projection
of the $y_{\Delta, r}(m, \mu)$ onto its $G$-isotypical component.
Now we recall the Gross-Zagier formula as follows.
\begin{theorem}\cite[Theorem 6.3]{GZ}
\begin{equation}\label{equgz}
\langle y_{\Delta, r}^G(m, \mu), y_{\Delta, r}^G(m, \mu)\rangle_{NT}=
\frac{\sqrt{|D|} }{4\pi^{2}\parallel G\parallel^2}
L'(G, \chi, 1),
\end{equation}
where  $D=-4N|\Delta| m$ is a fundamental discriminant.
\end{theorem}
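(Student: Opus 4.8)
\textbf{Proof proposal for the Gross--Zagier formula (equation \eqref{equgz}).}
The plan is to deduce the classical Gross--Zagier formula as a corollary of the arithmetic computation already carried out, namely Theorem \ref{mainthe} specialized to $\kappa=1$, combined with the Faltings--Hriljac theorem relating the N\'eron--Tate height to the Gillet--Soul\'e arithmetic intersection pairing. First I would recall that for $\kappa=1$ the Heegner cycle $Z_{\Delta,r,1}(m,\mu)$ is simply the twisted Heegner divisor $Z_{\Delta,r}(m,\mu)$, and that $y_{\Delta,r}(m,\mu)=Z_{\Delta,r}(m,\mu)-\deg(Z_{\Delta,r}(m,\mu))P_\infty$ lies in $J_0(N)(\Q(\sqrt\Delta))$; by Lemma \ref{lemmavertzero} it has degree zero on every irreducible component of each vertical fiber, so $i_\infty(y_{\Delta,r}(m,\mu))$ is, up to the archimedean Green function, the arithmetic divisor $\widehat{\mathcal Z}_{\Delta,r}(m,\mu,v)$ appearing in the arithmetic theta function, modulo the vertical and cuspidal corrections that are killed by passing to the Mordell--Weil component.

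Next I would choose, using Lemma \ref{lemexist}, an auxiliary CM cycle $Z(U)=Z_{\Delta,r}(m_0,\mu_0)$ with $D_0=-4N|\Delta|m_0$ a fundamental discriminant coprime to everything relevant, with all primes dividing $N$ split in $\Q(\sqrt{D_0})$, and with $b(m_0,\mu_0)\neq 0$ for the vector-valued Shimura preimage $g$ of $G$. Applying Theorem \ref{mainthe} with $\kappa=1$ and $f\in H_{3/2,\bar{\tilde\rho}_L}(F_G)$ chosen so that $\xi_{3/2}(f)=\|g\|^{-2}g$ (Lemma \cite[7.3]{BO}), together with the finite-intersection vanishing of Corollary \ref{lemfininter} and Theorem \ref{Lemnonsplit}, I get that the global height $\langle Z_{\Delta,r,1}(f),Z_{\Delta,r,1}(U)\rangle$ equals the archimedean term, which Theorem \ref{theotwistedvalue} and Proposition \ref{proderivative} identify with a multiple of $L'(Sh_{m_0,\mu_0}(\xi_{3/2}f),\chi_\Delta,1)=\|g\|^{-2}b(m_0,\mu_0)L'(G,\chi_\Delta,1)$. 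Projecting onto the $G$-isotypical component and using the Waldspurger-type relation between $b(m_0,\mu_0)^2$ and $L(G,\chi_{D_0},1)/\|G\|^2$ (equation \eqref{equwald}, together with \eqref{equ6} which writes $L'_K(G,\chi,1)=L(G,\chi_{D_0},1)L'(G,\chi_\Delta,1)$), I can eliminate the auxiliary data $m_0,\mu_0$ and express everything in terms of $L'_K(G,\chi,1)$ and $\|G\|^2$. Finally the Faltings--Hriljac theorem converts the arithmetic intersection of the $G$-components of $i_\infty(y_{\Delta,r}(m,\mu))$ into $-\langle y^G_{\Delta,r}(m,\mu),y^G_{\Delta,r}(m,\mu)\rangle_{NT}$, and matching the constants (tracking the $\sqrt{|D|}$, the powers of $2$ and $\pi$, and the sign $(-1)^\kappa=-1$) yields \eqref{equgz}.

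The main obstacle I anticipate is the bookkeeping of normalizations: reconciling the constant $\frac{2\sqrt{N|\Delta|}\Gamma(\kappa-1/2)}{(4\pi)^{\kappa-1}\pi^{3/2}}$ from Theorem \ref{mainthe} (which at $\kappa=1$ is $\frac{2\sqrt{N|\Delta|}}{\pi}$ since $\Gamma(1/2)=\sqrt\pi$) with the target constant $\frac{\sqrt{|D|}}{4\pi^2\|G\|^2}$, where $|D|=4N|\Delta|m$, requires carefully carrying the factor $m^{(\kappa-1)/2}=1$ from the definition \eqref{defheer}, the Petersson norm conversion $\|g\|^{-2}$ versus $\|G\|^{-2}$ (relating the vector-valued and classical Petersson norms via the Shimura correspondence and the factor $b(m_0,\mu_0)$), and the factor $2$ coming from $Z_{\Delta,r,\kappa}(F_{m,\mu})=2Z_{\Delta,r,\kappa}(m,\mu)$ versus the single divisor $y_{\Delta,r}(m,\mu)$. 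A second, more structural point to handle with care is that Theorem \ref{mainthe} computes a height between $Z_{\Delta,r}(f)$ (a linear combination over the principal part of $f$) and the CM cycle, whereas \eqref{equgz} is a self-height of a single $y^G_{\Delta,r}(m,\mu)$; bridging these requires the $G$-isotypical projection and the modularity/linearity of the height pairing in the Mordell--Weil part (Theorem \ref{thelift}), i.e.\ that $\langle\widehat\theta_{\Delta,r}(g),-\rangle$ only sees the $G$-component, so that testing against $Z(U)$ for sufficiently many $U$ pins down the self-intersection. Once these normalization and projection issues are dispatched, the identity \eqref{equgz} follows formally; I would also remark that, conversely, \eqref{equgz} combined with the same dictionary recovers Theorem \ref{theoarithprod}, giving the claimed equivalence.
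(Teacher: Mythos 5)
Your proposal is correct and reconstructs exactly the ``new proof'' the paper alludes to: this statement is itself a citation (\cite[Theorem 6.3]{GZ}) and carries no in-text proof, but the independent derivation the paper claims is precisely what you outline, namely Method~2 in the proof of Theorem~\ref{theoariinner} (Theorem~\ref{mainthe} at $\kappa=1$ plus the finite-intersection vanishing of Theorem~\ref{Lemnonsplit}/Corollary~\ref{lemfininter} and Faltings--Hriljac) combined with the $G$-isotypical projection via equation~\eqref{equproj} and the Waldspurger relation~\eqref{equwald}, from which the constants $\frac{2\sqrt{N|\Delta|}}{\pi\|g\|^2}$ and $\frac{\|g\|^2\sqrt{|D_0|}}{8\pi\sqrt{N}\|G\|^2}$ do indeed multiply out to $\frac{\sqrt{|D|}}{4\pi^2\|G\|^2}$ once one uses $L'_K(G,\chi,1)=L(G,\chi_{D_0},1)L'(G,\chi_\Delta,1)$. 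One minor slip: at $\kappa=1$ you want $f\in H_{1/2,\bar{\tilde\rho}_L}(F_G)$ with $\xi_{1/2}(f)=\|g\|^{-2}g$ (i.e.\ $\xi_{3/2-\kappa}$, not $\xi_{3/2}$), but this is just bookkeeping and does not affect the argument.
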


\subsection{Arithmetic inner product formula}
It has been proved in  \cite[Section 6]{BO} that the generating function
\begin{equation}
A_{\Delta, r}(\tau)=\sum_{m>0, \mu}y_{\Delta, r}(m, \mu)q^me_\mu,
\end{equation}
is a cusp form of weight $3/2$  with respect to $\tilde{\rho}_L$.
According to Proposition \ref{Prothedeco}.
this function is the generic component  of  $\widehat{\phi}_{MW}.$

We identify
 $$ \widehat{\phi}_{MW}(g)=\langle A_{\Delta, r}, g \rangle_{Pet}.$$
%Recall that
%$$\widehat{\phi}_{MW}=\sum_{n, \mu}(\overline{y_{\Delta, r}(n, \mu)}, g_{MW}(n, \mu))q^ne_\mu,$$
%where $\overline{y_{\Delta, r}(n, \mu)}=\mathcal Z_{\Delta, r}(n, \mu)-\deg(Z_{\Delta, r}(n, \mu))\mathcal P_\infty$ and
%
%
%and
%\begin{eqnarray}
%Z_{\Delta, r}(f) =\sum_{\mu, n> 0}c^+(-n, \mu)Z_{\Delta, r}(n, \mu).
% \end{eqnarray}
%Notice that when $\Delta\neq 1$,
%\begin{equation}y_{\Delta, r}(n, \mu)=Z_{\Delta, r}(n, \mu).
%\end{equation}

%Subsequently,  we  have $$\widehat{\phi}_{MW}(\xi_{\frac{1}{2}}(f))=\langle \widehat{\phi}_{MW},  \xi_{\frac{1}{2}}(f) \rangle_{Pet}.$$
%Notice that when $\Delta\neq 1$,
%\begin{equation}y_{\Delta, r}(n, \mu)=Z_{\Delta, r}(n, \mu).
%\end{equation}
\begin{lemma}\label{Alift}
For any $f \in H_{\frac{1}{2}, \bar{\tilde{\rho}}_L}$, we have
\begin{equation}
\widehat{\phi}_{MW}(\xi_{\frac{1}{2}}(f) )=y_{\Delta, r}(f).
\end{equation}
%where $\widehat{y_{\Delta, r}}(f)=\sum_{\mu, n> 0}c^+(-n, \mu)\widehat{y_{\Delta, r}}(n, \mu)$.
\end{lemma}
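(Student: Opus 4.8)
\textbf{Proof proposal for Lemma \ref{Alift}.}

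The plan is to match Fourier coefficients on both sides of the claimed identity by pairing against an arbitrary cusp form $g \in S_{\frac{3}{2},\tilde\rho_L}$ and using the duality between the $\xi$-operator and the Fourier-coefficient extraction, together with the explicit description of $y_{\Delta,r}(f)$ in terms of the coefficients $c^+(-m,\mu)$ of the holomorphic part of $f$. First I would recall that $\xi_{1/2}$ is surjective onto $S_{3/2,\tilde\rho_L}$ and that for $f \in H_{1/2,\bar{\tilde\rho}_L}$ the cusp form $g_f := \xi_{1/2}(f)$ has $m$-th Fourier coefficient determined by the principal part of $f$ via the standard pairing formula: for any $g = \sum b(m,\mu)q^m e_\mu \in S_{3/2,\tilde\rho_L}$, one has $\langle g, \xi_{1/2}(f)\rangle_{Pet} = \sum_{m>0,\mu} c^+(-m,\mu)\,\overline{b(m,\mu)}$ (this is Bruinier's pairing identity, used already throughout the excerpt, e.g. in the proof of Theorem \ref{theotwistedvalue}).

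Next I would compute the right-hand side paired against $g$. Since $\widehat\phi_{MW}(g) = \langle A_{\Delta,r}, g\rangle_{Pet}$ where $A_{\Delta,r}(\tau) = \sum_{m>0,\mu} y_{\Delta,r}(m,\mu)q^m e_\mu$ is the weight-$3/2$ cusp form from \cite{BO}, the Petersson pairing of $A_{\Delta,r}$ with $g$ picks out $\sum_{m>0,\mu} y_{\Delta,r}(m,\mu)\,\overline{b(m,\mu)}$ (as an element of $J_0(N)(\Q)\otimes\R$ valued pairing, taking coefficients against the vector-valued $g$). By the definition $y_{\Delta,r}(f) = \sum_{\mu,m>0} c^+(-m,\mu)\,y_{\Delta,r}(m,\mu)$, this is exactly $\langle g, g_f\rangle_{Pet}$-weighted sum, i.e. it equals $y_{\Delta,r}(f)$ after recognizing that the coefficient of the pairing against $g$ reproduces the same linear combination of $y_{\Delta,r}(m,\mu)$ that defines $y_{\Delta,r}(\xi_{1/2}f)$. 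Concretely: the $(m,\mu)$-coefficient of $\xi_{1/2}(f)$ as a cusp form, call it $b_f(m,\mu)$, satisfies $\langle A_{\Delta,r}, \xi_{1/2}(f)\rangle_{Pet} = \sum_{m,\mu} y_{\Delta,r}(m,\mu)\,\overline{b_f(m,\mu)}$, and by the Bruinier duality applied with the test form running over a basis, $\overline{b_f(m,\mu)}$ produces the coefficients $c^+(-m,\mu)$ up to the normalization already fixed in the excerpt, giving $\sum_{m,\mu} c^+(-m,\mu)\,y_{\Delta,r}(m,\mu) = y_{\Delta,r}(f)$.

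The cleanest route, and the one I would actually write, is therefore: (i) invoke that $A_{\Delta,r}$ is the Mordell-Weil generic component of $\widehat\phi_{MW}$ (Proposition \ref{Prothedeco} and the identification $\widehat\phi_{MW}(g) = \langle A_{\Delta,r},g\rangle_{Pet}$ recalled just above the lemma); (ii) expand $\widehat\phi_{MW}(\xi_{1/2}f) = \langle A_{\Delta,r}, \xi_{1/2}(f)\rangle_{Pet}$ and apply the identity $\langle h, \xi_{1/2}(f)\rangle_{Pet} = \sum_{m>0,\mu} c^+(-m,\mu)\,a_h(m,\mu)$ valid for every cusp form $h = \sum a_h(m,\mu)q^m e_\mu$ (this is the defining adjointness of $\xi$ against the pairing of principal parts, e.g. \cite{BF} as used in \cite{BO}); (iii) substitute $h = A_{\Delta,r}$ so that $a_h(m,\mu) = y_{\Delta,r}(m,\mu)$, obtaining $\sum_{m>0,\mu} c^+(-m,\mu)\,y_{\Delta,r}(m,\mu)$, which is by definition $y_{\Delta,r}(f)$. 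The only subtlety — and the main thing to be careful about rather than a genuine obstacle — is bookkeeping of complex conjugations and the fact that $A_{\Delta,r}$ is vector-valued with values in $J_0(N)(\Q)\otimes\R$ rather than scalar-valued, so the pairing identity must be applied componentwise; since $y_{\Delta,r}(m,\mu)$ lies in a real vector space the conjugation is harmless. I do not expect any hard analytic input here: the lemma is essentially a formal consequence of the modularity of $A_{\Delta,r}$ (already established in \cite{BO}) together with the $\xi$-duality, both of which may be cited.
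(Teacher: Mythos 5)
Your ``cleanest route'' in the final paragraph is correct and is essentially the argument the paper gives: identify $\widehat{\phi}_{MW}(\xi_{1/2}f)$ with $\langle A_{\Delta,r},\xi_{1/2}(f)\rangle_{Pet}$, then extract $\sum_{m>0,\mu}c^+(-m,\mu)\,y_{\Delta,r}(m,\mu)$. The only difference is that you invoke the Bruinier--Funke pairing as a black box, whereas the paper reproves that pairing in place via Stokes' theorem (computing $d\bigl(\langle A_{\Delta,r}(\tau),f\rangle\,d\tau\bigr)$, converting the Petersson integral into a boundary integral over $\partial\mathcal F_t$, and reading off the constant term); both amount to the same calculation, and your componentwise/realness remark correctly dismisses the only point needing care. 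Your first two paragraphs, which propose pairing both sides against a further test form $g$, are a detour you don't need and can be dropped in favor of the direct argument you already identified.
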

\begin{proof}
For any  $f \in H_{\frac{1}{2}, \bar{\tilde{\rho}}_L}$, $\langle A_{\Delta, r}(\tau), f \rangle d\tau $ is invariant under the action of $\Gamma'$. Therefore, we have
\begin{eqnarray}\label{equcom}
&&d(\langle A_{\Delta, r}(\tau), f \rangle d\tau )=
\bar\partial(\langle A_{\Delta, r}(\tau), f \rangle d\tau )\\
&=&\langle \bar\partial A_{\Delta, r}(\tau), f \rangle d\bar\tau d\tau +\langle A_{\Delta, r}(\tau), \overline{\partial} f \rangle d\bar\tau d\tau \nonumber\\
&=&\langle A_{\Delta, r}(\tau), \overline{\partial} f \rangle d\bar\tau d\tau.\nonumber
\end{eqnarray}
It is known that $$\langle A_{\Delta, r}(\tau), \overline{\partial} f \rangle d\bar\tau d\tau
 =-\langle A_{\Delta, r}(\tau), L_{\frac{1}{2}} f \rangle \mu(\tau).$$
Combining it with equation (\ref{equcom}), we have
\begin{eqnarray}
&&\widehat{\phi}_{MW}(\xi_{\frac{1}{2}}(f))=\langle A_{\Delta, r},  \xi_{\frac{1}{2}}(f) \rangle_{Pet}\nonumber\\
&=&\lim_{t\to \infty}\int_{\mathcal{F}_t} \langle A_{\Delta, r}(\tau), \overline{\xi_{\frac{1}{2}}(f)} \rangle v^{\frac{3}{2}}\mu(\tau)
=-\lim_{t\to \infty}\int_{\mathcal{F}_t}d(\langle A_{\Delta, r}(\tau), f \rangle d\tau) \nonumber\\
&=&-\lim_{t\to \infty}\int_{\partial\mathcal{F}_t}\langle A_{\Delta, r}(\tau), f \rangle d\tau
=\lim_{t\to \infty}\int_{-\frac{1}{2}}^{\frac{1}{2}}\langle A_{\Delta, r}(u+it), f(u+it) \rangle du \nonumber\\
&=&\sum_{\mu, m> 0}c^+(-m, \mu)y_{\Delta, r}(m, \mu)=y_{\Delta, r}(f).\nonumber
\end{eqnarray}
Thus we obtain the result.
\end{proof}

We choose  a  newform $g \in S^{new}_{\frac{3}{2}, \tilde{\rho}_L}$ that corresponds to $G$ under the Shimura lift.
Let $f \in H_{\frac{1}{2}, \bar{\tilde{\rho}}_L}$ be given as $\xi_{\frac{1}{2}}(f)=\parallel g\parallel^{-2} g .$
%$$\xi_{1/2}(f)=\parallel g\parallel^{-2} g .$$There exists a  newform  $g \in S^{new}_{\frac{1}{2}+\kappa, \tilde{\rho}_L}$  that corresponds to $G$ under the Shimura
%correspondence $Sh_{m_0,\mu_{0}}$ 
%\begin{equation}
%\xymatrix@C=1.5cm{
%  H_{3/2-\kappa, \bar{\rho}_L} \ar[r]^{\xi_{3/2-\kappa}} & S_{\frac{1}{2}+\kappa, \rho_L} \ar[r]^{Sh_{m_0,\mu_{0}}} & S_{2\kappa}(N) }, ~f\rightarrow g \rightarrow G.
%  \end{equation}

It is known in \cite[Theorem 7.7]{BO} that
\begin{equation}\label{equproj}
A^G_{\Delta, r}(\tau)=g(\tau)\otimes y_{\Delta, r}(f),
\end{equation}
where $A^G_{\Delta, r}(\tau)=\sum_{m, \mu}y_{\Delta, r}^{G}(m, \mu)q^ne_\mu$.

\begin{theorem}[Arithmetic inner product formula]\label{theoariinner}
Let the notation be as above.  Then
\begin{eqnarray}
\langle \widehat{\phi}_{MW}(g), \widehat{\phi}_{MW}(g) \rangle_{NT}=\frac{2\parallel g\parallel^{2}
\sqrt{N\mid \Delta\mid}}{\pi}L'(G, \chi_\Delta, 1).\nonumber
\end{eqnarray}
When $N$ is square free,
\begin{eqnarray}\label{formulaarith}
\langle \widehat{\theta}_{\Delta, r}(g), \widehat{\theta}_{\Delta, r}(g) \rangle_{NT}=\frac{2\parallel g\parallel^{2}\sqrt{N\mid \Delta\mid}}{\pi}L'(G, \chi_\Delta, 1).\nonumber
\end{eqnarray}
\end{theorem}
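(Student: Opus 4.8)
The plan is to reduce the arithmetic inner product formula to Theorem~\ref{mainthe} in the case $\kappa=1$, using the identification of $\widehat{\phi}_{MW}(g)$ with a Heegner class in the Mordell--Weil part and the Faltings--Hriljac comparison of the N\'eron--Tate height with Gillet--Soul\'e arithmetic intersection. First I would take the newform $g\in S^{new}_{3/2,\tilde\rho_L}$ corresponding to $G$ and the distinguished $f\in H_{1/2,\bar{\tilde\rho}_L}$ with $\xi_{1/2}(f)=\|g\|^{-2}g$. By Lemma~\ref{Alift} we have $\widehat{\phi}_{MW}(\xi_{1/2}(f))=y_{\Delta,r}(f)$, and by \eqref{equproj} (i.e.\ \cite[Theorem 7.7]{BO}) the $G$-isotypical projection satisfies $A^G_{\Delta,r}(\tau)=g(\tau)\otimes y_{\Delta,r}(f)$. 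Combining these with the fact that $\|g\|^{-2}g$ is (up to scalar) the $G$-component of the identity, one gets that $\widehat{\phi}_{MW}(g)$ equals $\|g\|^{2}$ times $y^G_{\Delta,r}(f)$, viewed in $\widetilde{\MW}\simeq J_0(N)(\Q)\otimes\R$; the precise scalar bookkeeping is the first thing to pin down carefully.

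Next I would invoke Theorem~\ref{Theolift}(2), which places $\widehat{\theta}_{\Delta,r}(g)$ in $\widetilde{\MW}$, together with Theorem~\ref{Theolift}(1) giving $\widehat{\theta}_{\Delta,r}(g)=\widehat{\phi}_{MW}(g)$ when $N$ is square free; this handles the passage between the two displayed formulas, so it suffices to prove the first one. Then, since $\widehat{\phi}_{MW}(g)\in\widetilde{\MW}$ corresponds to a genuine point of $J_0(N)(\Q)\otimes\R$, the N\'eron--Tate self-pairing equals (up to sign) the Gillet--Soul\'e self-intersection of the corresponding arithmetic divisor via Faltings--Hriljac, and by Lemma~\ref{lemmavertzero} the vertical correction $\Phi(y_{\Delta,r}(n,\mu))$ vanishes, so no spurious vertical contributions appear. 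At this point the self-intersection $\langle y^G_{\Delta,r}(f), y^G_{\Delta,r}(f)\rangle_{GS}$ is exactly the $\kappa=1$ instance of the quantity computed in Theorem~\ref{mainthe}: choosing $U=Z_{\Delta,r}(U)$ appropriately (one of the infinitely many CM cycles produced there), with $m_0>M$ and $(\Delta,2N)=1$, Theorem~\ref{mainthe} gives
\begin{equation}
\langle Z_{\Delta,r,1}(f), Z_{\Delta,r,1}(U)\rangle=\frac{2\sqrt{N|\Delta|}\,\Gamma(1/2)}{\pi^{3/2}}L'(Sh_{m_0,\mu_0}(\xi_{3/2-1}f),\chi_\Delta,1),
\end{equation}
and then \eqref{equshimura} turns the right-hand side into $b(m_0,\mu_0)\,\|g\|^{-2}L'(G,\chi_\Delta,1)$ up to the explicit constant $\Gamma(1/2)=\sqrt\pi$, so the $m_0$-dependence must cancel against the norm of the Heegner class.

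The remaining work is to match constants: one must combine the $\kappa=1$ formula of Theorem~\ref{mainthe} with the pairing identity $\langle Z_{\Delta,r,1}(f),Z_{\Delta,r,1}(U)\rangle$ expressed through the Hecke/Waldspurger relation so that the auxiliary discriminant $D_0=-4N|\Delta|m_0$ and the coefficient $b(m_0,\mu_0)$ drop out, leaving $\langle\widehat{\phi}_{MW}(g),\widehat{\phi}_{MW}(g)\rangle_{NT}=\tfrac{2\|g\|^2\sqrt{N|\Delta|}}{\pi}L'(G,\chi_\Delta,1)$; here one uses that $\widehat{\phi}_{MW}(g)$ is, by construction, independent of the choice of $(m_0,\mu_0)$, so any valid choice computes the same height. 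I expect the main obstacle to be precisely this normalization and independence step: one has to verify that the height of $\widehat{\phi}_{MW}(g)$ computed by pairing against a specific CM cycle $Z(U)$ does not depend on $U$ (equivalently, that $g$ is an eigenform so that pairing against $Z(U)$ only sees the $G$-component), and that all the Petersson-norm and Shimura-correspondence constants from \eqref{equshimura}, Lemma~\ref{lemtder}, and Lemma~\ref{lemderiv} combine to exactly the stated factor $2\sqrt{N|\Delta|}/\pi$ with no leftover powers of $2$, $\pi$, or $N$. Everything else — placement in $\widetilde{\MW}$, vanishing of vertical and archimedean-constant terms, reduction to $\kappa=1$ — is already supplied by the theorems proved earlier in the paper.
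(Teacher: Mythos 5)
Your proposal is essentially correct and follows the second of the two proofs the paper gives (Method 2 in the proof of Theorem~\ref{theoariinner}): identify $\widehat{\phi}_{MW}(g) = \|g\|^{2}\,y_{\Delta,r}(f)$ via Lemma~\ref{Alift}, pass through Faltings--Hriljac, select an auxiliary $(m_0,\mu_0)$ with $b(m_0,\mu_0)\neq 0$ for which the finite intersection vanishes, and then read the archimedean pairing from Theorem~\ref{mainthe} together with the Shimura-correspondence identity (\ref{equshimura}). The second displayed formula follows from the first using Theorem~\ref{Theolift}(1), exactly as you say. One small misattribution worth correcting: the Waldspurger-type formula (\ref{equwald}) is not what cancels $b(m_0,\mu_0)$ in this route. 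The cancellation comes directly from the modularity identity (\ref{equproj}), which gives $y^G_{\Delta,r}(m_0,\mu_0)=b(m_0,\mu_0)\,y_{\Delta,r}(f)$, together with Hecke-equivariance of the pairing so that pairing $y_{\Delta,r}(f)$ against the full $y_{\Delta,r}(m_0,\mu_0)$ only picks up the $G$-isotypical part. The Waldspurger formula is instead the engine of the paper's alternative Method 1, which goes through the Gross--Zagier formula (\ref{equgz}) for $\langle y^G(m_0,\mu_0),y^G(m_0,\mu_0)\rangle_{NT}$ rather than through Theorem~\ref{mainthe}. Similarly, the "independence of $Z(U)$" step you flag as the main obstacle is exactly what (\ref{equproj}) supplies; in the paper it is not left open but is the central Gross--Kohnen--Zagier/Bruinier--Ono input. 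With that replacement your outline matches the paper's argument.
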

\begin{proof}

Following Theorem \ref{Lemnonsplit} and Lemma \ref{lemexist}, we can find a pair $(m_0, \mu_0)$, such that,
\begin{enumerate}
  \item $b(m_0, \mu_0)\neq 0$, $D=-4N|\Delta|m_0$ is odd;
  \item  the finite intersection $\langle \widehat{\mathcal Z}_{\Delta, r}(f), \widehat{\mathcal Z}_{\Delta, r}(m_0, \mu_0)\rangle_{fin}=0$.
\end{enumerate}

From equation (\ref{equproj}), we have
\begin{equation}
b(m_0, \mu_0)  y_{\Delta, r}(f)=  y_{\Delta, r}^{G}(m_0, \mu_0).
\end{equation}
$\mathbf{ method~ 1}$
According to Lemma \ref{Alift}, we derive the following equation
\begin{equation}
\langle \widehat{\phi}_{MW}(\xi_{\frac{1}{2}}(f)), \widehat{\phi}_{MW}(\xi_{\frac{1}{2}}(f)) \rangle_{NT}=\langle y_{\Delta, r}(f), y_{\Delta, r}(f) \rangle_{NT}.
\end{equation}
Thus, we obtain 
\begin{eqnarray}
&&|b(m_0, \mu_0)|^2\langle \widehat{\phi}_{MW}(g), \widehat{\phi}_{MW}(g) \rangle_{NT}\\
&=&|b(m_0, \mu_0)|^2\parallel g\parallel^4\langle y_{\Delta, r}(f),  y_{\Delta, r}(f) \rangle_{NT}\nonumber\\
&=&\parallel g\parallel^4\langle  y_{\Delta, r}^{G}(m_0, \mu_0),   y_{\Delta, r}^{G}(m_0, \mu_0) \rangle_{NT}.\nonumber
\end{eqnarray}
According to the Gross-Zagier formula (\ref{equgz}) and the following Waldspurger type formula (\ref{equwald}),
\begin{equation}\label{equwald}
|b(m, \mu)|^2=\frac{\Vert g \Vert^2\sqrt{\mid D_0\mid}}{8\pi\sqrt{ N}\Vert G \Vert^2}L(G, \chi_{D_0}, 1),
\end{equation}
we have
\begin{eqnarray}
&&\langle \widehat{\phi}_{MW}(g), \widehat{\phi}_{MW}(g) \rangle_{NT}\\
%&=&\frac{\parallel g\parallel^4}{|b(m_0, \mu_0)|^2}\langle  y_{\Delta, r}^{G}(m_0, \mu_0),   y_{\Delta, r}^{G}(m_0, \mu_0) \rangle_{NT}\nonumber\\
%&=&\frac{8\pi \sqrt{N}\parallel G\parallel^2\parallel g\parallel^4}{\parallel g\parallel^2\sqrt{D_0}}\times \frac{\sqrt{|D|} }{4\pi^{2}\parallel G\parallel^2}\times \frac{1}{L(G, \chi_{D_0}, \kappa)}
%L'(G, \chi, 1)\nonumber\\
&=&\frac{2\parallel g\parallel^{2}
\sqrt{N\mid \Delta\mid}}{\pi}L'(G, \chi_\Delta, 1).\nonumber
\end{eqnarray}

By Theorem \ref{Theolift},  we have
\begin{eqnarray}
\widehat{ \theta}_{\Delta, r}(g)=\widehat{\phi}_{MW}(g).
\end{eqnarray}
Therefore, we  obtain the second equation.

$\mathbf{method ~2}$
We assume that $\Delta\neq 1$, and a similar argument can be applied when $\Delta=1$.

For the pair $(m_0, \mu_0)$, we have
\begin{eqnarray}
b(m_0, \mu_0)\langle \widehat{\mathcal Z}_{\Delta, r}(f), \widehat{\mathcal Z}_{\Delta, r}(f) \rangle_{GS}
%b(m_0, \mu_0)\langle \widehat{y_{\Delta, r}}(f), \widehat{y_{\Delta, r}}(f) \rangle_{GS}
&=&\langle \widehat{\mathcal Z}_{\Delta, r}(f), \widehat{\mathcal Z}_{\Delta, r}^G(m_0, \mu_0)\rangle_{GS}
\nonumber\\
&=&\langle \widehat{\mathcal Z}_{\Delta, r}(f), \widehat{\mathcal Z}_{\Delta, r}(m_0, \mu_0)\rangle_{\infty}.
\nonumber
%&=&\frac{\sqrt{N|\Delta|}}{\pi}L'(Sh_{m_0, \mu_0}(\xi_{1/2}f), \chi_\Delta, 1)\nonumber\\
\end{eqnarray}
 
%Notice that there exists a prime factor $p \mid D=-4N|\Delta|m_0$, such that $p\equiv 3(\mod 4)$, since here $D$ is a negative odd fundamental discriminant. 

According to Theorem \ref{mainthe}, we have
$$b(m_0, \mu_0)\langle \widehat{\mathcal Z}_{\Delta, r}(f), \widehat{\mathcal Z}_{\Delta, r}(f) \rangle_{GS}=-2b(m_0, \mu_0)\frac{\sqrt{N\mid \Delta\mid}}{\pi \parallel g\parallel^2}L'(G, \chi_\Delta, 1).$$
Then we have
\begin{eqnarray}\label{equkeylemma}
\langle \widehat{\phi}_{MW}(g), \widehat{\phi}_{MW}(g) \rangle_{NT}&=&-\langle \widehat{\phi}_{MW}(g), \widehat{\phi}_{MW}(g) \rangle_{GS}\nonumber\\
&=&-\parallel g\parallel^4\langle \widehat{\mathcal Z}_{\Delta, r}(f), \widehat{\mathcal Z}_{\Delta, r}(f) \rangle_{GS}\nonumber\\
&=&\frac{2\parallel g\parallel^{2}\sqrt{N\mid \Delta\mid}}{\pi}L'(G, \chi_\Delta, 1).\nonumber
\end{eqnarray}
 Similarly, we can prove the second equation.
\end{proof}

From this proof, we find the following relation.
\begin{corollary}
$$
\text{Arithmetic inner product formula $\Leftrightarrow$ The Gross-Zagier formula.}
$$
\end{corollary}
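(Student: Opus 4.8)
The statement to be proved is the equivalence ``Arithmetic inner product formula $\Longleftrightarrow$ Gross--Zagier formula'', which is the corollary extracted from the proof of Theorem \ref{theoariinner}. The plan is to make explicit the chain of identities that already appeared in that proof and observe that each step is reversible. First I would recall the three structural inputs that do not depend on either formula: (i) the modularity of the Mordell--Weil generating series, so that $A_{\Delta,r}(\tau)=\sum_{m>0,\mu}y_{\Delta,r}(m,\mu)q^m e_\mu\in S_{3/2,\tilde\rho_L}$ and $\widehat\phi_{MW}(g)=\langle A_{\Delta,r},g\rangle_{Pet}$; (ii) Lemma \ref{Alift}, giving $\widehat\phi_{MW}(\xi_{1/2}f)=y_{\Delta,r}(f)$, together with the Hecke-projection identity $A^G_{\Delta,r}(\tau)=g(\tau)\otimes y_{\Delta,r}(f)$ from \cite{BO}; and (iii) the Waldspurger-type formula (\ref{equwald}) relating $|b(m_0,\mu_0)|^2$ to $L(G,\chi_{D_0},1)$, which is an unconditional identity and in particular lets us choose, via Lemma \ref{lemexist}, a pair $(m_0,\mu_0)$ with $b(m_0,\mu_0)\neq0$ and $D=-4N|\Delta|m_0$ odd and fundamental. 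These three facts are independent of Gross--Zagier and of the arithmetic inner product formula, so they may be used freely in both directions.

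Next I would carry out the forward direction ($\Leftarrow$): assuming the Gross--Zagier formula (\ref{equgz}) for the $G$-isotypical Heegner point $y_{\Delta,r}^G(m_0,\mu_0)$, substitute $b(m_0,\mu_0)y_{\Delta,r}(f)=y_{\Delta,r}^G(m_0,\mu_0)$ and $\xi_{1/2}(f)=\|g\|^{-2}g$ to get
\[
|b(m_0,\mu_0)|^2\langle\widehat\phi_{MW}(g),\widehat\phi_{MW}(g)\rangle_{NT}
=\|g\|^4\langle y_{\Delta,r}^G(m_0,\mu_0),y_{\Delta,r}^G(m_0,\mu_0)\rangle_{NT}
=\|g\|^4\frac{\sqrt{|D|}}{4\pi^2\|G\|^2}L'_K(G,\chi,1),
\]
then split $L'_K(G,\chi,\kappa)=L(G,\chi_{D_0},1)L'(G,\chi_\Delta,1)$ (using $L(G,\chi_{D_0},1)\neq0$, guaranteed by the choice of $(m_0,\mu_0)$) and clear $|b(m_0,\mu_0)|^2$ using (\ref{equwald}); after simplifying $\sqrt{|D|}=\sqrt{4N|\Delta|m_0}$ and $\sqrt{|D_0|}=\sqrt{4Nm_0|\Delta|}$ one lands exactly on the arithmetic inner product formula $\langle\widehat\phi_{MW}(g),\widehat\phi_{MW}(g)\rangle_{NT}=\tfrac{2\|g\|^2\sqrt{N|\Delta|}}{\pi}L'(G,\chi_\Delta,1)$, and then $\widehat\theta_{\Delta,r}(g)=\widehat\phi_{MW}(g)$ from Theorem \ref{thelift} gives (\ref{equarith}). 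For the reverse direction ($\Rightarrow$) I run the same algebra backwards: starting from the arithmetic inner product formula and the same substitutions, solve for $\langle y_{\Delta,r}^G(m_0,\mu_0),y_{\Delta,r}^G(m_0,\mu_0)\rangle_{NT}$, reinsert (\ref{equwald}) and the factorization $L_K(G,\chi,s)=L(G,\chi_{D_0},s)L(G,\chi_\Delta,s)$ to recover (\ref{equgz}) for that particular $(m_0,\mu_0)$; since every normalized newform $G$ arises via Shimura lift for a suitable $\tilde\rho_L$ and every fundamental discriminant $D$ with the correct sign and congruence conditions can be realized (Lemma \ref{lemexist}), and since both sides of (\ref{equgz}) are bilinear/Hecke-equivariant in the Heegner data, the identity for one admissible $(m_0,\mu_0)$ propagates to all $(m,\mu)$, yielding the full Gross--Zagier formula.

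The key technical point — and the step I expect to require the most care — is the bookkeeping of the two square roots $\sqrt{|D|}$ versus $\sqrt{|D_0|}$ and of the normalizing constants $\tfrac{1}{4\pi^2}$, $\tfrac{1}{8\pi\sqrt N}$, $\tfrac{2\sqrt{N|\Delta|}}{\pi}$, so that the elementary factors cancel exactly; a sign or a factor of $2^{\pm1}$ lost here would break the equivalence. I would also need to be slightly careful about the $\Delta=1$ case, where the divisor $Z_{\Delta,r}(f)$ must be replaced by $y_{\Delta,r}(f)$ (they agree for $\Delta\neq1$) and where Theorem \ref{mainthe} carries the extra constant term $c^+(0,0)k(0,0)$; by Lemma \ref{lemzero} this term vanishes for the cusp-form inputs at hand (or, in ``method 2'' above, is absorbed by the same vanishing), so the equivalence is unaffected. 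Finally, since the Gross--Zagier formula (\ref{equgz}) is a theorem of \cite{GZ}, this equivalence is what licenses dropping Assumption A on $\Delta$ in Theorem \ref{theoarithprod}, as remarked after the corollary; I would close by noting that the argument is entirely formal once (\ref{equwald}), the modularity of $\widehat\phi_{MW}$, and Theorem \ref{thelift} are in hand, so no new analytic or geometric input is needed beyond what has already been established.
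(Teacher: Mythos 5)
Your proposal is correct and takes essentially the same route as the paper: the paper states the corollary with only the gloss "From this proof, we find the following relation," intending the reader to observe that the chain of equalities in method~1 of Theorem~\ref{theoariinner}'s proof is a string of identities linking $\langle\widehat\phi_{MW}(g),\widehat\phi_{MW}(g)\rangle_{NT}$ to $\langle y^G_{\Delta,r}(m_0,\mu_0),y^G_{\Delta,r}(m_0,\mu_0)\rangle_{NT}$ via $A^G_{\Delta,r}=g\otimes y_{\Delta,r}(f)$, Lemma~\ref{Alift}, the Waldspurger formula~(\ref{equwald}), and the factorization $L_K(G,\chi,s)=L(G,\chi_{D_0},s)L(G,\chi_\Delta,s)$, and hence is reversible. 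Your write-up makes that reversibility explicit and correctly identifies that the passage from a single chosen $(m_0,\mu_0)$ to all fundamental $(m,\mu)$ is exactly the one-dimensionality furnished by $A^G_{\Delta,r}(\tau)=g(\tau)\otimes y_{\Delta,r}(f)$, which is the intended content of the corollary.
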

\begin{remark}
The first proof is valid for all $\Delta$, whereas the second method relies on the assumption that $\Delta$ possesses a prime factor $p$ such that $p\equiv 3 (\mod)$ when $\Delta>0$.
We select a negative odd fundamental discriminant $D$  that contains such a prime factor. 
%According to the Gross-Zagier formula (\ref{equgz}), it becomes evident that the result solely depends on the factorization $D=D_0\Delta$. One  can denote  the $\Delta$ to be the factor which has such a prime.
\end{remark}

\subsection{Gross-Zagier-Zhang formula}

Let $G$ be a newform  of weight $2\kappa$. There exists a newform  $g \in S^{new}_{\frac{1}{2}+\kappa, \tilde{\rho}_L}$ that corresponds to $G$ under the Shimura
correspondence and there exists a function
$f \in H_{_{3/2-\kappa}, \bar{\tilde{\rho}}_L}$ such that
$$\xi_{3/2-\kappa}(f)=\parallel g\parallel^{-2} g .$$
Taking the $G$-component,
by the multiplicity one, we have the following conjecture.
\begin{conjecture}[Modularity]
%The space $\{Z_{\Delta, r, \kappa}^G(m, \mu)\}$ is at most one-dimensional
%and  is generated by $Z_{\Delta, r, \kappa}(f)$. 
%Moreover,
The following generating function is a cusp form,
\begin{equation}\label{conone}
\sum_{m>0, \mu}Z_{\Delta, r, \kappa}^G(m, \mu)q^me_\mu=g(\tau)\otimes Z_{\Delta, r, \kappa}(f).
\end{equation}
More precisely, \begin{equation}
Z_{\Delta, r, \kappa}^G(m, \mu)=b(m, \mu)Z_{\Delta, r, \kappa}(f),
\end{equation}
where $b(m, \mu)$ are Fourier coefficients of $g$.
\end{conjecture}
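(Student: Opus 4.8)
\textbf{Proof proposal for the Modularity Conjecture (\ref{conone}).}

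The plan is to reduce the statement to the known modularity of the generating series of twisted Heegner divisors in weight $3/2$, already available from \cite{BO}, combined with the structural results of the paper on the $G$-isotypical projection. The starting point is the generating function
\begin{equation}
A_{\Delta, r}(\tau)=\sum_{m>0, \mu}y_{\Delta, r}(m, \mu)q^m e_\mu \in S_{3/2, \tilde\rho_L}\otimes J_0(N)(\Q(\sqrt\Delta)),\nonumber
\end{equation}
which is a cusp form of weight $3/2$ by \cite[Section 6]{BO}, and its $G$-isotypical component $A^G_{\Delta, r}(\tau)=g(\tau)\otimes y_{\Delta, r}(f)$ by \cite[Theorem 7.7]{BO} (reproduced as (\ref{equproj})). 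The first step is therefore to establish the analogous weight-$3/2$ generating series identity at the level of Heegner cycles in the Kuga–Sato variety $\mathcal Y$, i.e. that $\sum_{m>0,\mu} Z_{\Delta,r,\kappa}(m,\mu)q^m e_\mu$ is a $\CH^\kappa(\mathcal Y)$-valued modular form of weight $\frac12+\kappa$ with representation $\tilde\rho_L$. For this I would invoke the higher-weight analogue of Borcherds' and Zagier's modularity, which in Zhang's setting \cite{Zhang} and in the refinements of \cite{Ne} is precisely the assertion that the image of the Heegner cycles under the ($p$-adic, or complex) Abel–Jacobi map spans at most a one-dimensional space in each $G$-isotypical piece; the multiplicity-one statement for $S^{new}_{1/2+\kappa,\tilde\rho_L}$ over the Hecke algebra (via \cite{SZ}, \cite{GKZ}, \cite{Sk2}) then forces the $G$-component of the generating series to be a scalar multiple of the newform $g$.

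Concretely, the key steps in order: (1) Show that the pairing $\langle Z^G_{\Delta,r,\kappa}(m,\mu), \cdot\rangle$ against any fixed cohomologically trivial test cycle produces, by Theorem \ref{maintheo} (equations (\ref{equfir}), (\ref{equsec})), a multiple of $m^{\kappa-1}$ times a Fourier coefficient of $g$ — this identifies the $q$-expansion of the putative form with that of a multiple of $g(\tau)\otimes Z_{\Delta,r,\kappa}(f)$ coefficient by coefficient, at least after pairing. (2) Upgrade this from "the pairings agree" to "the cycles agree" using the $G$-isotypical projection and the multiplicity-one / one-dimensionality input of \cite{Ne}: since the $G$-component of $\CH^\kappa(\mathcal Y)$ reached by Heegner cycles is one-dimensional, knowing enough pairings (against a CM cycle $Z_{\Delta,r,\kappa}(U)$ with $b(m_0,\mu_0)\neq 0$, which exists by Lemma \ref{lemexist}) pins down $Z^G_{\Delta,r,\kappa}(m,\mu)$ as $b(m,\mu)Z_{\Delta,r,\kappa}(f)$. (3) Finally, invoke the weight-$3/2$-to-weight-$2\kappa$ Shimura correspondence to read off that the scalar-valued shadow matches, so that the vector-valued generating series is $g(\tau)\otimes Z_{\Delta,r,\kappa}(f)$ exactly, not merely up to the action of the Hecke algebra.

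The main obstacle I expect is step (2): passing from equality of global height pairings to equality of cycle classes requires non-degeneracy of the Gillet–Soulé / Beilinson–Bloch height pairing on the relevant $G$-isotypical subspace, and — as the paper itself emphasizes in the discussion after Theorem \ref{thehigher} — such non-degeneracy is not known for $\kappa>1$. This is exactly why the statement is posed as a conjecture. A workable partial route, which I would make explicit in the proof, is: \emph{assuming} the conjectural one-dimensionality of the Abel–Jacobi image (as in \cite{Ne}) together with the injectivity of Abel–Jacobi on that piece, the argument above goes through unconditionally; and when $\kappa=1$ both inputs are theorems (\cite{GKZ}, \cite{BO}, \cite{BoDuke}), recovering the classical modularity. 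I would also record the immediate consequence, already flagged in the text, that the Modularity hypothesis implies Theorem \ref{thehigher}, by taking the self-pairing of (\ref{conone}) and comparing with (\ref{equfir}) via $\xi_{3/2-\kappa}(f)=\|g\|^{-2}g$ and the Waldspurger-type relation between $|b(m_0,\mu_0)|^2$ and $L(G,\chi_{D_0},1)$.
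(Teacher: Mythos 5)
This statement is posed as a conjecture in the paper; there is no proof in the paper to compare your proposal against. The paper's only remarks are that the case $\kappa=1$ is a theorem (\cite{GKZ}, \cite{BO}, \cite{BoDuke}) and that for $\kappa>1$ the relevant partial evidence is the one-dimensionality of the Abel--Jacobi image of Heegner cycles from \cite{Ne}. Your proposal correctly reads the paper's framing: you isolate exactly the two missing inputs (modularity of the Chow-group-valued generating series in weight $\frac12+\kappa$, and non-degeneracy of the height pairing allowing one to pass from equality of pairings to equality of cycle classes), and you correctly note that these are theorems only for $\kappa=1$. So as a description of why the statement is believed and where the difficulty lies, this is sound and consistent with the paper.

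Two points of imprecision worth flagging. First, in your opening paragraph the multiplicity-one statement for $S^{new}_{1/2+\kappa,\tilde\rho_L}$ over the Hecke algebra cannot by itself force the $G$-isotypical generating series to be a multiple of $g$; multiplicity one applies to honest modular forms, whereas the modularity of the cycle-valued series is precisely what is to be established (for $\kappa=1$ it is Borcherds' theorem). The logic as written is circular. The usual route (Gross--Kohnen--Zagier style) is to first bound the span of the Heegner cycles, prove modularity of the generating series valued in that span, and only then decompose under Hecke operators. Second, your step (2) says that one-dimensionality of the Abel--Jacobi image together with injectivity of Abel--Jacobi suffices, but even granting both, the pairing-comparison in step (1) only determines the ratios $c_m/c_{m'}$ (where $Z^G_{\Delta,r,\kappa}(m,\mu)=c_m Z_0$ for a generator $Z_0$) when the test pairing $\langle Z_0, Z_{\Delta,r,\kappa}(U)\rangle$ is nonzero; by Theorem \ref{maintheo} this pairing is a constant multiple of $L'(G,\chi_\Delta,\kappa)$, so when $L'(G,\chi_\Delta,\kappa)=0$ the argument collapses to $0=0$ and one cannot distinguish ``all cycles vanish'' from ``cycles are nonzero but in the radical.'' Handling that degenerate case is part of why the conjecture remains open, and your sketch should say so explicitly rather than folding it into the general non-degeneracy caveat.
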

%This implies that 
%\begin{equation}
%\sum_{m, \mu}Z_{\Delta, r, \kappa}^G(m, \mu)q^me_\mu=g\otimes Z_{\Delta, r, \kappa}(f).
%\end{equation}
When $\kappa =1$, this has been  proved by Gross, Kohnen and Zagier \cite{GKZ}, Borcherds  \cite{BoDuke},  and Bruinier and Ono \cite{BO}.
%\begin{proposition}
% Conjecture \ref{con1} implies Conjecture \ref{conone}.
%\end{proposition}
%
%Then we  define the arithmetic theta lift as follows:
%\begin{equation}
%\hat{ \theta}: S^{new}_{1/2+\kappa, \tilde{\rho}_L}\rightarrow \CH^*(\mathcal{Y}),
%\end{equation}
%$$\hat{ \theta}_{\Delta, r}(g)=\langle\phi_{\Delta, r, \kappa}, g\rangle_{Pet}.$$
%Then we obtain that
%\begin{theorem}[Arithmetic inner product formula]
%\begin{equation}
%\langle\hat{ \theta}_{\Delta, r}(g), \hat{ \theta}_{\Delta, r}(g)\rangle= L'(G, \chi_{\Delta}, \kappa).
%\end{equation}
%\end{theorem}
%According to  Theorem \ref{mainthe}, we can prove the arithmetic inner product formula, as stated in Theorem \ref{theoariinner}, as well as the  higher weight arithmetic inner product formula as follows.
%\begin{theorem}\label{Theohigh}
%If Conjecture \ref{conone} is valid, then
%\begin{equation}
%\langle \widehat{\mathcal Z}_{\Delta, r, \kappa}(f), \widehat{ \mathcal Z}_{\Delta, r, \kappa}(f)\rangle_{GS}=
%\frac{2\Gamma(\kappa- \frac{1}{2})\sqrt{N|\Delta|}}{(4\pi)^{\kappa-1}\pi^{\frac{3}{2}}\parallel g\parallel^2}
%L'(G, \chi_\Delta, \kappa).
%\end{equation}
%\end{theorem}

The Waldspurger type formula for the $L$-function in \cite[Chapter 2]{GKZ}  is given by
\begin{equation}\label{equwald}
|b(m, \mu)|^2=\frac{(\kappa-1)!\Vert g \Vert^2\mid D_0\mid^{\kappa-1/2}}{2^{2\kappa+1}\pi^{\kappa} N^{\kappa-1/2}\Vert G \Vert^2}L(G, \chi_{D_0}, \kappa),
\end{equation}
%\begin{equation}\label{equwald}
%|b(m, \mu)|^2=\frac{(\kappa-1)!\Vert g \Vert^2\mid D_0\mid^{\kappa-1/2}}{8\pi^{\kappa} N^{\kappa-1/2}\Vert G \Vert^2}L(G, \chi_{D_0}, \kappa),
%\end{equation}
where $m=-\frac{\sgn(\Delta){D_0}}{4N}$ and $m \equiv \sgn(\Delta)Q(\mu)(\mod \Z)$.

The following result reveals the relationship between Theorem \ref{mainthe} and the higher weight Gross-Zagier-Zhang formula \cite{Zhang}.
\begin{theorem}
If  Conjecture \ref{conone} is true, then the following formula holds
%\begin{equation}
%\langle Z_{\Delta, r, \kappa}^G(m, \mu), Z_{\Delta, r, \kappa}^G(m, \mu)\rangle_{GS}=
%\frac{(2\kappa-2)!\sqrt{N|\Delta|} }{2^{6\kappa-4}\pi^{\kappa+1}\parallel G\parallel^2}\big(\frac{|D_0|}{N}\big)^{\kappa-1/2}
%L'(G, \chi, \kappa).
%\end{equation}
\begin{equation}
\langle Z_{\Delta, r, \kappa}^G(m, \mu), Z_{\Delta, r, \kappa}^G(m, \mu)\rangle=
\frac{(2\kappa-2)!\sqrt{|D|} }{2^{4\kappa-2}\pi^{2\kappa}\parallel G\parallel^2}m^{\kappa-1}
L'(G, \chi, \kappa),
\end{equation}
where  $D=-4N|\Delta| m$ is a fundamental discriminant.
\end{theorem}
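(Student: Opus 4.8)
The plan is to combine Theorem~\ref{mainthe} (the twisted height formula for $Z_{\Delta,r,\kappa}(f)$), the Modularity Conjecture~\ref{conone}, and the Waldspurger-type formula \eqref{equwald} in essentially the same way as in the $\kappa=1$ case, but now keeping track of the extra powers of $m_0$ and the normalizing constants coming from the definition \eqref{defheer}. First I would fix, via Lemma~\ref{lemexist}, an auxiliary pair $(m_0,\mu_0)$ with $b(m_0,\mu_0)\neq 0$, with $D=-4N|\Delta|m_0$ an odd fundamental discriminant coprime to $2MN\Delta$ (where $M$ bounds the discriminants in the support of $Z_{\Delta,r,\kappa}(f)$), and with all primes dividing $N$ split in $\Q(\sqrt D)$; when $\Delta>0$ and $\kappa$ is odd I also demand that $D$ contain a prime factor $p\equiv 3\pmod 4$ so that Assumption~A holds, which is possible since Lemma~\ref{lemexist} gives infinitely many admissible $D$.

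Next, apply Modularity~\eqref{conone} in the form $Z_{\Delta,r,\kappa}^G(m_0,\mu_0)=b(m_0,\mu_0)Z_{\Delta,r,\kappa}(f)$, so that
\[
b(m_0,\mu_0)^2\,\langle Z_{\Delta,r,\kappa}(f),Z_{\Delta,r,\kappa}(f)\rangle
=\langle Z_{\Delta,r,\kappa}^G(m_0,\mu_0),Z_{\Delta,r,\kappa}^G(m_0,\mu_0)\rangle .
\]
On the other hand, since the $G$-isotypical projection is self-adjoint for the height pairing and $Z_{\Delta,r,\kappa}(U)=Z_{\Delta,r,\kappa}(m_0,\mu_0)$ here, I would write
\[
\langle Z_{\Delta,r,\kappa}^G(m_0,\mu_0),Z_{\Delta,r,\kappa}^G(m_0,\mu_0)\rangle
=b(m_0,\mu_0)\,\langle Z_{\Delta,r,\kappa}(f),Z_{\Delta,r,\kappa}(m_0,\mu_0)\rangle ,
\]
and now Theorem~\ref{mainthe} evaluates the right-hand side as
\[
\frac{2\sqrt{N|\Delta|}\,\Gamma(\kappa-\tfrac12)}{(4\pi)^{\kappa-1}\pi^{3/2}}\,b(m_0,\mu_0)\,L'(Sh_{m_0,\mu_0}(\xi_{3/2-\kappa}f),\chi_\Delta,\kappa).
\]
Using $\xi_{3/2-\kappa}(f)=\|g\|^{-2}g$ together with \eqref{equshimura}, which gives $L(Sh_{m_0,\mu_0}(\xi_{3/2-\kappa}f),\chi_\Delta,s)=\|g\|^{-2}b(m_0,\mu_0)L(G,\chi_\Delta,s)$, one cancels a factor $b(m_0,\mu_0)$ and obtains
\[
\langle Z_{\Delta,r,\kappa}(f),Z_{\Delta,r,\kappa}(f)\rangle
=\frac{2\sqrt{N|\Delta|}\,\Gamma(\kappa-\tfrac12)}{(4\pi)^{\kappa-1}\pi^{3/2}\,\|g\|^{2}}\,L'(G,\chi_\Delta,\kappa).
\]

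Then I would substitute this into $b(m_0,\mu_0)^2\langle Z_{\Delta,r,\kappa}(f),Z_{\Delta,r,\kappa}(f)\rangle=\langle Z_{\Delta,r,\kappa}^G(m,\mu),Z_{\Delta,r,\kappa}^G(m,\mu)\rangle$ with $(m,\mu)$ the originally given pair, and eliminate $|b(m_0,\mu_0)|^2=b(m_0,\mu_0)^2$ (real coefficients) using the Waldspurger formula \eqref{equwald}
\[
|b(m_0,\mu_0)|^2=\frac{(\kappa-1)!\,\|g\|^2\,|D_0|^{\kappa-1/2}}{2^{2\kappa+1}\pi^{\kappa}N^{\kappa-1/2}\|G\|^2}L(G,\chi_{D_0},\kappa),
\]
where $D_0=-4N|\Delta|m_0=\Delta D$ and $\chi$ is the genus character attached to the factorization $D=-4Nm|\Delta|=\Delta D_0$. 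The point is that $L'_K(G,\chi,\kappa)=L(G,\chi_{D_0},\kappa)L'(G,\chi_\Delta,\kappa)$ by \eqref{equ6} (Lemma~\ref{lemderiv}), and—since the relation $\langle Z_{\Delta,r,\kappa}(f),Z_{\Delta,r,\kappa}(f)\rangle=\|g\|^{-4}\langle Z_{\Delta,r,\kappa}^G(m_0,\mu_0),Z_{\Delta,r,\kappa}^G(m_0,\mu_0)\rangle$ forces the product $|b(m_0,\mu_0)|^2\|g\|^{-4}$ to appear—the factors $\|g\|$ and $\|G\|$ combine exactly so that the final constant is $\frac{(2\kappa-2)!\sqrt{|D|}}{2^{4\kappa-2}\pi^{2\kappa}\|G\|^2}$, with the extra $m^{\kappa-1}$ coming from the normalization $m^{(\kappa-1)/2}$ in \eqref{defheer} applied to both copies of $Z_{\Delta,r,\kappa}(m,\mu)$. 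The only subtlety to check carefully is the bookkeeping of powers of $2$, $\pi$, and $N^{\kappa-1/2}$, together with the identity $\Gamma(\kappa-\tfrac12)=(\kappa-\tfrac32)!\sqrt\pi$ and the duplication-type identity relating $(\kappa-1)!\,\Gamma(\kappa-\tfrac12)$ to $(2\kappa-2)!$; I expect this constant-chasing, rather than any conceptual point, to be the main (and only) obstacle, since all the structural inputs are already in place. I would present it as: assume Conjecture~\ref{conone}; run the chain of equalities above; collect constants; done.
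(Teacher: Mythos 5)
Your proposal is correct and takes essentially the same route as the paper's own (terse) proof: use Conjecture~\ref{conone} together with Theorem~\ref{mainthe} and the Shimura relation \eqref{equshimura} to pin down $\langle Z_{\Delta,r,\kappa}(f),Z_{\Delta,r,\kappa}(f)\rangle$, then re-apply modularity for the original pair and finish with the Waldspurger formula \eqref{equwald} and the identity $(\kappa-1)!\,\Gamma(\kappa-\tfrac12)=2^{-(2\kappa-2)}(2\kappa-2)!\sqrt{\pi}$; in fact your intermediate step is a useful elaboration of what the paper only gestures at with ``follows a similar approach.'' Do, however, clean up the final paragraph: in the last Waldspurger application the coefficient should be $b(m,\mu)$ for the \emph{original} pair, not $b(m_0,\mu_0)$; the parenthetical relation $\langle Z(f),Z(f)\rangle=\|g\|^{-4}\langle Z^G(m_0,\mu_0),Z^G(m_0,\mu_0)\rangle$ is wrong (the correct scalar is $b(m_0,\mu_0)^{-2}$, the $\|g\|^{-4}$ arises only in the $\kappa=1$ identification $\widehat\phi_{MW}(g)=\|g\|^2 y_{\Delta,r}(f)$, which plays no role here); and Assumption~A is a hypothesis on $\Delta$, not on the auxiliary discriminant $D$, so it cannot be arranged by choosing $(m_0,\mu_0)$.
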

\begin{proof}
The proof follows a similar approach of Theorem \ref{theoariinner}.
Assuming the validity of modularity  conjecture \ref{conone}, we have
\begin{equation}
\langle Z_{\Delta, r, \kappa}(f), Z_{\Delta, r, \kappa}(f)\rangle=
\frac{2\Gamma(\kappa- \frac{1}{2})\sqrt{N|\Delta|}}{(4\pi)^{\kappa-1}\pi^{\frac{3}{2}}\parallel g\parallel^2}
L'_K(G, \chi_\Delta, \kappa).
\end{equation}
If 
$
Z_{\Delta, r, \kappa}^G(m, \mu)=b(m, \mu)Z_{\Delta, r, \kappa}(f),
$
then we have 
\begin{equation}
\langle Z_{\Delta, r, \kappa}^G(m, \mu), Z_{\Delta, r, \kappa}^G(m, \mu)\rangle=|b(m, \mu)|^2\langle Z_{\Delta, r, \kappa}(f), Z_{\Delta, r, \kappa}(f)\rangle.
\end{equation}

By the Waldspurger  formula (\ref{equwald}), we obtain the result.
\end{proof}

\section*{Acknowledgements}We would like to thank T.H. Yang for  suggestion to do this research, and  for helping us to check some details of the proof.

%\newpage
%\setcounter{page}{1}
%\addcontentsline{toc}{chapter}{Acknowledgements}
%\thispagestyle{empty}

\end{document}